\newtheorem{theorem}{Theorem}
\numberwithin{theorem}{section}
\newtheorem{question}[theorem]{Question}
\newtheorem{corollary}[theorem]{Corollary}
\newtheorem{definition}[theorem]{Definition}
\newtheorem{lemma}[theorem]{Lemma}
\newtheorem{observation}[theorem]{Observation}
\newtheorem{proposition}[theorem]{Proposition}
\newtheorem{remark}[theorem]{Remark}
\newcommand{\Th}{\text{Th}}
\newcommand{\AR}{\text{AR}}
\newcommand{\Q}{\mathbb{Q}}
\newcommand{\R}{\mathbb{R}}
\newcommand{\Z}{\mathbb{Z}}
\newcommand{\N}{\mathbb{N}}
\newcommand{\A}{\mathcal{A}}
\newcommand{\B}{\mathcal{B}}
\newcommand{\CC}{\mathcal{C}}
\newcommand{\D}{\mathcal{D}}
\newcommand{\ccc}{\boldsymbol{c}}
\newcommand{\ddd}{\boldsymbol{d}}
\DeclareMathOperator*{\WW}{\bigwedge\mkern-12mu\bigwedge}
\DeclareMathOperator*{\VV}{\bigvee\mkern-12mu\bigvee}
\newcommand{\tp}{\text{tp}}
\newcommand{\lom}{^{< \omega}} 
\newcommand{\E}{\exists}
\newcommand{\iv}{^{-1}}
\newcommand{\LL}{\mathcal{L}}
\newcommand{\dom}{\text{dom}}
\newcommand{\dgsp}{\text{DgSp}}
\newcommand{\SR}{\text{SR}}
\newcommand{\supp}{\text{supp}}
\newcommand{\Fin}{\text{Fin}}
\newcommand{\pSR}{\text{pSR}}
\newcommand{\SC}{\text{SC}}
\newcommand{\K}{\mathcal{K}}
\newcommand{\cut}{\text{cut}}
\begin{document}
\title{Measuring the Complexity of Countable Presburger Models}

\author{Jason Block}
\address{
\parbox{\dimexpr\textwidth -\parindent}{ Department of Mathematics, College of William \& Mary}}
\email{jeblock@wm.edu}
\urladdr{\url{https://sites.google.com/view/jasonblockmath/}}

\maketitle

\begin{abstract}
   We take two approaches to classifying the complexity of Presburger models: Scott analysis and degree spectra. In particular, we investigate the possible Scott sentence complexities and possible degree spectra of models of Presburger arithmetic.  Many of our results will be achieved by showing how given a linear order $\mathcal{L}$, we can construct a Presburger group $P_\mathcal{L}$ that maintains much of the structure of $\mathcal{L}$. 
\end{abstract}

\section{Introduction}  
In his 1965 article \cite{Scott}, Dana Scott showed that every countable structure can be identified up to isomorphism by an infinitary sentence. That is for every countable structure $\A$, there exists an $L_{\omega_1,\omega}$  sentence $\phi$ such that given any countable $\B$, $$\B \models\phi \iff B\cong \A.$$ We call such a $\phi$ a \emph{Scott sentence} of $\A$. We define the \emph{Scott rank} of $\A$ to be the least ordinal $\alpha$ such that $\A$ has a $\Pi^{in}_{\alpha +1}$ Scott sentence. A slightly finer-grained notion then Scott rank is \emph{Scott sentence complexity}, which we define as  the least $\Gamma$ among $\{\Sigma_\alpha,\Pi_\alpha,d\text{-}\Sigma_\alpha\}_{\alpha<\omega_1}$ such that $\A$ has an infinitary $\Gamma$ Scott sentence.

While Scott rank and Scott sentence complexity measure the difficulty of describing a countable structure, the difficulty of computing a copy of such a structure is given by its \emph{degree spectrum}. Given a countable structure $\A$, we define the degree spectrum of $\A$ to be the set of Turing degrees of copies of $\A$. In this article we examine the possible Scott ranks, Scott sentence complexities and degree spectra of models of \emph{Presburger arithmetic}, which is just the first order theory of $(\Z,+,<,0,1)$. We refer to models of this theory as \emph{Presburger groups}.

Questions regarding degree spectra and Scott analysis have long been asked about \emph{Peano arithmetic}, the much studied sub-theory of $\Th(\N,+,\cdot,<,0,1)$, which includes multiplication. As shown by Tennenbaum \cite{tennenbaum1959non} in 1959, no nonstandard model of Peano arithmetic has a computable copy.  In contrast, there are many nonstandard models of Presburger arithmetic with computable copies (e.g.\ $\Q\times \Z$). Thus, although degree spectra can distinguish the standard model of Peano arithmetic, it fails to do so for Presburger arithmetic.

In \cite{scott-spec-pa}, Montalb\'an and Rossegger examined the \emph{Scott spectrum} of Peano arithmetic. The Scott spectrum of a theory $T$ is the set of all Scott ranks of countable models of $T$. Montalb\'an and Rossegger found this Scott spectrum to be $\{\alpha: \alpha=1 \text{ or }\omega\leq \alpha< \omega_1\}$. In contrast, we will see by Corollary \ref{complete scott spectrum} that Presburger arithmetic has  complete Scott spectrum. That is, its Scott spectrum contains all countable positive ordinals. Further Scott analysis was done for Peano arithmetic by Gonzalez, \L{}e\l{}yk, Rossegger and Szlufik in \cite{scott-analysis-PA} where they found the possible Scott sentence complexities for these models. Similarly, we investigate the possible Scott sentence complexities of Presburger groups in Section \ref{Scott Complexities Section}. 

Dariusz Kaloci\'nski coined the term \emph{simplest model property} to describe a theory that has a unique model of least Scott rank. It was shown in \cite{scott-spec-pa} that only the standard model of Peano arithmetic has Scott rank $1$, and so Peano arithmetic has the simplest model property. We will get the same result for Presburger arithmetic by Corollary \ref{non standard SR>1}. Thus, unlike degree spectra, Scott analysis succeeds in identifying the standard models of both Peano and Presburger arithmetic. It should be noted that in a forthcoming paper, Kaloci\'nski independently proved that Presburger arithmetic has the simplest model property using different methods.

Given a countable linear order $\LL$, we describe how to construct a countable Presburger group  $P_{\LL}$ that retains much of the structure of $\LL$. In \cite{david-dino}, Gonzalez and Rossegger investigate the possible Scott sentence complexities for linear orders. Further work on the topic was done by Gonzalez, Harrison-Trainer and Ho in \cite{tubo-david-mathew}. In particular, the authors of \cite{tubo-david-mathew} and \cite{david-dino}  were able to find all the possible Scott complexities of linear orders. We are able to carry over many of their results to Presburger groups of the form $P_\LL$. In particular, we will see in Theorem \ref{main theorem} that there exist Presburger groups with the following Scott sentence complexities: 

\begin{itemize}
    \item $\Pi_\alpha$ for all ordinals $\alpha>1$;
    \item $d$-$\Sigma_\alpha$ for all successor ordinals $\alpha>1$;
    \item $\Sigma_\alpha$ for all successor ordinals $\alpha>4$. 
    
\end{itemize}
Furthermore, we show in Theorem \ref{thm: no sigma3} that there is no Presburger group with Scott sentence complexity $\Sigma_3$. We leave open for now whether there are Presburger groups with Scott sentence complexity $\Sigma_4$. Note, other complexities not mentioned above are not possible for infinite structures in general.

To prove that no Presburger group has Scott complexity $\Sigma_3$, we must look beyond just Presburger groups of the form $P_\LL$. Furthermore, we must look beyond the class of what we call \emph{plain} Presburger groups, which we do in Section \ref{non-plain section}. The class of non-plain Presburger groups includes the recursively saturated models, which we show to all have Scott rank $2$ (Proposition \ref{recursively saturated}).

In Section \ref{dgsp chapter} we find some notable degree spectra of Presburger groups. Many (though not all) of these spectra will come from groups  of the form $P_\LL$. We will see by Theorem \ref{L'} that $\dgsp(P_\LL)=\{\ddd:\ddd'\in \dgsp(\LL)\}$. Thus, we may make use of past results about the degree spectra of linear orders to find degree spectra for Presburger groups. In particular we will see in Theorem \ref{dgsp theorem} that given any $\alpha\neq 2$, the $\alpha^{th}$ jump inversion of any Turing cone is the degree spectrum of a Presburger group. This contrasts results of Richter \cite{Richter_1981} and Knight \cite{Knight_1986} for linear orders, as they showed that no linear order can have  the $0$ or $1$ jump inversion of a nontrivial Turing cone as its degree spectrum.  

Last, we pose some open questions in Section \ref{questions}.

\subsection{Scott Analysis}

Given a language $L$, we denote the set of (finitary) first order $L$-formulas as $L_{\omega,\omega}$. We may expand to the set of infinitary first order $L$-formulas, denoted $L_{\omega_1,\omega}$, by allowing the use of the symbols $\VV$ and $\WW$, which denote countably infinite disjunctions and conjunctions respectively. Note, we still only allow quantification over finitely many variables. A formula is $\Sigma^{in}_0$ if and only if it is $\Pi^{in}_0$ if and only it is a  quantifier free $L_{\omega,\omega}$ formula. Given a countable ordinal $\alpha$, we say that a formula is $\Sigma^{in}_{\alpha}$ if it can be expressed in the form $$\VV_{i\in \N} (\E\vec y)[ \psi_i(\vec x,\vec y)]$$ where each $\psi_i$ is $\Pi^{in}_{\beta_i}$ with  $\beta_i<\alpha$. A formula is $\Pi^{in}_\alpha$ if its negation is $\Sigma^{in}_\alpha$, which is to say it can be expressed in the form $$\WW_{i\in \N} (\forall\vec y)[ \neg \psi_i(\vec x,\vec y)].$$ We say that a formula is $d$-$\Sigma^{in}_\alpha$ if it can be expressed as the conjunction of a $\Sigma^{in}_\alpha$ and a $\Pi^{in}_\alpha$ formula (and is thus the "difference" of $\Sigma^{in}_\alpha$ formulas). 

The (parameterless) Scott rank of $\A$, denoted $\SR(\A)$, is the least $\alpha>0$ that satisfies the equivalent conditions of the following theorem of Montalb\'an:

\begin{theorem}[From \cite{Scott-rank}]
\label{robuster scott rank}

Given a countable structure $\A$ and a countable ordinal $\alpha>0$, the following are equivalent:

\begin{enumerate}
    \item Every automorphism orbit in $\A$ is $\Sigma^{in}_\alpha$-definable; and
    \item $\A$ has a $\Pi^{in}_{\alpha+1}$ Scott sentence; and
    \item $\A$ is uniformly $\boldsymbol{\Delta}^0_\alpha$-categorical.\qed
\end{enumerate}
\end{theorem}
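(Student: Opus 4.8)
The plan is to prove the cycle of implications $(1) \Rightarrow (2) \Rightarrow (3) \Rightarrow (1)$, leaning on the standard back-and-forth apparatus of Ash and Knight. For tuples $\bar a,\bar b$ from $\A$, write $\bar a \le_\alpha \bar b$ for the $\alpha$-th back-and-forth relation ($\bar a \le_0 \bar b$ iff $\bar a,\bar b$ share an atomic type, and for $\alpha>0$, $\bar a \le_\alpha \bar b$ iff for every $\beta<\alpha$ and every $\bar d$ there is $\bar c$ with $\bar b\bar d \le_\beta \bar a\bar c$). The two facts I would take off the shelf are: (i) the standard characterization that $\bar a \le_\alpha \bar b$ holds iff every $\Pi^{in}_\alpha$ formula true of $\bar a$ is true of $\bar b$; and (ii) that for each $\bar a$ there are canonical formulas, a $\Pi^{in}_\alpha$ formula $\pi^\alpha_{\bar a}$ and a $\Sigma^{in}_\alpha$ formula $\sigma^\alpha_{\bar a}$, which across any countable structure define $\{\bar b : \bar a \le_\alpha \bar b\}$ and the complementary relation respectively.

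For $(1)\Rightarrow(2)$, suppose each orbit $\orb(\bar a)$ is defined by a $\Sigma^{in}_\alpha$ formula $\theta_{\bar a}(\bar x)$. I would form the sentence $\phi$ as the conjunction of (a) $\theta_{\langle\rangle}$; (b) for every $\bar a$ and every $b$, the forth clause $\forall \bar x\,(\theta_{\bar a}(\bar x)\to \E y\,\theta_{\bar a b}(\bar x,y))$; and (c) for every $\bar a$ the back clause $\forall \bar x\,\forall y\,(\theta_{\bar a}(\bar x)\to \VV_{b}\theta_{\bar a b}(\bar x,y))$. Since each $\theta_{\bar a}$ is $\Sigma^{in}_\alpha$, its negation is $\Pi^{in}_\alpha$ while $\E y\,\theta_{\bar a b}$ and $\VV_b \theta_{\bar a b}$ remain $\Sigma^{in}_\alpha$, so each conjunct is $\Pi^{in}_{\alpha+1}$ after universal quantification, and hence so is $\phi$. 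The remaining point is that $\phi$ is a Scott sentence: for countable $\B\models\phi$, clauses (b) and (c) make $\{(\bar a,\bar b):\B\models\theta_{\bar a}(\bar b)\}$ a back-and-forth family between $\A$ and $\B$ respecting atomic types, which a routine forth/back recursion promotes to an isomorphism.

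For $(2)\Rightarrow(3)$, a $\Pi^{in}_{\alpha+1}$ Scott sentence yields a $\Sigma^{in}_\alpha$ Scott family; equivalently it guarantees that $\le_\alpha$-equivalence already forces tuples into the same orbit. Given two copies $\A,\B$ with atomic diagrams $D(\A),D(\B)$, I would build an isomorphism by a back-and-forth that at each stage preserves $\bar a \le_\alpha \bar b$. The only oracle content is the evaluation of the canonical $\pi^\alpha$ formulas, which is $\boldsymbol{\Pi}^0_\alpha$ in the diagrams; hence the search for matching elements, and the isomorphism produced, is $\boldsymbol{\Delta}^0_\alpha$, and the procedure is a single functional independent of the copies, i.e.\ uniform $\boldsymbol{\Delta}^0_\alpha$-categoricity.

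For $(3)\Rightarrow(1)$, which I expect to be the main obstacle since it converts a purely computational hypothesis into definability, I would argue via back-and-forth: it suffices to show that uniform $\boldsymbol{\Delta}^0_\alpha$-categoricity implies that $\bar a \le_\alpha \bar b$ and $\bar b\le_\alpha \bar a$ together force $\bar a,\bar b$ into the same orbit, for then $\sigma^\alpha_{\bar a}$ cuts out exactly $\orb(\bar a)$ and $(1)$ follows. To manufacture the required automorphism I would feed the uniform isomorphism functional two generic presentations of $\A$, built by a forcing/oracle construction so that $\bar a$ in one copy and $\bar b$ in the other are $\le_\alpha$-indistinguishable, and then read off from the resulting $\boldsymbol{\Delta}^0_\alpha$ isomorphism, composed with the fixed isomorphisms back to $\A$, an automorphism sending $\bar a$ to $\bar b$. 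The delicate points are arranging the two presentations to be genuinely generic yet computed from a common oracle, and verifying that the $\alpha$-agreement of the distinguished tuples is preserved by the functional; this is precisely where uniformity is indispensable and where the relativized form of Ash's metatheorem carries the argument.
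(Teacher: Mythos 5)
First, a framing point: the paper does not prove this theorem at all --- it is quoted with a \qed from Montalb\'an's ``A robuster Scott rank'' \cite{Scott-rank} --- so your attempt can only be measured against that original proof. Montalb\'an does not go directly around the triangle you propose; he routes through auxiliary equivalent conditions (``no tuple of $\A$ is $\alpha$-free'' and ``every $\Pi^{in}_\alpha$-type realized in $\A$ is supported by a single $\Sigma^{in}_\alpha$ formula''), plus a forcing argument with generic presentations for the categoricity clause. Those auxiliary notions are exactly what is missing at the two places where your sketch breaks. The most serious break is in your $(2)\Rightarrow(3)$: the opening claim that ``a $\Pi^{in}_{\alpha+1}$ Scott sentence yields a $\Sigma^{in}_\alpha$ Scott family'' is not an off-the-shelf fact; it \emph{is} the implication $(2)\Rightarrow(1)$, the hardest and genuinely novel content of the theorem, and inside a cycle $(1)\Rightarrow(2)\Rightarrow(3)\Rightarrow(1)$ you may not assume it while proving $(2)\Rightarrow(3)$. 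Your parenthetical reformulation (``$\leq_\alpha$-equivalence forces equal orbits'') does not repair this, because even granting it, $\Sigma^{in}_\alpha$-definability of orbits does not follow: the orbit of $\bar a$ would be the intersection of the $\Pi^{in}_\alpha$-definable up-set $\{\bar b:\bar a\leq_\alpha\bar b\}$ with the down-set $\{\bar b:\bar b\leq_\alpha\bar a\}$, and down-sets are in general \emph{not} $\Sigma^{in}_\alpha$-definable (in linear orders the level-$1$ down-set of the $k$-th element of $\omega$ is ``at most $k$ predecessors,'' which no countable disjunction of existential formulas defines uniformly, since those are preserved under extensions). Producing a $\Sigma^{in}_\alpha$ definition of an orbit is precisely what Montalb\'an's supported-types condition accomplishes. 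Relatedly, your complexity accounting fails without the $\Sigma$-side formulas: a back-and-forth that must \emph{find} witnesses to $\boldsymbol{\Pi}^0_\alpha$ conditions is a $\boldsymbol{\Sigma}^0_{\alpha+1}$ search; it is the $\Sigma^{in}_\alpha$ Scott family, whose satisfaction facts are $\boldsymbol{\Sigma}^0_\alpha$ and hence enumerable from the appropriate jump, that makes the construction $\boldsymbol{\Delta}^0_\alpha$.

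The same defect resurfaces at the end of your $(3)\Rightarrow(1)$. By your own definition, $\sigma^\alpha_{\bar a}$ defines the \emph{complement} of the up-set $\{\bar b:\bar a\leq_\alpha\bar b\}$, a set disjoint from $\orb(\bar a)$; so ``$\sigma^\alpha_{\bar a}$ cuts out exactly $\orb(\bar a)$'' is false as written, and even with a corrected target you would still owe the supported-types argument to convert ``$\equiv_\alpha$ implies automorphic'' into $\Sigma^{in}_\alpha$-definability of orbits. (Your two-generic-presentations idea for extracting automorphisms from the uniform functional is the right idea and is in the spirit of Montalb\'an's proof, but it only delivers the orbit rigidity statement, not the definability statement that $(1)$ asserts.) Finally, a smaller, fixable gap in $(1)\Rightarrow(2)$: the sentence $\phi$ must also force tuples satisfying $\theta_{\bar a}$ to realize the atomic type of $\bar a$, via clauses $\forall\bar x\,(\theta_{\bar a}(\bar x)\to\chi(\bar x))$ for each atomic or negated atomic $\chi$ true of $\bar a$. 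Without them $\phi$ need not be a Scott sentence: take $\A$ the one-point structure with $R(a)$ and choose the (legitimate) orbit definitions $\theta_{(a,\dots,a)}(\bar x):=R(x_1)\wedge\cdots\wedge R(x_k)$; then the two-point structure with both points in $R$ satisfies every clause of your $\phi$ but is not isomorphic to $\A$, because nothing in your clauses tracks equality patterns.
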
 

It should be noted that a Scott rank will always be a countable ordinal. A structure $\A$ is uniformly $\boldsymbol{\Delta}^0_\alpha$-categorical if  there is a Turing operator $\Psi$ along with an oracle $S\subseteq \N$ such that for all structures $\B,\CC\cong \A $, $$\Psi^{(\D(\B)\oplus \D(\CC)\oplus S)^{(\beta)}}$$ is an isomorphism from $\B$ to $\CC$, where $\beta = \alpha$ if infinite and $\beta= \alpha-1$ if finite. That is, $1+\beta=\alpha$. Here $\D(B)$ denotes the atomic diagram of $\B$, which may be viewed as a subset of $\N$ (see e.g.\ \cite{CST1}).

We define the \emph{parameterized Scott rank} of $\A$, denoted $\pSR(\A)$, to be the least $\alpha$ such that $\SR(\A,\vec a)=\alpha$ for some tuple $\vec a\in \A\lom$. The following theorem gives a similar result to Theorem \ref{robuster scott rank}:

\begin{theorem}[From \cite{Scott-rank}]
\label{robust pSR}
Given a countable structure $\A$ and a countable ordinal $\alpha>0$, the following are equivalent:

\begin{enumerate}
    \item $\pSR(\A)\leq \alpha$; and
    \item $\A$ has a $\Sigma^{in}_{\alpha+2}$ Scott sentence; and
    \item $\A$ is $\boldsymbol{\Delta}^0_\alpha$-categorical.  \qed
\end{enumerate}
\end{theorem}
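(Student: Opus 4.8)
The plan is to deduce all three equivalences from Theorem \ref{robuster scott rank} applied not to $\A$ itself but to its expansions $(\A,\vec a)$ by finitely many named parameters. By definition $\pSR(\A)\leq\alpha$ holds exactly when there is a tuple $\vec a\in\A\lom$ with $\SR(\A,\vec a)\leq\alpha$, and Theorem \ref{robuster scott rank}, read in the language of $(\A,\vec a)$, says this is in turn equivalent to $(\A,\vec a)$ having a $\Pi^{in}_{\alpha+1}$ Scott sentence, equivalently to $(\A,\vec a)$ being uniformly $\boldsymbol{\Delta}^0_\alpha$-categorical, equivalently to every automorphism orbit of $(\A,\vec a)$ being $\Sigma^{in}_\alpha$-definable. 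So the whole theorem reduces to showing that the existence of such a tuple is equivalent to (2) and to (3).

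For $(1)\Rightarrow(2)$ I would take a tuple $\vec a$ for which $(\A,\vec a)$ has a $\Pi^{in}_{\alpha+1}$ Scott sentence, write that sentence as $\varphi(\vec x)$ with the named constants replaced by free variables, and form $\E\vec x\,\varphi(\vec x)$. Since $\varphi$ is $\Pi^{in}_{\alpha+1}$ and $\alpha+1<\alpha+2$, this sentence is $\Sigma^{in}_{\alpha+2}$, and a two-line check shows it is a Scott sentence of $\A$: any $\B\models\E\vec x\,\varphi(\vec x)$ has a witness $\vec b$ with $(\B,\vec b)\models\varphi$, hence $(\B,\vec b)\cong(\A,\vec a)$ and so $\B\cong\A$, while conversely an isomorphism $\A\cong\B$ carries $\vec a$ to a witness. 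The subtle direction is $(2)\Rightarrow(1)$. Here the naive attempt fails: if $\sigma=\VV_i(\E\vec y)\psi_i(\vec y)$ is a $\Sigma^{in}_{\alpha+2}$ Scott sentence and $\vec a$ witnesses some disjunct $\psi_{i_0}$, the formula $\psi_{i_0}(\vec x)$ need not define the orbit of $\vec a$, so it is not automatically a Scott sentence of $(\A,\vec a)$. Instead I would choose $\vec a$ carefully so that its $\Pi^{in}_{\alpha+1}$-type is self-defining, and then argue that this type is itself a $\Pi^{in}_{\alpha+1}$ Scott sentence of $(\A,\vec a)$; the existence of such an $\vec a$ should follow from the fact that there are only countably many $\Pi^{in}_{\alpha+1}$-types, together with an omitting-types / Baire-category argument exploiting that $\sigma$ already pins $\A$ down up to isomorphism.

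For $(1)\Leftrightarrow(3)$ I would relativize clause (3) of Theorem \ref{robuster scott rank}: $(\A,\vec a)$ is uniformly $\boldsymbol{\Delta}^0_\alpha$-categorical via some operator $\Psi$ and oracle $S$ precisely when, on the cone above $S$, one can uniformly match copies of $\A$ once the images of $\vec a$ are located. The content of the equivalence is thus a dictionary between a named element-parameter and a cone base: going one way, a copy of $\A$ on a sufficiently large cone can uniformly search for and recognize a tuple of the correct $\Pi^{in}_{\alpha+1}$-type (such types can be recognized relative to a high enough jump of the oracle) and then invoke the categoricity of $(\A,\vec a)$; going the other way, $\Delta^0_\alpha$-categoricity on a cone supplies, after absorbing the cone base into the oracle, exactly the uniform relativized isomorphism procedure that naming $\vec a$ provides.

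I expect the main obstacle to be $(2)\Rightarrow(1)$: extracting from a single $\Sigma^{in}_{\alpha+2}$ Scott sentence an actual parameter $\vec a$ over which every orbit collapses to $\Sigma^{in}_\alpha$, since a bare witness to the existential quantifier carries no such guarantee and one must instead produce a tuple whose $\Pi^{in}_{\alpha+1}$-type is principal. The parameter-versus-cone bookkeeping in $(1)\Leftrightarrow(3)$ is the secondary difficulty, but it can largely be handled by the standard ``on a cone'' machinery once the reduction to expansions $(\A,\vec a)$ is in place.
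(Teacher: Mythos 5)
The paper itself gives no proof of this statement: it is imported verbatim from Montalb\'an's ``A robuster Scott rank'' \cite{Scott-rank} (note the \qed on the statement), so what you are attempting is a proof of Montalb\'an's theorem itself, not something the paper argues. Within your attempt, the reduction to Theorem \ref{robuster scott rank} applied to the expansions $(\A,\vec a)$ is the right frame, and your $(1)\Rightarrow(2)$ step (turn a $\Pi^{in}_{\alpha+1}$ Scott sentence of $(\A,\vec a)$ into $\E\vec x\,\varphi(\vec x)$) is complete and correct. You also correctly identify that a bare witness to the existential quantifier need not work for $(2)\Rightarrow(1)$. But at that point the proposal stops being a proof: ``choose $\vec a$ whose $\Pi^{in}_{\alpha+1}$-type is self-defining, which should follow from countably many types plus omitting types / Baire category'' is a restatement of the thing to be proved, not an argument --- countability of the realized types by itself yields nothing. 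The actual proof runs through the Polish space of presentations: a $\Sigma^{in}_{\alpha+2}$ Scott sentence makes the isomorphism class a countable union of $\Pi^0_{\alpha+1}$ sets, and a Baire-category/Vaught-transform argument inside the isomorphism class extracts a tuple $\vec a$ and a $\Pi^{in}_{\alpha+1}$ formula isolating it. You name plausible ingredients, but none of that work is carried out.

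The more serious mislocation is in $(1)\Leftrightarrow(3)$. Clause (3) of the present theorem is \emph{non-uniform} $\Delta^0_\alpha$-categoricity on a cone, while clause (3) of Theorem \ref{robuster scott rank} is \emph{uniform} $\boldsymbol{\Delta}^0_\alpha$-categoricity; ``relativizing'' the latter to $(\A,\vec a)$ only gives the easy direction $(1)\Rightarrow(3)$ (absorb the oracle and the parameter into the cone base --- no searching for tuples is even needed, since non-uniformity lets you fix witnessing tuples in each pair of copies). The converse $(3)\Rightarrow(1)$ is the core of Montalb\'an's proof: from the mere existence of some $\Delta^0_\alpha(X)$ isomorphism between each pair of $X$-computable copies, for $X$ on a cone, one must manufacture a single parameter tuple over which every automorphism orbit is $\Sigma^{in}_\alpha$-definable. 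That requires a relativized Ash--Knight--Manasse--Slaman/Chisholm-style forcing argument combined with a Martin cone (determinacy) argument to eliminate the non-uniformity; it is not ``standard on-a-cone bookkeeping.'' So the proposal is a reasonable road map, but both implications that make the theorem nontrivial --- $(2)\Rightarrow(1)$ and $(3)\Rightarrow(1)$ --- are left as genuine gaps.
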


We denote the Scott sentence complexity of $\A$ as $\SC(\A)$. Recall that this is the least $\Gamma$ among $\{\Sigma_\alpha,\Pi_\alpha,d\text{-}\Sigma_\alpha\}_{\alpha<\omega_1}$ such that $\A$ has an infinitary $\Gamma$ Scott sentence. For a comprehensive overview of Scott sentence complexity, see \cite{HARRISON-TRAINOR_2022}. It was shown by A.\ Miller \cite{A-Miller} that if a structure $\A$ has a $\Sigma^{in}_\alpha$ Scott sentence and a $\Pi^{in}_\alpha$ Scott sentence with $\alpha\geq2$, then $\A$ also has a $d$-$\Sigma_\beta^{in}$ Scott sentence for some $\beta<\alpha$. This gives that there will indeed be a "least" such $\Gamma$ so long as we define $\Sigma_\alpha,\Pi_\alpha < d$-$\Sigma_\alpha$.

\subsection{Degree Spectra}
The degree spectrum of a structure $\A$ is defined as $$\dgsp(\A):= \{\ddd: \E\B\cong \A \text{ with } \deg(\B)=\ddd\}.$$ In \cite{Knight_1986}, Knight showed that the degree spectrum of a structure is either upwards closed or equal to a singleton. Furthermore, the degree spectrum of a structure $\A$ will be a singleton if and only if  there exists a finite subset  of $\A$ such that
any permutation of $\A$ that fixes this subset is an automorphism. Such structures are called \emph{automorphically trivial} (not to be confused with having a trivial automorphism group). None of the structures discussed in this paper are automorphically trivial, and so all degree spectra discussed will be upwards closed. 

Note that if $P$ is a Presburger group with a computable copy (such as $\Z$ or $\Q\times \Z$) then $\dgsp(P)$ is just the set of all Turing degrees. If there is a degree $\ccc$ such that $\dgsp(\A)=\{\ddd:\ccc\leq \ddd\}$, then we say that (the isomorphism class of) $\A$ is of degree $\ccc$. We will see by Proposition \ref{up cone} and Corollary \ref{upper cone cor} that there exists a Presburger group of each degree. However, the more interesting case is when a structure does not have any degree, and we will construct many such Presburger groups in section \ref{dgsp chapter}.

\section{Presburger Arithmetic}

\begin{definition}
    Presburger arithmetic is the first order theory of the structure $(\Z,+,<,0,1)$. 
\end{definition}

Moj\.zesz Presburger, for whom the theory is named, showed in his 1927 article \cite{Presburger} that $\Th(\Z,+,0,1)$ is decidable and axiomatized by the the axioms for torsion-free abelian groups along with the following axiom for each natural number $n\geq1$: 
$$(\forall x)(\E y)[ny=x \vee ny+1 = x \vee \cdots \vee ny + (n-1) = x].$$ Note, $ny$ is shorthand for $\overbrace{y+\cdots+y}^{\text{$n$-many}}$.
At the suggestion of Tarski, Presburger proved this result using quantifier elimination \cite{Haase}. In a supplement to \cite{Presburger}, Presburger remarked that $\Th(\Z, +, <, 0,1)$ can be shown to be decidable using a similar quantifier elimination argument.  We include ordering in our language for the remainder of this paper. 

The fact that Presburger arithmetic is complete, decidable and computably axiomatizable distinguishes it from other similar theories. The theory $\Th(\N,+,\cdot,<,0,1)$, which includes multiplication and is often called \emph{True arithmetic}, is complete but not decidable nor computably axiomatizable. Peano arithmetic, a sub-theory of true arithmetic, is computably axiomatized but is not complete by G\"odel's first incompleteness theorem \cite{Godel}. Additionally, the set of consequences of the axioms of Peano arithmetic is not decidable, as exemplified by the resolution of Hilbert's tenth problem (see e.g.\ \cite{cooper}). 

Some authors use the name Presburger arithmetic to refer to the theory of the naturals under addition instead of the integers. There is not much difference between choosing one over the other, as a model of $\Th(\Z,+,<,0,1)$ can be turned into a model of $\Th(\N,+,<,0,1)$ by simply removing the negative elements and vice versa. We choose $\Th(\Z,+,<,0,1)$ for this paper as the models we get are groups and thus more pleasant to work with. However, all of our main results still hold for models of $\Th(\N,+,<,0,1)$. 

\begin{definition}
    A model of Presburger arithmetic is called a Presburger group. 
\end{definition}
\begin{remark}[See e.g.\ \cite{llew}]
A structure $\A$ is a Presburger group if and only if $\A$ is a discretely ordered abelian group (with the constant $1$ defined as the successor of $0$) that models the following sentence for all integers $n\geq 2$:
$$(\forall x)(\E y)[ny=x \vee ny+1 = x \vee \cdots \vee ny + (n-1) = x].$$
\end{remark} 

As a result of these axioms, we get that the residue modulo $n$ of an element of a Presburger group is definable for every $n\geq 1$. This allows us to define the \emph{residue sequence} of an element of a Presburger group.

\begin{definition}
    Let $A$ be a Presburger group. We define the residue sequence of $a\in A$ as the sequence $\rho(a)=( a\mod 1, a\mod 2, a \mod 3,...)$. 
\end{definition}
 \noindent This function $\rho$ gives a homomorphism from a Presburger group to the following profinite group:

 \begin{definition}
     The (additive) group  of profinite integers $\widehat \Z$ is the inverse limit of the inverse system $\langle \Z/i\Z,f^j_i\rangle_{(I,\prec)} $ where $I$ is the positive integers, the partial order $\prec$ is defined such that $i\prec j$ if $i$ divides $j$, and $f^j_i$ is defined so that $f^j_i(x)\equiv x \mod i$. That is, $\widehat \Z$ is the collection of integer sequences $(a_i)_{i\in \Z_{>0}}$ such that $a_i\in \Z/i \Z$ and $a_i\equiv a_j \mod i$ whenever $i$ divides $j$. 
 \end{definition}
 \noindent Note, it follows from the Chinese remainder theorem that $\widehat \Z \cong \prod_{p\in P}\Z_p$ where $P$ is the set of all primes and $\Z_p$ is the $p$-adic integers. 

We have that every element of a Presburger group has residue sequence in $\widehat \Z$. Conversely, given an element $\hat r \in \widehat \Z$ we can augment any given Presburger group to include a new element with residue sequence $\hat r$. 

\begin{definition}
\label{A[r]}
    Given $\hat r\in \widehat{\Z}$ with $\hat r =(r_n)_{n\in \Z_{>0}}$ and a Presburger group $A$, we can construct a Presburger group $A[\hat r]$ by adding in an infinite element $X$ with residue sequence $\hat r$. Formally, we define  $$A[\hat r]:=\left\{a + \frac{X-r_n}{n}z/ \sim : a\in A, z\in \Z,n\in \Z_{>0}\right\}$$ where $\sim$ is an equivalence relation defined by $$a_1 + \frac{X-r_n}{n}z_1 \sim a_2 + \frac{X-r_m}{m}z_2 $$ if $mz_1=nz_2$ and $nma_1 - mr_nz_1=nma_2-nr_mz_2$. Addition on $A[\hat r]$ is defined by $$ \bigg( a_1 + \frac{X-r_n}{n}z_1 \bigg) +\bigg( a_2 + \frac{X-r_m}{m}z_2\bigg)= $$ $$ \bigg(a_1+a_2 + \frac{r_{nm}-r_n}{n}z_1 + \frac{r_{nm}-r_m}{m}z_2\bigg) + \frac{X-r_{nm}}{nm}(mz_1 + nz_2). $$
We say $$a + \frac{X-r_n}{n}z  >0$$
if either $z>0$ or if $z=0$ and $a>0$. 
\end{definition}
To see that the element $X:= 0+\frac{X-0}{1}1$ of $A[\hat r]$ has residue sequence $\hat r$, simply define $Y_n:= 0+\frac{X-r_n}{n}1$. This yields $n\cdot Y_n + r_n= X$.  Additionally, note that $X>a$ for all $a\in A$.

Let us look at some examples to understand how this equivalence relation and definition for addition work. Suppose we have $r_2=1$, $r_4=3$ and $r_8=3$. It follows from the above definition that $$\frac{X-1}{2}3\sim 3 + \frac{X-3}{4}6. $$ This can be justified intuitively as we can write $$ \frac{X-1}{2}3 = -\frac{3}{2} + \frac{3}{2}X= 3- \frac{9}{2} + \frac{3}{2}X=3 + \frac{X-3}{4}6.$$ We also have from the definition that $$\left(1+\frac{X-1}{2}5\right)+ \left(-3 +\frac{X-3}{4}3\right )=3 +\frac{X-3}{8}26.$$
Note that we can write 
\begin{align*}
    \left(1+\frac{X-1}{2}5\right)+ \left(-3 +\frac{X-3}{4}3\right )=1 - \frac{5}{2} + \frac{5}{2}X -3 -\frac{9}{4} +\frac{3}{4} X = \\
    -\frac{27}{4} + \frac{13}{4}X = 3 - \frac{78}{8} +\frac{26}{8}X = 3 +\frac{X-3}{8}26.
\end{align*}

This technique for augmenting Presburger groups, along with the natural coding power of $\widehat \Z$, allows us to easily encode upper cones into the degree spectra of Presburger groups.  We will also see an alternative construction for Presburger groups with these degree spectra in Section \ref{dgsp chapter}. 

\begin{proposition}
\label{up cone}
Given any Turing degree $\ccc$, there exists a Presburger group whose degree spectrum is $\{\ddd:\ccc\leq \ddd\}$.
\end{proposition}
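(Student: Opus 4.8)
The plan is to realize $\ccc$ as the exact degree of a group $\Z[\hat r]$, where $\hat r\in\widehat\Z$ is a profinite integer coding a set $S$ of degree $\ccc$, exploiting the fact that the residue map $\rho$ is uniformly computable from any copy. First I would fix $S\subseteq\N$ with $\deg(S)=\ccc$, enumerate the primes as $p_0<p_1<\cdots$, and use $\widehat\Z\cong\prod_p\Z_p$ to build $\hat r=(r_n)_n$ by declaring its $p_i$-component to be $1\in\Z_{p_i}$ when $i\in S$ and $0\in\Z_{p_i}$ when $i\notin S$. Then $\hat r\equiv_T S\equiv_T\ccc$, and the residue $r_{p_i}=\hat r\bmod p_i$ is $0$ precisely when $i\notin S$. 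Set $P:=\Z[\hat r]$ as in Definition \ref{A[r]}.

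For the upper direction I would check that all the data defining $\Z[\hat r]$ in Definition \ref{A[r]}---the set of normal forms $a+\frac{X-r_n}{n}z$, the relation $\sim$, the addition, and the order---are given by explicit formulas in the $r_n$. Since $(r_n)_n\leq_T\hat r\leq_T\ccc$, these are uniformly $\ccc$-computable, so $P$ has a $\ccc$-computable copy and $\dgsp(P)$ contains a degree $\le\ccc$. The substantive half is to show that \emph{every} copy $\B\cong P$ computes $\ccc$. Two observations drive this. First, $\rho$ is computable from $\D(\B)$ uniformly: for $b\in\B$ and $n\ge1$ the residue $b\bmod n$ is the unique $j<n$ for which $\B\models\E y\,(ny+j=b)$, so dovetailing the search over the finitely many pairs $(j,y)$ terminates and returns $b\bmod n$ using only the quantifier-free diagram. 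Second, $S$ is coded in $\rho(X)=\hat r$: one checks that $X$ is divisible by $p_i$ in $P$ iff $r_{p_i}=0$ iff $i\notin S$, so reading off $\rho(X)\bmod p_i$ for each $i$ recovers $S$. Thus, once we locate (the image of) $X$ in $\B$, we obtain $S\leq_T\D(\B)$ and hence $\ccc\leq\deg(\B)$.

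The hard part will be exactly this location step. Because the residue sequence $\hat r$ is genuinely infinitary data, $X$ is not first-order definable, and the standard part $\Z\subseteq\B$ need not be computable from $\D(\B)$; so the naive recovery of $\rho(X)$ threatens to cost a jump, which is precisely the phenomenon that will separate this statement from the jump-versions obtained later in the paper. My plan to avoid the jump is twofold. I would first kill the only genuine ambiguity, namely the global shift $X\mapsto X+m$ (under which the whole isomorphism type of $P$ is unchanged and under which $\rho(X)$ changes only by the diagonal image of $m$): with the $\{0,1\}$-valued code above, the true $\hat r$ is the unique shift whose prime-components all lie in $\{0,1\}$, so $S$ is recoverable from $\rho$ of \emph{any} lift of the generator of the extension $0\to\Z\to P\to\Q\to0$. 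I would then recover such a lift---or, more robustly, two nonstandard elements of coprime value, which together determine all but finitely many of the $r_{p_i}$ by dividing out their ($p_i$-adic unit) values---by a $\D(\B)$-computable search rather than merely in the limit. Making this extraction provably $\D(\B)$-computable, as opposed to $\D(\B)'$-computable, is the crux of the whole argument.

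Finally I would assemble the pieces. The explicit copy from the upper direction has degree $\le\ccc$, and by the lower direction it (like every copy) computes $\ccc$; hence it has degree exactly $\ccc$, so $\ccc\in\dgsp(P)$. Since $P$ is not automorphically trivial, Knight's dichotomy \cite{Knight_1986} forces $\dgsp(P)$ to be upward closed, giving $\dgsp(P)\supseteq\{\ddd:\ccc\le\ddd\}$, while the lower direction gives the reverse inclusion. Therefore $\dgsp(P)=\{\ddd:\ccc\le\ddd\}$, as required.
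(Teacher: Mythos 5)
Your construction is exactly the paper's: code a set $S$ of degree $\ccc$ into a $\{0,1\}$-valued $\hat r\in\widehat\Z$ at prime(-power) components, take $P=\Z[\hat r]$, note $\hat r$ computes a copy, and use Knight's upward closure for the degrees above $\ccc$. The genuine gap is in the step you yourself flag as the crux: you propose to recover $S$ from an arbitrary copy $\B$ by a \emph{uniform} $\D(\B)$-computable search that locates (a lift of) $X$, or a pair of nonstandard elements, and you worry that otherwise the recovery ``costs a jump.'' Such a search cannot exist. Since Presburger arithmetic is complete and $\Z$ sits inside $\Z[\hat r]$ as an elementary substructure, every finite fragment of the atomic diagram true of a tuple in $\Z[\hat r]$ is also realized by a tuple of standard integers; hence no finite amount of $\D(\B)$ ever certifies that an element is nonstandard, no terminating search can provably return one, and in fact (by a routine finite-extension diagonalization over the decidable set of finite diagrams) no single program computes a noncomputable $S$ from \emph{every} copy. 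So the dichotomy you set up --- either an effective location procedure or a jump --- is a false one, and as written your lower bound is not established.

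The missing idea is non-uniformity, which is all the proposition needs: the reduction witnessing $S\le_T\D(\B)$ may depend on $\B$ and carry finite advice. Fix (non-effectively) any nonstandard $b\in\B$; under the isomorphism $b=a+\frac{X-r_n}{n}z$ with $z\neq 0$, and hard-code $a,n,z$ (and $r_n$). For every $m$ one has $b\equiv a+\frac{r_{nm}-r_n}{n}z \pmod m$, and --- exactly as in your first observation about $\rho$ --- this residue is genuinely computable from $\D(\B)$ by a terminating parallel search over the $m$ candidate residues for a witness $y$ with $my+j=b$. For every prime $p_i$ not dividing $nz$ this gives $b\equiv u+\frac{z}{n}r_{p_i}\pmod{p_i}$ for the fixed rational $u=a-\frac{zr_n}{n}$, and the two candidate values (for $r_{p_i}=0$ or $1$) differ mod $p_i$ because $\frac{z}{n}\not\equiv 0$; so $S(i)$ is read off directly, with the finitely many exceptional primes also hard-coded as advice. (The same advice in fact recovers all of $\hat r$, which is the paper's terse claim that any copy ``computes $\hat r$.'') No jump and no location step are needed. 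With this replacement your outline is complete and coincides with the paper's proof, which compresses precisely this point into the phrase ``it is clear that given a copy of $\Z[\hat r]$, we can compute $\hat r$.''
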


\begin{proof}
It is clear that given a residue sequence $\hat r\in \widehat{\Z}$, we can compute a copy of $\Z[\hat r]$ and that given a copy of 
$\Z[\hat r]$, we can compute $\hat r$. Hence, we need only show that any set of natural numbers can be encoded into a residue sequence. 

Fix some $S\subseteq \N$. Let $\{p_n\}_{n\in \N}$ be the set of primes in increasing order. We define the element $\hat r\in \widehat{\Z}$ as follows: $$r_1=0 $$ $$r_{p_k^m}= \begin{cases} 
1 & \text{if } k\in S\\
0 & \text{if } k\notin S

\end{cases}$$ 
for all $m\geq1$. For non-prime powers $n>1$, $r_n$ is the unique integer in $\Z/n\Z$ satisfying
\begin{align*}
  r_n &\equiv r_{(p_1^{m_1})} \mod {p_1^{m_1}}\\
  &\,\,\,\vdots \\
  r_n &\equiv r_{p_i^{m_i}} \mod {p_i^{m_i}}
\end{align*}
where $p_1^{m_1}\cdots p_i^{m_i}=n $. Note, the existence and uniqueness of $r_n$ is guaranteed by the Chinese remainder theorem. This residue sequence $\hat r$ is Turing equivalent to $S$.
\end{proof}

\subsection{Divisible Ordered Abelian Groups}
\begin{definition}
    A divisible ordered abelian group is a totally ordered abelian group $V$ such that for all $x\in V$ and $q\in \Q$, we have $qx\in V$. 
\end{definition}
\noindent By $qx$ we mean the unique element of $y\in V$ that satisfies $ax=by$ where $a,b\in \Z$ and $q=\frac{a}{b}$. It should be noted that this definition is equivalent to saying that every element of $V$ is divisible by every positive $n\in \N$, and that every divisible ordered abelian group forms a vector space over $\Q$. The first order theory of divisible ordered abelian groups is a complete theory and has quantifier elimination (see e.g.\ \cite{marker2002}).

\begin{definition}
\label{arch equiv}
Let $G$ be an ordered group (not necessarily divisible). Given $x\in G$, define $|x|=\max(x,-x)$. We say that $x,y\in G$ are Archimedean equivalent (denoted $x\sim y$) if there exists a positive integer $N$ such that $N|x|>|y|$ and $N|y|>|x|$. If $N|x|<|y|$ for all $N\in \N$, then we say $x\ll y$. 
\end{definition}

It is clear that $\sim$ is an equivalence relation on $G$. Given $x\in G$, we let $[x]$ denote the equivalence class under $\sim$ containing $x$. Note, the only element of the equivalence class $[0]$ is $0$. Thus, we typically ignore this equivalence class.  

\begin{definition}
The Archimedean rank of an ordered group $G$ (denoted 
$\AR(G)$) is the set $\{[x]:0\neq x\in G\}$ ordered by $[x]<[y]$ if and only if $x\ll y$. 
\end{definition}

Given any linear order $\LL$, we can construct a divisible ordered abelian group with Archimedean rank $\LL$:

\begin{definition}
    Let $\LL$ be a linear order and let $V_l$ be a divisible ordered abelian group for each $l\in \LL$. Recall that the direct product $\prod_{l\in \LL}V_l$ is the set of functions $f: \LL \to \bigcup_{l\in \LL} V_l$ such that $f(l)\in V_l$ for all $l\in \LL$. We define the direct sum $$\bigoplus_{l\in \LL}V_l:=\left\{f\in \prod_{l\in \LL}V_l: \supp(f) \text{ is finite}\right\}$$ where $\supp(f)=\{l\in \LL:f(l)\neq 0_{V_l}\}$. Defining addition as coordinate-wise, and defining $f>0$ if $f(\max(\supp(f)))>0$, we get that this direct sum is a divisible ordered abelian group.   
\end{definition}

\begin{definition}
\label{V_L}
Given a linear order $\LL$, we define $$V_\LL:=\bigoplus_{l\in \LL} \Q.$$ 

\end{definition}

\begin{observation}
    Take any nonzero $f\in V_\LL$. Let $l=\max(\supp(f))$, and define $g_l\in V_\LL$ such that $$g_l(k)=\begin{cases}
    0 & \text{ if } k\neq l \\
    1 & \text{ if } k=l.
\end{cases}$$ We have that $f\sim g_l$, and so it is clear that $\AR(V_\LL)=\LL$.
\end{observation} 

\begin{observation}
Let $P$ be a Presburger group, and let $V$ be a divisible ordered abelian group. The structure $(V\times P,+,<,0,1)$ with $0:=(0_V,0_P), 1:=(0_V,1_P)$, addition as coordinate-wise addition and ordering as the lexicographical ordering is a Presburger group. Note,  an element $(v,p)\in V\times P$ has the same residue sequence as $p$. 
\end{observation}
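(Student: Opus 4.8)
The plan is to verify the claim directly against the axiomatic characterization of Presburger groups recorded in the Remark above: it suffices to check that $V\times P$ is a discretely ordered abelian group in which $1=(0_V,1_P)$ is the successor of $0=(0_V,0_P)$, and that it satisfies the division-with-remainder sentence for every integer $n\geq 2$. So the whole proof is a routine inspection of the axioms, and I would organize it coordinate by coordinate.

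First I would dispense with the algebraic and order-theoretic bookkeeping. Coordinate-wise addition makes $V\times P$ an abelian group with identity $(0_V,0_P)$, being simply the direct product of the abelian groups $V$ and $P$. The lexicographic order is a linear order because both $V$ and $P$ are, and it is translation-invariant: from $(v_1,p_1)<(v_2,p_2)$ one checks that adding any $(w,q)$ preserves the inequality, using translation-invariance of the orders on $V$ and on $P$ according to whether $v_1<v_2$ or $v_1=v_2$. Hence $(V\times P,+,<)$ is a totally ordered abelian group.

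Next I would establish discreteness by showing $(0_V,1_P)$ is the least positive element. If $(v,p)>(0_V,0_P)$ then either $v>0_V$, whence $(v,p)>(0_V,1_P)$, or else $v=0_V$ and $p>0_P$; in the latter case $p\geq 1_P$ since $1_P$ is the least positive element of the discretely ordered group $P$, so $(v,p)\geq(0_V,1_P)$. Because $V\times P$ has a least positive element, translation-invariance yields that every element $x$ has immediate successor $x+1$, so the order is discrete with $1$ the successor of $0$.

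The one step that genuinely uses both hypotheses at once --- and the closest thing to an obstacle --- is the division-with-remainder axiom. Fixing $n\geq 2$ and an arbitrary $(v,p)$, I would build the required quotient by splitting coordinates: divisibility of $V$ supplies a $w\in V$ with $nw=v$, while the hypothesis that $P$ is a Presburger group supplies $q\in P$ and $k\in\{0,\dots,n-1\}$ with $nq+k\cdot 1_P=p$. Then $n(w,q)+k\cdot 1=(nw,\,nq+k\cdot 1_P)=(v,p)$, since $k\cdot 1=(0_V,k\cdot 1_P)$, which is precisely the needed instance of the axiom. This simultaneously settles the closing remark: because the $V$-coordinate contributes no remainder, the $k$ produced here is exactly the residue of $p$ modulo $n$, so $\rho(v,p)=\rho(p)$ for all $n$ and the two residue sequences coincide.
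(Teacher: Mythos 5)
Your proof is correct. The paper states this result as an Observation and provides no proof at all, and your verification---checking the axiomatic characterization from the Remark coordinate by coordinate (totally ordered abelian group under coordinate-wise addition and the lexicographic order, discreteness via $(0_V,1_P)$ being the least positive element, and the division-with-remainder axiom by letting the divisible factor absorb the quotient exactly while the Presburger factor supplies the remainder, which also yields $\rho(v,p)=\rho(p)$)---is precisely the routine check the paper intends the reader to perform.
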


\begin{definition}
\label{P_l def}
    Given a linear order $\LL$, we define the Presburger group $$P_\LL:=V_\LL\times \Z.$$ 
\end{definition}

\begin{definition}
A Presburger group $P$ is called plain if every element of $P$ has the same residue sequence as an integer. That is, given $x\in P$, there exists some $z\in \Z$ such that $\rho(x)=\rho(z)$. 
\end{definition}
\begin{proposition}
\label{plain}
If $P$ is a plain Presburger group, then there exists a divisible ordered abelian group $V$ such that $P\cong V \times \Z$. 
\end{proposition}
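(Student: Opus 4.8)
The plan is to extract the "divisible part" of a plain Presburger group $P$ and show it splits off as a direct factor $V$, leaving a copy of $\Z$. The key structural insight is that the residue sequence map $\rho\colon P\to \widehat\Z$ behaves very simply on a plain group: since every element has the residue sequence of some integer, the image of $\rho$ is exactly the image of $\Z$, namely the ``diagonal'' copy $\{\rho(z):z\in\Z\}\subseteq\widehat\Z$. So I would first set $V:=\ker\rho=\{x\in P:\rho(x)=\rho(0)=\vec 0\}$, the set of elements congruent to $0$ modulo every $n$. This is a subgroup, and I claim it is a divisible ordered abelian group in its own right.

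**First I would verify** that $V$ is divisible. Given $x\in V$ and $n\geq 1$, the Presburger axiom gives some $y$ with $ny=x+j$ for a unique $j\in\{0,\dots,n-1\}$; since $x\equiv 0\Mod n$ we must have $j=0$, so $ny=x$ and $y$ is the required $n$-th root. I must also check $y\in V$, i.e.\ that $y\equiv 0\Mod m$ for all $m$: this follows because $\rho(y)$ is determined (it is an element of $\widehat\Z$ with $n\cdot\rho(y)=\vec 0$), and torsion-freeness of $\widehat\Z$ on the relevant coordinates, together with plainness applied to $y$, forces $\rho(y)=\vec 0$. Hence $V$ is closed under $\Q$-scaling and, with the inherited order and addition, is a divisible ordered abelian group.

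**Next I would produce the complement.** Fix any element $u\in P$ with $\rho(u)=\rho(1)$, i.e.\ $u\equiv 1\Mod n$ for all $n$; such a $u$ exists by plainness applied to $1$ (take $u=1$ itself, but I want the smallest positive such element). More carefully, among elements with residue sequence $\rho(1)$ I want one generating a discrete copy of $\Z$; I would take $\Z u:=\{zu:z\in\Z\}$ and show $P=V\oplus \Z u$ as ordered groups. For the direct-sum decomposition: given arbitrary $x\in P$, plainness gives $z\in\Z$ with $\rho(x)=\rho(z)=\rho(zu)$, so $x-zu\in\ker\rho=V$, yielding $x=(x-zu)+zu$; and $V\cap\Z u=0$ since a nonzero multiple $zu$ has residue sequence $\rho(z)\neq\vec 0$ for $z\neq 0$. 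Finally I would check the order is lexicographic in this decomposition, using that $u$ is infinitely large relative to no element of $V$ is false — rather, that $V$ consists of the ``infinitesimal/bounded'' part: I need $v< u$ for every $v\in V$ and more generally that the order on $P$ agrees with the lexicographic order on $V\times\Z$ where the $\Z$-coordinate dominates.

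**The hard part will be** this last order-theoretic verification, specifically showing that the copy of $\Z$ (generated by $u$) is a \emph{convex-dominating} factor so that the ordering really is lexicographic with $\Z$ on top. A priori an element of $V$ could be Archimedean-larger than $u$, which would break the claim $P\cong V\times\Z$ with $\Z$ dominant. I expect to resolve this by noting that $u$ generates the discrete structure: $1$ is the successor of $0$, and $u\equiv 1\Mod n$ means $u$ lies in a distinguished residue class, but the genuinely delicate point is that in a discretely ordered group the element $1$ need not be Archimedean-maximal. I suspect the intended statement actually places $\Z$ as the \emph{bottom} (smallest Archimedean class) factor, matching Definition \ref{P_l def} where $P_\LL=V_\LL\times\Z$ has $\Z$ as the last lexicographic coordinate; so I would instead decompose with $V$ as the dominant part and $\Z u$ convex at the bottom, showing every $v\neq 0$ in $V$ satisfies $u\ll v$ or handling the finite-difference elements directly. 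Pinning down which factor is convex, and proving convexity of the $\Z$-part from discreteness plus plainness, is the crux; the algebra of the splitting itself is routine once $V=\ker\rho$ is identified.
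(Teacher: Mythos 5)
Your plan follows the paper's proof almost exactly: the paper also takes $V=\{x-z_x:x\in P\}=\ker\rho$, checks divisibility via the Presburger axiom, and decomposes $x=(x-z_x)+z_x\cdot 1$. But you explicitly leave unproven the step you yourself call the crux — which factor is Archimedean-dominant and why the order is lexicographic — and you waver between the two possibilities. That is a genuine gap, because without it the map $v+z\mapsto(v,z)$ is only a group isomorphism, not an order isomorphism. The resolution is short and you already have both ingredients (divisibility of $V$ and discreteness of $P$) in hand: let $0\neq v\in V$ and $n\geq 1$. By divisibility there is $w\in V\subseteq P$ with $nw=|v|$ and $w>0$; since $P$ is discretely ordered, $1$ is the least positive element, so $w\geq 1$ and hence $|v|=nw\geq n$. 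Thus every nonzero element of $V$ exceeds every integer in absolute value, so $V$ is the dominant factor, $\Z\cdot 1$ is convex at the bottom, and $v+z>0$ iff $v>0$, or $v=0$ and $z>0$ — exactly the lexicographic order on $V\times\Z$ with $V$ first, which is the paper's convention for $V\times\Z$. Your suspicion that $\Z$ sits in the smallest Archimedean class is correct; what was missing was this two-line derivation of that fact from divisibility plus discreteness.

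Two minor remarks. First, in your divisibility check, plainness is not needed to see that the $n$-th root $y$ of $x\in V$ lies in $V$: $\rho$ is a homomorphism into $\widehat\Z\cong\prod_p\Z_p$, which is torsion-free, so $n\rho(y)=\rho(x)=\vec 0$ already forces $\rho(y)=\vec 0$. Second, your detour through a general $u$ with $\rho(u)=\rho(1)$ is unnecessary; taking $u=1$ outright (as the paper does) gives the decomposition $x=(x-z_x)+z_x$ immediately, with uniqueness of $z_x$ following from $V\cap\Z=\{0\}$.
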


\begin{proof}
Consider $x\in P$. Since $P$ is plain, there exists a unique integer $z_x$ that has the same residue sequence as $x$. Define $x'=x-z_x$. Note that $x'$ has a residue sequence of all zeros. Define $V=\{x':x\in P\}$. Recall that $\rho$ is a homomorphism from $P$ to $\widehat{\Z}$. We have that $V$ is the kernel of this map, and so $V$ is a subgroup of $P$. Thus, addition and ordering are defined for $V$. Since every element of $V$ has a residue of all zeros, for all $v\in V$ and positive $n\in \N$ there is some $u\in V $ with $nu=v$. This gives that $\frac{v}{n}\in V$ for all $n$ and so we get that $qv\in V$ for all $q\in \Q, v\in V$. Hence, $V$ is a divisible ordered abelian group. 

Note that every element of $P$ is of the form $v+z$ with $v\in V$ and $z\in \Z$. If $|v|>0$ (where $|v|$ is defined as $\max(v,-v)$), then $|v|$ is larger than all integers. Hence, $v+z>0$ if and only if $v>0$ or $v=0$ and $z>0$. This gives that $P\cong V \times \Z$ as witnessed by the map $v+z \mapsto (v,z)$.  
\end{proof}

\section{Scott Complexities of Presburger Groups}\label{Scott Complexities Section}

We now examine the possible Scott sentence complexities of Presburger groups. Our findings can be summarized by the following theorem:

\begin{theorem}
\label{main theorem}
    There exist Presburger groups with the following Scott sentence complexities:

    \begin{itemize}
    \item $\Pi_\alpha$ for all countable ordinals $\alpha>1$;
    \item $d$-$\Sigma_\alpha$ for all countable successor ordinals $\alpha>1$;
    \item $\Sigma_\alpha$ for all successor ordinals $\alpha>4$. 
    
\end{itemize}
Furthermore, there is no Presburger group with Scott sentence complexity $\Sigma_3$.
\end{theorem}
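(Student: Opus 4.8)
The plan is to prove the theorem in two phases: first realize the listed complexities via explicit constructions, then separately prove the negative result that $\Sigma_3$ is impossible.

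For the \emph{positive} results I would lean heavily on the promised transfer from linear orders to Presburger groups. The authors of \cite{david-dino} and \cite{tubo-david-mathew} have already determined which Scott sentence complexities are realized by linear orders, so the natural strategy is to show that the assignment $\LL \mapsto P_\LL = V_\LL \times \Z$ preserves (or predictably shifts) Scott sentence complexity. Concretely, I would prove a lemma of the form $\SC(P_\LL) = \Gamma$ whenever $\SC(\LL) = \Gamma'$ for an appropriate correspondence $\Gamma' \leftrightarrow \Gamma$. The intuition is that $V_\LL$ is a divisible ordered abelian group whose Archimedean rank is exactly $\LL$ (by the remark following Definition \ref{V_L}), and divisibility means the theory of $V_\LL$ has quantifier elimination, so the only ``new'' structural content in $V_\LL$ beyond a $\Q$-vector space is the ordering of Archimedean classes — i.e.\ $\LL$ itself. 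The extra factor of $\Z$ contributes the discrete/residue data, which is absolute and should cost only a bounded number of quantifier alternations. So I would expect $\SC(P_\LL)$ to equal $\SC(\LL)$ up to a fixed finite additive shift, and then I would read off the three bulleted families by plugging in the linear orders known to realize each complexity in \cite{david-dino,tubo-david-mathew}. The shift in the starting indices ($\alpha>1$ for $\Pi_\alpha$, $\alpha>4$ for $\Sigma_\alpha$) strongly suggests this is exactly the mechanism, with the offset absorbing the cost of describing the group structure and residue sequences.

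The harder and more interesting half is the \emph{negative} result: no Presburger group has Scott sentence complexity $\Sigma_3$. Here the $P_\LL$ construction alone is insufficient (as the introduction warns), so I would argue about arbitrary Presburger groups. The plan is to show that whenever a Presburger group $P$ has a $\Sigma_3^{in}$ Scott sentence, it must in fact already have a simpler Scott sentence — either $\Pi_2^{in}$, $\Sigma_2^{in}$, $d\text{-}\Sigma_2^{in}$, or $\Pi_3^{in}$ — so that $\Sigma_3$ is never the \emph{least} complexity. By Theorem \ref{robust pSR}, a $\Sigma_3^{in}$ (indeed $\Sigma_{\alpha+2}^{in}$ with $\alpha=1$) Scott sentence forces $\pSR(P) \le 1$, meaning $P$ becomes $\Delta^0_1$-categorical on a cone once we add finitely many parameters. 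I would analyze what adding finitely many parameters does to a Presburger group: fixing finitely many elements fixes their residue sequences and their Archimedean classes, which should collapse the automorphism structure dramatically. The key step is a structure theorem showing that a Presburger group with $\pSR \le 1$ must be of a very restricted form (plausibly built from finitely many Archimedean classes over a plain or recursively saturated skeleton), and then a direct computation of the Scott sentence complexity of all such groups, verifying that none of them lands exactly on $\Sigma_3$.

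The main obstacle I anticipate is precisely this structural dichotomy in the negative direction: controlling \emph{all} Presburger groups of low parameterized Scott rank, not merely those of the form $P_\LL$. The $\Sigma_3$ case sits at the boundary where the ordering data ($\LL$), the divisible-group data ($V$), and the residue/profinite data ($\widehat{\Z}$) all interact, and one must rule out a pathological group that exploits these three layers simultaneously to achieve $\Sigma_3$ yet avoid every lower complexity. I expect the argument to require the non-plain analysis of Section \ref{non-plain section} — in particular understanding the recursively saturated models, which Proposition \ref{recursively saturated} pins at Scott rank $2$ — since these are exactly the groups where the residue sequences range over all of $\widehat{\Z}$ and cannot be reduced to integer residues. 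Making the case analysis exhaustive, and showing that each surviving case has complexity strictly below $\Sigma_3$ or incomparable to it (namely $\Pi_3$ or $d\text{-}\Sigma_2$), will be the crux; the A.\ Miller result cited above ($\Sigma_\alpha$ and $\Pi_\alpha$ Scott sentences together yield $d\text{-}\Sigma_\beta$ for some $\beta<\alpha$) will likely be the tool that forces any borderline group down from $\Sigma_3$.
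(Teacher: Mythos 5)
Your overall architecture matches the paper's: transfer complexities from linear orders via $\LL\mapsto P_\LL$ for the positive results, and show that a $\Sigma_3^{in}$ Scott sentence forces a simpler one for the negative result. But both halves have a concrete gap. On the positive side, the blanket lemma you propose --- $\SC(P_\LL)=\Gamma_{1+\alpha}$ whenever $\SC(\LL)=\Gamma_\alpha$ --- is not proved in the paper; it is explicitly left open (Question \ref{Q SRPL vs SRL}). What is actually available is only the upper bound $\SC(P_\LL)\leq\Gamma_{1+\alpha}$ (Theorem \ref{SCL < SCPL}), and the matching lower bounds must be established case by case. The engine for those lower bounds is the back-and-forth equivalence $\A\leq_\alpha\B\iff P_\A\leq_{1+\alpha}P_\B$ (Corollary \ref{Ls iff 1+ Ps}), applied to specific witnesses from \cite{david-dino,tubo-david-mathew} such as $\Z^{n+1}\leq_{2n+2}\Z^n\cdot\Q$: one needs a non-isomorphic $\LL'$ with $\LL\leq_\alpha\LL'$ to rule out a $\Pi_\alpha^{in}$ Scott sentence for $P_\LL$, and this is the technical heart your sketch omits. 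Moreover, the cases $\Pi_\lambda$ and $\Sigma_{\lambda+1}$ for limit $\lambda$ cannot be reached by Scott-rank bounds at all (those only give $\Pi_{\lambda+1}$); the paper needs a separate syntactic translation of a $\Pi_\lambda^{in}$ Scott sentence for $\LL$ into one for $P_\LL$ (Proposition \ref{limit scott sentences transfer}), together with $\Pi_3^{in}$ sentences pinning down the isomorphism type $V_\K\times\Z$.

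On the negative side, your reduction to $\pSR(P)\leq1$, i.e.\ $\Delta^0_1$-categoricity on a cone, is exactly right, but the ``structure theorem'' you guess at is not the correct invariant. The decisive fact is the Goncharov--Lempp--Solomon characterization: an ordered abelian group is computably categorical on a cone if and only if it has a \emph{finite basis} (finite dimension), so a $\Sigma_3^{in}$ Scott sentence is equivalent to finite dimension (Corollary \ref{cor: sigma_3 iff finite basis}). ``Finitely many Archimedean classes'' does not capture this --- an infinite-dimensional Presburger group can have very few Archimedean classes, since linearly independent elements can be Archimedean equivalent --- and the recursively saturated models play no role here (they have infinite dimension and hence no $\Sigma_3^{in}$ Scott sentence at all). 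The remaining work is then a direct argument that finite dimension yields uniform $\boldsymbol{\Delta}^0_2$-categoricity: an isomorphism is determined by sending a basis to any tuple with the same residue sequences, the same cut data, and linear independence, all of which is $\Pi_1^{in}$-definable and locatable from one jump of the diagrams. That gives a $\Pi_3^{in}$ Scott sentence, and then, as you correctly anticipate, A.\ Miller's result pushes the complexity down to $d$-$\Sigma_2$ at worst, so $\Sigma_3$ is never least.
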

This theorem follows directly from Propositions  \ref{pi_n sc}, \ref{sigma_n sc},  \ref{d-sigma_n sc}, \ref{sc alpha+2}, \ref{d sigma l + 1}, \ref{pi l + 1}, \ref{Sigma Lambda + 1} and \ref{Pi l} as well as Theorem \ref{thm: no sigma3} below. All of these Scott sentence complexities are achievable for linear orders, as shown in \cite{tubo-david-mathew} and  \cite{david-dino}. The authors of \cite{david-dino} also gave a linear order with complexity $\Sigma_4$ and showed that $\Sigma_3$ is not achievable for linear orders. We leave open for now whether there exist Presburger groups with Scott sentence complexity $\Sigma_4$ (Question \ref{Q Sigma2/3}). Other complexities not mentioned above are not possible for infinite structures in general.

As a consequence of Theorem \ref{main theorem}, we get that Presburger arithmetic has complete Scott spectrum. That is:

\begin{corollary}
\label{complete scott spectrum}
Given a countable ordinal $\alpha>0$, there exists a Presburger group with Scott rank $\alpha$. 
\end{corollary}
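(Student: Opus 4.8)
The goal is to deduce Corollary \ref{complete scott spectrum} from Theorem \ref{main theorem}, so my plan is to translate Scott sentence complexities into Scott ranks and then check that the complexities guaranteed by Theorem \ref{main theorem} cover every countable ordinal $\alpha>0$. The key bridge is the relationship between the two notions: if $\A$ has Scott sentence complexity $\Pi_{\alpha+1}$, then $\A$ has a $\Pi^{in}_{\alpha+1}$ Scott sentence and no simpler one, so by Theorem \ref{robuster scott rank} its Scott rank is exactly $\alpha$ (the least $\beta$ with a $\Pi^{in}_{\beta+1}$ Scott sentence). More generally I will record the standard correspondence: complexity $\Sigma_\gamma$, $\Pi_\gamma$, or $d\text{-}\Sigma_\gamma$ all yield a $\Pi^{in}_{\gamma+1}$ Scott sentence, and one must argue that nothing of the form $\Pi^{in}_{\gamma}$ works, pinning the Scott rank down to the appropriate value.

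The concrete strategy is to handle the possible values of $\alpha$ by cases according to the list in Theorem \ref{main theorem}. For Scott rank $\alpha=1$ I invoke the existence of a structure (e.g.\ the standard model, or any Presburger group whose complexity sits at the bottom) — but since $\Pi_2$ is available for all countable $\alpha>1$ and $\Pi_2$ corresponds to Scott rank $1$, the rank-$1$ case is covered by the $\Pi_2$ entry. For every countable ordinal $\alpha>1$, Theorem \ref{main theorem} supplies a Presburger group of complexity $\Pi_{\alpha+1}$, and this group has Scott rank exactly $\alpha$ by the argument above. Thus the $\Pi_{\beta}$ family alone, as $\beta$ ranges over all countable ordinals greater than $1$, realizes every Scott rank $\alpha\geq 1$ (taking $\beta=2$ for $\alpha=1$ and $\beta=\alpha+1$ for $\alpha>1$). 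This means I do not even need the $\Sigma$ or $d\text{-}\Sigma$ entries for completeness; the $\Pi_\alpha$ line of Theorem \ref{main theorem} suffices.

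I would write the proof as follows. Fix a countable ordinal $\alpha>0$. If $\alpha=1$, take a Presburger group with Scott sentence complexity $\Pi_2$, which exists by Theorem \ref{main theorem}; having a $\Pi^{in}_2$ Scott sentence and no simpler one, it has Scott rank $1$ by Theorem \ref{robuster scott rank}. If $\alpha>1$, take a Presburger group $P$ with Scott sentence complexity $\Pi_{\alpha+1}$, which again exists by Theorem \ref{main theorem} since $\alpha+1>1$. Then $P$ has a $\Pi^{in}_{\alpha+1}$ Scott sentence, so $\SR(P)\leq\alpha$; and $P$ has no $\Pi^{in}_\alpha$ Scott sentence (else its complexity would be at most $\Pi_\alpha$, contradicting $\SC(P)=\Pi_{\alpha+1}$), so by the characterization of Scott rank via $\Pi^{in}_{\beta+1}$ Scott sentences we get $\SR(P)=\alpha$.

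The main obstacle, and the only genuinely delicate point, is verifying the exact correspondence between Scott sentence complexity and Scott rank — specifically that complexity strictly $\Pi_{\alpha+1}$ forces the Scott rank to be exactly $\alpha$ rather than something smaller. The upper bound $\SR(P)\le\alpha$ is immediate from clause (2) of Theorem \ref{robuster scott rank}. The lower bound requires knowing that $\SR(P)<\alpha$ would produce a Scott sentence of complexity simpler than $\Pi_{\alpha+1}$, contradicting minimality of the Scott sentence complexity; I would justify this using Theorem \ref{robuster scott rank} together with the fact, noted in the excerpt (via A.\ Miller's result), that the Scott sentence complexities are linearly ordered with $\Sigma_\gamma,\Pi_\gamma<d\text{-}\Sigma_\gamma<\Sigma_{\gamma+1},\Pi_{\gamma+1}$, so that $\Pi_{\alpha+1}$ being the \emph{least} complexity precludes any $\Pi^{in}_\alpha$ Scott sentence. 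Everything else is bookkeeping over the ordinal cases.
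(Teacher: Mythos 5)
Your proposal is correct and follows essentially the same route as the paper: both deduce the corollary from the $\Pi_{\alpha+1}$ entry of Theorem \ref{main theorem} together with the characterization of Scott rank in Theorem \ref{robuster scott rank}. Your write-up is just more explicit about the lower bound (that minimality of the complexity $\Pi_{\alpha+1}$ rules out $\SR<\alpha$), which the paper leaves implicit.
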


\begin{proof}
    This follows from the fact there exists a Presburger group with Scott sentence complexity $\Pi_{\alpha+1}$ for all $\alpha$ and from Theorem \ref{robuster scott rank}. 
\end{proof}
This distinguishes Presburger arithmetic from Peano arithmetic, as it was shown in \cite{scott-spec-pa} that no model of Peano arithmetic can have a finite Scott rank other than $1$. It was further shown in \cite{scott-analysis-PA} that $\Pi_\omega$, $\Sigma_{\omega+1}$ and $\Sigma_{\omega+2}$ are not achievable Scott sentence complexities for models of Peano arithmetic and that $\Pi_{\omega+2}$ is not achievable for true arithmetic, which further distinguishes Presburger arithmetic from these theories. 

We also get an analogous result to Corollary \ref{complete scott spectrum} for parameterized Scott rank.

\begin{corollary}
    Given a countable ordinal $\alpha>0$, there exists a Presburger group with parameterized Scott rank $\alpha$. 
\end{corollary}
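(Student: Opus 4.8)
The plan is to reduce this statement about parameterized Scott rank to the already-established result for ordinary Scott rank, namely Corollary \ref{complete scott spectrum}, by combining it with Theorem \ref{robust pSR}. The key observation is that $\pSR$ and $\SR$ are tightly linked: adding parameters can only make a structure simpler to describe, so $\pSR(\A) \leq \SR(\A)$ always holds, and in fact for many structures we can control exactly how much (if any) the complexity drops once we are allowed to name a finite tuple.

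First I would recall that by Corollary \ref{complete scott spectrum} we already have, for each countable $\alpha > 0$, a Presburger group $P$ with $\SR(P) = \alpha$, realized via a $\Pi^{in}_{\alpha+1}$ Scott sentence. The natural first attempt is simply to ask whether $\pSR(P) = \alpha$ as well. This need not be the case in general, because naming a parameter could collapse the parameterized rank below $\alpha$. So the cleaner route is to build the witness directly: I would aim to produce, for each $\alpha$, a Presburger group $Q$ whose \emph{parameterized} Scott rank is exactly $\alpha$, by arranging that no choice of parameters helps—i.e.\ that $\SR(Q,\vec a) \geq \alpha$ for every tuple $\vec a$—while also guaranteeing $\SR(Q,\vec a) = \alpha$ for at least one tuple (in fact for the empty tuple, if $\SR(Q) = \alpha$ and the rank is parameter-stable). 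For the groups of the form $P_\LL$ the homogeneity coming from the vector-space factor $V_\LL$ should make the Scott rank insensitive to naming finitely many parameters, since any finite tuple lies in a finite-dimensional $\Q$-subspace whose automorphism orbit is already controlled.

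The cleanest way to package this is via Theorem \ref{robust pSR}: it tells us that $\pSR(\A) \leq \alpha$ if and only if $\A$ has a $\Sigma^{in}_{\alpha+2}$ Scott sentence. So to force $\pSR(Q) = \alpha$ I would show two things. For the upper bound $\pSR(Q) \leq \alpha$, I exhibit a $\Sigma^{in}_{\alpha+2}$ Scott sentence—this is immediate from a $\Pi^{in}_{\alpha+1}$ Scott sentence, since any $\Pi^{in}_{\alpha+1}$ sentence is in particular $\Sigma^{in}_{\alpha+2}$. For the lower bound $\pSR(Q) \geq \alpha$, I would show $Q$ has no $\Sigma^{in}_{\alpha+1}$ Scott sentence after naming any finite tuple, equivalently that $\SR(Q,\vec a) \geq \alpha$ for all $\vec a$; this is where the structural argument lives, and it should follow by a back-and-forth / non-definability argument analogous to the one used to prove the $\Pi_{\alpha+1}$ cases in Theorem \ref{main theorem}, transported across the correspondence $P_\LL \leftrightarrow \LL$.

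The main obstacle is precisely this lower bound: ruling out that any finite set of parameters lowers the Scott rank. The upper bound is essentially free, but establishing parameter-stability of the rank requires understanding the automorphism orbits of tuples in $Q$ and confirming that naming finitely many elements does not suddenly make high-rank orbits $\Sigma^{in}_\beta$-definable for some $\beta < \alpha$. I would expect to handle this by exploiting that the relevant $P_\LL$ inherit a rigidity/homogeneity dichotomy from $\LL$: a finite tuple only pins down finitely much local data, leaving the "tail" of the structure—carrying the ordinal $\alpha$ worth of complexity—untouched. If the direct $P_\LL$ construction does not cleanly give parameter-stability for every $\alpha$, the fallback is to pass to a modified construction (possibly a non-plain group as in Section \ref{non-plain section}) engineered so that every finite tuple is permutable, thereby forcing $\SR(Q,\vec a)$ to agree with $\SR(Q)$ for all $\vec a$.
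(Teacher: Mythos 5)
Your framework is the right one---Theorem \ref{robust pSR} combined with the existence of Presburger groups having $\Pi^{in}_{\alpha+1}$ Scott sentences---and your upper bound $\pSR(Q)\leq\alpha$ is exactly the paper's: a $\Pi^{in}_{\alpha+1}$ Scott sentence is in particular $\Sigma^{in}_{\alpha+2}$. The gap is that you leave the lower bound, which you correctly identify as the substantive half, as an unexecuted sketch: you propose a back-and-forth analysis of automorphism orbits of tuples in $P_\LL$ to show that no finite tuple of parameters collapses the Scott rank, and you hedge with a fallback to non-plain groups if that analysis fails. As written, nothing is actually proved about $\SR(Q,\vec a)$ for nonempty $\vec a$, so the proposal does not yet establish $\pSR(Q)\geq\alpha$.

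What you are missing is that no new structural argument is needed. Theorem \ref{main theorem} gives a Presburger group $\A$ whose Scott sentence complexity is \emph{exactly} $\Pi_{\alpha+1}$, and that minimality already encodes the lower bound. Indeed, if $\A$ had a $\Sigma^{in}_{\beta+2}$ Scott sentence for some $\beta<\alpha$, then since $\beta+2\leq\alpha+1$ it would have a $\Sigma^{in}_{\alpha+1}$ Scott sentence alongside its $\Pi^{in}_{\alpha+1}$ one, and A.\ Miller's theorem would then produce a $d$-$\Sigma^{in}_\gamma$ Scott sentence for some $\gamma<\alpha+1$, contradicting $\SC(\A)=\Pi_{\alpha+1}$. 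Hence the least $\beta$ such that $\A$ has a $\Sigma^{in}_\beta$ Scott sentence is $\alpha+2$, and Theorem \ref{robust pSR} immediately yields $\pSR(\A)=\alpha$. This is the paper's two-line proof; the parameter-stability of automorphism orbits that you were planning to establish by hand is subsumed by the statement that the Scott sentence complexity is exactly $\Pi_{\alpha+1}$.
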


\begin{proof}
    If $\SC(\A)=\Pi_{\alpha +1}$ then the least $\beta$ such that $\A$ has a $\Sigma_{\beta}^{in}$ Scott sentence is $\beta=\alpha +2$. Thus, the result follows from Theorems \ref{main theorem} and \ref{robust pSR}. 
\end{proof} 

Almost all of the Presburger groups we use to achieve these complexities are of the form $P_\LL$ where $\LL$ is an infinite linear order (see Definition \ref{P_l def}). As shown by the following lemmas, the Scott rank of such a $P_\LL$ can only differ from $\LL$ by at most $1$. 

\begin{lemma}
\label{less than 1 +}
Let $\LL$ be a countably infinite linear order. We have $\SR(P_\LL)\leq 1+\SR(\LL)$.
\end{lemma}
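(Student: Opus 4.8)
The plan is to use Theorem \ref{robuster scott rank}: to bound $\SR(P_\LL) \leq 1 + \SR(\LL)$, it suffices to show that every automorphism orbit in $P_\LL = V_\LL \times \Z$ is $\Sigma^{in}_{1+\SR(\LL)}$-definable, given that every automorphism orbit in $\LL$ is $\Sigma^{in}_{\SR(\LL)}$-definable. The key idea is that an element $(v,z) \in V_\LL \times \Z$ is determined, up to the automorphism group of $P_\LL$, by data that is controlled by the order $\LL$ together with some purely arithmetic (rational/integer) data that is definable at a low level. First I would analyze what the automorphisms of $P_\LL$ actually are: since $V_\LL = \bigoplus_{l \in \LL} \Q$ with the lexicographic/$\AR$-ordering and the $\Z$ factor carries the residue (discrete) structure, an automorphism should be built from an automorphism of the index order $\LL$ together with scaling/shearing data on the $\Q$-coordinates that respects the Archimedean rank. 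So the orbit of an element is essentially governed by where its ``support pattern'' in $\LL$ sits, up to $\mathrm{Aut}(\LL)$.

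Next I would make precise how to express the orbit of a tuple $\bar a = ((v_1,z_1),\dots,(v_k,z_k))$ by an infinitary formula. The recipe is: an element's behavior decomposes into (i) its integer/residue component, which is captured by quantifier-free or low-level ($\Sigma^{in}_1$ or $\Sigma^{in}_2$) formulas since residues mod $n$ are definable and the $\Z$-part is discretely ordered, and (ii) the Archimedean-rank data, i.e.\ for each pair of coordinates the comparisons $\ll$, $\sim$, and the ratios within an Archimedean class, together with \emph{which} element of $\LL$ each relevant Archimedean class corresponds to. The comparisons $\sim$ and $\ll$ are expressible by $\Sigma^{in}$/$\Pi^{in}$ formulas of bounded (finite) level using Definition \ref{arch equiv}. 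The genuinely $\LL$-dependent part is pinning down the position in $\AR(P_\LL) = \LL$, and here I would invoke the hypothesis that orbits in $\LL$ are $\Sigma^{in}_{\SR(\LL)}$-definable: I translate an orbit-defining formula for the relevant points of $\LL$ into a formula about $P_\LL$ by relativizing quantifiers over $\LL$ to quantifiers over Archimedean classes of $P_\LL$. Each such quantifier over $\LL$ becomes an existential/universal quantifier over group elements guarded by a fixed finite-level formula describing ``being a representative of an Archimedean class,'' which raises the complexity by only a bounded finite amount — this is where the ``$1+$'' absorbs the overhead, since $1 + \SR(\LL)$ dominates $\SR(\LL) + (\text{finite})$ once $\SR(\LL)$ is infinite, and the finite cases can be checked directly.

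The main obstacle I anticipate is the careful interface between the group structure and the order structure: I must verify that the map sending an element to its Archimedean-class data is sufficiently definable and that the translation of an $\LL$-formula into a $P_\LL$-formula both (a) correctly captures the orbit (soundness: the formula holds exactly on the orbit, which requires knowing the full automorphism group of $P_\LL$, including the rational scalings) and (b) does not blow up the quantifier complexity beyond a finite offset. Concretely, I expect the technical heart to be showing that ``$x$ and $y$ lie in the same Archimedean class and $x = q\cdot y$ for the appropriate $q \in \Q$'' and ``$x$ represents the $l$-th class'' are uniformly definable at a fixed finite level, so that substituting these into the $\Sigma^{in}_{\SR(\LL)}$ orbit formula for $\LL$ yields a $\Sigma^{in}_{1+\SR(\LL)}$ formula for the orbit in $P_\LL$. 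Once the bounded-overhead translation and the soundness of the orbit description are established, the conclusion $\SR(P_\LL) \leq 1 + \SR(\LL)$ follows from clause (1) $\Rightarrow$ (2) of Theorem \ref{robuster scott rank}.
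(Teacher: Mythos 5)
Your route is genuinely different from the paper's. The paper proves this lemma through clause (3) of Theorem \ref{robuster scott rank} (uniform $\boldsymbol{\Delta}^0_\alpha$-categoricity): given two copies $\A,\B\cong P_\LL$, it extracts linear orders $\tilde\A,\tilde\B\cong\LL$ using one jump of the diagrams (choosing, for each Archimedean class, the least positive representative with residue sequence $\rho(0)$), invokes the uniform categoricity of $\LL$ at level $\SR(\LL)$ to match up these representatives, and then extends $\Q$-linearly to an isomorphism of the groups; the single extra jump spent on the extraction is exactly the ``$1+$''. You instead go through clause (1), proposing to write explicit $\Sigma^{in}_{1+\SR(\LL)}$ definitions of the automorphism orbits of $P_\LL$ by splitting orbit data into low-level arithmetic information and an $\LL$-orbit formula translated via relativization of quantifiers to Archimedean classes. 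This is viable: the translation machinery you describe (atomic order formulas become the $\Delta^{in}_2$ formulas $x<^*y$ and $x=^*y$, quantifiers get guarded by finite-level formulas) is essentially the lemma the paper proves later for Proposition \ref{limit scott sentences transfer}, and the soundness of the orbit description rests on determining $\operatorname{Aut}(P_\LL)$, for which the paper's Lemma \ref{lemma: automorphism} supplies the key step. The trade-off is that the paper's argument is shorter and never needs the full automorphism group or explicit formulas, while yours, if completed, yields concrete orbit definitions but requires substantially more verification.

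One point in your complexity accounting is wrong as stated and would sink the infinite case if taken literally: you write that ``$1+\SR(\LL)$ dominates $\SR(\LL)+(\text{finite})$ once $\SR(\LL)$ is infinite.'' This is backwards; for infinite $\alpha$ one has $1+\alpha=\alpha<\alpha+1$, so a bound of the form $\SR(\LL)+k$ is \emph{not} absorbed by $1+\SR(\LL)$. The bookkeeping only works because the overhead sits at the \emph{innermost} level of the formula: substituting $\Delta^{in}_2$ formulas for the atomic subformulas and adding finite-level guards at the quantifiers turns a $\Sigma^{in}_\alpha$ formula into a $\Sigma^{in}_{1+\alpha}$ one, not a $\Sigma^{in}_{\alpha+k}$ one. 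You do state the correct version in your final paragraph, so the plan survives, but the argument must be phrased as an induction pushing the overhead to the atoms rather than as an additive bound applied at the top.
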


\begin{proof}
  By Theorem \ref{robuster scott rank}, we need only show that $P_\LL$ is uniformly $\boldsymbol{\Delta}^0_{1+\SR(\LL)}$ categorical. Take any $\A,\B\cong P_\LL$ and let $1+\alpha= \SR(\LL)$.  We can build a linear order $\Tilde{\A}\cong \LL$ as follows. First define $f:\N\to \A$ so that $f(n)=a_m$ where $m$ is the least such that $\rho(a_m)=\rho(0)$ (where $\rho(a_m)$ is the residue sequence of $a_m$), $a_m>0_\A$, and $a_m$ is not Archimedean equivalent to  $1_\A$ or any $f(l)$ with $l<n$. Define $<_f$ such that $n<_f m$ if $f(n)<f(m)$ and define $\Tilde{\A}=(\N,<_f)$. Define $g:\N\to\B$ and $\Tilde{\B}\cong \LL$ analogously. Note, both $\Tilde{\A}$ and $\Tilde{\B}$ are computable from $(\D(\A)\oplus \D(\B))'$ as it only takes one jump to decide Archimedean equivalence. By definition of Scott rank, there is some set $S$ such that we can (uniformly) compute an isomorphism $h:\Tilde{\A} \to \Tilde{\B}$ from $((\D(\A)\oplus \D(\B)\oplus S)')^{(\alpha)}= (\D(\A)\oplus \D(\B)\oplus S)^{(1+\alpha)}$. By mapping $0_\A\mapsto 0_B$, $1_\A \mapsto 1_\B$ and $f(n)\mapsto g(h(n)) $ we can generate an isomorphism from $\A$ to $\B$. This isomorphism was (uniformly) computable from $(\D(\A)\oplus \D(\B)\oplus S)^{(1+\alpha)}$, and thus $\SR(P_\LL)\leq 1+1+\alpha = 1+\SR(\LL)$. 
  \end{proof}

  \begin{lemma}
  \label{srl<srpl}
      Let $\LL$ be a countably infinite linear order. We have $\SR(\LL)\leq \SR(P_\LL)$.
  \end{lemma}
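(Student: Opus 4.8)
The plan is to establish the reverse inequality $\SR(\LL) \leq \SR(P_\LL)$ by showing that any infinitary description of the group $P_\LL$ can be ``pulled back'' to a description of the linear order $\LL$ of no greater complexity. The cleanest route is to use the uniform $\boldsymbol{\Delta}^0_\alpha$-categoricity characterization from Theorem \ref{robuster scott rank}: I would assume $\SR(P_\LL) = \beta$ and argue that $\LL$ is uniformly $\boldsymbol{\Delta}^0_\beta$-categorical, whence $\SR(\LL) \leq \beta$. The key observation is that the Archimedean rank of $P_\LL = V_\LL \times \Z$ recovers $\LL$: the nonzero Archimedean classes of $V_\LL$ are in order-isomorphic correspondence with the points of $\LL$, and the copy of $\Z$ sitting at the bottom contributes only the single smallest class $[1]$. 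So I would first verify carefully that $\AR(P_\LL) \cong \LL$ as linear orders (with $[1]$ removed, or absorbed, depending on how the conventions shake out).

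The main steps, in order, are as follows. First, given two copies $\A, \B \cong \LL$, I need to manufacture copies of $P_\LL$ from them in a uniform, effective way — essentially reversing the construction in Definition \ref{V_L} and Definition \ref{P_l def}, building $V_\A \times \Z$ and $V_\B \times \Z$ whose atomic diagrams are computable from $\A$ and $\B$ respectively. Second, I would invoke the uniform $\boldsymbol{\Delta}^0_\beta$-categoricity of $P_\LL$ to obtain, from an oracle of the form $(\D(V_\A \times \Z) \oplus \D(V_\B \times \Z) \oplus S)^{(\gamma)}$ (with $1+\gamma = \beta$), a uniformly computed isomorphism $H : V_\A \times \Z \to V_\B \times \Z$. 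Third, and this is the crux, I would extract from $H$ an isomorphism $h : \A \to \B$ of the original linear orders. The idea is that $H$ must respect Archimedean equivalence and the ordering of Archimedean classes (these are preserved by any group isomorphism), so $H$ induces a well-defined order isomorphism on $\AR(V_\A \times \Z) \cong \A$ and $\AR(V_\B \times \Z) \cong \B$. Since passing from $\A$ to $\D(V_\A \times \Z)$ costs no jumps and the induced map on Archimedean classes is computable from $H$ together with the diagrams, the whole pipeline shows $\LL$ is uniformly $\boldsymbol{\Delta}^0_\beta$-categorical.

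The hard part will be the bookkeeping in the third step: verifying that the induced map on Archimedean classes is genuinely computable at the right level and that it is a bijection on classes (i.e.\ that $H$ sends a point realizing class $[x]$ in $\A$ to a point realizing the correctly-corresponding class in $\B$, with no collapsing or splitting of classes). One must check that choosing a representative from each Archimedean class — say the least index element with nonzero support at a given coordinate — can be done uniformly relative to the oracle already in hand, and that Archimedean comparison $x \ll y$ is decidable at the appropriate jump level so that the order relation on classes transfers. I would handle the $\Z$-summand carefully, since its class $[1]$ is the unique least class and must be matched consistently by $H$ (it is the class of $1_\A$ and $1_\B$, which $H$ fixes); this class corresponds to no point of $\LL$, so it should simply be discarded when reading off $h$. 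A subtle point to confirm is that the degree-theoretic overhead of reconstructing $P_\LL$ from $\LL$ and then reading the Archimedean rank back off is absorbed into the single jump count $\gamma$ with $1+\gamma = \beta$, matching the $1+(\cdot)$ shift seen in Lemma \ref{less than 1 +}; together the two lemmas will pin $\SR(P_\LL)$ to within $1$ of $\SR(\LL)$.
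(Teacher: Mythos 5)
Your proposal is correct and follows essentially the same route as the paper's proof: uniformly build copies $\hat\A,\hat\B\cong P_\LL$ from copies $\A,\B\cong\LL$, apply the uniform $\boldsymbol{\Delta}^0_{\SR(P_\LL)}$-categoricity of $P_\LL$ to get an isomorphism $h:\hat\A\to\hat\B$, and read off an isomorphism $\A\to\B$ via the Archimedean-class structure (the paper phrases this as $g\iv\circ h\circ f$ for the canonical injections $f,g$ of $\LL$ into its group copies). The bookkeeping you flag as the hard part is handled in the paper simply by keeping track of the explicit embeddings, since in the constructed copies every element carries an explicit finite-support representation from which its Archimedean class is computable.
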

  
\begin{proof}
    We show that $\LL$ is uniformly $\boldsymbol{\Delta}^0_{\SR(P_\LL)}$-categorical. Let $\A,\B\cong \LL$ and let $1+\alpha=\SR(P_\LL)$. Using $\D(\A)$ and $\D(\B)$,  we can uniformly compute structures $\hat{\A},\hat \B\cong P_\LL$ along with injections $f:\A\to \hat\A$ and $g:\B\to \hat \B $ such that $f(x)\ll f(y)$ if and only if $x<y$ (and likewise for $g$). By definition of Scott rank, there is some $S\subseteq \N$ such that we can uniformly compute an isomorphism $h:\hat \A \to \hat \B$ from $(\D(\hat \A) \oplus \D(\hat \B)\oplus  S)^{(\alpha)} $. Thus, we get that $g\iv \circ h \circ f$ is an isomorphism from $\A\to \B$ and this isomorphism was (uniformly) computable from $(\D(\A)\oplus \D(\B)\oplus S)^\alpha$. Hence, $\SR(\LL)\leq 1+\alpha =\SR(P_\LL)$. 
\end{proof}

We also get an analogous result for parameterized Scott rank.

\begin{lemma}
\label{psr PL<1+ psr L}
If $\LL$ is a countable infinite linear order then $\pSR(P_\LL)\leq 1 + \pSR(\LL)$ and $\pSR(\LL)\leq \pSR(P_\LL)$.
\end{lemma}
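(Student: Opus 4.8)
The plan is to mirror the two preceding non-parameterized lemmas (Lemma \ref{less than 1 +} and Lemma \ref{srl<srpl}), transporting their uniform $\boldsymbol{\Delta}^0_\alpha$-categoricity constructions into the parameterized setting. The governing principle is that parameterized Scott rank is just ordinary Scott rank after naming a finite tuple, and both constructions used above are ``local'' enough to respect parameters. For the inequality $\pSR(P_\LL)\leq 1+\pSR(\LL)$, suppose $\pSR(\LL)=\beta$, witnessed by a tuple $\vec a\in \LL\lom$ with $\SR(\LL,\vec a)=\beta$. I would reuse the recipe of Lemma \ref{less than 1 +} to build, from copies $\A,\B\cong P_\LL$, the derived linear orders $\Tilde\A,\Tilde\B\cong \LL$ via the jump of the diagrams, but now additionally fix in each $P_\LL$-copy a finite tuple of positive residue-zero elements whose Archimedean classes correspond to $\vec a$. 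Naming this tuple in $P_\LL$ pins down the corresponding tuple in $\Tilde\A$, so that the definition of $\SR(\LL,\vec a)=\beta$ supplies a uniformly $\boldsymbol{\Delta}^0_\beta$-computable isomorphism of the derived orders respecting $\vec a$; pushing this back through the map $f(n)\mapsto g(h(n))$ exactly as before yields a $P_\LL$-isomorphism computed from one extra jump, giving $\SR(P_\LL,\vec p)\leq 1+\beta$ and hence $\pSR(P_\LL)\leq 1+\pSR(\LL)$.

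For the reverse inequality $\pSR(\LL)\leq \pSR(P_\LL)$, I would run the construction of Lemma \ref{srl<srpl} parameterized. Let $\pSR(P_\LL)=\gamma$, witnessed by a tuple $\vec p$ with $\SR(P_\LL,\vec p)=\gamma$. Given $\A,\B\cong \LL$, uniformly build $\hat\A,\hat\B\cong P_\LL$ with the $\ll$-respecting injections $f,g$ as in that lemma. The new wrinkle is that $\vec p$ may involve elements of $\hat\A$ not in the image of $f$ (elements with nonzero $V_\LL$-part in non-singleton Archimedean classes, nonzero $\Z$-part, or elements whose support touches several classes). I would name in $\A$ a finite tuple $\vec a$ large enough that the Archimedean classes appearing in $\vec p$ are all determined by $f(\vec a)$; then $\SR(P_\LL,\vec p)=\gamma$ gives a uniformly $\boldsymbol{\Delta}^0_\gamma$ isomorphism $h:\hat\A\to\hat\B$, and $g\iv\circ h\circ f$ is the desired $\LL$-isomorphism respecting $\vec a$, computed from $(\D(\A)\oplus\D(\B)\oplus S)^{(\delta)}$ where $1+\delta=\gamma$. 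This establishes $\SR(\LL,\vec a)\leq\gamma$ and therefore $\pSR(\LL)\leq\pSR(P_\LL)$.

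The main obstacle is the bookkeeping of \emph{which} tuple to name so that the witnessing parameter on one side is ``captured'' by the named tuple on the other side, uniformly in the copies. In the non-parameterized lemmas this issue does not arise because no parameters are present; here I must ensure that naming $\vec a$ in $\LL$ genuinely forces the whole relevant structure around $\vec p$ in $P_\LL$ (and vice versa) to be fixed, so that the ordinary Scott-rank characterization of Theorem \ref{robuster scott rank} applied to the \emph{expanded} structures $(P_\LL,\vec p)$ and $(\LL,\vec a)$ transfers cleanly. I expect this to be routine once one observes that an element of $P_\LL=V_\LL\times\Z$ is, up to the named Archimedean classes and the $\Z$-coordinate, an essentially rational-coefficient combination over finitely many classes, so a finite tuple in $\LL$ suffices to localize it. Aside from this matching argument, the computations are the same jump-counting bookkeeping as in Lemmas \ref{less than 1 +} and \ref{srl<srpl}, and I would present them only in the compressed form ``analogously to the proof of Lemma~\ref{less than 1 +}.''
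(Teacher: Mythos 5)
Your argument is essentially correct, but it routes through a different characterization of parameterized Scott rank than the paper does, and the difference is worth noting. The paper's proof is a one-line appeal to condition (3) of Theorem \ref{robust pSR}: it adapts the constructions of Lemmas \ref{less than 1 +} and \ref{srl<srpl} to show that $P_\LL$ is $\Delta^0_{1+\pSR(\LL)}$-categorical \emph{on a cone} and that $\LL$ is $\Delta^0_{\pSR(P_\LL)}$-categorical on a cone. Since categoricity on a cone quantifies over all copies of the bare structure relative to sufficiently powerful oracles, no parameters ever need to be named, and the ``main obstacle'' you identify --- matching the witnessing tuple on one side with a named tuple on the other --- simply does not arise. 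You instead work from the definition of $\pSR$ as $\min_{\vec a}\SR(\A,\vec a)$ and apply the uniform $\boldsymbol{\Delta}^0_\alpha$-categoricity clause of Theorem \ref{robuster scott rank} to the \emph{expanded} structures, which forces you to exhibit explicit witnesses: $\pi(\vec a)$ in $P_\LL$ for one direction, and (in the paper's notation) $\tau(\vec p)$ in $\LL$ for the other. Both choices are correct --- in the second direction the point you need, and correctly gesture at, is that any isomorphism carrying $\vec p^{\hat\A}$ to $\vec p^{\hat\B}$ must match up the Archimedean classes occurring in the supports, since the support decomposition of an element of $P_\LL$ is canonical --- so the parameter bookkeeping goes through. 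What your route buys is slightly more information (explicit tuples realizing the parameterized Scott ranks on each side); what the paper's route buys is brevity, since relativizing to a cone absorbs the parameters entirely. Either way the jump-counting is identical to the two preceding lemmas.
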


\begin{proof}
    The proofs of the previous two lemmas can be adapted to show that $P_\LL$ is  $\boldsymbol{\Delta}^0_{1+\pSR(\LL)}$-categorical and that $\LL$ is  $\boldsymbol{\Delta}^0_{\pSR(P_\LL)}$-categorical.
\end{proof}

Recall that if $\alpha$ is an infinite ordinal then $1+\alpha = \alpha$. This yields the following corollary: 

\begin{corollary}
\label{psr=psr}
    If $\SR(\LL)\geq \omega$, then $\SR(P_\LL)= \SR(\LL)$. Similarly, if $\pSR(\LL)\geq \omega$, then $\pSR(P_\LL)=\pSR(\LL)$. \qed
\end{corollary}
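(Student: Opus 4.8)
The plan is to obtain the equality by sandwiching, reading off one inequality in each direction from the lemmas already established and then collapsing the upper bound via the ordinal identity $1+\alpha=\alpha$ for infinite $\alpha$ noted just before the statement. No new construction is needed; everything reduces to combining Lemmas \ref{less than 1 +}, \ref{srl<srpl}, and \ref{psr PL<1+ psr L} with that arithmetic fact.

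For the Scott rank claim I would first record the lower bound. Lemma \ref{srl<srpl} gives $\SR(\LL)\leq \SR(P_\LL)$ unconditionally, so that direction requires no hypothesis on $\LL$. For the matching upper bound I would invoke Lemma \ref{less than 1 +}, which yields $\SR(P_\LL)\leq 1+\SR(\LL)$. Here is where the hypothesis $\SR(\LL)\geq\omega$ enters: since $\SR(\LL)$ is then an infinite ordinal, $1+\SR(\LL)=\SR(\LL)$, so the upper bound simplifies to $\SR(P_\LL)\leq \SR(\LL)$. Combining the two inequalities forces $\SR(P_\LL)=\SR(\LL)$.

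The parameterized case is entirely parallel, with both inequalities supplied in a single step by Lemma \ref{psr PL<1+ psr L}: that lemma states $\pSR(\LL)\leq \pSR(P_\LL)$ and $\pSR(P_\LL)\leq 1+\pSR(\LL)$. Assuming $\pSR(\LL)\geq\omega$, the same identity $1+\pSR(\LL)=\pSR(\LL)$ collapses the upper bound, and the two inequalities together give $\pSR(P_\LL)=\pSR(\LL)$.

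There is essentially no obstacle to overcome—the genuine work was carried out in the three preceding lemmas, where the categoricity transfers between $\LL$ and $P_\LL$ were established. The only point demanding any care is the legitimate use of $1+\alpha=\alpha$, which applies precisely because the hypothesis guarantees the relevant rank is infinite; for finite ranks the extra ``$1+$'' genuinely matters, which is exactly why the corollary is stated only for ranks at least $\omega$. Accordingly I expect the proof to be a two-sentence deduction, one sentence per claim, and would mark it with \qed directly after the statement as the excerpt does.
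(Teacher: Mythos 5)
Your proposal is correct and matches the paper's argument exactly: the corollary is stated with \qed because it follows immediately from Lemmas \ref{less than 1 +}, \ref{srl<srpl}, and \ref{psr PL<1+ psr L} together with the identity $1+\alpha=\alpha$ for infinite $\alpha$, which is precisely the sandwich you describe. Nothing further is needed.
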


We will also see by Theorem \ref{SCL < SCPL} that if $\SC(\LL)= \Gamma_{\alpha}$, then $\SC(P_\LL)\leq \Gamma_{1+\alpha}$ (for $\Gamma = \Pi, \Sigma$ or $d$-$\Sigma$). We conjecture that $\SC(P_\LL)=\Gamma_{1+\alpha}$ for all infinite linear orders $\LL$, though this remains open for now (Question \ref{Q SRPL vs SRL}). However, we are able to show that $\SC(P_\LL)=\Gamma_{1+\alpha}$ for enough linear orders $\LL$ to achieve all of the  Scott sentence complexities claimed in Theorem \ref{main theorem}.

\subsection{Back-and-Forth Relations}

In order to calculate Scott sentence complexities, we make use of \emph{back-and-forth relations}. For a comprehensive reference on these relations, see e.g.\ \cite{AK-book}.

\begin{definition}
    Let $\A$ and $\B$ be structures in the same language, let $\alpha$ be a countable ordinal and let $\vec a$ and $\vec b$ be tuples of the same length in $\A$ and $\B$ respectively. We say $(\A, \vec a)\leq_1(\B,\vec b)$ if every finitary $\Sigma_1$ formula true of $\vec b$ in $\B$ is also true of $\vec a$ in $\A$. Given $\alpha>1$, we say $(\A,\vec a)\leq_\alpha (\B, \vec b)$ if for all $1\leq \beta<\alpha$ and $\vec d \in \B\lom$, there exists some $\vec c \in \A\lom$ such that $(\B,\vec b,\vec d) \leq_\beta (\A,\vec a,\vec c) $. 
\end{definition}

\noindent We may also write $\A\leq_\alpha \B$ by viewing $\vec a$ and $\vec b$ as the empty tuple.  These back-and-forth relations are especially useful as a result of the following theorem of Karp:

\begin{theorem}[From \cite{Karp}]
\label{karp}
    Given an ordinal $\alpha\geq 1$, the following are equivalent: 
    \begin{enumerate}
        \item $(\A,\vec a)\leq_\alpha (\B,\vec b)$; and
        \item Every $\Pi^{in}_\alpha$ formula true of $\vec a$ in $\A$ is true of $\vec b$ in $\B$; and
        \item Every $\Sigma^{in}_\alpha$ formula true of $\vec b$ in $\B$ is true of $\vec a$ in $\A$. \qed
    \end{enumerate}
\end{theorem}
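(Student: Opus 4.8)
This is Karp's classical theorem, cited here from \cite{Karp}, and I would reconstruct the standard proof by transfinite induction on $\alpha$. First, the equivalence of (2) and (3) is immediate and needs no induction: since the negation of a $\Pi^{in}_\alpha$ formula is $\Sigma^{in}_\alpha$ and vice versa, there is a $\Pi^{in}_\alpha$ formula $\theta$ true of $\vec a$ in $\A$ but false of $\vec b$ in $\B$ exactly when the $\Sigma^{in}_\alpha$ formula $\neg\theta$ is true of $\vec b$ in $\B$ but false of $\vec a$ in $\A$. Thus (2) and (3) are contrapositives of one another, and the real content is the equivalence of (1) with (3), which I would prove by induction.

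For the base case $\alpha = 1$, the key observation is that a $\Sigma^{in}_1$ formula is a countable disjunction $\VV_i (\E \vec y)\psi_i$ of finitary $\Sigma_1$ formulas, since each $\psi_i$ must be $\Pi^{in}_{\beta_i}$ with $\beta_i < 1$, hence quantifier-free. A disjunction is true of a tuple exactly when one disjunct is, so the preservation of all finitary $\Sigma_1$ formulas from $\vec b$ to $\vec a$ (which is the definition of $\leq_1$) is equivalent to the preservation of all $\Sigma^{in}_1$ formulas; this gives (1) $\iff$ (3) at level $1$.

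For the inductive step with $\alpha > 1$, I would assume the theorem at every $\beta < \alpha$. For (1) $\Rightarrow$ (3), take a $\Sigma^{in}_\alpha$ formula $\phi = \VV_i (\E \vec y)\psi_i$ true of $\vec b$ in $\B$, and fix a disjunct and witness so that $\B \models \psi_i(\vec b, \vec d)$ with $\psi_i$ being $\Pi^{in}_{\beta_i}$, $\beta_i < \alpha$. Setting $\beta = \max(\beta_i,1)$, the definition of $\leq_\alpha$ supplies $\vec c \in \A\lom$ with $(\B, \vec b, \vec d) \leq_\beta (\A, \vec a, \vec c)$; applying the induction hypothesis (the (1) $\Rightarrow$ (2) direction at level $\beta$) together with the cumulativity $\Pi^{in}_{\beta_i} \subseteq \Pi^{in}_\beta$, the formula $\psi_i$ transfers to give $\A \models \psi_i(\vec a, \vec c)$, hence $\A \models \phi(\vec a)$.

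The harder direction is (3) $\Rightarrow$ (1), and this is where I expect the main obstacle: correctly assembling a single distinguishing formula of the right complexity while exploiting the countability of $\A$. Fix $1 \leq \beta < \alpha$ and $\vec d \in \B\lom$; I must produce $\vec c \in \A\lom$ with $(\B, \vec b, \vec d) \leq_\beta (\A, \vec a, \vec c)$. Suppose no such $\vec c$ exists. Then for each length-$|\vec d|$ tuple $\vec c$ from $\A$, the induction hypothesis at level $\beta$ yields a $\Pi^{in}_\beta$ formula $\theta_{\vec c}$ with $\B \models \theta_{\vec c}(\vec b, \vec d)$ but $\A \not\models \theta_{\vec c}(\vec a, \vec c)$. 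Since $\A$ is countable, the conjunction $\bigwedge_{\vec c}\theta_{\vec c}(\vec x, \vec z)$ is a countable conjunction of $\Pi^{in}_\beta$ formulas and hence $\Pi^{in}_\beta$, so $(\E \vec z)\bigwedge_{\vec c}\theta_{\vec c}(\vec x, \vec z)$ is $\Sigma^{in}_{\beta+1}$, and therefore $\Sigma^{in}_\alpha$ because $\beta + 1 \leq \alpha$. Witnessing $\vec z$ by $\vec d$ shows this formula is true of $\vec b$ in $\B$, so by (3) it is true of $\vec a$ in $\A$, yielding some $\vec c_0 \in \A$ with $\A \models \theta_{\vec c_0}(\vec a, \vec c_0)$, contradicting $\A \not\models \theta_{\vec c_0}(\vec a, \vec c_0)$. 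Hence a suitable $\vec c$ exists for every $\beta$ and $\vec d$, which is precisely $(\A, \vec a) \leq_\alpha (\B, \vec b)$. Throughout, the steps requiring the most care are keeping straight the reversal of roles between $\A$ and $\B$ built into the definition of $\leq_\alpha$, and verifying that the synthesized formula lands in $\Sigma^{in}_\alpha$ rather than one level too high.
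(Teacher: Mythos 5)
Your proof is correct: the contrapositive argument for (2)$\Leftrightarrow$(3), the identification of $\Sigma^{in}_1$ formulas with countable disjunctions of finitary $\Sigma_1$ formulas at the base case, and the countable-conjunction contradiction argument for (3)$\Rightarrow$(1) — including the key use of countability of $\A$ to keep the synthesized formula $(\E\vec z)\bigwedge_{\vec c}\theta_{\vec c}$ inside $L_{\omega_1,\omega}$ at level $\Sigma^{in}_{\beta+1}\subseteq\Sigma^{in}_\alpha$ — together constitute the standard proof of Karp's theorem. The paper itself gives no proof, importing the result directly from \cite{Karp}, so there is nothing in the paper to compare against; your reconstruction matches the classical argument.
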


This yields us the following lemmas about the relation "$\leq_1$" on linear orders and Presburger groups:

\begin{lemma}[See e.g.\ \cite{AK-book} Chapter 15]
\label{lemma same ordering}
    Let $\A$ and $\B$ be countable linear orders with $\vec a\in \A^k$ and $\vec b\in \B^k$. We have $(\A,\vec a)\leq_1(\B,\vec b)$ if and only if for all $\vec d\in \B\lom$, there is some $\vec c\in \A\lom$ such that the ordering on $\vec b,\vec d$ in $\B$ is the same as the ordering of $\vec a,\vec c$ in $\A$. \qed
\end{lemma}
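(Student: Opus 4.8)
The plan is to use the fact that in the language of linear orders the quantifier-free formulas are exactly those whose truth on a tuple depends only on the order-and-equality relations among its entries. Consequently every finitary $\Sigma_1$ formula is equivalent to an existential quantification $\E\vec y\,\psi(\vec x,\vec y)$ with $\psi$ a Boolean combination of atomic formulas $x_i<x_j$ and $x_i=x_j$; and for any finite tuple the complete quantifier-free type is captured by a single such $\psi$, namely the conjunction of all atomic and negated atomic formulas it satisfies. Both directions of the biconditional then reduce to translating between ``$\vec a,\vec c$ satisfies the same quantifier-free formulas as $\vec b,\vec d$'' and ``$\vec a,\vec c$ has the same ordering as $\vec b,\vec d$,'' which are the same condition.

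For the forward direction, assume $(\A,\vec a)\leq_1(\B,\vec b)$ and fix $\vec d\in\B\lom$. Let $\psi(\vec x,\vec y)$ be the quantifier-free formula describing the exact ordering (and equalities) of the combined tuple $\vec b,\vec d$, so that $\B\models\psi(\vec b,\vec d)$. Then $\E\vec y\,\psi(\vec b,\vec y)$ is a finitary $\Sigma_1$ formula true of $\vec b$ in $\B$, so by the definition of $\leq_1$ it is true of $\vec a$ in $\A$. A witness $\vec c$ to this existential is a tuple of the same length as $\vec d$ for which $\vec a,\vec c$ has the same ordering as $\vec b,\vec d$, as required.

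For the backward direction, assume the extension property and let $\phi(\vec x)=\E\vec y\,\psi(\vec x,\vec y)$ be an arbitrary finitary $\Sigma_1$ formula true of $\vec b$ in $\B$, with $\psi$ quantifier-free. Fix a witness $\vec d\in\B\lom$ with $\B\models\psi(\vec b,\vec d)$. By hypothesis there is $\vec c\in\A\lom$ of the same length with $\vec a,\vec c$ ordered exactly as $\vec b,\vec d$. Since $\psi$ is quantifier-free, its truth transfers under this matching of orderings, so $\A\models\psi(\vec a,\vec c)$ and hence $\A\models\phi(\vec a)$. As $\phi$ was arbitrary, $(\A,\vec a)\leq_1(\B,\vec b)$.

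The only genuinely technical point---and the step I would be most careful with---is the bookkeeping underlying ``same ordering equals same quantifier-free type'': one must verify that over a finite tuple in this finite relational language the quantifier-free type is captured by a single quantifier-free formula, handle possible repeated entries via the equality relation, and ensure the length of the found tuple $\vec c$ matches that of $\vec d$ so that $\psi$ genuinely transfers. None of this is deep, but it is where the equivalence actually resides.
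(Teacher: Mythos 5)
Your proposal is correct: both directions are sound, and the key transfer step (quantifier-free truth in the language of linear orders depends only on the order-and-equality pattern of the tuple) is exactly the right hinge. The paper itself gives no proof of this lemma --- it is cited to \cite{AK-book}, Chapter 15 --- and your argument is precisely the standard proof of that cited fact, so there is nothing to reconcile between the two.
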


\begin{lemma}[See e.g.\ \cite{AK-book} Chapter 15] 
\label{lemma geq suborderings}
Let $\A$ and $\B$ be countable linear orders. Take $\vec a\in \A^k$ and $\vec b\in \B^k$ with $a_1<\cdots< a_k$ and $b_1<\cdots <b_k$.  Define $\A_0,...,\A_k\subseteq \A$ as the open intervals between the elements of $\vec a$. That is, $\A_0=(-\infty,a_1)$, $\A_1=(a_1,a_2)$, $\dots$, $\A_k=(a_k,\infty)$. Define $\B_0,...,\B_k$ analogously. We will have $(\A,\vec a)\leq_1 (\B,\vec b)$ if and only if for all $i\leq k$, $|\A_i|\geq |\B_i|$. \qed
    
\end{lemma}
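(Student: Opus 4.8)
The plan is to reduce everything to the order-type characterization of $\leq_1$ supplied by Lemma \ref{lemma same ordering} and then translate the condition ``every finite order pattern realized over $\vec b$ can be copied over $\vec a$'' into the interval-by-interval cardinality inequalities. Recall that by Lemma \ref{lemma same ordering}, $(\A,\vec a)\leq_1(\B,\vec b)$ holds exactly when for every $\vec d\in\B\lom$ there is some $\vec c\in\A\lom$ so that $\vec a\vec c$ and $\vec b\vec d$ have the same ordering (including which coordinates coincide with which $a_i$, respectively $b_i$). So the whole lemma is a matter of recasting this ``same ordering'' condition in terms of the $|\A_i|$ and $|\B_i|$.

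First I would isolate the one elementary fact that drives the argument: if $L$ is a linear order and $p_1<\cdots<p_m$ is any strictly increasing list, then this pattern can be copied into any linear order $L'$ with $|L'|\geq m$, simply by choosing $m$ distinct elements of $L'$ and listing them in increasing order. Because we only ever compare finitely many points, no density or discreteness property of the intervals is relevant here --- only their cardinalities. Applying this interval by interval, I would argue that a matching $\vec c$ for a given $\vec d$ exists if and only if for every $i\leq k$ the interval $\A_i$ contains at least $m_i$ elements, where $m_i$ is the number of distinct coordinates of $\vec d$ lying strictly inside $\B_i$; the coordinates of $\vec d$ equal to some endpoint $b_i$ are matched by forcing the corresponding coordinate of $\vec c$ to equal $a_i$, which is always possible.

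For the direction assuming $|\A_i|\geq|\B_i|$ for all $i$: given any $\vec d$, the count $m_i$ of distinct $\vec d$-coordinates strictly inside $\B_i$ satisfies $m_i\leq|\B_i|\leq|\A_i|$, so by the realizability fact a matching $\vec c$ exists, whence $(\A,\vec a)\leq_1(\B,\vec b)$ by Lemma \ref{lemma same ordering}. For the converse I would prove the contrapositive: suppose $|\A_i|<|\B_i|$ for some $i$. Since all orders here are countable, this forces $\A_i$ to be finite, say $|\A_i|=n$, with $|\B_i|\geq n+1$. Taking $\vec d$ to list $n+1$ distinct elements strictly inside $\B_i$, no $\vec c$ can reproduce this order type, as that would require $n+1$ distinct elements strictly inside $\A_i$; hence $(\A,\vec a)\not\leq_1(\B,\vec b)$.

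The only delicate point is the bookkeeping in the equivalence between ``a matching $\vec c$ exists'' and the inequalities $|\A_i|\geq m_i$: one must correctly account for repeated versus distinct coordinates of $\vec d$ and for the coordinates pinned to the endpoints $b_i$. I do not expect a genuine obstacle --- once the realizability fact is stated, each direction is immediate --- so the hard part is really just presenting the cardinality comparison cleanly and uniformly across the finite and countably infinite cases.
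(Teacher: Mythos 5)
The paper does not prove this lemma at all --- it is quoted from Ash--Knight (Chapter 15) with a \qed --- so there is no in-paper argument to compare against. Your proof is correct and is essentially the standard one: reducing to the order-pattern characterization of $\leq_1$ (Lemma \ref{lemma same ordering}), matching coordinates of $\vec d$ pinned to the $b_j$'s with the corresponding $a_j$'s, realizing the finitely many remaining coordinates interval by interval using only cardinality, and for the converse exploiting that over countable orders $|\A_i|<|\B_i|$ forces $\A_i$ finite so that a $\vec d$ with $|\A_i|+1$ distinct points in $\B_i$ cannot be matched. The bookkeeping you flag (distinct versus repeated coordinates, coordinates equal to endpoints) is handled correctly, so the argument is complete.
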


\begin{lemma}
\label{same atomic}
    Let $P$ and $R$ be Presburger groups. If $\vec p \in P\lom$ and $\vec r\in R\lom$ satisfy the same $\Pi_0$ formulas, and if $\rho(p_i)=\rho(r_i)$ for each $i$ (recall that $\rho$ gives the residue sequence of an element), then $(R,\vec r)\leq_1 (P,\vec p)$.
\end{lemma}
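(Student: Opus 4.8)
The plan is to verify the defining property of $\leq_1$ directly. By the definition of the back-and-forth relation (and Theorem \ref{karp} at level $1$), the assertion $(R,\vec r)\leq_1(P,\vec p)$ amounts to showing that every finitary $\Sigma_1$ formula $\E\vec y\,\theta(\vec x,\vec y)$ (with $\theta$ quantifier-free) that holds of $\vec p$ in $P$ also holds of $\vec r$ in $R$. I would establish this by reducing each such existential formula to a quantifier-free condition via \emph{quantifier elimination} for Presburger arithmetic, and then observing that that condition is already recorded in the shared $\Pi_0$ data of $\vec p$ and $\vec r$.

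Concretely, I would invoke the fact (used already by Presburger; see \cite{Presburger,Haase}) that the theory admits quantifier elimination once the language is augmented by the congruence relations $x\equiv c \Mod{n}$, which are definable in every Presburger group and whose truth on a tuple is exactly the information carried by its residue sequence. First I would fix a $\Sigma_1$ formula $\phi(\vec x)=\E\vec y\,\theta(\vec x,\vec y)$ with $P\models\phi(\vec p)$. By quantifier elimination $\phi$ is equivalent, over the theory and hence in both $P$ and $R$, to a quantifier-free formula $\psi(\vec x)$ in the augmented language, i.e.\ a Boolean combination of linear equalities and inequalities among the $x_i$ and congruences of the form $\sum_i a_i x_i \equiv c \Mod{n}$. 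Each conjunct of $\psi$ is a $\Pi_0$ condition on $\vec p$: the order/additive atoms are literally quantifier-free, and the congruence atoms are determined by the residue sequence of $\vec p$. Since $P\models\psi(\vec p)$ and, by hypothesis, $\vec r$ satisfies exactly the same $\Pi_0$ formulas, I would conclude $R\models\psi(\vec r)$ and therefore $R\models\phi(\vec r)$. As every $\Sigma_1$ fact about $\vec p$ thus transfers to $\vec r$, this yields $(R,\vec r)\leq_1(P,\vec p)$.

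The hard part — and the reason this lemma is genuinely about Presburger groups rather than a formal triviality — is the modular constraints. Unlike the linear-order case of Lemma \ref{lemma geq suborderings}, where matching the order type of the entries suffices, here the purely additive/order quantifier-free type of $\vec p$ does \emph{not} by itself determine its $\Sigma_1$ type: a divisibility assertion such as $\E y\,(ny=x)$ is $\Sigma_1$ yet is not quantifier-free in $\{+,<,0,1\}$, and two entries with identical additive and order relations to the constants can still differ in, say, parity. The essential point is therefore that ``satisfying the same $\Pi_0$ formulas'' must pin down the residue sequences as well, and it is precisely these congruences — made quantifier-free by Presburger's elimination — that a witness $\vec y$ found in $P$ imposes and that a corresponding witness in $R$ must respect. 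Equivalently, I would need the divisibility axioms of a Presburger group to guarantee that a finite system of linear (in)equalities and congruences solvable in $P$ relative to $\vec p$ remains solvable in $R$ relative to $\vec r$ once the residues agree; quantifier elimination is exactly what packages this solvability, which is why I expect it to carry the entire weight of the argument.
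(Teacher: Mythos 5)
Your proposal takes essentially the same route as the paper's proof: reduce an arbitrary finitary $\Sigma_1$ formula true of $\vec p$ to a quantifier-free one via quantifier elimination, transfer it to $\vec r$ using the shared $\Pi_0$ type, and conclude with Karp's theorem. You are in fact more explicit than the paper on the one delicate point---that elimination holds only after congruence predicates are adjoined to the language, so the shared-$\Pi_0$-type hypothesis must be read as carrying that modular (residue) data---which the paper's terse appeal to ``quantifier elimination'' leaves implicit.
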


\begin{proof}
 Recall that Presburger arithmetic has full quantifier elimination when we expand the language to include unaray relations $D_k$ for each integer $k>2$, where we interpret $D_k(x)$ to say that $k$ divides $x$. Since $\rho(p_i)=\rho(r_i)$ for all $i$, we still have that $\vec p$ and $\vec r$ satisfy the same $\Pi_0$ formulas, even in this expanded language. Let $\psi$ be a finitary $\Sigma_1$ formula such that $P\models \psi(\vec p)$. By quantifier elimination, there is a quantifier free formula $\psi'$ such that $\psi'(\vec x)\ \equiv \psi(\vec x)$. Thus, $P\models \psi'(\vec p)$ and so $R\models \psi'(\vec r)$ since $\vec p$ and $\vec r$ satisfy the same $\Pi_0$ formulas. Hence, $R\models \psi(\vec r)$ and so $(R,\vec r) \leq _1 (P,\vec p)$ by Karp's theorem (\ref{karp}). \end{proof}

\begin{definition}
    Given a Presburger group $P$,  $q\in \Q$ and  $p\in P$ with $\rho(p)=\rho(0)$, we define $qp$ to be the unique element $p'\in P$ that satisfies $ap=bp'$ for $a,b\in \Z$ with $\frac{a}{b}=q$. 
\end{definition}
The fact that $\rho(p)=\rho(0)$ guarantees the existence of $qp$ for all $q\in \Q$.  Note, every element of $P_\LL$ can be expressed in the form $(f,z)$ where  $z\in  \Z$ and $f:\LL\to \Q$ with $\supp(f)$ finite (see Definitions \ref{V_L} and \ref{P_l def}). 

\begin{definition}
\label{def pi}
    Given a countable linear order $\LL$, define the embedding $\pi: \LL\to P_\LL$ such that $\pi(l)=(f_l,0)$ where $f_l:\LL\to \Q$ is defined so that $f_l(l)=1$ and $f_l(m)=0$ for all $m\neq l$. 
    \end{definition}

\begin{definition}
\label{def sharp}
    Take any $p\in P$.   There is a unique tuple $(l_1,...,l_k)\in \LL\lom$ with each $l_i<l_{i+1}$, a unique tuple $(q_1,...,q_k)\in \Q^k$ with each $q_i\neq 0$ and a unique $z\in \Z$ such that $$p=q_1\pi(l_1) +\cdots + q_k\pi(l_k) + z.$$ Given a  formula $\phi(  x)$ in the language of Presburger arithmetic, define $\phi_{p}^\#(x_1,...,x_k)$ to be the same as $\phi(x)$ except with all mentions of $x$ replaced with $q_1 x_1 +\cdots + q_kx_k +z$. 
\end{definition}

\begin{definition}
\label{def tau}
    Define $\tau:P_\LL\to \LL\lom$ such that $\tau(p)=(l_1,...,l_k)$ as in the above definition. Define $t_p:\LL^{|\tau(p)|}\to P_\LL$ to be the map defined by $$t_p(x_1,...,x_k)=q_1 \pi(x_1) +\cdots + q_k\pi(x_k) + z.$$ 
\end{definition}
Note that $\phi^\#_p$ has the same complexity as $\phi$. Additionally, note that  $t_p(\tau(p))=p$ and that $\phi^\#_{\pi(l)}=\phi$ for any $l\in \LL$. These definitions can be extended in the obvious way to  handle tuples $\vec p$.

\begin{lemma}
\label{phi pound}
$P_\LL\models \phi (\vec p)$ if and only if $P_\LL\models \phi^\#_{\vec p}(\pi(\tau(\vec p))$. 
\end{lemma}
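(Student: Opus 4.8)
The plan is to induct on the complexity of the (possibly infinitary) formula $\phi$, with the substance lying entirely in the atomic case while the inductive step is a bookkeeping version of the substitution lemma of first-order logic. The organizing remark is the one recorded just before the statement: if $p=q_1\pi(l_1)+\cdots+q_k\pi(l_k)+z$ and $\tau(p)=(l_1,\dots,l_k)$, then substituting $\pi(\tau(p))$ into the term $q_1x_1+\cdots+q_kx_k+z$ that $\#$ inserts in place of $x$ returns exactly $t_p(\tau(p))=p$. Thus $\phi^\#_{\vec p}(\pi(\tau(\vec p)))$ is, for each free variable, the result of substituting into $\phi$ a term that denotes the element $\vec p$, so the lemma is the assertion that this term-substitution preserves truth.

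First I would dispose of the atomic case, which is the only place the structure of $P_\LL=V_\LL\times\Z$ is really used. Each $\pi(l)$ lies in the divisible part $V_\LL$, hence has residue sequence $\rho(0)$ and is infinitely divisible, so $q\,\pi(l)$ is a genuine element of $P_\LL$ for every $q\in\Q$ and again has residue $\rho(0)$; this makes every rational-coefficient term appearing in $\phi^\#_{\vec p}$ well defined once we evaluate at $\pi(\tau(\vec p))$, where it names $p^{(j)}$. For an equality or inequality between $\Z$-linear terms the substituted atom, evaluated at $\pi(\tau(\vec p))$, produces the same group elements as the original atom evaluated at $\vec x=\vec p$, since the $\pi$-images honestly respect $+$ and $<$; so its truth value is unchanged.

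The delicate point, and the main obstacle, is a congruence atom $x_j\equiv c\Mod n$. After substitution this becomes $q^{(j)}_1\pi(l^{(j)}_1)+\cdots+z^{(j)}\equiv c\Mod n$, and because each summand $q^{(j)}_i\pi(l^{(j)}_i)$ has residue sequence $\rho(0)$, only the integer part $z^{(j)}$ contributes modulo $n$. This is precisely the residue of $p^{(j)}$ modulo $n$, which is what the unsubstituted atom $x_j\equiv c\Mod n$ tests at $\vec x=\vec p$; hence congruence atoms transfer correctly. Verifying this---that the $V_\LL$-contribution vanishes modulo every $n$---is the one step that must be checked by hand.

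Finally the inductive step is routine, since forming $\phi^\#_{\vec p}$ touches only the free variables of $\phi$ and leaves every bound variable alone. The construction commutes with $\neg$, with the infinitary connectives $\WW$ and $\VV$ (whose truth is determined coordinatewise, to which the induction hypothesis applies), and with $\E y$ and $\forall y$ for $y$ distinct from the substituted variables: a witness or counterexample $b\in P_\LL$ for a quantifier in $\phi(\vec p)$ serves verbatim for $\phi^\#_{\vec p}(\pi(\tau(\vec p)))$, applying the induction hypothesis to the matrix with the assignment $y=b$ held fixed as a parameter. Assembling these cases yields the equivalence.
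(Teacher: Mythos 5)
Your proof is correct and is precisely the routine substitution-lemma verification that the paper compresses into ``this follows directly from the above definitions'': since $t_{\vec p}(\tau(\vec p))=\vec p$, the substituted term names the element $\vec p$ itself, and your induction on the structure of $\phi$ confirms that truth is preserved. The only point of substance is the one you isolate---that $\rho(\pi(l))=\rho(0)$ makes the rational-coefficient terms well defined and leaves residues governed by the integer part $z$ alone---and you handle it correctly.
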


\begin{proof}
    This follows directly from the above definitions.
\end{proof}

\begin{lemma}
\label{lemma: automorphism}
    Let $\LL$ be a countable linear order with $\vec a \in \LL\lom$. Take $p\in P_\LL$ such that: \begin{itemize}
        \item $0\ll p$; and
        \item $\rho(p)=\rho(0)$; and
        \item $p$ is not a $\Q$-linear combination of $\pi(\vec a)$.
    \end{itemize} 
    There exists an automorphism $G:P_\LL\to P_\LL$ such that $G$ fixes $\pi(\vec a)$ and $G(p)=\pi(l)$ for some $l\in \LL$. 
\end{lemma}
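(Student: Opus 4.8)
The plan is to reduce the problem to constructing a single order-automorphism of the divisible group $V_\LL$, and then to write that automorphism down explicitly as a ``triangular'' change of basis.

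First I would pin down the shape of every automorphism of $P_\LL=V_\LL\times\Z$. Since $1$ is the immediate successor of $0$ in the discrete order, any automorphism fixes $1$, hence fixes every $(0,z)$. Automorphisms also preserve residue sequences, so by the Observation preceding Definition \ref{P_l def} they carry the set of residue-$\rho(0)$ elements—namely $V_\LL\times\{0\}$—onto itself. Consequently an automorphism restricts to an order-preserving group automorphism $\sigma$ of $V_\LL$ (automatically $\Q$-linear, as $V_\LL$ is torsion-free and divisible), and conversely any such $\sigma$ extends to an automorphism $G$ of $P_\LL$ by $G(v,z)=(\sigma(v),z)$: this $G$ respects $+$ and $\rho$ trivially, and it respects the lexicographic order because $\sigma$ fixes $0$ and preserves the order on $V_\LL$. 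Thus it suffices to build an order-automorphism $\sigma$ of $V_\LL$ fixing each $\pi(a_i)$ and sending the $V_\LL$-component of $p$ onto a basis vector $\pi(l)$.

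Next I would set up coordinates for $p$, writing $p=\sum_{j=1}^{r}c_j\,\pi(m_j)$ with $m_1<\cdots<m_r$ in $\LL$ and each $c_j\in\Q\setminus\{0\}$, so that the Archimedean class of $p$ is $[\pi(m_r)]$. Two facts are needed, and this is the one place the hypotheses must be invoked: that $p>0$, forcing the leading coefficient $c_r>0$; and that the dominant index $m_r$ does \emph{not} occur in $\vec a$. The second is the essential point—any order-automorphism fixing $\pi(\vec a)$ fixes the classes $[\pi(a_i)]$, so $p$ cannot be moved onto a single basis vector unless its top class is fresh—and it is the Archimedean largeness of $p$ recorded by $0\ll p$, together with $p$ not being a $\Q$-linear combination of $\pi(\vec a)$, that I would use to conclude $m_r\notin\vec a$. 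I then take $l=m_r$.

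With this in hand I would define $\sigma$ on the standard basis $\{\pi(m):m\in\LL\}$ of $V_\LL$ by fixing $\pi(m)$ for every $m\neq m_r$ and setting
$$\sigma(\pi(m_r))=\frac{1}{c_r}\Big(\pi(m_r)-\sum_{j<r}c_j\,\pi(m_j)\Big).$$
A direct computation gives $\sigma(p)=\pi(m_r)$, and $\sigma$ fixes each $\pi(a_i)$ since $a_i\neq m_r$. The map $\sigma$ is a $\Q$-linear bijection, its inverse fixing $\pi(m)$ for $m\neq m_r$ and sending $\pi(m_r)\mapsto p$. I expect the main obstacle to be checking that $\sigma$ is order-preserving rather than merely bijective: because $\sigma$ alters only the $\pi(m_r)$-direction, adding terms of strictly lower Archimedean class and rescaling by the positive factor $1/c_r$, the sign of the leading coefficient—which defines positivity in $V_\LL$—is preserved in each case according to whether $\max\supp(g)$ lies above, at, or below $m_r$. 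Once $\sigma$ is known to be an order-automorphism, extending it by the identity on $\Z$ yields the desired $G$ with $G(p)=\pi(m_r)$.
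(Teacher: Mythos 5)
Your construction is essentially the paper's: reduce to an order-automorphism of $V_\LL$ (extended by the identity on the $\Z$-coordinate), and perform the triangular change of basis that fixes $\pi(m)$ for $m\neq m_r$ and adjusts the $m_r$-direction so that $p\mapsto \pi(m_r)$. Your verification that this map preserves order (the leading coefficient of any element is unchanged up to a positive scalar) is exactly the content the paper leaves implicit, and it is correct.

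The genuine gap is the step where you claim the stated hypotheses force $m_r\notin\vec a$. They do not: take $\vec a=(a_1)$ and $p=\pi(a_1)+\pi(b)$ with $b<a_1$. Then $0\ll p$, $\rho(p)=\rho(0)$, and $p$ is not a $\Q$-linear combination of $\pi(a_1)$, yet the dominant index is $m_r=a_1\in\vec a$. Worse, in this example the conclusion itself fails: any automorphism $G$ fixing $\pi(a_1)$ sends $p$ to $\pi(a_1)+G(\pi(b))$ with $0<G(\pi(b))\ll\pi(a_1)$, so $G(p)$ has at least two elements in its support and can never equal $\pi(l)$. Hence no argument can close this step from the hypotheses as written; what is actually needed (and what the intended application supplies, since the relevant property is $\Pi^{in}_1$-expressible with parameters $\pi(\vec a)$ and is carried over to $\vec r$ by the back-and-forth hypothesis) is the stronger condition that $p$ is not Archimedean equivalent to any $\Q$-linear combination of $\pi(\vec a)$, which immediately yields $m_r\notin\vec a$ and lets your map go through. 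For what it is worth, the paper's own proof silently makes the same assumption: it chooses $m$ with $f_m\sim v$ and asserts that the map fixing ``all other $f_l$'' fixes each $f_{a_i}$, which is only true when $m$ does not occur in $\vec a$. So your write-up is a more detailed version of the paper's argument that surfaces, but does not repair, a hypothesis the lemma as stated is missing.
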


\begin{proof}
    Since $\rho(p)=\rho(0)$, we have that $p=(v,0)$ for some $v\in V_\LL$. Let $f_a$ be as in Definition \ref{def pi} for each $a$ in $\vec a$. It is enough to show that there is an automorphism $H:V_\LL\to V_\LL$ that fixes each $f_a$ and maps $v\mapsto f_l$ for some $l\in \LL$. The existence of such an $H$ is clear since $V_\LL$ is a $\Q$-vector space with basis $\{f_l:l\in \LL\}$, and $v$ is linearly independent from $f_{a_1},...,f_{a_k}$. Taking $m\in \LL$ such that $f_m$ is archimedean equivalent to $v$,  the map $H$ that sends $v$ to $f_m$ and fixes all other $f_l$ yields the automorphism we desire.
\end{proof}

\begin{proposition}
    Take any countable linear orders $\A$ and $\B$. Define the embeddings $\pi_1:\A\to P_{\A}$ and $\pi_2:\B\to P_{\B}$ as in Definition \ref{def pi}. Given any ordinal $\alpha\geq 1$ and tuples $\vec a\in \A^k$ and $\vec b\in \B^k$, $$(\A,\vec a)\leq_\alpha (\B, \vec b) \iff (P_{\A},\pi(\vec a))\leq_{1+\alpha} (P_{\B}, \pi(\vec b)).$$ 
\end{proposition}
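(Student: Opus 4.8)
The plan is to prove the biconditional by transfinite induction on $\alpha$, carrying it for all countable linear orders and all pairs of equal-length tuples simultaneously, so that the inductive hypothesis is available after extending tuples. Throughout I would pass freely between $\leq_\beta$ and its syntactic form from Karp's theorem (\ref{karp}), and lean on the decomposition machinery of Definitions \ref{def pi} and \ref{def tau}: every element of $P_\LL$ has a canonical form $q_1\pi(l_1)+\cdots+q_k\pi(l_k)+z$, and the rewriting $\phi\mapsto\phi^\#$ turns a formula about such an element into one about the pure generators $\pi(l_1),\dots,\pi(l_k)$ without changing its complexity (the remark after Definition \ref{def tau}). The governing principle is that the linear-order content of a $P_\LL$-tuple sits in its underlying Archimedean classes $\tau(\cdot)$, and that detecting the Archimedean relation $\ll$ costs exactly one quantifier level; this is the source of the $1+$ shift.

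Writing a challenge level $\gamma$ with $1\le\gamma<1+\alpha$ as $\gamma=1+\delta$ with $0\le\delta<\alpha$, consider first the forward direction. Assume $(\A,\vec a)\leq_\alpha(\B,\vec b)$ and let the challenger play $\vec d\in P_\B\lom$. I extract $\vec e:=\tau(\vec d)\in\B\lom$. If $\delta\ge 1$, I feed $\vec e$ into the level-$\delta$ instance of $(\A,\vec a)\leq_\alpha(\B,\vec b)$ to get $\vec c'\in\A\lom$ with $(\B,\vec b,\vec e)\leq_\delta(\A,\vec a,\vec c')$; since $\leq_\delta$ entails $\leq_1$, Lemma \ref{lemma same ordering} forces $(\vec a,\vec c')$ and $(\vec b,\vec e)$ to have the same order type, so the two canonical forms align. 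I then build my response $\vec c$ by copying the exact coefficients and integer parts of $\vec d$, but over the generators $\pi_1(\vec c')$ rather than $\pi_2(\vec e)$. Because the canonical forms agree, $\phi^\#$ computed at $(\pi_1(\vec a),\vec c)$ is literally the same formula as $\phi^\#$ computed at $(\pi_2(\vec b),\vec d)$; applying Lemma \ref{phi pound} on each side together with the inductive hypothesis $(P_\B,\pi_2(\vec b),\pi_2(\vec e))\leq_{1+\delta}(P_\A,\pi_1(\vec a),\pi_1(\vec c'))$ transfers every $\Sigma^{in}_\gamma$ formula from $(\pi_1(\vec a),\vec c)$ to $(\pi_2(\vec b),\vec d)$, which is exactly $(P_\B,\pi_2(\vec b),\vec d)\leq_\gamma(P_\A,\pi_1(\vec a),\vec c)$. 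When $\delta=0$ (so $\gamma=1$) no inductive hypothesis is needed: using only $(\A,\vec a)\leq_1(\B,\vec b)$ and Lemma \ref{lemma same ordering} I match the order type of $\vec e$, build $\vec c$ as above, and check via Lemma \ref{same atomic} that $(\pi_1(\vec a),\vec c)$ and $(\pi_2(\vec b),\vec d)$ realize the same quantifier-free type.

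The backward direction is where the real work lies, and I expect it to be the main obstacle. Assuming $(P_\A,\pi_1(\vec a))\leq_{1+\alpha}(P_\B,\pi_2(\vec b))$, I must answer a linear-order challenge $\vec e\in\B\lom$ at level $\delta<\alpha$. I play $\pi_2(\vec e)$ into the level-$(1+\delta)$ instance on the Presburger side, obtaining a possibly arbitrary $\vec c\in P_\A\lom$ with $(P_\B,\pi_2(\vec b),\pi_2(\vec e))\leq_{1+\delta}(P_\A,\pi_1(\vec a),\vec c)$, and I must convert $\vec c$ into an honest tuple $\vec c'\in\A\lom$. The key observation is that $\leq_{1+\delta}$ entails $\leq_1$, so $(\pi_1(\vec a),\vec c)$ and $(\pi_2(\vec b),\pi_2(\vec e))$ share a quantifier-free type; although no single quantifier-free formula expresses $\ll$, the full quantifier-free \emph{type} pins down all Archimedean comparisons through the infinitely many finite comparisons $n x \lessgtr m y$, forcing each component of $\vec c$ to have trivial residue, to lie in a single Archimedean class, and either to coincide exactly with some $\pi_1(a_i)$ or to have a genuinely new top class matching the order type of $\vec e$. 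I then take $\vec c'$ to be the tuple of top Archimedean classes of $\vec c$.

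It then suffices to show $(P_\A,\pi_1(\vec a),\vec c)\leq_{1+\delta}(P_\A,\pi_1(\vec a),\pi_1(\vec c'))$, after which transitivity of $\leq_{1+\delta}$ and the inductive hypothesis close the induction. I would establish this by producing an automorphism $G$ of $P_\A$ fixing $\pi_1(\vec a)$ with $G(\vec c)=\pi_1(\vec c')$: since $V_\A$ is an ordered $\Q$-vector space, the order-preserving (lower-triangular, unipotent) automorphisms can strip any lower-Archimedean-class ``junk'' off each component while fixing the chosen generators, and the type analysis above guarantees there is no obstruction to doing this simultaneously; in particular, whenever a component shares its top class with some $\pi_1(a_i)$ the type forces it to equal $\pi_1(a_i)$ exactly, which is precisely the one situation an automorphism fixing $\pi_1(\vec a)$ could not repair. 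This is a tuple-level strengthening of Lemma \ref{lemma: automorphism}, which I would prove by peeling off components from the top Archimedean class downward. Once $G$ is in hand, $(P_\A,\pi_1(\vec a),\vec c)$ and $(P_\A,\pi_1(\vec a),\pi_1(\vec c'))$ are isomorphic, hence $\leq_{1+\delta}$-equivalent, completing the argument.
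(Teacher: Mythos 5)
Your overall architecture matches the paper's: the forward direction via extracting $\tau(\vec d)$, copying coefficients and integer parts onto new generators, and transferring formulas through the $\#$-rewriting of Lemma \ref{phi pound} (with Lemma \ref{same atomic} disposing of the level-one case), and the backward direction by playing $\pi_2(\vec e)$ on the Presburger side and repairing the response $\vec c$ into a tuple of pure generators by an automorphism fixing $\pi_1(\vec a)$, then closing with the inductive hypothesis --- this is precisely the paper's use of Lemma \ref{lemma: automorphism}. Your insistence on a tuple-level strengthening of that lemma is, if anything, more careful than the paper, which applies the one-element lemma to a whole tuple without comment even though componentwise hypotheses alone would not suffice (if $c_2=2c_1$, each component satisfies the lemma's hypotheses, yet no automorphism sends both to pure generators); as you observe, it is the \emph{joint} quantifier-free type, matching that of $(\pi_2(\vec b),\pi_2(\vec e))$, that rules out such relations.

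There is, however, one genuine error: you claim the quantifier-free type of $(\pi_1(\vec a),\vec c)$ forces each component of $\vec c$ to have trivial residue. It cannot. Divisibility by $n$ is existential, not quantifier-free, in the language $(+,<,0,1)$: inside $P_\A$ the elements $\pi_1(l)$ and $\pi_1(l)+1$ satisfy exactly the same quantifier-free formulas, yet their residue sequences are $\rho(0)$ and $\rho(1)$. This matters exactly where you need it: automorphisms preserve residue sequences, so a component of $\vec c$ with nontrivial residue can never be mapped to a generator $\pi_1(c')$, and your automorphism $G$ would fail to exist. The repair is immediate and is what the paper actually does: in the inductive backward step you have $(P_\B,\pi_2(\vec b),\pi_2(\vec e))\leq_{1+\delta}(P_\A,\pi_1(\vec a),\vec c)$ with $1+\delta\geq 2$, so the $\Pi^{in}_2$ formula $\WW_{n\in\N}(\exists y)[ny=x]$, true of each $\pi_2(e_j)$, transfers to each $c_j$ by Karp's theorem (\ref{karp}); positivity, being above every integer, and $\Q$-linear independence from $\pi_1(\vec a)$ already transfer at the $\Pi^{in}_1$ level. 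Note also that this confirms your $\delta=0$ case is safe as written: there only order-matching of top Archimedean classes is needed for Lemma \ref{lemma same ordering}, and that much genuinely does follow from the quantifier-free type, with no automorphism (hence no residue information) required.
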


\begin{proof}
   We prove this by induction, beginning with the base case $\alpha=1$. Define $\tau_1:P_\A\to \A\lom$ and $\tau_2:P_\B \to \B\lom$ as in Definition \ref{def tau}. Suppose that $(\A,\vec a)\leq_1(\B,\vec b)$. We show that $(P_\A,\pi(\vec a))\leq_2 (P_\B,\pi(\vec b))$. 
Given a first move $\vec p$  in the back-and-forth game, we find a response $\vec r$ such that $(P_\A,\pi(\vec a), \vec r)\geq_1 (P_\B,\pi(\vec b),\vec p)$.      We have by Lemma \ref{lemma same ordering} that given any $\vec p\in P_\B^n $, there is some $\vec c\in \vec \A^m$ such that the ordering on $\vec b, \tau_2( p_1),...,\tau_2(p_n)$ in $\B$ is the same as the ordering on $\vec a,  c_1,...,c_m$ in $\A$ (note that $m=|\tau_2(p_1)|+\cdots +|\tau_2(p_n)|$). It follows that $(P_\A,\pi(\vec a),\pi(\vec c))$ satisfies the same $\Pi_0$ formulas as $\left(P_\B,\pi(\vec b), \pi( \tau_2(\vec p)\right)$.  Take any $\Pi_0$ formula $\phi$. We will have $P_\B\models \phi(\pi(\vec b),\vec p)$ if and only if $$P_\B\models\phi^\#_{\pi(\vec b),\vec p}\left(\pi(\vec b),\pi(\tau_2(\vec p))\right). $$ Since $(P_\A,\pi(\vec a),\pi(\vec c))$ satisfies the same $\Pi_0$ formulas as $\left(P_\B,\pi(\vec b),\pi( \tau_2(\vec p))\right)$, this holds if and only if $$P_\A\models \phi^\#_{ \pi (\vec b),\vec p}\left(\pi(\vec a),\pi( \vec c)\right).$$  Define $\vec r\in P_\A^n$ by $$\vec r= \left(t_{p_1}(c_1,...,c_{|\tau_2(p_1)|}),\dots, t_{p_n}(c_{m-|\tau_2(p_2)|+1},...,c_m)\right) $$ where $t_{p_i}$ is as in definition \ref{def tau}.   We get that $\psi^\#_{\vec p}=\psi^\#_{\vec r}$ for any $\psi$ and $\tau_1(\vec r)=\vec c$. Thus, we have $$P_\A\models \phi^\#_{ \pi (\vec a),\vec r}\left(\pi(\vec a),\pi( \vec c)\right)$$ which holds if and only if $P_\A\models \phi(\pi(\vec a),\vec r)$ since $\tau_1(\vec r)=\vec c$. This will hold for all $\Pi_0$ formulas, and we can find such an $\vec r$ given any $\vec p$. By definition of $\vec r$, we have $\rho(p_i)=\rho(r_i)$ for each $i$ as both are equal to $\rho(z_i)$ where $z_i\in \Z$ as in Definition \ref{def tau}. Hence, by Lemma \ref{same atomic}, we get that $(P_\B,\pi(\vec b),\vec p)\leq_1 (P_\A,\pi(\vec a),\vec r)$ for some $\vec r$ given any $\vec p$ and so $(P_\A,\pi(\vec a))\leq_2 (P_\B,\pi(\vec b))$. 

   For the other direction, suppose that $(P_\A,\pi(\vec a))\leq_2 (P_\B,\pi(\vec b))$. Define $\A_0,...,\A_k$ and $\B_0,...,\B_k$ as in Lemma \ref{lemma geq suborderings}. Define the formula $$\varphi_n(x_1,x_2):= \E y_1,...,y_n \WW_{i\in \N} [ix_1 <y_1 ~\&~ iy_1<y_2 ~\& ~\cdots ~\& iy_n <x_2] $$ for each $n\in \N$. Two positive elements  $p$ and $r$ of a Presburger group will satisfy $\varphi_n(r,p)$ if and only if there are at least $n$ archimedean equivalence classes between those of $r$ and $p$. Since each $\varphi_n$ is $\Sigma^{in}_2$, if $P_\B\models \varphi_n(b_i,b_{i+1})$ then $P_\A\models \varphi_n(a_i,a_{i+i})$ by Karp's theorem, and so $|\A_i|\geq |\B_i|$ for each $i$, and so $(\A,\vec a)\leq_1 (\B,\vec b)$ by Lemma \ref{lemma geq suborderings}. 

   Now for the inductive step, suppose the result holds for all $1\leq \beta <\alpha$. Suppose further that $(\A,\vec a)\leq_\alpha (\B,\vec b)$. Take any $\vec p\in P_\B^n$ and $\Pi^{in}_\gamma$ formula $\phi$ such that $1\leq \gamma<1+\alpha$. If $P_\B\models \phi(\pi(\vec b),\vec p)$, then $P_\B\models \phi^\#_{\pi(\vec b),\vec p}\left(\pi(\vec b),\pi(\tau_2(\vec p))\right)$. Since $(\A,\vec a)\leq_\alpha (\B,\vec b)$, there is some $\vec c\in \A$ such that $(\B,\vec b, \tau_2(\vec p))\leq_\gamma (\A,\vec a,\vec c)$. By inductive hypothesis, this gives that $(P_\B,\pi(\vec b), \pi(\tau_2(\vec p)))\leq_{1+\gamma}(P_\A,\pi(\vec a),\pi(\vec c))$ and so $P_\A\models \phi^\#_{\pi(\vec b),\vec p}(\pi(\vec a),\pi( \vec c))$. Defining $\vec r$ as in the first paragraph of this proof we get that $P_\A\models \phi^\#_{\pi(\vec a),\vec r}(\pi(\vec a),\pi(\vec c))$ and so $P_\A\models \phi^\#_{\pi(\vec a),\vec r}(\pi(\vec a),\pi(\tau_1(\vec r)))$ and so $P_\A \models \phi(\pi(\vec a),\vec r)$. Since this works for any $\Pi_\gamma^{in}$ formula $\phi$ with $1\leq\gamma<1+\alpha$, we have that $(P_B,\pi(\vec b),\vec p)\leq_\gamma (P_\A,\pi(\vec a),\vec r)$ for all such $\gamma$. Since we can find such an $\vec r$ given any $\vec p$, we get that $(P_\A,\pi(\vec a))\leq_{1+\alpha}(P_\B,\pi(\vec b))$. 

   For the other direction, suppose that $(P_\A, \pi(\vec a))\leq_{1+\alpha} (P_\B, \pi(\vec b))$. Given any $\vec d\in \B$ and $\beta$ with $1\leq \beta<\alpha$, there exists some $\vec r\in P_\A$ such that $(P_\B,\pi(\vec b),\pi(\vec d))\leq_{1+\beta} (P_\A, \pi(\vec a),\vec r)$. Without loss of generality, suppose each element of $\vec d$ is distinct form those in $\vec b$. Since every $\Pi^{in}_2$ formula modeled by $(P_\B,\pi(\vec b),\pi (\vec d))$ is also modeled by $(P_\A,\pi(\vec a),\vec r)$, we get that each $r$ in $\vec r$ satisfies the conditions of Lemma \ref{lemma: automorphism}.  Thus, there is an automorphism of $P_\A$ that fixes $\pi(\vec a)$ and maps $\vec r$ to $\pi(\vec c)$ for some $\vec c\in \A\lom$. Hence, we have $(P_\B,\pi(\vec b),\pi(\vec d))\leq_{1+\beta} (P_\A, \pi(\vec a),\pi(\vec c))$ and so $(\B,\vec b,\vec d)\leq_\beta (\A,\vec a,\vec c)$ by inductive hypothesis. Hence, $(\A,\vec a)\leq_\alpha (\B,\vec b) $.  \end{proof}

   \begin{corollary}
   \label{Ls iff 1+ Ps}
       Given countable linear orders $\A,\B$ and an ordinal $\alpha\geq 1$, $$\A\leq_\alpha \B \iff P_\A \leq_{1+\alpha} P_\B. $$\qed
   \end{corollary}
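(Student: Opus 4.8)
The plan is to derive this corollary directly from the preceding Proposition by specializing to empty parameter tuples. Recall the convention, stated just after the definition of the back-and-forth relations, that $\A\leq_\alpha \B$ abbreviates $(\A,\vec a)\leq_\alpha (\B,\vec b)$ with $\vec a$ and $\vec b$ taken to be the empty tuple, and likewise $P_\A \leq_{1+\alpha} P_\B$ abbreviates $(P_\A,\vec a)\leq_{1+\alpha}(P_\B,\vec b)$ with empty $\vec a,\vec b$.

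First I would set $k=0$ in the Proposition, so that $\vec a\in \A^{0}$ and $\vec b\in \B^{0}$ are both the empty tuple. Under the embeddings $\pi_1$ and $\pi_2$ of Definition \ref{def pi}, the image of the empty tuple is again the empty tuple, so $\pi_1(\vec a)$ and $\pi_2(\vec b)$ are empty as well. The Proposition then reads $(\A,\emptyset)\leq_\alpha (\B,\emptyset) \iff (P_\A,\emptyset)\leq_{1+\alpha}(P_\B,\emptyset)$, which, under the abbreviation above, is precisely the statement of the corollary.

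There is no genuine obstacle at the level of the corollary itself: all of the content has already been carried out in the Proposition, whose induction on $\alpha$ combines the ordering-transfer Lemma \ref{lemma same ordering}, the interval-size criterion of Lemma \ref{lemma geq suborderings}, and---crucially for the reverse direction---the automorphism-construction of Lemma \ref{lemma: automorphism}, which replaces an arbitrary witness $\vec r$ in $P_\A$ by one of the form $\pi(\vec c)$. Thus the only thing left to observe is that the empty tuple is a legitimate choice of parameters in the Proposition, which it is by definition, and that the $\leq_\alpha$ and $\leq_{1+\alpha}$ notation for parameterless structures is exactly the empty-tuple instance. Hence the corollary is immediate.
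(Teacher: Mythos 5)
Your proposal is correct and matches the paper exactly: the corollary is stated with a \qed precisely because it is the $k=0$ (empty-tuple) instance of the preceding proposition, which is where all the work lives. Nothing further is needed.
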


\subsection{Finding the Complexities}

As shown in Theorem 5.1 of \cite{Alvir-et}, no infinite structure can have a $\Sigma_2^{in}$ Scott sentence. Thus the least possible Scott sentence complexity for a Presburger group is $\Pi_2$, which is achieved by the standard model:

\begin{proposition}
    The standard Presburger group $\Z$ has Scott sentence complexity $\Pi_2$. 
\end{proposition}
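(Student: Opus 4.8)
The plan is to establish the two directions separately: first that $\Z$ has no Scott sentence simpler than $\Pi_2$, and then that it has one of complexity exactly $\Pi_2$. The lower bound is almost immediate from the preceding remark. Since no infinite structure has a $\Sigma^{in}_2$ Scott sentence, and since each of the classes $\Sigma_1$, $\Pi_1$, and $d$-$\Sigma_1$ is contained in $\Sigma_2$ — a $\Pi^{in}_1$ formula is a degenerate $\Sigma^{in}_2$ formula (one disjunct, empty existential block), and $\Sigma^{in}_2$ is closed under finite conjunction, whence $d$-$\Sigma^{in}_1\subseteq\Sigma^{in}_2$ as well — any Scott sentence of complexity below $\Pi_2$ would yield a $\Sigma^{in}_2$ one. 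As $\Z$ is infinite, this is impossible, so it remains only to exhibit a $\Pi^{in}_2$ Scott sentence.

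For the upper bound, I would take $\phi$ to be the conjunction of the (finitely many) first-order axioms for a discretely ordered abelian group in which $1$ is the least positive element, together with the \emph{standardness} axiom
$$\forall x\ \VV_{n\in\N}\Big(x=\underbrace{1+\cdots+1}_{n}\ \oor\ x+\underbrace{(1+\cdots+1)}_{n}=0\Big).$$
All the group and order axioms are $\Pi^{in}_1$, save the existence-of-inverses axiom $\forall x\,\E y\,(x+y=0)$, which is $\Pi^{in}_2$; the matrix of the standardness axiom is a countable disjunction of quantifier-free formulas, hence $\Sigma^{in}_1$, so prefixing it with a single universal quantifier yields a $\Pi^{in}_2$ formula. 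Since $\Pi^{in}_2$ contains $\Pi^{in}_1$ and is closed under countable conjunction, $\phi$ is $\Pi^{in}_2$.

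It then remains to check that $\phi$ is a Scott sentence of $\Z$. Clearly $\Z\models\phi$. Conversely, if $M\models\phi$ then $M$ is a torsion-free linearly ordered abelian group whose least positive element is $1$, and the map $k\mapsto k\cdot 1_M$ is a homomorphism that is injective (by torsion-freeness), order-preserving (as $1_M>0$), and onto (by standardness), fixing $0$ and $1$; hence $M\cong\Z$. Combining the two halves gives $\SC(\Z)=\Pi_2$. The only point demanding genuine care is the complexity bookkeeping: one must confirm that the universally quantified infinite disjunction lands in $\Pi^{in}_2$, and that conjoining it with the first-order axioms (in particular the $\Pi^{in}_2$ inverse axiom) does not escalate the complexity to $d$-$\Sigma_2$ — which it does not, precisely because every conjunct is already $\Pi^{in}_2$ and that class is conjunction-closed. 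Everything else is routine.
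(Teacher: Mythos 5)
Your proof is correct and follows essentially the same route as the paper: cite the fact that no infinite structure has a $\Sigma^{in}_2$ Scott sentence for the lower bound, and exhibit the universally quantified standardness disjunction as a $\Pi^{in}_2$ Scott sentence for the upper bound. Your version is slightly more careful in explicitly conjoining the (still $\Pi^{in}_2$) first-order axioms for discretely ordered abelian groups, which is needed for the sentence to pin down $\Z$ among arbitrary countable structures in the language; the paper leaves this implicit.
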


\begin{proof}
    Since $\Z$ does not have a $\Sigma^{in}_2$ Scott sentence, it is enough to show that $\Z$ has a $\Pi^{in}_2$ Scott sentence. Such a sentence is given by the conjunction of the Presburger axioms (all of which are $\Pi^{in}_2$) and the sentence $$ \forall x \left[\VV_{n\in \N} x= n1~\vee ~ x+n1=0 \right]$$ where $n1$ is shorthand for $\overbrace{1+\cdots +1}^{n\text{-many}} $.     \end{proof} 
This gives that $\SR(\Z)=1$. We will see below that no other Presburger group has this Scott rank:

\begin{lemma}
\label{lemma A<2Z}
    If $\A$ is a Presburger group, then $\A\leq_2 \Z$.
\end{lemma}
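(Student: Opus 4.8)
The plan is to unfold the definition of $\leq_2$ and reduce the problem to a statement about $\leq_1$, which Lemma \ref{same atomic} handles at the level of quantifier-free formulas. Viewing the base tuples as empty, $\A\leq_2\Z$ means precisely that for every tuple $\vec d\in\Z\lom$ there is a tuple $\vec c\in\A\lom$ with $(\Z,\vec d)\leq_1(\A,\vec c)$. So it suffices, given any $\vec d$ from $\Z$, to produce a matching $\vec c$ from $\A$, and by Lemma \ref{same atomic} it is enough to arrange that $\vec c$ satisfies the same $\Pi_0$ formulas as $\vec d$.

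First I would use the canonical copy of $\Z$ sitting inside any Presburger group. Since $\A$ is a discretely ordered abelian group in which $1$ is the successor of $0$, the map $\iota:\Z\to\A$ given by $\iota(n)=n1_\A$ is an embedding of ordered groups: it is injective because $\A$ is torsion-free, and it is order-preserving because $1_\A>0_\A$. Given $\vec d=(d_1,\dots,d_k)\in\Z^k$, I would simply set $\vec c=(\iota(d_1),\dots,\iota(d_k))$.

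Next I would verify that $\vec d$ in $\Z$ and $\vec c$ in $\A$ satisfy exactly the same $\Pi_0$ (quantifier-free) formulas. Since these are Boolean combinations of atomic formulas $t_1=t_2$ and $t_1<t_2$, it is enough to treat the atomic case. Every term $t(x_1,\dots,x_k)$ in the language $\{+,<,0,1\}$ evaluates to $\sum_i n_i x_i + m1$ for some naturals $n_i,m$, so $t(\vec c)=\iota\big(t(\vec d)\big)$ because $\iota$ is a group homomorphism fixing $0$ and $1$. As $\iota$ is an order embedding, $t_1(\vec d)=t_2(\vec d)\iff t_1(\vec c)=t_2(\vec c)$, and likewise for $<$; hence $\vec d$ and $\vec c$ satisfy the same quantifier-free formulas.

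Finally, Lemma \ref{same atomic}, applied with $R=\Z$, $\vec r=\vec d$, $P=\A$, $\vec p=\vec c$, yields $(\Z,\vec d)\leq_1(\A,\vec c)$. Since such a $\vec c$ exists for every $\vec d\in\Z\lom$, we conclude $\A\leq_2\Z$. The only real content is the atomic-formula check in the third paragraph, and I expect no genuine obstacle there: once one observes that the standard-part embedding $\iota$ is an isomorphism onto an ordered subgroup of $\A$, the discreteness and torsion-freeness of Presburger groups do all the work.
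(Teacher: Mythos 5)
Your proof is correct, and it selects the same witness tuple as the paper: both map $\vec d\in\Z\lom$ to its image under the canonical embedding $\iota(n)=n1_\A$ onto the standard copy of $\Z$ inside $\A$. The difference lies in how $(\Z,\vec d)\leq_1(\A,\iota(\vec d))$ gets verified. The paper never touches Lemma \ref{same atomic}: it notes that every integer is definable by a quantifier-free term, so any finitary $\Sigma_1$ formula true of $\iota(\vec d)$ in $\A$ can be rewritten as a $\Sigma_1$ sentence true in $\A$, which then transfers to $\Z$ because Presburger arithmetic is complete, i.e.\ $\Th(\A)=\Th(\Z)$. You instead check by hand that $\vec d$ and $\iota(\vec d)$ satisfy the same quantifier-free formulas and let Lemma \ref{same atomic} finish. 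Your route is more modular, reusing machinery the paper has already built, and it avoids any appeal to elementary equivalence; but it inherits that lemma's dependence on quantifier elimination, and there is a point of care there: Presburger arithmetic eliminates quantifiers only after the congruence relations $x\equiv k\pmod{n}$ are adjoined to the language, so the hypothesis of Lemma \ref{same atomic} (``satisfy the same $\Pi_0$ formulas'') is really only strong enough when read as including residue conditions. Your term-evaluation argument covers only the $\{+,<,0,1\}$-atomic formulas; for your particular tuples the gap closes instantly, since $\iota(d)=d\cdot 1_\A$ has the same residue sequence as $d$ for every integer $d$, but that observation should be stated explicitly. The paper's completeness-based transfer sidesteps this subtlety entirely, which is what its more self-contained argument buys.
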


\begin{proof}
    Take  any tuple $\vec b\in \Z\lom$. Since $\A$ is a Presburger group, there is a copy of $\Z$ inside of $\A$ generated by the constant $1$. Let $\vec a$ be the elements within the copy of $\Z$ in $\A$ that correspond to $\vec b$. We have that both $\vec a$ and $\vec b$ are definable, as all elements of $\Z$ are definable without quantifiers. Thus, any finitary $\Sigma_1$ formula $\phi(\vec x)$ true about $\vec a$ in $\A$ is equivalent to a finitary $\Sigma_1$ sentence $\phi'$ true in $\A$. Since $\Th(\A)=\Th(\Z)$, we have that $\Z\models \phi'$ and thus  $\Z\models\phi(\vec b)$ as well. Hence, $(\Z, \vec b)\leq_1 (\A,\vec a)$ and so $\A\leq_2 \Z$. 
\end{proof}

\begin{corollary}
\label{non standard SR>1}
    If $\A$ is a nonstandard Presburger group, then $\SR(\A)>1$. 
\end{corollary}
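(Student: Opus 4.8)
The plan is to argue by contradiction, taking Lemma \ref{lemma A<2Z} as the essential input and reducing the statement ``$\SR(\A)>1$'' to a statement about $\Pi^{in}_2$ Scott sentences. Suppose $\A$ is a nonstandard Presburger group with $\SR(\A)=1$. Since Scott rank is always at least $1$, by Theorem \ref{robuster scott rank} the assumption $\SR(\A)\leq 1$ is equivalent to saying that $\A$ has a $\Pi^{in}_2$ Scott sentence; call it $\phi$. My goal is to derive a contradiction from the coexistence of such a $\phi$ with the relation $\A\leq_2\Z$ supplied by Lemma \ref{lemma A<2Z}.

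First I would invoke Karp's theorem (\ref{karp}): reading $\A\leq_2\Z$ with empty tuples, condition (2) says precisely that every $\Pi^{in}_2$ sentence true in $\A$ is also true in $\Z$. Since $\phi$ is $\Pi^{in}_2$ and $\A\models\phi$, this immediately yields $\Z\models\phi$. Because $\phi$ is a Scott sentence of $\A$, any countable model of $\phi$ must be isomorphic to $\A$; in particular $\Z\cong\A$, contradicting the assumption that $\A$ is nonstandard. Hence $\A$ has no $\Pi^{in}_2$ Scott sentence and $\SR(\A)>1$.

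I expect no serious obstacle in the corollary itself: all the genuine work has already been done in establishing $\A\leq_2\Z$ (Lemma \ref{lemma A<2Z}), which leverages quantifier elimination together with the completeness fact $\Th(\A)=\Th(\Z)$. The corollary is then a direct application of Karp's theorem and the defining property of a Scott sentence. The only point requiring care is the bookkeeping that $\SR(\A)=1$ corresponds exactly to the existence of a $\Pi^{in}_2$ (rather than $\Pi^{in}_1$) Scott sentence, which is handled by the indexing convention $\SR(\A)\leq\alpha \iff \A$ has a $\Pi^{in}_{\alpha+1}$ Scott sentence from Theorem \ref{robuster scott rank}.
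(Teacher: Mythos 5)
Your proof is correct and is essentially the paper's own argument: the paper likewise derives the result from Lemma \ref{lemma A<2Z} by noting that $\A\leq_2\Z$ forbids a $\Pi^{in}_2$ Scott sentence for a nonstandard $\A$, which via Theorem \ref{robuster scott rank} gives $\SR(\A)>1$. You have merely spelled out the Karp's-theorem step that the paper leaves implicit.
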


\begin{proof}
    Let $\A$ be a nonstandard Presburger group. Since $\A\leq_2\Z$, $\A$ cannot have a $\Pi^{in}_2$ Scott sentence and so $\SR(\A)>1$. 
\end{proof}

It should be noted that Corollary \ref{non standard SR>1} was independently proven by Dariusz Kaloci\'nski using different methods in forthcoming work. We can use this to help find a Presburger group with Scott sentence complexity $d$-$\Sigma_2$. Let $P_1$ denote the Presburger group $\Q\times \Z$. The Scott sentence complexity of $P_1$ will be evident from the fact that $\SR(P_1)=2$ but $\pSR(P_1)=1$. 

\begin{proposition}
\label{P_1 SC}
    $\SC(P_1)=d$-$\Sigma_2$.
\end{proposition}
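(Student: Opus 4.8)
The plan is to derive $\SC(P_1)=d$-$\Sigma_2$ from the two facts flagged just before the statement, $\SR(P_1)=2$ and $\pSR(P_1)=1$, each of which I would establish first. For $\pSR(P_1)=1$ I would fix the single parameter $u_0:=(1,0)\in\Q\times\Z$, a positive element with residue sequence $\rho(0)$. Every element of $P_1$ has the form $qu_0+n\cdot 1$ with $q\in\Q$, $n\in\Z$, and writing $q=s/t$ the condition ``$x=qu_0+n\cdot 1$'' is the quantifier-free formula $t(x-n\cdot 1)=s\,u_0$. Thus $(P_1,u_0)$ is rigid with every element $\Sigma^{in}_0$-definable over $u_0$, so $\SR(P_1,u_0)=1$ and hence $\pSR(P_1)=1$. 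By Theorem \ref{robust pSR} (with $\alpha=1$) this already endows $P_1$ with a $\Sigma^{in}_3$ Scott sentence.

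Next I would compute $\SR(P_1)=2$. The lower bound $\SR(P_1)>1$ is immediate from Corollary \ref{non standard SR>1}, since $P_1$ is nonstandard. For $\SR(P_1)\le 2$ I would verify condition (3) of Theorem \ref{robuster scott rank}, i.e.\ that $P_1$ is uniformly $\boldsymbol{\Delta}^0_2$-categorical (one jump). Given $\B,\CC\cong P_1$, one jump of $\D(\B)\oplus\D(\CC)$ lets me (i) identify the standard integers, which is a $\Sigma^0_1$ predicate, and map them across; and (ii) for each $b\in\B$ compute its standard part $m_b\in\Z$, the integer with $\rho(b)=\rho(m_b)$. Here plainness is essential: $b\bmod n=m_b\bmod n$ for every $n$, so decoding $b\bmod n$ into $(-n/2,n/2]$ stabilizes to $m_b$, and this limit is computable from the jump. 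Passing to $b^\circ:=b-m_b\cdot 1$ in the divisible part $D\cong\Q$, fixing positive divisible generators $u_0\in D^+(\B)$ and $u_0'\in D^+(\CC)$ (obtained as $b^\circ$ for any positive infinite $b$), computing the rational $q_b$ with $q_bu_0=b^\circ$, and sending $b\mapsto q_bu_0'+m_b\cdot 1$ produces a uniform isomorphism computed from the single jump. Hence $\SR(P_1)\le 2$, and by Theorem \ref{robuster scott rank} (with $\alpha=2$) $P_1$ has a $\Pi^{in}_3$ Scott sentence.

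With both a $\Sigma^{in}_3$ and a $\Pi^{in}_3$ Scott sentence in hand, A.\ Miller's theorem \cite{A-Miller} (applied with $\alpha=3\ge 2$) yields a $d$-$\Sigma^{in}_\beta$ Scott sentence for some $\beta<3$, so $\SC(P_1)\le d$-$\Sigma_2$. For the matching lower bound I would observe that $P_1$ has no $\Sigma^{in}_2$ Scott sentence by Theorem 5.1 of \cite{Alvir-et} (no infinite structure does) and no $\Pi^{in}_2$ Scott sentence because $\SR(P_1)>1$. A Scott sentence of any complexity strictly below $d$-$\Sigma_2$, namely $\Sigma_1$, $\Pi_1$, $d$-$\Sigma_1$, $\Sigma_2$, or $\Pi_2$, would in particular be a $\Sigma^{in}_2$ or a $\Pi^{in}_2$ Scott sentence, so these two exclusions give $\SC(P_1)\ge d$-$\Sigma_2$, and therefore equality.

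The main obstacle is the upper bound $\SR(P_1)\le 2$. The delicate point is that recovering an element's integer part is a priori a $\Pi^{in}_2$ (full residue-sequence) condition, which would naively cost a second jump and push the rank to $3$; the computation closes at level $2$ only because plainness lets the standard part $m_b$ be extracted as a single-jump limit of the finite residues $b\bmod n$. I would take care to state this stabilization argument precisely, since it is exactly what separates $\SR(P_1)=2$ from $\SR(P_1)=3$. The remaining ingredients — the rigidity computation for $\pSR$ and the final assembly through \cite{A-Miller} and \cite{Alvir-et} — are routine given the quoted theorems.
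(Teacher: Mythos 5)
Your proof is correct, and it follows the same overall skeleton as the paper's: establish $\pSR(P_1)=1$ and $\SR(P_1)=2$, then assemble the $d$-$\Sigma_2$ answer from the resulting $\Sigma^{in}_3$ and $\Pi^{in}_3$ Scott sentences together with the exclusions of $\Sigma^{in}_2$ (no infinite structure has one) and $\Pi^{in}_2$ (Corollary \ref{non standard SR>1}). The genuine difference lies in which equivalent condition of Montalb\'an's theorems you invoke for each half --- and in fact you make exactly the opposite choices from the paper. For $\SR(P_1)\leq 2$ the paper argues syntactically via condition (1) of Theorem \ref{robuster scott rank}: it classifies the automorphisms as $(q,z)\mapsto(rq,z)$ for positive rational $r$, lists the three orbits over each $z$ (namely $q>0$, $q<0$, $q=0$), and writes explicit $\Sigma^{in}_2$ defining formulas. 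You instead verify condition (3), uniform $\boldsymbol{\Delta}^0_2$-categoricity, with a jump-computable standard-part extraction; your stabilization of $b \bmod n$ recentered in $(-n/2,n/2]$ is a correct application of the limit lemma, since each residue is computable from the diagram by a terminating search. Conversely, for $\pSR(P_1)=1$ the paper argues computationally --- with the constant $c=(1,0)$ adjoined, an isomorphism between copies is computable directly from the atomic diagrams --- whereas you argue syntactically, noting that $(P_1,u_0)$ is rigid with every singleton orbit defined by a quantifier-free formula over the parameter. Both pairings are legitimate. The paper's route has the side benefit of exhibiting the full orbit structure of $P_1$ explicitly, while your categoricity argument isolates precisely why a single jump suffices (plainness makes the integer part limit-computable), which is the same mechanism the paper later exploits in the proof of Theorem \ref{thm: no sigma3}.
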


\begin{proof}
    Since $P_1\ncong\Z$, we get that it has no $\Pi^{in}_2$ Scott sentence. By Theorem \ref{robuster scott rank}, we can show that $\SR(P_1)=2$ by demonstrating that the automorphism orbits in $P_1$ are $\Sigma^{in}_2$-definable. Note that residue sequences must be preserved under automorphism. Thus if $(q_1,z_1)$ is automorphic to $(q_2,z_2)$, then $z_1=z_2$. However, any  automorphism of $(\Q,+,<)$ can be extended to an automorphism of $P_1$. The  automorphisms of $(\Q,+,<)$ are exactly those generated by multiplying everything by a positive rational. Hence, the automorphisms of $P_1$ are exactly those defined by $(q,z)\mapsto (rq,z)$ where $r$ is some positive rational number. This gives that there are three automorphism orbits for every $z \in \Z$: $$\{(q,z):q>0\}, \, \{(q,z):q<0\}\, \text {and } \{(0,z)\}.$$ Each of these orbits can be defined by the following formulas respectively:
    $$\WW_{n\in \N} x \equiv z \mod n \,\, \&\,\, \WW_{n\in \N} \overbrace{1+\cdots + 1}^\text{$n$-many}<x,$$ 
    $$\WW_{n\in \N} x \equiv z \mod n \,\, \&\,\, \WW_{n\in \N} \overbrace{1+\cdots + 1}^\text{$n$-many}+x<0$$ and $$x=\overbrace{1+\cdots + 1}^{z\text{-many}}. $$
    Each of these formulas is $\Pi^{in}_1$, and thus $\Sigma^{in}_2$. Given a tuple of elements $\vec p$, we can define its automorphism with a conjunction of the above type sentences applied to the individual elements of $\vec p$. This conjunction will still be $\Sigma^{in}_2$, and so we get that $\SR(P_1)= 2$ and so $P_1$ has a $\Pi^{in}_3$ Scott sentence. 

    As for the parameterized Scott rank, note that  by adding a constant $c$ to the language to represent the element $(1,0)\in P_1$, we get that an isomorphism between any two copies of $P_1$ can be computed (given the atomic diagrams of the copies) simply by sending $c$ to $c$ and $1$ to $1$. Thus, $\SR(P_1,c)=1$ and so $\pSR(P_1)=1$. This gives that $P_1$ has a $\Sigma^{in}_3$ Scott sentence but no $\Sigma^{in}_2$ Scott sentence, as no infinite structure has a $\Sigma^{in}_2$ Scott sentence. Since $P_1$ has both a $\Sigma^{in}_3$ Scott sentence and a $\Pi^{in}_3$ Scott sentence, it must have a $d$-$\Sigma^{in}_2$ Scott sentence as well. 
\end{proof} 

It should be noted that if we define $P_n=\Q^n\times \Z$ for any positive integer $n$, a similar proof as above yields that $\SC(P_n)=d$-$\Sigma_2$.

Let us now show the existence of the remaining finite Scott sentence complexities claimed in Theorem \ref{main theorem}. All of the finite cases are covered by the next three propositions.

\begin{proposition}
\label{pi_n sc}
    Given a natural number $n\geq 2$, there exists a Presburger group with Scott complexity $\Pi_n$.
\end{proposition}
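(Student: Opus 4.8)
The plan is to reduce the computation to the linear-order case using the back-and-forth correspondence (Corollary \ref{Ls iff 1+ Ps}) together with the upper bound of Theorem \ref{SCL < SCPL}. The case $n=2$ is already handled by the standard model $\Z$, whose Scott complexity was shown to be $\Pi_2$, so I would focus on $n\geq 3$. For such $n$ I would fix a countable linear order $\LL$ with $\SC(\LL)=\Pi_{n-1}$; since $n-1\geq 2$, such an $\LL$ exists by the classification of Scott complexities of linear orders in \cite{david-dino,tubo-david-mathew} (for instance $\LL=\Q$ realizes $\Pi_2$, giving the case $n=3$). The claim would then be that $\SC(P_\LL)=\Pi_n$.

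The organizing observation is that, to conclude $\SC(P_\LL)=\Pi_n$, it suffices to establish two things: (a) $\SC(P_\LL)\leq \Pi_n$, and (b) $P_\LL$ has no $\Sigma_n$ Scott sentence. This is enough because every Scott complexity strictly below $\Pi_n$ is contained in $d\text{-}\Sigma_{n-1}$, and hence in $\Sigma_n$; so (b) excludes all complexities strictly below $\Pi_n$, while (a) excludes everything strictly above $\Pi_n$ as well as the incomparable $\Sigma_n$. The only remaining possibility is exactly $\Pi_n$ (the existence of a least complexity being guaranteed by \cite{A-Miller}, as recalled in the introduction).

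For (a), I would apply Theorem \ref{SCL < SCPL} directly: from $\SC(\LL)=\Pi_{n-1}$ it yields $\SC(P_\LL)\leq \Pi_{1+(n-1)}=\Pi_n$. For (b), I would transfer the matching lower bound from $\LL$. Since $\SC(\LL)=\Pi_{n-1}$, the order $\LL$ has no $\Sigma_{n-1}$ Scott sentence, so by the standard characterization of Scott complexity through the back-and-forth relations (see \cite{AK-book}) there is a countable linear order $\M\not\cong\LL$ with $\M\leq_{n-1}\LL$. Corollary \ref{Ls iff 1+ Ps} then gives $P_\M\leq_{1+(n-1)}P_\LL$, i.e.\ $P_\M\leq_n P_\LL$. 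Because $\M\not\cong\LL$ we also have $P_\M\not\cong P_\LL$: here I would use that the Archimedean rank is an isomorphism invariant and that $\AR(P_\LL)\cong 1+\LL$ (the $\Z$-part contributes a single least Archimedean class lying below all classes coming from $V_\LL$, and $\AR(V_\LL)=\LL$), so that $P_\M\cong P_\LL$ would force $1+\M\cong 1+\LL$ and hence $\M\cong\LL$. Having a nonisomorphic $P_\M$ with $P_\M\leq_n P_\LL$, Karp's theorem (\ref{karp}) shows that no $\Sigma_n$ sentence separates $P_\M$ from $P_\LL$, so $P_\LL$ has no $\Sigma_n$ Scott sentence, establishing (b).

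The hard part will be the lower bound (b), and within it the faithfulness statement $P_\M\not\cong P_\LL$: the correspondence of Corollary \ref{Ls iff 1+ Ps} only compares groups already of the form $P_{(-)}$, so I must separately verify that nonisomorphic linear orders yield nonisomorphic groups, which I would do through the Archimedean-rank computation above. The remaining points are routine: that a linear order of complexity exactly $\Pi_{n-1}$ is available for every $n-1\geq 2$ (immediate from the cited classification), and that $1+\M\cong 1+\LL$ implies $\M\cong\LL$ (true because the adjoined point is the least element and order isomorphisms preserve least elements).
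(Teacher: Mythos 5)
Your overall architecture matches the paper's: upper bound for $P_\LL$ via the transfer results, lower bound by exhibiting a nonisomorphic $P_\M$ with $P_\M\leq_n P_\LL$ and invoking Karp's theorem, plus the Archimedean-rank computation to see $P_\M\not\cong P_\LL$ (which the paper also uses implicitly, via $\AR(P_\LL)=1+\LL$ in the proof of Theorem \ref{L'}). The logical bookkeeping at the start and the faithfulness argument are both fine. However, there is one genuine gap.

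The problematic step is: ``Since $\SC(\LL)=\Pi_{n-1}$, the order $\LL$ has no $\Sigma_{n-1}$ Scott sentence, so by the standard characterization \dots there is a countable linear order $\M\not\cong\LL$ with $\M\leq_{n-1}\LL$.'' What Karp's theorem gives is only the \emph{converse}: if such an $\M$ exists, then $\LL$ has no $\Sigma_{n-1}$ Scott sentence. The direction you need --- that failure of a $\Sigma_{n-1}$ Scott sentence always produces a $\leq_{n-1}$-witness below $\LL$ --- is not a formal consequence of Karp's theorem, and it is not a citable ``standard characterization''; the hypothesis ``every $\M$ with $\M\leq_{n-1}\LL$ is isomorphic to $\LL$'' only yields a Scott sentence that is a countable conjunction of $\Sigma_{n-1}$ sentences, hence $\Pi_n$, not $\Sigma_{n-1}$. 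Whether such back-and-forth characterizations reverse is exactly the flavor of Question \ref{Q david/matthew}, which this paper records as open (for the $\Pi$ side). The paper avoids the issue by working with \emph{concrete} linear orders for which explicit witnesses are already known: $\Z^{k+1}\leq_{2k+2}\Z^k\cdot\Q$ (from \cite{david-dino}) handles the odd levels, and $\omega^k+\omega^k\leq_{2k+1}\omega^k$ (from \cite{AK-book}) handles the even levels. Your proof is repaired by making the same substitution: instead of an abstract $\LL$ of complexity $\Pi_{n-1}$, take these specific orders, for which both $\SC(\LL)=\Pi_{n-1}$ and the witness $\M\leq_{n-1}\LL$, $\M\not\cong\LL$, are supplied by the cited literature; the rest of your argument then goes through as written.
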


\begin{proof}
    For the $n=2$ case, recall that the standard Presburger group $\Z$ has Scott complexity $\Pi_2$. As noted in the proof of Corollary 3.10 of \cite{david-dino}, the linear order $\Z^n\cdot\Q$ has Scott complexity $\Pi_{2n+2}$ but $\Z^{n+1}\leq_{2n+2} \Z^n\cdot\Q$. It follows from Lemma \ref{less than 1 +} that $P_{\Z^n\cdot \Q}$ has a $\Pi^{in}_{2n+3}$ Scott sentence and from Corollary \ref{Ls iff 1+ Ps} that $P_{\Z^{n+1}}\leq_{2n+3} P_{\Z^n\cdot \Q}$. Thus, $P_{\Z^n\cdot \Q}$ has no $\Sigma^{in}_{2n+3}$ sentence and so it has Scott complexity $\Pi_{2n+3}$. 

    Similarly, the linear order $\omega^n$ has Scott complexity $\Pi_{2n+1}$ but $\omega^n+\omega^n\leq_{2n+1} \omega^n$ (see e.g.\ \cite{AK-book} Lemma 15.10). Thus, the same reasoning as above gives that $\SC(P_{\omega^n})=\Pi_{2n+2}$. 
\end{proof}

\begin{proposition}
\label{sigma_n sc}
Given a natural number $n\geq 5$, there exists a Presburger group with Scott complexity $\Sigma_n$. 
\end{proposition}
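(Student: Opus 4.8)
The plan is to transfer a linear order of complexity $\Sigma_{n-1}$ up to a Presburger group of complexity $\Sigma_n$, mirroring the proof of Proposition \ref{pi_n sc} but working on the $\Sigma$ side of the linear-order classification. Since $n\geq 5$ we have $n-1\geq 4$, and by the results of \cite{david-dino} and \cite{tubo-david-mathew} there is a countable linear order $\LL$ with $\SC(\LL)=\Sigma_{n-1}$ (recall that $\Sigma_4$ is realized by a linear order in \cite{david-dino}, while $\Sigma_3$ is not). I claim that $\SC(P_\LL)=\Sigma_n$, which would complete the proof.

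For the upper bound, Theorem \ref{SCL < SCPL} applied to $\SC(\LL)=\Sigma_{n-1}$ gives $\SC(P_\LL)\leq \Sigma_{1+(n-1)}=\Sigma_n$, using $1+(n-1)=n$ for finite $n$. Every Scott complexity strictly below $\Sigma_n$ is subsumed by $\Pi_n$ (in particular $d\text{-}\Sigma_{n-1}$ entails a $\Pi_n^{in}$ Scott sentence), so to pin the complexity at exactly $\Sigma_n$ it remains only to show that $P_\LL$ has no $\Pi_n^{in}$ Scott sentence.

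The lower bound is where the real work lies, since Theorem \ref{SCL < SCPL} is only an inequality (indeed $\SC(P_\LL)=\Gamma_{1+\alpha}$ is open in general). Here I would exploit the fact that Corollary \ref{Ls iff 1+ Ps} is a two-sided equivalence. Because $\SC(\LL)=\Sigma_{n-1}$, Miller's theorem \cite{A-Miller} forbids $\LL$ from having both a $\Sigma_{n-1}$ and a $\Pi_{n-1}$ Scott sentence, so $\LL$ has no $\Pi_{n-1}^{in}$ Scott sentence. By the standard characterization through Karp's theorem (\ref{karp}), this produces a countable linear order $\LL'\not\cong\LL$ with $\LL\leq_{n-1}\LL'$. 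Corollary \ref{Ls iff 1+ Ps} then yields $P_\LL\leq_n P_{\LL'}$. Moreover the assignment $\LL\mapsto P_\LL$ is injective on isomorphism types: in $P_\LL=V_\LL\times\Z$ the set $\{x:\rho(x)=\rho(0)\}$ is definable and, as a subgroup, is isomorphic to $V_\LL$, with $\AR(V_\LL)=\LL$ (Definition \ref{V_L}); hence $P_{\LL'}\cong P_\LL$ would force $\LL'\cong\LL$, so $P_{\LL'}\not\cong P_\LL$. Finally, if $P_\LL$ had a $\Pi_n^{in}$ Scott sentence $\phi$, then from $P_\LL\leq_n P_{\LL'}$ Karp's theorem would give $P_{\LL'}\models\phi$, forcing $P_{\LL'}\cong P_\LL$, a contradiction. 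Thus $P_\LL$ has no $\Pi_n^{in}$ Scott sentence, and together with the upper bound we conclude $\SC(P_\LL)=\Sigma_n$.

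The main obstacle is not a single hard estimate but the one-directional nature of the transfer: because Theorem \ref{SCL < SCPL} gives only $\leq$, the equality genuinely requires the separate no-$\Pi_n$ argument above, which in turn depends on having, for each finite level $n-1\geq 4$, a linear-order witness of complexity \emph{exactly} $\Sigma_{n-1}$ from \cite{david-dino,tubo-david-mathew}. If one preferred an explicit witness over the abstract existence statement, the difficulty would shift to computing the exact Scott complexity of a concrete order, precisely the sort of back-and-forth calculation already carried out for $\Z^n\cdot\Q$ and $\omega^n$ in Proposition \ref{pi_n sc}.
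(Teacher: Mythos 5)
Your overall architecture matches the paper's: find a linear order $\LL$ of complexity $\Sigma_{n-1}$, push the upper bound through to $P_\LL$ via Lemma \ref{psr PL<1+ psr L} (equivalently the finite case of Theorem \ref{SCL < SCPL}), and kill the $\Pi_n^{in}$ possibility by exhibiting $P_{\LL'}\not\cong P_\LL$ with $P_\LL\leq_n P_{\LL'}$ and invoking Karp. Your observations that ruling out a $\Pi_n^{in}$ Scott sentence suffices, and that $\LL\mapsto P_\LL$ is injective on isomorphism types (so $P_{\LL'}\not\cong P_\LL$), are correct and the latter is a detail the paper leaves implicit.

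However, there is a genuine gap at the step where you produce the witness $\LL'$. You claim that since $\LL$ has no $\Pi_{n-1}^{in}$ Scott sentence, ``the standard characterization through Karp's theorem'' yields some $\LL'\not\cong\LL$ with $\LL\leq_{n-1}\LL'$. Karp's theorem gives only the converse direction: if $\LL$ \emph{had} a $\Pi_{n-1}^{in}$ Scott sentence and $\LL\leq_{n-1}\LL'$, then $\LL'\cong\LL$. Whether the absence of a $\Pi_\alpha^{in}$ Scott sentence forces the existence of a non-isomorphic $\leq_\alpha$-successor is precisely Question \ref{Q david/matthew} (Question 1.10 in \cite{spectral-gaps}), which is open — and this paper's own discussion after that question treats the implication as unknown even for linear orders. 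The correct move, and the one the paper makes, is to cite \cite{david-dino} directly for the stronger package: for each $m\geq 4$ those authors construct a linear order $\LL$ with $\SC(\LL)=\Sigma_m$ \emph{together with} an explicit $\LL'\not\cong\LL$ satisfying $\LL\leq_m\LL'$. With that citation in place of your derivation, the rest of your argument goes through exactly as the paper's does.
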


\begin{proof}
As shown in \cite{david-dino}, for every natural number $m\geq 4$ there is an infinite linear order $\LL$ such that $\SC(\LL)=\Sigma_m$ as well as an $\LL'\ncong \LL$ with $\LL\leq_m\LL'$. It follows from Lemma \ref{psr PL<1+ psr L} that $P_\LL$ has a $\Sigma_{m+1}$ Scott sentence and from Corollary \ref{Ls iff 1+ Ps} that $P_\LL\leq_{m+1} P_{\LL'}$.  Thus, $P_\LL$ has no $\Pi_{m+1}$ Scott sentence and so it has Scott complexity $\Sigma_{m+1}$. 
\end{proof}

\begin{proposition}
\label{d-sigma_n sc}
    Given a natural number $n\geq 2$, there exists a Presburger group with Scott complexity $d$-$\Sigma_n$. 
\end{proposition}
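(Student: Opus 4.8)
The case $n=2$ is already handled by Proposition \ref{P_1 SC}, so the plan is to treat $n\geq 3$ and reduce everything to the linear-order case, exactly as in Propositions \ref{pi_n sc} and \ref{sigma_n sc}. Set $m=n-1\geq 2$. Since \cite{david-dino} and \cite{tubo-david-mathew} determine all achievable Scott sentence complexities of countable linear orders, I would first fix a countable linear order $\LL$ with $\SC(\LL)=d$-$\Sigma_m$. Next I would invoke the standard characterization of $\Pi^{in}_m$ and $\Sigma^{in}_m$ Scott sentences via back-and-forth relations (see \cite{AK-book} and \cite{HARRISON-TRAINOR_2022}): a countable structure $\A$ has a $\Pi^{in}_m$ Scott sentence if and only if every countable $\B$ with $\A\leq_m\B$ is isomorphic to $\A$, and dually $\A$ has a $\Sigma^{in}_m$ Scott sentence if and only if every countable $\B$ with $\B\leq_m\A$ is isomorphic to $\A$. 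Because $\SC(\LL)=d$-$\Sigma_m$, the order $\LL$ has neither a $\Pi^{in}_m$ nor a $\Sigma^{in}_m$ Scott sentence, and so this characterization yields two countable witnesses $\LL_1\ncong\LL$ and $\LL_2\ncong\LL$ with $\LL\leq_m\LL_1$ and $\LL_2\leq_m\LL$.

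With the witnesses in hand, the bounds follow mechanically from the transfer results of the previous subsection. For the upper bound, Theorem \ref{SCL < SCPL} gives $\SC(P_\LL)\leq d$-$\Sigma_{1+m}=d$-$\Sigma_{m+1}=d$-$\Sigma_n$ (using $1+m=m+1$ since $m$ is finite), so in particular $P_\LL$ has a $d$-$\Sigma_n$ Scott sentence. For the lower bounds I would push the two witness relations through Corollary \ref{Ls iff 1+ Ps}. From $\LL\leq_m\LL_1$ we obtain $P_\LL\leq_{m+1}P_{\LL_1}$; since $\LL_1\ncong\LL$ forces $P_{\LL_1}\ncong P_\LL$ (the order $\LL$ is recovered from $P_\LL$ as the Archimedean rank lying strictly above the distinguished class of $1$), any $\Pi^{in}_{m+1}$ Scott sentence of $P_\LL$ would, by Karp's theorem (\ref{karp}), also hold in $P_{\LL_1}$ and force $P_{\LL_1}\cong P_\LL$, a contradiction; hence $P_\LL$ has no $\Pi^{in}_{m+1}$ Scott sentence. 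Symmetrically, $\LL_2\leq_m\LL$ gives $P_{\LL_2}\leq_{m+1}P_\LL$ with $P_{\LL_2}\ncong P_\LL$, which rules out any $\Sigma^{in}_{m+1}$ Scott sentence of $P_\LL$.

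Finally I would observe that a $d$-$\Sigma_{m+1}$ Scott sentence together with the absence of a $\Sigma^{in}_{m+1}$ and of a $\Pi^{in}_{m+1}$ Scott sentence pins the complexity down exactly: any complexity strictly below $d$-$\Sigma_{m+1}$ yields either a $\Sigma^{in}_{m+1}$ or a $\Pi^{in}_{m+1}$ Scott sentence, so $\SC(P_\LL)=d$-$\Sigma_{m+1}=d$-$\Sigma_n$. The only step carrying real content is the extraction of the two non-isomorphic witnesses $\LL_1,\LL_2$ from $\SC(\LL)=d$-$\Sigma_m$; this is where I expect the main subtlety to lie, since it is precisely the back-and-forth characterization of Scott complexity (rather than a bare existence statement) that is needed. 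Everything else is a routine application of the transfer machinery, the one new feature relative to the $\Pi_n$ and $\Sigma_n$ cases being that the $d$-$\Sigma$ case requires a separate witness on each side rather than a single one.
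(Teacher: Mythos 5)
Your overall strategy---handle $n=2$ via $P_1$, then transfer a $d$-$\Sigma_{n-1}$ linear order through Theorem \ref{SCL < SCPL} and Corollary \ref{Ls iff 1+ Ps}---is the same as the paper's, and the upper-bound half and the final pinning-down argument are fine. The gap is in the step you yourself flag as the one with ``real content'': extracting the witnesses $\LL_1\ncong\LL$ with $\LL\leq_m\LL_1$ and $\LL_2\ncong\LL$ with $\LL_2\leq_m\LL$ from the bare hypothesis $\SC(\LL)=d$-$\Sigma_m$. The biconditional you invoke is not a standard theorem: only one direction is known. Karp's theorem gives that \emph{if} $\A$ has a $\Pi^{in}_m$ Scott sentence and $\A\leq_m\B$ then $\B\cong\A$; the converse---that the absence of a $\Pi^{in}_m$ Scott sentence produces a non-isomorphic $\B$ with $\A\leq_m\B$---is precisely Question \ref{Q david/matthew} (Question 1.10 of \cite{spectral-gaps}), which this paper records as open. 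Indeed, the discussion following Question \ref{Q SRPL vs SRL} points out that the existence of such witness orders $\K\ncong\LL$ with $\LL\leq_n\K$ is exactly what a positive answer to that open question would supply. So your derivation of $\LL_1$ (and, dually, of $\LL_2$) rests on an unproved converse, and the argument as written does not go through for an arbitrary $\LL$ of complexity $d$-$\Sigma_m$.

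The repair is to do what the paper does: rather than choosing an arbitrary linear order of the right complexity and trying to conjure witnesses abstractly, cite the proof of Corollary 3.10 of \cite{david-dino}, which for each $n\geq 2$ \emph{constructs} an infinite linear order $\LL$ with $\SC(\LL)=d$-$\Sigma_n$ together with explicit $\LL',\LL''\ncong\LL$ satisfying $\LL'\leq_n\LL\leq_n\LL''$. With those concrete witnesses in hand, the rest of your argument (non-isomorphism of the resulting Presburger groups via recovery of $\LL$ from the Archimedean rank of $P_\LL$, Karp's theorem to rule out $\Pi^{in}_{n+1}$ and $\Sigma^{in}_{n+1}$ Scott sentences, and the upper bound from Theorem \ref{SCL < SCPL}) is correct and matches the paper.
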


\begin{proof}
    We saw in Proposition \ref{P_1 SC} that $\SC(P_1)=d$-$\Sigma_2$. 
    Given $n\geq 2$, it was shown in the proof of Corollary 3.10 of \cite{david-dino} that there exists an infinite linear order $\LL$ such that $\SC(\LL)=d$-$\Sigma_n$ and linear orders $\LL',\LL''\ncong \LL$ with $\LL'\leq_n\LL\leq_n\LL''$. Thus, as in the above two proofs, we get that $P_\LL$ has a $\Pi^{in}_{n+2}$ Scott sentence and a $\Sigma^{in}_{n+2}$ Scott sentence, but no $\Pi^{in}_{n+1}$ nor $\Sigma^{in}_{n+1}$ Scott sentence. Hence, $\SC(P_\LL)=d$-$\Sigma_{n+1}$.  
\end{proof}

We now turn our attention to achieving the infinite Scott sentence complexities. We begin with the complexities whose indices are at least $2$ greater than a limit, for which we can make use of the following lemma: 

 \begin{lemma}[See e.g.\ \cite{CST} Table 1]
    \label{alpha+2}
    Let $\A$ be a countable structure and let $\alpha\geq 1$ be a countable ordinal.
    \begin{itemize}
        \item $\SC(\A)=\Sigma_{\alpha+2}$ if and only if $\pSR(\A)=\alpha$ and $\SR(\A)=\alpha + 2$. 
        \item $\SC(\A)=\Pi_{\alpha+2}$ if and only if $\pSR(\A)=\SR(\A)= \alpha+1$.
        \item $\SC(\A)=d$-$\Sigma_{\alpha+2}$ if and only if $\pSR(\A)=\alpha +1$ and $\SR(\A)=\alpha +2$. \qed
    \end{itemize}
        
    \end{lemma}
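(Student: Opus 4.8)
The plan is to translate each of the three biconditionals into statements about which infinitary complexity classes contain a Scott sentence of $\A$, and then read these off from the two characterizations already in hand: by Theorem~\ref{robuster scott rank}, $\A$ has a $\Pi^{in}_{\gamma+1}$ Scott sentence if and only if $\SR(\A)\leq\gamma$, and by Theorem~\ref{robust pSR}, $\A$ has a $\Sigma^{in}_{\gamma+2}$ Scott sentence if and only if $\pSR(\A)\leq\gamma$. Since $\alpha\geq 1$, the indices $\alpha+1,\alpha+2,\alpha+3$ are all successors, so these equivalences apply directly at every index we need. Throughout I would freely use the standard containments $\Sigma^{in}_\beta,\Pi^{in}_\beta\subseteq d\text{-}\Sigma^{in}_\beta\subseteq \Sigma^{in}_{\beta+1}\cap\Pi^{in}_{\beta+1}$, the inequality $\pSR(\A)\leq\SR(\A)$ (immediate from the definition of $\pSR$ via the empty tuple), and the fact (recorded after Theorem~\ref{robust pSR}, due to A.\ Miller \cite{A-Miller}) that if $\A$ has both a $\Sigma^{in}_\gamma$ and a $\Pi^{in}_\gamma$ Scott sentence with $\gamma\geq 2$, then it has a $d$-$\Sigma^{in}_\beta$ Scott sentence for some $\beta<\gamma$.

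For the $\Pi_{\alpha+2}$ clause I would argue as follows. If $\pSR(\A)=\SR(\A)=\alpha+1$, then $\A$ has a $\Pi^{in}_{\alpha+2}$ Scott sentence (from $\SR(\A)\leq\alpha+1$), no $\Pi^{in}_{\alpha+1}$ one (else $\SR(\A)\leq\alpha$), and no $\Sigma^{in}_{\alpha+2}$ one (else $\pSR(\A)\leq\alpha$); since every complexity strictly below $\Pi_{\alpha+2}$ is contained in $\Sigma^{in}_{\alpha+2}$, the absence of a $\Sigma^{in}_{\alpha+2}$ Scott sentence leaves $\Pi_{\alpha+2}$ as the minimal complexity. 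Conversely, $\SC(\A)=\Pi_{\alpha+2}$ gives a $\Pi^{in}_{\alpha+2}$ Scott sentence (so $\SR(\A)\leq\alpha+1$) and no $\Pi^{in}_{\alpha+1}$ one (so $\SR(\A)=\alpha+1$); moreover $\A$ can have no $\Sigma^{in}_{\alpha+2}$ Scott sentence, for otherwise Miller's theorem would produce a $d$-$\Sigma^{in}_\beta$ Scott sentence with $\beta<\alpha+2$, contradicting minimality. Hence $\pSR(\A)\geq\alpha+1$, while $\pSR(\A)\leq\SR(\A)=\alpha+1$ forces equality.

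The $\Sigma_{\alpha+2}$ and $d$-$\Sigma_{\alpha+2}$ clauses run on the same template, and I would handle them symmetrically. For $\Sigma_{\alpha+2}$: assuming $\pSR(\A)=\alpha$ and $\SR(\A)=\alpha+2$, the bound $\pSR(\A)\leq\alpha$ yields a $\Sigma^{in}_{\alpha+2}$ Scott sentence, while $\SR(\A)=\alpha+2>\alpha+1$ rules out any $\Pi^{in}_{\alpha+2}$ Scott sentence and hence, via $d\text{-}\Sigma^{in}_{\alpha+1}\subseteq\Pi^{in}_{\alpha+2}$, any Scott sentence of complexity below $\Sigma_{\alpha+2}$; so $\SC(\A)=\Sigma_{\alpha+2}$. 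For the converse, a $\Sigma^{in}_{\alpha+2}$ Scott sentence gives $\pSR(\A)\leq\alpha$, and if $\pSR(\A)=\beta<\alpha$ then a $\Sigma^{in}_{\beta+2}\subseteq\Sigma^{in}_{\alpha+1}$ Scott sentence would undercut $\Sigma_{\alpha+2}$, so $\pSR(\A)=\alpha$; minimality forbids a $\Pi^{in}_{\alpha+2}$ Scott sentence (again via Miller), giving $\SR(\A)\geq\alpha+2$, while $\Sigma^{in}_{\alpha+2}\subseteq\Pi^{in}_{\alpha+3}$ gives $\SR(\A)\leq\alpha+2$. The $d$-$\Sigma_{\alpha+2}$ clause combines the two: $\pSR(\A)=\alpha+1$ and $\SR(\A)=\alpha+2$ give both a $\Sigma^{in}_{\alpha+3}$ and a $\Pi^{in}_{\alpha+3}$ Scott sentence, so Miller produces a $d$-$\Sigma^{in}_{\alpha+2}$ Scott sentence, while the strict inequalities $\pSR(\A)>\alpha$ and $\SR(\A)>\alpha+1$ rule out $\Sigma_{\alpha+2}$, $\Pi_{\alpha+2}$, and all lower complexities; the converse reverses these implications using $d\text{-}\Sigma^{in}_{\alpha+2}\subseteq\Sigma^{in}_{\alpha+3}\cap\Pi^{in}_{\alpha+3}$.

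The only real subtlety---and the step I would be most careful about---is the bookkeeping of the complexity order near index $\alpha+2$, specifically that $\Sigma_{\alpha+2}$ and $\Pi_{\alpha+2}$ are incomparable and each sits strictly above every class of index $\leq\alpha+1$. The role of Miller's theorem is precisely to convert ``has Scott sentences on both the $\Sigma$ and the $\Pi$ side'' into ``has one of strictly smaller $d$-$\Sigma$ complexity,'' which is what lets each forward direction pin down the missing one of $\SR,\pSR$ exactly rather than merely bound it. No genuinely new idea beyond Theorems~\ref{robuster scott rank} and~\ref{robust pSR}, the hierarchy containments, and Miller's theorem is needed; the content is entirely in organizing these three inputs.
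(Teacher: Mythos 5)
Your proof is correct, but note that the paper does not prove this lemma at all: it is stated with a \qed and attributed to Table~1 of \cite{CST}, so there is no internal argument to compare against. What you have written is a complete, self-contained derivation from exactly the ingredients the paper does make available --- the characterization of $\SR$ via $\Pi^{in}_{\gamma+1}$ Scott sentences (Theorem~\ref{robuster scott rank}), the characterization of $\pSR$ via $\Sigma^{in}_{\gamma+2}$ Scott sentences (Theorem~\ref{robust pSR}), Miller's theorem, the containments among the classes, and $\pSR(\A)\leq\SR(\A)$ via the empty tuple --- and this is indeed the standard way the cited table is obtained. The bookkeeping is sound at every point I checked: in particular, your use of Miller's theorem to upgrade ``no $\Pi^{in}_{\alpha+2}$ (resp.\ $\Sigma^{in}_{\alpha+2}$) Scott sentence'' from a consequence of minimality into the exact value of $\SR$ (resp.\ $\pSR$) is the right move, and in the backward direction of the $d$-$\Sigma_{\alpha+2}$ clause Miller's theorem a priori only yields a $d$-$\Sigma^{in}_\beta$ Scott sentence for some $\beta\leq\alpha+2$, but as you implicitly use, any such sentence with $\beta\leq\alpha+1$ would be $\Pi^{in}_{\alpha+2}$ and contradict $\SR(\A)=\alpha+2$, forcing $\beta=\alpha+2$. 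The only thing your write-up adds beyond the citation is transparency; it would serve perfectly well as a proof sketch if one wanted the paper to be self-contained here.
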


\begin{proposition}
\label{sc alpha+2}
    Given a countable ordinal $\alpha\geq \omega$, there exist Presburger groups with Scott sentence complexities $\Sigma_{\alpha+2}, \Pi_{\alpha+2}$ and $d$-$\Sigma_{\alpha+2} $.
\end{proposition}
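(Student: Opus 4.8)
The plan is to pull each of the three target complexities back from the corresponding linear-order complexity, using the transfer results already established in this section. Fix a countable ordinal $\alpha\geq\omega$. By the classification of the Scott sentence complexities of linear orders in \cite{david-dino} and \cite{tubo-david-mathew}, there exist countably infinite linear orders $\LL_\Sigma$, $\LL_\Pi$, $\LL_d$ with $\SC(\LL_\Sigma)=\Sigma_{\alpha+2}$, $\SC(\LL_\Pi)=\Pi_{\alpha+2}$, and $\SC(\LL_d)=d$-$\Sigma_{\alpha+2}$. Here I use that $\alpha+2=(\alpha+1)+1$ is a successor ordinal and that $\alpha+2\geq\omega+2>4$, so all three complexities lie in the range achievable for linear orders.

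Next I would read off the Scott and parameterized Scott ranks of these orders using Lemma \ref{alpha+2}. For $\LL_\Sigma$ this gives $\pSR(\LL_\Sigma)=\alpha$ and $\SR(\LL_\Sigma)=\alpha+2$; for $\LL_\Pi$ it gives $\pSR(\LL_\Pi)=\SR(\LL_\Pi)=\alpha+1$; and for $\LL_d$ it gives $\pSR(\LL_d)=\alpha+1$ and $\SR(\LL_d)=\alpha+2$. The key observation is that, because $\alpha\geq\omega$, every ordinal appearing here, namely $\alpha$, $\alpha+1$, and $\alpha+2$, is itself $\geq\omega$.

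This is precisely what is needed to invoke Corollary \ref{psr=psr}, which asserts that for infinite (parameterized) Scott ranks the passage from $\LL$ to $P_\LL$ preserves rank. I would therefore conclude $\pSR(P_{\LL_\Sigma})=\alpha$ and $\SR(P_{\LL_\Sigma})=\alpha+2$; $\pSR(P_{\LL_\Pi})=\SR(P_{\LL_\Pi})=\alpha+1$; and $\pSR(P_{\LL_d})=\alpha+1$, $\SR(P_{\LL_d})=\alpha+2$. Applying Lemma \ref{alpha+2} in the reverse direction to each of these three Presburger groups then yields $\SC(P_{\LL_\Sigma})=\Sigma_{\alpha+2}$, $\SC(P_{\LL_\Pi})=\Pi_{\alpha+2}$, and $\SC(P_{\LL_d})=d$-$\Sigma_{\alpha+2}$, which is exactly what the proposition claims.

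Since the substantive work is carried by Corollary \ref{psr=psr} and Lemma \ref{alpha+2}, I do not expect any step to be genuinely hard; the proof is essentially bookkeeping. The one place that requires care is verifying that $1+\alpha=\alpha$ (and likewise for $\alpha+1$ and $\alpha+2$), so that the infinite-rank hypothesis of Corollary \ref{psr=psr} is met exactly in all three cases. The only external dependency worth double-checking is that the cited linear-order constructions indeed realize $\Sigma_{\alpha+2}$, $\Pi_{\alpha+2}$, and $d$-$\Sigma_{\alpha+2}$ for \emph{arbitrary} countable $\alpha\geq\omega$, rather than merely for some restricted subclass of such ordinals.
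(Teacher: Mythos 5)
Your proof is correct and follows exactly the same route as the paper's: extract the ranks from the linear orders via Lemma \ref{alpha+2}, transfer them to $P_\LL$ via Corollary \ref{psr=psr} (using $\alpha\geq\omega$ so all ranks involved are infinite), and read off the complexities by applying Lemma \ref{alpha+2} in reverse. No differences worth noting.
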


\begin{proof}
    Fix any countable $\alpha\geq\omega$. It was shown in \cite{david-dino} that there exist countable linear orders $\LL_1,\LL_2$ and $\LL_3$ with Scott sentence complexities $\Sigma_{\alpha+2}, \Pi_{\alpha+2}$ and $d$-$\Sigma_{\alpha+2} $ respectively. By Lemma \ref{alpha+2}, we have that $\pSR(\LL_1)=\alpha$ and $\SR(\LL_1)=\alpha+2$ and so $\pSR(P_{\LL_1})=\alpha$  and $\SR(P_{\LL_1})=\alpha +2$ by Corollary \ref{psr=psr}. Hence, by Lemma \ref{alpha+2}, we have that $\SC(P_{\LL_1})=\Sigma_{\alpha+2}$. An analogous argument yields that $\SC(P_{\LL_2})=\Pi_{\alpha+2}$ and $\SC(P_{\LL_3})=d$-$\Sigma_{\alpha+2}$.
\end{proof}

As shown in \cite{Alvir-et}, no countable structure can have Scott sentence complexity $\Sigma_\lambda$ nor $d$-$\Sigma_\lambda$ for a limit ordinal $\lambda$. We now show the existence of Presburger groups with complexities  $d$-$\Sigma_{\lambda+1}$, $\Pi_{\lambda+1}$, $\Sigma_{\lambda+1}$ and $\Pi_{\lambda}$ for every countable limit ordinal $\lambda$. 

\begin{proposition}
\label{d sigma l + 1}
    Given a limit ordinal $\lambda$, there exists a Presburger group with Scott complexity $d$-$\Sigma_{\lambda+1}$. 
\end{proposition}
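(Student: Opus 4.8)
The plan is to mirror the finite case (Proposition \ref{d-sigma_n sc}): fix a linear order $\LL$ of the desired complexity and transfer it through the $P_\LL$ construction, replacing the arithmetic $1+n=n+1$ by $1+(\lambda+1)=\lambda+1$. A separate argument is needed here because Lemma \ref{alpha+2} characterizes only the complexities $\Gamma_{\alpha+2}$, and $\lambda+1$ is not of this form when $\lambda$ is a limit; in particular we cannot simply read the complexity of $P_\LL$ off its Scott and parameterized Scott ranks, and must instead argue directly with back-and-forth relations.

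First I would use \cite{david-dino} to fix a countable linear order $\LL$ with $\SC(\LL)=d$-$\Sigma_{\lambda+1}$. Since this complexity lies strictly above both $\Sigma_{\lambda+1}$ and $\Pi_{\lambda+1}$, the order $\LL$ has neither a $\Sigma_{\lambda+1}$ nor a $\Pi_{\lambda+1}$ Scott sentence. By the standard description of back-and-forth relations (a structure $\A$ lacks a $\Pi_\alpha$ Scott sentence exactly when some $\B\ncong\A$ has $\A\leq_\alpha\B$, and lacks a $\Sigma_\alpha$ Scott sentence exactly when some $\B\ncong\A$ has $\B\leq_\alpha\A$) this yields witnesses $\LL',\LL''\ncong\LL$ with $\LL'\leq_{\lambda+1}\LL\leq_{\lambda+1}\LL''$.

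Next I would transfer these through the construction. By Corollary \ref{Ls iff 1+ Ps} and $1+(\lambda+1)=\lambda+1$ we get $P_{\LL'}\leq_{\lambda+1}P_\LL\leq_{\lambda+1}P_{\LL''}$. To exploit them I need $P_{\LL'},P_{\LL''}\ncong P_\LL$, which follows from the general fact that $P_\LL\cong P_{\LL'}$ forces $\LL\cong\LL'$: an isomorphism gives $P_\LL\leq_\gamma P_{\LL'}$ and $P_{\LL'}\leq_\gamma P_\LL$ for every $\gamma$, hence $\LL\leq_\beta\LL'$ and $\LL'\leq_\beta\LL$ for every $\beta$ by Corollary \ref{Ls iff 1+ Ps}, and two countable linear orders that are back-and-forth equivalent at every level are isomorphic. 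Now $P_{\LL'}\leq_{\lambda+1}P_\LL$ with $P_{\LL'}\ncong P_\LL$ shows $P_\LL$ has no $\Sigma_{\lambda+1}$ Scott sentence, and $P_\LL\leq_{\lambda+1}P_{\LL''}$ with $P_{\LL''}\ncong P_\LL$ shows it has no $\Pi_{\lambda+1}$ Scott sentence.

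Finally, Theorem \ref{SCL < SCPL} (again using $1+(\lambda+1)=\lambda+1$) gives the upper bound $\SC(P_\LL)\leq d$-$\Sigma_{\lambda+1}$. Combined with the two non-existence results this forces equality: every complexity strictly below $d$-$\Sigma_{\lambda+1}$ can be padded up to either a $\Sigma_{\lambda+1}$ or a $\Pi_{\lambda+1}$ Scott sentence, both now excluded, so $\SC(P_\LL)=d$-$\Sigma_{\lambda+1}$. I expect the upper bound to be the delicate point: at a successor-of-limit index there is no $\SR/\pSR$ characterization to appeal to, so one must rely on Theorem \ref{SCL < SCPL}, or argue self-containedly that $\SR(P_\LL)\leq\lambda+1$ and $\pSR(P_\LL)\leq\lambda$ (via Lemmas \ref{less than 1 +} and \ref{psr PL<1+ psr L}, using $\SR(\LL)=\lambda+1$ and $\pSR(\LL)\leq\lambda$) give $P_\LL$ both a $\Pi_{\lambda+2}$ and a $\Sigma_{\lambda+2}$ Scott sentence, and then apply A.\ Miller's theorem together with the two lower bounds to pin the index down to exactly $\lambda+1$.
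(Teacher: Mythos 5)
Your overall architecture matches the paper's: establish the upper bound by transferring Scott sentences (or the ranks $\SR$ and $\pSR$) from $\LL$ to $P_\LL$, and establish the lower bound by pushing back-and-forth witnesses through Corollary \ref{Ls iff 1+ Ps} using $1+(\lambda+1)=\lambda+1$. The upper-bound half and the final ``padding'' argument are fine, as is your observation that $P_{\LL'}\cong P_\LL$ forces $\LL'\cong\LL$.

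There is, however, a genuine gap in how you produce the witnesses $\LL'$ and $\LL''$. You invoke as ``standard'' the equivalence that $\A$ lacks a $\Pi_\alpha$ (resp.\ $\Sigma_\alpha$) Scott sentence exactly when some $\B\ncong\A$ satisfies $\A\leq_\alpha\B$ (resp.\ $\B\leq_\alpha\A$). Only one direction of this is a theorem: if such a $\B$ exists, then by Karp's theorem $\B$ would satisfy any $\Pi_\alpha$ (resp.\ $\Sigma_\alpha$) Scott sentence of $\A$, so none exists. The converse direction --- the one you actually need, namely that the absence of a $\Pi_{\lambda+1}$ Scott sentence for $\LL$ yields a concrete $\LL''\ncong\LL$ with $\LL\leq_{\lambda+1}\LL''$ --- is precisely Question \ref{Q david/matthew} (Question 1.10 of \cite{spectral-gaps}), which this paper explicitly records as open. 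So you cannot extract the witnesses from the bare fact that $\SC(\LL)=d$-$\Sigma_{\lambda+1}$.

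The fix is to choose $\LL$ concretely so that the witnesses are already known. The paper takes $\LL=\omega^\lambda+\omega^\lambda$, for which $\SR(\LL)=\lambda+1$ and $\pSR(\LL)=\lambda$ (giving the $\Sigma_{\lambda+2}$ and $\Pi_{\lambda+2}$ Scott sentences for $P_\LL$ via Corollary \ref{psr=psr}, exactly as in your fallback argument), and for which the explicit ordinal computations
$\omega^\lambda+\omega^\lambda+\omega^\lambda\leq_{\lambda+1}\omega^\lambda+\omega^\lambda\leq_{\lambda+1}\omega^\lambda$
are available from \cite{AK-book}. Transferring these through Corollary \ref{Ls iff 1+ Ps} kills both the $\Sigma_{\lambda+1}$ and $\Pi_{\lambda+1}$ Scott sentences for $P_{\omega^\lambda+\omega^\lambda}$, and the rest of your argument then goes through verbatim.
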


\begin{proof}
    We show that $P_{\omega^\lambda+\omega^\lambda}$ has the desired Scott complexity. Note, $\SR(\omega^\lambda+\omega^\lambda)=\lambda+1$ and $\pSR(\omega^\lambda + \omega^\lambda)=\lambda$ (see e.g.\ \cite{CST}), and so $\SR(P_{\omega^\lambda+\omega^\lambda})=\lambda+1$ and $\pSR(P_{\omega^\lambda+\omega^\lambda})= \lambda$ by Corollary \ref{psr=psr}. Thus, $P_{\omega^\lambda+\omega^\lambda}$ has both a $\Sigma_{\lambda+2}^{in}$ and a $\Pi_{\lambda+2}^{in}$ Scott sentence. However, we have $$\omega^\lambda +\omega^\lambda+ \omega^\lambda\leq_{\lambda+1} \omega^\lambda + \omega^\lambda\leq_{\lambda+1} \omega^\lambda $$ (see e.g.\ \cite{AK-book}) and so $$P_{\omega^\lambda+\omega^\lambda+\omega^\lambda}\leq_{\lambda+1} P_{\omega^\lambda+ \omega^\lambda}\leq_{\lambda +1} P_{\omega^\lambda}$$ by Corollary \ref{Ls iff 1+ Ps} (note $1+\lambda +1 = \lambda +1$). Hence, $P_{\omega^\lambda+\omega^\lambda}$ has no $\Sigma_{\lambda+1}^{in}$ nor $\Pi_{\lambda+1}^{in}$ Scott sentence and so $\SC(P_{\omega^\lambda +\omega^\lambda})=d$-$\Sigma_{\lambda +1}$. 
\end{proof}

\begin{proposition}
\label{pi l + 1}
    Given a limit ordinal $\lambda$, there exists a Presburger group with Scott complexity $\Pi_{\lambda+1}$. 
\end{proposition}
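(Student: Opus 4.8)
The plan is to show that $P_{\omega^\lambda}$ has Scott complexity $\Pi_{\lambda+1}$, mirroring the strategy of Proposition \ref{d sigma l + 1} but using only the ``upper half'' of the back-and-forth chain there. I will rely on two standard facts about the ordinal $\omega^\lambda$: first, that $\SR(\omega^\lambda)=\lambda$ (see e.g.\ \cite{CST}); and second, that $\omega^\lambda+\omega^\lambda\leq_{\lambda+1}\omega^\lambda$ (see e.g.\ \cite{AK-book}), which was already invoked in Proposition \ref{d sigma l + 1}.

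For the upper bound, since $\SR(\omega^\lambda)=\lambda\geq\omega$, Corollary \ref{psr=psr} gives $\SR(P_{\omega^\lambda})=\lambda$, and so by Theorem \ref{robuster scott rank} the group $P_{\omega^\lambda}$ has a $\Pi^{in}_{\lambda+1}$ Scott sentence. For the lower bound, the key step is to rule out a $\Sigma^{in}_{\lambda+1}$ Scott sentence. Since $1+(\lambda+1)=\lambda+1$, Corollary \ref{Ls iff 1+ Ps} transfers the inequality $\omega^\lambda+\omega^\lambda\leq_{\lambda+1}\omega^\lambda$ into $P_{\omega^\lambda+\omega^\lambda}\leq_{\lambda+1}P_{\omega^\lambda}$. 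These two groups are non-isomorphic: their Archimedean ranks are $1+(\omega^\lambda+\omega^\lambda)=\omega^\lambda+\omega^\lambda$ and $1+\omega^\lambda=\omega^\lambda$ respectively, and $\omega^\lambda+\omega^\lambda\ncong\omega^\lambda$. Hence, if $P_{\omega^\lambda}$ modeled a $\Sigma^{in}_{\lambda+1}$ Scott sentence $\phi$, then by Karp's theorem (\ref{karp}) the relation $P_{\omega^\lambda+\omega^\lambda}\leq_{\lambda+1}P_{\omega^\lambda}$ would force $P_{\omega^\lambda+\omega^\lambda}\models\phi$ and hence $P_{\omega^\lambda+\omega^\lambda}\cong P_{\omega^\lambda}$, a contradiction. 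So $P_{\omega^\lambda}$ has no $\Sigma^{in}_{\lambda+1}$ Scott sentence.

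To pin the complexity down exactly, I would combine these two observations. Every complexity class strictly below $\Pi_{\lambda+1}$ is of the form $\Sigma_\delta$, $\Pi_\delta$, or $d$-$\Sigma_\delta$ with $\delta\leq\lambda$, and each of these is contained in $\Sigma^{in}_{\lambda+1}$, using $\Pi_\delta\subseteq\Sigma_{\delta+1}\subseteq\Sigma_{\lambda+1}$ and $d$-$\Sigma_\delta\subseteq\Sigma_{\delta+1}\subseteq\Sigma_{\lambda+1}$ for $\delta\leq\lambda$. Thus the absence of a $\Sigma^{in}_{\lambda+1}$ Scott sentence automatically rules out every strictly lower complexity as well. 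Moreover, since $P_{\omega^\lambda}$ has a $\Pi^{in}_{\lambda+1}$ Scott sentence but no $\Sigma^{in}_{\lambda+1}$ one, A.\ Miller's theorem (\cite{A-Miller}) forbids any collapse to a $d$-$\Sigma^{in}_\beta$ Scott sentence with $\beta<\lambda+1$. Therefore $\Pi_{\lambda+1}$ is the least complexity admitted by $P_{\omega^\lambda}$, i.e.\ $\SC(P_{\omega^\lambda})=\Pi_{\lambda+1}$.

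The hard part will be the containment bookkeeping in the last paragraph, namely confirming that ruling out $\Sigma^{in}_{\lambda+1}$ genuinely eliminates all strictly lower complexities; this hinges on the ordinal-arithmetic identities $\delta+1\leq\lambda+1$ for $\delta\leq\lambda$ and on $1+(\lambda+1)=\lambda+1$, so that Corollary \ref{Ls iff 1+ Ps} applies at precisely level $\lambda+1$. The non-isomorphism $P_{\omega^\lambda+\omega^\lambda}\ncong P_{\omega^\lambda}$ is routine from Archimedean rank, but should be stated explicitly since it is what makes the back-and-forth inequality useful.
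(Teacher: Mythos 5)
Your proof is correct and follows essentially the same route as the paper's: it obtains the $\Pi^{in}_{\lambda+1}$ Scott sentence from $\SR(P_{\omega^\lambda})=\SR(\omega^\lambda)=\lambda$ via Corollary \ref{psr=psr}, and rules out a $\Sigma^{in}_{\lambda+1}$ Scott sentence from $P_{\omega^\lambda+\omega^\lambda}\leq_{\lambda+1}P_{\omega^\lambda}$ via Corollary \ref{Ls iff 1+ Ps}. The additional detail you supply (the Archimedean-rank non-isomorphism and the bookkeeping showing that excluding $\Sigma^{in}_{\lambda+1}$ eliminates every strictly lower complexity) is accurate and is simply left implicit in the paper.
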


\begin{proof}
    We show that $P_{\omega^\lambda}$ has the desired Scott complexity. Since $\SR(\omega^\lambda)=\lambda$, $\SR(P_{\omega^\lambda})=\lambda$ by Corollary \ref{psr=psr} and so $P_{\omega^\lambda}$ has a $\Pi_{\lambda+1}^{in}$ Scott sentence. However since $P_{\omega^\lambda+\omega^\lambda}\leq_{\lambda+1} P_{\omega^\lambda}$, we have that $P_{\omega^\lambda}$ has no $\Sigma^{in}_{\lambda+1}$ Scott sentence and so $\SC(P_{\omega^\lambda})=\Pi_{\lambda+1}$. 
\end{proof}

The remaining cases, $\Sigma_{\lambda +1 }$ and $\Pi_\lambda$, are the most difficult. To achieve them, we find a method for converting a $\Sigma_{\lambda +1}^{in}$ or $\Pi^{in}_\lambda$ Scott sentence for $\LL$ into a Scott sentence for $P_\LL$ of the same complexity.

\begin{lemma}
\label{pull back}
    Given $\alpha\geq 1$, a linear order $\LL$ and a $\Pi_\alpha^{in} $ sentence $\phi$ in the language of linear orders, there exists a $\Pi_{1+\alpha}^{in}$ sentence $\phi^*$ in the language of Presburger arithmetic such that $\LL\models \phi$ if and only if $P_\LL\models \phi^*$. 
\end{lemma}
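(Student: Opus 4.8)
The plan is to realize $\LL$ as a definable quotient inside $P_\LL$ and to take $\phi^*$ to be the relativization of $\phi$ to this interpretation. Writing elements of $P_\LL = V_\LL\times\Z$ as pairs $(v,z)$, let
$$U(x) := x>0 \,\wedge\, 1\ll x,$$
which holds of $(v,z)$ exactly when $v>0$. For such an element the Archimedean class is determined by $\max(\supp(v))\in\LL$, so (by Definition \ref{arch equiv}) the map $(v,z)\mapsto\max(\supp(v))$ induces an order isomorphism from $\big(U(P_\LL)/{\sim},\ll\big)$ onto $\LL$, with $\pi(l)$ a representative of the class corresponding to $l$. I would define the relativization $\cdot^*$ on formulas in the language of linear orders by $(x<y)^*:=x\ll y$, $(x=y)^*:=x\sim y$, letting $\cdot^*$ commute with $\neg,\wedge,\vee,\WW,\VV$, and setting $(\E x\,\psi)^*:=\E x\,(U(x)\wedge\psi^*)$ and $(\forall x\,\psi)^*:=\forall x\,(U(x)\to\psi^*)$. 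Since $\ll$ and $\sim$ are $\sim$-invariant and their quotient is $\LL$, the standard interpretation lemma (see e.g.\ \cite{AK-book}) gives that for all $p_1,\dots,p_n\in U(P_\LL)$ with Archimedean classes $l_1,\dots,l_n$ one has $\LL\models\phi(l_1,\dots,l_n)\iff P_\LL\models\phi^*(p_1,\dots,p_n)$; specializing to a sentence yields $\LL\models\phi\iff P_\LL\models\phi^*$.

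It remains to check that $\phi^*$ can be taken $\Pi^{in}_{1+\alpha}$. First I record the complexity of the atoms: $1\ll x$ is $\WW_N(N1<x)$, hence $\Pi^{in}_1$, so $U$ is $\Pi^{in}_1$; likewise $x\ll y$ is $\Pi^{in}_1$ and $x\sim y$ is $\Sigma^{in}_1$. Consequently, for quantifier-free $\theta$ in the language of orders, $\theta^*$ is a finite Boolean combination of $\Sigma^{in}_1$ and $\Pi^{in}_1$ formulas, so $\theta^*\in\Sigma^{in}_2\cap\Pi^{in}_2$. I will use the usual closure facts: $\Sigma^{in}_\gamma$ is closed under $\VV$ and finite $\wedge$, $\Pi^{in}_\gamma$ under $\WW$ and finite $\vee$; $\E(\Pi^{in}_\gamma)\subseteq\Sigma^{in}_{\gamma+1}$ and $\forall(\Sigma^{in}_\gamma)\subseteq\Pi^{in}_{\gamma+1}$; and $\E(\Sigma^{in}_\gamma)\subseteq\Sigma^{in}_\gamma$, $\forall(\Pi^{in}_\gamma)\subseteq\Pi^{in}_\gamma$ for $\gamma\geq1$.

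I would then prove, by simultaneous induction on $\alpha\geq1$, that $\phi\in\Pi^{in}_\alpha\Rightarrow\phi^*\in\Pi^{in}_{1+\alpha}$ and $\phi\in\Sigma^{in}_\alpha\Rightarrow\phi^*\in\Sigma^{in}_{1+\alpha}$; the lemma is the first implication. Writing a $\Pi^{in}_\alpha$ formula as $\WW_i\forall\vec y_i\,\chi_i$ with $\chi_i\in\Sigma^{in}_{\beta_i}$, $\beta_i<\alpha$, relativization gives $\phi^*=\WW_i\forall\vec y_i\big((\bigvee_j\neg U(y_{i,j}))\vee\chi_i^*\big)$. When $\beta_i\geq1$ the inductive hypothesis puts $\chi_i^*$ in $\Sigma^{in}_{1+\beta_i}$, so each matrix is $\Sigma^{in}_{1+\beta_i}$ and each block $\forall\vec y_i(\cdots)$ is $\Pi^{in}_{1+\beta_i+1}=\Pi^{in}_{1+(\beta_i+1)}\subseteq\Pi^{in}_{1+\alpha}$ since $\beta_i+1\leq\alpha$; when $\beta_i=0$ the matrix is a Boolean combination of $\Sigma^{in}_1,\Pi^{in}_1$, read as $\Pi^{in}_2$, and the block is $\Pi^{in}_2\subseteq\Pi^{in}_{1+\alpha}$. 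The outer $\WW_i$ keeps us in $\Pi^{in}_{1+\alpha}$. The $\Sigma$ case is dual: writing $\VV_i\E\vec y_i\,\chi_i$ with $\chi_i\in\Pi^{in}_{\beta_i}$, the block $\E\vec y_i(\bigwedge_j U(y_{i,j})\wedge\chi_i^*)$ is $\Sigma^{in}_{1+\beta_i+1}$ when $\beta_i\geq1$ and lies in $\Sigma^{in}_2$ when $\beta_i=0$ (via $\E(d\text{-}\Sigma^{in}_1)\subseteq\Sigma^{in}_2$), both inside $\Sigma^{in}_{1+\alpha}$. For limit $\alpha$ one uses $1+\alpha=\alpha$ together with $1+\beta_i<\alpha$.

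The only delicate point — the place where a naive bound threatens to drift up a level — is the base of the hierarchy: a relativized atom is neither $\Sigma^{in}_1$ nor $\Pi^{in}_1$ but a genuine $d\text{-}\Sigma^{in}_1$ formula, since $<$ becomes the $\Pi^{in}_1$ relation $\ll$ while $=$ becomes the $\Sigma^{in}_1$ relation $\sim$. The crux is to route these mixed atoms correctly through the quantifiers: a $d\text{-}\Sigma^{in}_1$ matrix under a universal quantifier must be read as $\Pi^{in}_2$ (using $\forall(\Pi^{in}_2)=\Pi^{in}_2$), whereas under an existential quantifier the existential absorbs the $\Sigma^{in}_1$ part, giving $\E(d\text{-}\Sigma^{in}_1)=\Sigma^{in}_2$ rather than the naive $\Sigma^{in}_3$. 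Getting this one-level accounting right at the base, and then tracking the ordinal arithmetic $1+\beta_i\leq\alpha$ (with $1+\alpha=\alpha$ at limits) in the inductive step, is exactly what makes the target come out as $\Pi^{in}_{1+\alpha}$ rather than $\Pi^{in}_{2+\alpha}$. I expect the semantic interpretation claim to be routine and this complexity bookkeeping to be the main content of the proof.
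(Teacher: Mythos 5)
Your proposal is correct and follows essentially the same route as the paper: both interpret $\LL$ as the Archimedean quotient of the positive non-integer part of $P_\LL$, translate $<$ and $=$ into the $\Pi^{in}_1$ relation $\ll$ and the $\Sigma^{in}_1$ relation $\sim$, guard the quantifiers (your $\neg U(y)$ disjunct is the paper's $\VV_n y<n$), and induct on complexity with the same $\Delta^{in}_2$ base case and $1+\alpha$ ordinal arithmetic. The only cosmetic differences are that you run a simultaneous $\Sigma/\Pi$ induction where the paper sets $\varphi^*=\neg(\neg\varphi)^*$, and you spell out the closure facts (e.g.\ $\E(d\text{-}\Sigma^{in}_1)\subseteq\Sigma^{in}_2$) that the paper leaves implicit.
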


\begin{proof}
    This follows from the Pull-back theorem for jumps, as discussed in \cite{Maher2009}. We already have that there is a $\Delta^0_2$ embedding from the class of countable linear orders to the class of Presburger groups. Thus, the result in \cite{Maher2009} yields that any computable $\Pi_{\alpha}^{in}$ sentence $\phi$ can be turned into the needed $\phi^*$, and this $\phi^*$ will be computable $\Pi_{1+\alpha}^{in}$. However if $\phi$ is $\Pi_{\alpha}^{in}$ but not necessarily computably $\Pi_{\alpha}^{in}$, then the methods outlined in \cite{Maher2009} will still yield a (not necessarily computable) $\Pi_{1+\alpha}^{in}$ sentence $\phi^*$ such that $\LL\models \phi$ if and only if $P_\LL\models \phi^*$. 
\end{proof}

\begin{proposition}
\label{limit scott sentences transfer}
    Take any $\alpha\geq \omega$. If $\LL$ has a $\Pi^{in}_\alpha$ Scott sentence, then $P_\LL$ also has a $\Pi^{in}_\alpha$ Scott sentence. If $\LL$ has a a $\Sigma^{in}_\alpha$ Scott sentence, then $P_\LL$ also has a $\Sigma^{in}_\alpha$ Scott sentence.
\end{proposition}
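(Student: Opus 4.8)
The plan is to combine the preceding lemma with a single bounded-complexity sentence that pins down the class of groups of the form $P_{\LL'}$. Since $\alpha\geq\omega$ we have $1+\alpha=\alpha$, so the preceding lemma takes a $\Pi^{in}_\alpha$ sentence $\phi$ in the language of linear orders to a $\Pi^{in}_{1+\alpha}=\Pi^{in}_\alpha$ sentence $\phi^*$ in the language of Presburger arithmetic with $\LL'\models\phi\iff P_{\LL'}\models\phi^*$ for \emph{every} countable linear order $\LL'$; this uniformity holds because $\phi^*$ relativizes quantifiers to positive non-integer elements and replaces $<,=$ by the archimedean relations $<^*,=^*$, so its construction does not depend on the particular $\LL'$. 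For the $\Sigma$ case, given a $\Sigma^{in}_\alpha$ Scott sentence $\phi$ for $\LL$, I would apply the lemma to the $\Pi^{in}_\alpha$ sentence $\neg\phi$ and set $\phi^*:=\neg(\neg\phi)^*$, which is $\Sigma^{in}_\alpha$ and satisfies $P_{\LL'}\models\phi^*\iff\LL'\models\phi$.

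The new ingredient is a sentence $\chi$ characterizing, up to isomorphism, the class $\{P_{\LL'}:\LL' \text{ a countable linear order}\}$. I would take $\chi$ to be the conjunction of three statements: (i) the complete, computably axiomatized Presburger axioms, a countable conjunction of $\Pi^{in}_2$ sentences (hence $\Pi^{in}_2$), forcing any model to be a Presburger group; (ii) \emph{plainness}, $\forall x\,\VV_{z\in\Z}\rho(x)=\rho(z)$, which by Proposition \ref{plain} forces a model to have the form $V\times\Z$ for a divisible ordered abelian group $V$; and (iii) the assertion that every archimedean layer of $V$ is one-dimensional over $\Q$, expressed by saying that for all positive $x,y$ with $\rho(x)=\rho(y)=\rho(0)$ and $x=^*y$ there is a rational $q$ with $qx-y<^*x$ or $qx=y$. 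Each conjunct has finite complexity, so $\chi$ is $\Pi^{in}_k$ for some finite $k$.

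The proposed Scott sentence is then $\chi\wedge\phi^*$. For the complexity: since $\alpha\geq\omega$ we have $k<\alpha$, so $\Pi^{in}_k\wedge\Pi^{in}_\alpha$ is $\Pi^{in}_\alpha$; and in the $\Sigma$ case, writing $\phi^*=\VV_i\E\vec y\,\psi_i$ with each $\psi_i$ of complexity $\Pi^{in}_{\beta_i}$, $\beta_i<\alpha$, we have $\chi\wedge\phi^*\equiv\VV_i\E\vec y\,(\chi\wedge\psi_i)$, each matrix now of complexity $\Pi^{in}_{\max(k,\beta_i)}$ with $\max(k,\beta_i)<\alpha$, so the whole is $\Sigma^{in}_\alpha$. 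To verify $\chi\wedge\phi^*$ is a Scott sentence: $P_\LL$ satisfies $\chi$ by construction and satisfies $\phi^*$ because $\LL\models\phi$; conversely, if a countable $Q$ satisfies $\chi$ then $Q\cong P_{\LL'}$ for $\LL'=\AR(V)$, and $Q\models\phi^*$ gives $P_{\LL'}\models\phi^*$, hence $\LL'\models\phi$, hence $\LL'\cong\LL$ (as $\phi$ is a Scott sentence for $\LL$), so $Q\cong P_\LL$.

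The main obstacle is step (iii): one must show that plainness, divisibility, and one-dimensional layers really force a countable model to be $\cong P_{\AR(V)}$, i.e.\ that a countable divisible ordered abelian group all of whose archimedean layers are one-dimensional over $\Q$ is isomorphic to $\bigoplus_{\AR(V)}\Q=V_{\AR(V)}$. This is not immediate, since a priori such a $V$ could sit strictly between the Hahn sum and the Hahn product of copies of $\Q$; the point is that for a \emph{countable} $V$ one can build an isomorphism onto the Hahn sum by a back-and-forth selection of archimedean-class representatives (absorbing any infinite-support generators into a direct-sum basis, as one checks on small examples). I expect this basis-selection argument, rather than the complexity bookkeeping, to be the technical heart of the proof.
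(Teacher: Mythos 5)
Your proposal is, in outline, the same proof as the paper's: the paper also builds a single sentence $Pr \wedge Plain \wedge \Psi \wedge \phi^*$, where $Pr$ is the conjunction of the Presburger axioms, $Plain$ is your conjunct (ii), $\Psi$ is a layer axiom, and $\phi^*$ comes from the preceding lemma, with the same complexity bookkeeping (the first three conjuncts have finite complexity and $1+\alpha=\alpha$). The one substantive divergence is the layer axiom, and there your version is actually the correct one. The paper's $\Psi$ demands that any two archimedean-equivalent elements $x,y$ with $\rho(x)=\rho(y)=\rho(0)$ satisfy $n|x|=m|y|$ for some $n,m$, i.e.\ be rational multiples of one another; but this fails in $P_\LL$ itself as soon as $|\LL|\geq 2$, since for $l_1<l_2$ the elements $\pi(l_1)+\pi(l_2)$ and $\pi(l_2)$ are archimedean equivalent, have zero residue sequences, and are not rationally dependent (and if $n=m=0$ were allowed, $\Psi$ would instead be vacuous and pin down nothing). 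Your condition (iii) --- each archimedean layer is one-dimensional over $\Q$ --- is the property that $P_\LL$ really has and is still strong enough. One small repair: with the lemma's definition of $<^*$, the formula ``$qx-y<^*x$'' holds vacuously whenever $qx-y<0$, so you must write $|qx-y|\ll x$ (both $n(qx-y)<x$ and $n(y-qx)<x$ for all $n$) rather than $qx-y<^*x$.

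The genuine gap is the one you flag yourself: you never prove that a countable model of your $\chi$ is isomorphic to $P_{\AR(V)}=\bigl(\bigoplus_{\AR(V)}\Q\bigr)\times\Z$, i.e.\ that a countable divisible ordered abelian group with one-dimensional layers is isomorphic to the Hahn sum over its archimedean rank; you only predict that a back-and-forth will work. It does, and the absorption argument you gesture at can be completed as follows. Maintain a finite set $B\subseteq V$ of positive elements lying in pairwise distinct archimedean classes; such a set is automatically ``valuation independent,'' since the class of any nonzero rational combination of $B$ is the largest class occurring in it. To absorb a new element $a$, repeatedly subtract from the current remainder a rational multiple of the element of $B$ in the same archimedean class (this is exactly what (iii) provides); each subtraction strictly lowers the class of the remainder, and the class must lie in the finite set of classes of $B$ for the process to continue, so after finitely many steps the remainder is either $0$ (then $a$ lies in the $\Q$-linear span of $B$) or lies in a class not represented in $B$, in which case it (or its negative) is added to $B$. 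Running this over an enumeration of $V$ yields a basis with exactly one element per archimedean class, and sending it to the standard basis of $\bigoplus_{\AR(V)}\Q$ is an order isomorphism. (This is in essence Brown's theorem that countable-dimensional valued vector spaces admit valuation bases; it also disposes of your worry about groups sitting strictly between the Hahn sum and the Hahn product --- in the countable, one-dimensional-layer case they do not exist up to isomorphism.) With that lemma supplied your proof is complete, and it in fact repairs the step that the paper's own $\Psi$ gets wrong.
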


\begin{proof}
    We prove the $\Pi^{in}_\alpha$ case. The proof of the $\Sigma^{in}_\alpha$ case is analogous. Suppose $\LL$ has a $\Pi^{in}_\alpha$ Scott sentence. Define $Pr$ to be the (infinite) conjunction of all the Presburger axioms. Note, $Pr$ is $\Pi^{in}_3$. Recall that $\rho(x)$ denotes the residue sequence of the element $x$ of a Presburger group. Define $$Plain:= (\forall x) \VV_{z\in \Z} \rho(x)=\rho(z) . $$ Given a Presburger group $P$, we have $P\models Plain$ if and only if $P$ is a plain Presburger group. Note, $Plain$ is also $\Pi^{in}_3$. 
    
    Recall that we write $x\sim y $ when $x$ and $y$ are archimedean equivalent and write $|x|$ to denote the max of $\{x,-x\}$. Define $$\Psi:= (\forall x,y)\left[\rho(x)\neq \rho(0)~ \vee~ \rho(y)\neq \rho(0) ~\vee ~ \neg(x\sim y) ~\vee ~ \VV_{n,m\in \N} n|x|=m|y|\right]. $$ Given a plain Presburger group $P$, we have that $P=V\times \Z$ for some divisible ordered abelian group $V$ by  Proposition \ref{plain}. If $P\models \Psi$ then given two elements of the form $x=(v,0)$ and $y=(u,0)$, if $x\sim y$ then  $y$ is a rational multiple of $x$. This holds if and only if $V$ has the form $V_{\mathcal{K}}$ for some linear order $\mathcal{K}$, and so $P\models \Psi$ if and only if $P\cong P_\mathcal{K}$ for some $\mathcal{K}$. Note, $\Psi$ is $\Pi^{in}_2$.
    
    Last, let $\phi$ be a $\Pi^{in}_\alpha$ Scott sentence of $\LL$. Let $\phi^*$ be as in the previous lemma. Since $1+\alpha=\alpha$, we have that $\phi^*$ is $\Pi^{in}_\alpha$. Define $$\Phi:= Pr ~\wedge ~ Plain~\wedge~\Psi ~\wedge~ \phi^*.$$ If $P\models \Phi$, then since $\Phi$ contains $Pr$, $Plain$ and $\Psi$, $P$ must have the form $P_{\mathcal{K}}$ for some linear order $\mathcal{K}$. Since $P\models \phi^*$, we have by the previous lemma that $\mathcal{K}\cong \LL$. Hence, $\Phi$ is a $\Pi_{\alpha}^{in}$ Scott sentence for $P_\LL$. 
\end{proof}

\begin{theorem}
\label{SCL < SCPL}
    Let $\LL$ be a countably infinite linear order. If $\SC(\LL)= \Gamma_{\alpha}$, then $\SC(P_\LL)\leq \Gamma_{1+\alpha}$ (for $\Gamma = \Pi, \Sigma$ or $d$-$\Sigma$).
\end{theorem}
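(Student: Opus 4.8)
The plan is to prove each of the three cases ($\Gamma = \Pi$, $\Sigma$, $d$-$\Sigma$) for the statement $\SC(P_\LL) \le \Gamma_{1+\alpha}$ by combining Proposition \ref{limit scott sentences transfer} with the alternative characterizations of Scott sentence complexity via (parameterized) Scott rank. The cleanest strategy is to split into the finite and infinite regimes and use the relationship between $\SC$ and the pair $(\pSR, \SR)$ supplied by Lemma \ref{alpha+2}, together with the rank bounds of Lemmas \ref{less than 1 +}, \ref{srl<srpl}, and \ref{psr PL<1+ psr L}.

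First I would dispose of the case $\alpha \ge \omega$ directly. When $\alpha$ is infinite we have $1+\alpha = \alpha$, so it suffices to show $\SC(P_\LL) \le \Gamma_\alpha$. For $\Gamma = \Pi$ and $\Gamma = \Sigma$, this is immediate from Proposition \ref{limit scott sentences transfer}: a $\Pi^{in}_\alpha$ (resp.\ $\Sigma^{in}_\alpha$) Scott sentence for $\LL$ transfers to one for $P_\LL$. For $\Gamma = d$-$\Sigma$, recall that $\SC(\LL) = d$-$\Sigma_\alpha$ means $\LL$ has both a $\Sigma^{in}_\alpha$ Scott sentence and a $\Pi^{in}_\alpha$ Scott sentence (and no simpler one); applying Proposition \ref{limit scott sentences transfer} twice gives $P_\LL$ both a $\Sigma^{in}_\alpha$ and a $\Pi^{in}_\alpha$ Scott sentence, and by A.\ Miller's result cited in the introduction this yields a $d$-$\Sigma^{in}_\beta$ Scott sentence for some $\beta \le \alpha$, so $\SC(P_\LL) \le d$-$\Sigma_\alpha$ as required.

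The finite case $\alpha < \omega$ is where $1+\alpha$ genuinely shifts the index and where I would instead argue through the rank characterizations, since Proposition \ref{limit scott sentences transfer} is stated only for $\alpha \ge \omega$. Here I would translate $\SC(\LL) = \Gamma_\alpha$ into statements about $\SR(\LL)$ and $\pSR(\LL)$ using Lemma \ref{alpha+2} (writing $\alpha = \beta+2$ in the three finite subcases), then push these through the bounds $\SR(P_\LL) \le 1+\SR(\LL)$ and $\pSR(P_\LL) \le 1+\pSR(\LL)$ from Lemmas \ref{less than 1 +} and \ref{psr PL<1+ psr L}, and finally read off $\SC(P_\LL)$ via the reverse direction of Lemma \ref{alpha+2}. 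One must check that the lower-rank inequalities $\SR(\LL) \le \SR(P_\LL)$ and $\pSR(\LL) \le \pSR(P_\LL)$ (Lemmas \ref{srl<srpl}, \ref{psr PL<1+ psr L}) are enough to pin down, rather than merely bound, the ranks of $P_\LL$ so that the correct $\Gamma$ is recovered; since we only need the inequality $\SC(P_\LL) \le \Gamma_{1+\alpha}$ and not equality, the upper bounds on rank combined with the existence of the appropriate Scott sentences should suffice.

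The main obstacle I anticipate is bookkeeping at the boundary between the two regimes and making sure the $d$-$\Sigma$ case is handled uniformly. In particular, the A.\ Miller argument guarantees only a $d$-$\Sigma^{in}_\beta$ sentence for \emph{some} $\beta \le \alpha$, which is consistent with the claimed bound $\SC(P_\LL) \le d$-$\Sigma_{1+\alpha}$ but requires care to state correctly, and one should confirm that having both a $\Sigma^{in}_{1+\alpha}$ and a $\Pi^{in}_{1+\alpha}$ Scott sentence is exactly what the definition of $\SC(P_\LL) \le d$-$\Sigma_{1+\alpha}$ demands. A secondary subtlety is that for very small finite $\alpha$ the formulas $Pr$, $Plain$, and $\Psi$ appearing in Proposition \ref{limit scott sentences transfer} have fixed complexity $\Pi^{in}_3$, $\Pi^{in}_3$, $\Pi^{in}_2$, so the transfer technique alone cannot produce a sentence below $\Pi_3$; this is precisely why the finite case must route through the rank lemmas rather than through Proposition \ref{limit scott sentences transfer}, and I would flag this explicitly to justify the case split.
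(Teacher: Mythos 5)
Your overall strategy is the paper's: split on whether $\alpha$ is finite or infinite, use Proposition \ref{limit scott sentences transfer} to transfer $\Sigma^{in}_\alpha$ and $\Pi^{in}_\alpha$ Scott sentences in the infinite case, route the finite case through the rank bounds $\SR(P_\LL)\leq 1+\SR(\LL)$ and $\pSR(P_\LL)\leq 1+\pSR(\LL)$, and handle $d$-$\Sigma$ by reducing to the $\Sigma$ and $\Pi$ cases. Your observation that the transfer sentence has a built-in $\Pi^{in}_3$ floor and therefore cannot be used for small finite indices is correct and is exactly why the paper also splits the cases this way.

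There is, however, one genuinely wrong step: you assert that $\SC(\LL)=d$-$\Sigma_\alpha$ means $\LL$ has both a $\Sigma^{in}_\alpha$ and a $\Pi^{in}_\alpha$ Scott sentence. That is false, and in fact contradicts the very result of A.\ Miller you then invoke: if $\LL$ had both a $\Sigma^{in}_\alpha$ and a $\Pi^{in}_\alpha$ Scott sentence (with $\alpha\geq 2$), it would have a $d$-$\Sigma^{in}_\beta$ Scott sentence for some $\beta<\alpha$, so its Scott sentence complexity would be strictly below $d$-$\Sigma_\alpha$. The correct characterization, which the paper uses, is that a structure has a $d$-$\Sigma^{in}_\alpha$ Scott sentence if and only if it has both a $\Sigma^{in}_{\alpha+1}$ and a $\Pi^{in}_{\alpha+1}$ Scott sentence (a $d$-$\Sigma^{in}_\alpha$ sentence is both $\Sigma^{in}_{\alpha+1}$ and $\Pi^{in}_{\alpha+1}$, and conversely Miller's theorem applied at level $\alpha+1$ yields a $d$-$\Sigma^{in}_\beta$ sentence with $\beta\leq\alpha$). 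The repair is a one-index shift: apply the $\Sigma$ and $\Pi$ cases of the theorem at level $\alpha+1$ (for infinite $\alpha$ the transfer proposition still applies since $\alpha+1\geq\omega$, and $1+(\alpha+1)=\alpha+1$; for finite $\alpha$ the rank lemmas give $\Sigma^{in}_{1+\alpha+1}$ and $\Pi^{in}_{1+\alpha+1}$ Scott sentences for $P_\LL$), and then conclude that $P_\LL$ has a $d$-$\Sigma^{in}_{1+\alpha}$ Scott sentence. With that correction your argument goes through and coincides with the paper's proof.
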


\begin{proof}
    If $\alpha$ is infinite, then the $\Sigma_\alpha$ and $\Pi_\alpha$ cases follow from Proposition \ref{limit scott sentences transfer}. If $\alpha$ is finite, then the $\Sigma_\alpha$ case follows from the fact that $\pSR(P_\LL) \leq 1 +\pSR(\LL)$ (Lemma \ref{psr PL<1+ psr L}). The $\Pi_{\alpha}$ case follows from the fact that $\SR(P_\LL)\leq 1 +\SR(\LL)$ (Lemma \ref{less than 1 +}).

    For the $d$-$\Sigma_\alpha$ cases, recall that a structure has a $d$-$\Sigma_\alpha^{in}$ Scott sentence if and only if it has both a $\Pi^{in}_{\alpha+1}$ and a $\Sigma^{in}_{\alpha+1}$ Scott sentence. 
\end{proof}

\begin{proposition}
\label{Sigma Lambda + 1}
Given a limit ordinal $\lambda$, there exists a Presburger group with Scott complexity $\Sigma_{\lambda + 1}$.

\end{proposition}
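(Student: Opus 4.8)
The plan is to find a linear order $\LL$ with $\SC(\LL)=\Sigma_{\lambda+1}$ and then transfer this complexity to $P_\LL$ using the machinery already developed. By the results of \cite{david-dino}, such a linear order $\LL$ exists. The goal is to show $\SC(P_\LL)=\Sigma_{\lambda+1}$. Since $1+\lambda=\lambda$ for the limit ordinal $\lambda$, we have $1+(\lambda+1)=\lambda+1$, so Theorem \ref{SCL < SCPL} immediately gives the upper bound $\SC(P_\LL)\leq \Sigma_{\lambda+1}$. Thus $P_\LL$ has a $\Sigma^{in}_{\lambda+1}$ Scott sentence (this is exactly the content of the $\Sigma^{in}_\alpha$ case of Proposition \ref{limit scott sentences transfer}, since $\lambda+1$ is infinite).

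The remaining task is to establish the lower bound, namely that $P_\LL$ has no $\Pi^{in}_{\lambda+1}$ Scott sentence. The strategy here mirrors the earlier propositions (e.g.\ Propositions \ref{pi_n sc} and \ref{d-sigma_n sc}): I would invoke the fact from \cite{david-dino} that a linear order of complexity $\Sigma_{\lambda+1}$ comes equipped with a witness $\LL'\ncong\LL$ satisfying $\LL\leq_{\lambda+1}\LL'$. The existence of such a back-and-forth witness is precisely what prevents $\LL$ from having a $\Pi^{in}_{\lambda+1}$ Scott sentence. Applying Corollary \ref{Ls iff 1+ Ps}, this transfers to $P_\LL\leq_{1+(\lambda+1)}P_{\LL'}=P_\LL\leq_{\lambda+1}P_{\LL'}$. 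Since $\LL'\ncong\LL$ implies $P_{\LL'}\ncong P_\LL$ (the Archimedean rank of $P_\LL$ recovers $\LL$ up to isomorphism), we obtain a nonisomorphic structure $P_{\LL'}$ that is $\leq_{\lambda+1}$-above $P_\LL$. By Karp's theorem (\ref{karp}), this means any $\Pi^{in}_{\lambda+1}$ sentence true of $P_\LL$ is also true of $P_{\LL'}$, so no $\Pi^{in}_{\lambda+1}$ sentence can be a Scott sentence for $P_\LL$.

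Combining the two bounds, $P_\LL$ has a $\Sigma^{in}_{\lambda+1}$ Scott sentence but no $\Pi^{in}_{\lambda+1}$ Scott sentence; since no infinite structure has a $\Sigma^{in}_\lambda$ sentence for the limit $\lambda$, and the $\Sigma^{in}_{\lambda+1}$ sentence is genuinely the simplest available, we conclude $\SC(P_\LL)=\Sigma_{\lambda+1}$. I expect the main subtlety to lie not in the transfer arguments themselves, which are routine given Corollary \ref{Ls iff 1+ Ps} and Theorem \ref{SCL < SCPL}, but in confirming that the specific linear order supplied by \cite{david-dino} for the $\Sigma_{\lambda+1}$ case indeed carries the needed companion $\LL'$ with $\LL\leq_{\lambda+1}\LL'$ and $\LL'\ncong\LL$, and in ensuring that the definition of $\SC$ together with A.\ Miller's result (cited earlier) genuinely forces the complexity to be exactly $\Sigma_{\lambda+1}$ rather than some $d$-$\Sigma_\beta$ with $\beta<\lambda+1$. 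This last point is handled automatically because having a $\Sigma^{in}_{\lambda+1}$ but no $\Pi^{in}_{\lambda+1}$ Scott sentence rules out both $\Pi_{\lambda+1}$ and any lower $d$-$\Sigma_\beta$, leaving $\Sigma_{\lambda+1}$ as the least possible $\Gamma$.
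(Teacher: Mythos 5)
Your upper bound is exactly the paper's: cite a linear order $\LL$ with $\SC(\LL)=\Sigma_{\lambda+1}$ and apply Theorem \ref{SCL < SCPL} (the paper takes $\LL$ from \cite{tubo-david-mathew} rather than \cite{david-dino}, but that is immaterial). For the lower bound you diverge, and your route carries a dependency the paper's does not. The paper argues: since $\SC(\LL)=\Sigma_{\lambda+1}$, $\LL$ has a $\Sigma^{in}_{\lambda+1}$ but no $\Pi^{in}_{\lambda+1}$ Scott sentence, hence $\SR(\LL)=\lambda+1$; because this rank is infinite, Corollary \ref{psr=psr} (equivalently, Lemmas \ref{less than 1 +} and \ref{srl<srpl} together) gives $\SR(P_\LL)=\lambda+1$, which already rules out a $\Pi^{in}_{\lambda+1}$ Scott sentence for $P_\LL$ --- no auxiliary structure $\LL'$ is needed. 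You instead want a companion $\LL'\ncong\LL$ with $\LL\leq_{\lambda+1}\LL'$ and then push it through Corollary \ref{Ls iff 1+ Ps}. That step is sound if such an $\LL'$ exists, and your observation that $P_{\LL'}\ncong P_\LL$ (via Archimedean rank) is fine; but be aware that the existence of such a companion does \emph{not} follow from $\SC(\LL)=\Sigma_{\lambda+1}$ alone --- the implication ``no $\Pi^{in}_\alpha$ Scott sentence $\Rightarrow$ there is a nonisomorphic $\leq_\alpha$-successor'' is precisely the open Question \ref{Q david/matthew}. So your proof is conditional on extracting the witness $\LL'$ from the specific construction in the cited papers (you flag this yourself), whereas the Scott-rank argument closes the gap unconditionally. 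If you keep your approach, you must actually verify the companion exists for the $\Sigma_{\lambda+1}$ orders you cite; otherwise, replace the lower bound with the rank computation. Your final paragraph ruling out lower complexities (including $d$-$\Sigma_\beta$) is correct.
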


\begin{proof}
    In \cite{tubo-david-mathew}, it is shown that there is a linear order $\LL$ with Scott complexity $\Sigma_{\lambda+1}$. By Theorem \ref{SCL < SCPL}, the Presburger group $P_\LL$ has a $\Sigma^{in}_{\lambda+1}$ Scott sentence. Since $\SC(\LL)=\Sigma_{\lambda+1}$, $\LL$ has no $\Pi^{in}_{\lambda+1}$ Scott sentence. Thus, $\SR(\LL)=\lambda +1$ and so $\SR(P_\LL)=\lambda +1$ by Lemma \ref{less than 1 +}. Hence, $P_\LL$ also has no $\Pi^{in}_{\lambda+1}$ Scott sentence and so $\SC(P_\LL)=\Sigma_{\lambda+1}$.
\end{proof}

\begin{proposition}
\label{Pi l}
    Given a limit ordinal $\lambda$, there exists a Presburger group with Scott complexity $\Pi_\lambda$.
\end{proposition}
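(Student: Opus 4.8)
The plan is to locate a countable linear order $\LL$ with $\SC(\LL)=\Pi_\lambda$ and to show that $P_\LL$ inherits this complexity. Such an $\LL$ exists by the classification of Scott complexities of linear orders in \cite{tubo-david-mathew} and \cite{david-dino}. The upper bound is then immediate: since $\lambda$ is a limit we have $1+\lambda=\lambda$, so Theorem \ref{SCL < SCPL} (whose infinite-$\alpha$ case rests on Proposition \ref{limit scott sentences transfer}) gives that $P_\LL$ has a $\Pi^{in}_\lambda$ Scott sentence, i.e.\ $\SC(P_\LL)\leq \Pi_\lambda$. Everything then reduces to the matching lower bound.

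First I would reduce the lower bound to a single condition, exploiting that $\lambda$ is a limit: for a limit ordinal $\lambda$, any countable structure $\A$ with a $\Pi^{in}_\lambda$ Scott sentence and no $\Sigma^{in}_\lambda$ Scott sentence must satisfy $\SC(\A)=\Pi_\lambda$. Indeed, every complexity strictly below $\Pi_\lambda$ has the form $\Gamma_\beta$ with $\beta<\lambda$, and in each of the three cases $\Gamma\in\{\Sigma,\Pi,d\text{-}\Sigma\}$ the structure would then possess a $\Sigma^{in}_\lambda$ Scott sentence (directly when $\Gamma=\Sigma$, since $\beta<\lambda$; because a $\Pi^{in}_\beta$ sentence is trivially $\Sigma^{in}_{\beta+1}$ with $\beta+1<\lambda$ when $\Gamma=\Pi$; and because a $d$-$\Sigma^{in}_\beta$ Scott sentence supplies a $\Sigma^{in}_{\beta+1}$ one when $\Gamma=d$-$\Sigma$), a contradiction. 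The only remaining complexities at level $\lambda$, namely $\Sigma_\lambda$ and $d$-$\Sigma_\lambda$, are not achievable for limit $\lambda$ by \cite{Alvir-et}. Hence it suffices to show that $P_\LL$ has no $\Sigma^{in}_\lambda$ Scott sentence.

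Finally I would transfer the absence of a $\Sigma^{in}_\lambda$ Scott sentence from $\LL$ to $P_\LL$ through the back-and-forth relations. By the standard correspondence between these relations and Scott sentences (see e.g.\ \cite{AK-book}), a countable structure has a $\Sigma^{in}_\lambda$ Scott sentence exactly when every countable $\B$ with $\B\leq_\lambda\A$ is isomorphic to $\A$. Since $\SC(\LL)=\Pi_\lambda$, $\LL$ has no $\Sigma^{in}_\lambda$ Scott sentence, so there is a countable $\K\not\cong\LL$ with $\K\leq_\lambda\LL$. Applying Corollary \ref{Ls iff 1+ Ps} together with $1+\lambda=\lambda$ yields $P_\K\leq_\lambda P_\LL$, while $P_\K\not\cong P_\LL$ because the Archimedean rank recovers the order (one checks $\AR(P_\K)\cong \K$ with a least element adjoined for the integer class, so $P_\K\cong P_\LL$ would force $\K\cong\LL$). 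Thus $P_\LL$ has a $\leq_\lambda$-predecessor not isomorphic to it, and therefore, via Karp's theorem (\ref{karp}) and the definition of a Scott sentence, no $\Sigma^{in}_\lambda$ Scott sentence. Combined with the reduction above, this gives $\SC(P_\LL)=\Pi_\lambda$.

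I expect the crux to be the reduction step, where the limit structure of $\lambda$ is genuinely used: one must verify that controlling only the $\Sigma^{in}_\lambda$ side suffices to pin the complexity down, which rests on the inclusion of every lower complexity into $\Sigma^{in}_\lambda$ and on the unavailability of $\Sigma_\lambda$ and $d$-$\Sigma_\lambda$ for limit $\lambda$. The transfer itself is routine given Corollary \ref{Ls iff 1+ Ps}, once the non-isomorphism $P_\K\not\cong P_\LL$ is checked via Archimedean rank.
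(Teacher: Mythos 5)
Your upper bound and your reduction of the lower bound to ``$P_\LL$ has no $\Sigma^{in}_\lambda$ Scott sentence'' are both correct and consistent with the paper. The gap is in how you certify that last fact. You invoke, as a ``standard correspondence,'' the biconditional that a countable structure $\A$ has a $\Sigma^{in}_\lambda$ Scott sentence exactly when every countable $\B$ with $\B\leq_\lambda\A$ is isomorphic to $\A$. Only one direction of this is standard: if $\A$ has such a Scott sentence $\phi$ and $\B\leq_\lambda\A$, then $\B\models\phi$ by Karp's theorem, so $\B\cong\A$. The direction you actually use --- from ``$\LL$ has no $\Sigma^{in}_\lambda$ Scott sentence'' to ``there exists $\K\not\cong\LL$ with $\K\leq_\lambda\LL$'' --- is the contrapositive of the converse, and that converse is not a theorem you can cite from \cite{AK-book}; it is precisely the $\Sigma$-side dual of Question \ref{Q david/matthew} (Question 1.10 of \cite{spectral-gaps}), which this paper records as open. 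Concretely, the known intrinsic characterizations give that failure of a $\Sigma^{in}_{\beta+1}$ Scott sentence produces, for each tuple $\vec a\in\LL$, some $(\B_{\vec a},\vec b)$ with $(\LL,\vec a)\leq_\beta(\B_{\vec a},\vec b)$ and $\B_{\vec a}\not\cong\LL$; but nothing supplies a single $\K$ absorbing all tuples of $\LL$ at all levels $\beta<\lambda$ simultaneously, which is what $\K\leq_\lambda\LL$ demands. So as written the witness $\K$ is not justified. (Your remaining steps are fine: $P_\K\leq_\lambda P_\LL$ via Corollary \ref{Ls iff 1+ Ps} and $1+\lambda=\lambda$, and $P_\K\not\cong P_\LL$ via Archimedean rank.)

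The paper sidesteps this entirely by arguing through Scott rank rather than through an explicit $\leq_\lambda$-predecessor: if $\SC(P_\LL)\neq\Pi_\lambda$, then (since $\Sigma_\lambda$ and $d$-$\Sigma_\lambda$ are impossible) $P_\LL$ would have a $\Pi^{in}_\alpha$ Scott sentence for some $\alpha<\lambda$, whence $\SR(P_\LL)\leq\alpha$, whence $\SR(\LL)\leq\alpha$ by Lemma \ref{srl<srpl}, giving $\LL$ a $\Pi^{in}_{\alpha+1}$ Scott sentence with $\alpha+1<\lambda$ and contradicting $\SC(\LL)=\Pi_\lambda$. If you want to salvage your route, you should either replace the appeal to the ``standard correspondence'' by this Scott-rank argument, or else verify for the specific $\LL$ of \cite{david-dino} that a witness $\K\not\cong\LL$ with $\K\leq_\lambda\LL$ is actually exhibited there; the general principle you rely on is not available.
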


\begin{proof}
    As shown in \cite{david-dino}, there exists a linear order $\LL$ with Scott complexity $\Pi_{\lambda}$. By Theorem \ref{SCL < SCPL}, the Presburger group $P_\LL$ has a $\Pi_{\lambda}^{in}$ Scott sentence. If $\SC(P_\LL)$ is not $\Pi_\lambda$, then since a Scott complexity cannot be $\Sigma_\lambda$ or $d$-$\Sigma_\lambda$, there would have to be some $\alpha<\lambda$ such that $P_\LL$ has a $\Pi^{in}_\alpha$ Scott sentence. This would imply that $\SR(P_\LL)\leq \alpha$ which implies that $\SR(\LL)\leq\alpha$ which contradicts that $\SC(\LL)=\Pi_\lambda$. Hence, $\SC(P_\LL)=\Pi_\lambda$. 
\end{proof}

\section{Non-Plain Presburger Groups}
\label{non-plain section}

With the exception of those constructed in Proposition \ref{up cone}, all Presburger groups discussed so far have been plain. That is, they only contained elements whose residue sequence was that of an integer. Since an element of a Presburger group can have any element of the continuum size group $\widehat{\Z}$ as its residue sequence, we have barely scratched the surface of the class of countable Presburger groups.   

Of particular interest are the recursively saturated Presburger groups. Given a Presburger group $P$ let $P_+$ denote the set of nonnegative elements of $P$. Note that $P_+$ is a model of $\Th(\N,+,<,0,1)$. It was shown by Lipshitz and Nadel in \cite{Lipshitz1978TheAS} that a countable nonstandard $P_+$ can be expanded to a model of Peano arithmetic if and only if it is recursively saturated.  Given any computable $\hat r \in \widehat\Z$, the type $T_{\hat r}(x)=\{(\E y)[ny + r_n=x]: n\in \N_{>0}\}$ is computable and finitely realizable in every Presburger group. Thus if $P$ is a recursively saturated Presburger group and $\hat r$ is computable, then $P$ contains an element with residue sequence $\hat r$. As shown by the following proposition, it can be readily seen that recursively saturated Presburger groups all have Scott rank 2.

\begin{proposition}
\label{recursively saturated}
    If $P$ is a countable recursively saturated Presburger group then $\SR(P)=2$. 
\end{proposition}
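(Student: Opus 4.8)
The plan is to show $\SR(P)=2$ by using Montalb\'an's characterization (Theorem \ref{robuster scott rank}): I must verify that every automorphism orbit in $P$ is $\Sigma^{in}_2$-definable, and separately that $\SR(P)>1$, which is immediate from Corollary \ref{non standard SR>1} since any recursively saturated Presburger group is nonstandard. So the real content is the upper bound $\SR(P)\leq 2$, i.e. the $\Sigma^{in}_2$-definability of orbits. By quantifier elimination for Presburger arithmetic together with Lemma \ref{same atomic}, two tuples satisfying the same quantifier-free (equivalently, the same $\Pi_0$) formulas are automatically back-and-forth equivalent at level $1$ in both directions. The key insight I would exploit is that recursive saturation lets me upgrade "same finitary type" to "same orbit": the finitary quantifier-free type of a tuple in a Presburger group is captured by its ordering data together with its residue-sequence data, and recursive saturation guarantees that any two tuples with the same such data can actually be matched by an automorphism.

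First I would pin down, for a single element $a\in P$, what quantifier elimination says its finitary type records. Because the theory eliminates quantifiers and the residues mod $n$ are definable, the quantifier-free type of $a$ is determined by: its residue sequence $\rho(a)$, its sign and whether it is (non)standard, and its position relative to the divisibility and ordering structure. The plan is to argue that the automorphism orbit of $a$ is exactly the set of $b$ realizing the same finitary quantifier-free type, and that this set is $\Sigma^{in}_2$-definable: the residue conditions $a\equiv k \Mod n$ are each quantifier-free, and an infinitary conjunction/disjunction over $n$ and over the possible residues stays within $\Sigma^{in}_2$ (indeed the orbit formulas in Proposition \ref{P_1 SC} are a template — a $\WW_n$ of residue conditions conjoined with a $\Sigma_2$-style Archimedean/position statement). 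I would then extend this to arbitrary tuples $\vec a$, where the type additionally records $\Q$-linear and ordering relations among the coordinates, all still expressible by a $\Sigma^{in}_2$ formula.

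The heart of the argument — and the place where recursive saturation is indispensable — is showing that realizing the same finitary type implies being in the same orbit. Here the plan is a standard back-and-forth construction: given $\vec a$ and $\vec b$ with the same finitary quantifier-free type, I build an automorphism extending $\vec a\mapsto \vec b$ one element at a time. At each step I am handed a new element $c$ and must find an image $d$ so that $\vec a c$ and $\vec b d$ have the same finitary type; the existence of such a $d$ is precisely a requirement that a certain computable type over the finite parameter set $\vec b$ be realized, which is exactly what recursive saturation delivers (the relevant type is computable because Presburger arithmetic is decidable and the residue/ordering conditions are uniformly computable). Since $P$ is countable, iterating this back and forth produces a genuine automorphism.

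The main obstacle I anticipate is making the back-and-forth step fully rigorous: I must confirm that the type I need to realize at each stage is genuinely computable (not merely finitely satisfiable) and finitely satisfiable over the current parameters, so that recursive saturation applies. The subtlety is that the type specifies an entire residue sequence $\rho(c)\in\widehat\Z$ together with Archimedean/ordering constraints, and I need the whole package to be encoded as a single recursive type in the language — this is where I would lean on decidability of Presburger arithmetic and on the observation (made in the excerpt for types of the form $T_{\hat r}$) that residue-sequence requirements for computable $\hat r$ are computable and finitely realizable. Once that computability-and-consistency bookkeeping is in place, the $\Sigma^{in}_2$-definability of orbits follows, giving $\SR(P)\leq 2$, and combined with $\SR(P)>1$ we conclude $\SR(P)=2$.
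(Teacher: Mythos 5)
Your overall strategy matches the paper's: the lower bound $\SR(P)\geq 2$ comes from Corollary \ref{non standard SR>1}, and the upper bound comes from showing every automorphism orbit is $\Sigma^{in}_2$-definable, which reduces (via quantifier elimination) to showing that tuples realizing the same finitary type are automorphic, i.e.\ that $P$ is homogeneous. The paper simply cites the standard fact that countable recursively saturated structures are homogeneous; you instead set out to prove it, and your back-and-forth step has a genuine gap. You claim that, given $\tp(\vec a)=\tp(\vec b)$ and a new element $c$, the type you must realize over $\vec b$ is computable ``because Presburger arithmetic is decidable and the residue/ordering conditions are uniformly computable.'' This is false in general: the type of $c$ records its entire residue sequence $\rho(c)\in\widehat\Z$, which can be an arbitrary, in particular non-computable, profinite integer --- Proposition \ref{up cone} of the paper is built precisely on the fact that $\rho(c)$ can code any $S\subseteq\N$. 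A countable recursively saturated Presburger group can contain such elements (take a recursively saturated elementary extension of $\Z[\hat r]$ for non-computable $\hat r$), so the set of formulas $\{\phi(x,\vec b):P\models\phi(c,\vec a)\}$ need not be a recursive type, and recursive saturation does not apply to it. Decidability of the theory is beside the point; it is the type, not the theory, that must be recursive, and your appeal to the computability of $T_{\hat r}$ for \emph{computable} $\hat r$ does not cover the elements actually present in $P$.

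The repair is the standard parameter trick: instead of transporting $\tp(c/\vec a)$ to $\vec b$, realize the type $q(x)=\{\phi(c,\vec a)\leftrightarrow\phi(x,\vec b):\phi(v,\vec y)\ \text{a formula}\}$, viewed as a type over the combined parameter tuple $\vec a,\vec b,c$. This set of formulas is recursive (the map $\phi\mapsto(\phi(c,\vec a)\leftrightarrow\phi(x,\vec b))$ is computable once $c,\vec a,\vec b$ are treated as parameter symbols), and it is finitely satisfiable precisely because $\tp(\vec a)=\tp(\vec b)$; recursive saturation then yields the desired $d$ with $\tp(\vec a c)=\tp(\vec b d)$. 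With that substitution your back-and-forth goes through and recovers the homogeneity that the paper cites, after which the $\Sigma^{in}_2$-definability of orbits (a countable conjunction of the $\Pi^0_1$ formulas in the tuple's type, as in the paper's proof) completes the bound $\SR(P)\leq 2$.
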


\begin{proof}
    Since $P$ is nonstandard, $\SR(P)\geq 2$ by Corollary \ref{non standard SR>1}. If $P$ is recursively saturated, then $P$ is homogeneous (see e.g.\ \cite{llew}). That is given $\vec p,\vec q \in P\lom$ of the same length, $\tp(\vec p)=\tp(\vec q)$ if and only if $\vec p$ is automorphic to $\vec q$. Additionally, since Presburger arithmetic has elimination of quantifiers down to $\Delta^0_1$ formulas (again see e.g.\ \cite{llew}), we get that $\vec p$ is automorphic to $\vec q$ if and only if they satisfy the same $\Delta^0_1$ formulas. Let $\Psi$ be the set of all $\Pi^0_1$ formulas satisfied by $\vec p$. We get that the automorphism orbit of $\vec p$ is $$\{ \vec x: P\models \WW_{\psi\in\Psi} \psi(\vec x)\}. $$ Thus, all automorphism orbits in $P$ are $\Sigma^{in}_2$-definable and so $\SR(P)\leq 2$ by Theorem \ref{robuster scott rank}.
\end{proof}

We now turn our attention to showing that there is no Presburger group with Scott complexity $\Sigma_3$. 
To do so, we must consider all possible Presburger groups, not just the plain ones. In \cite{Reed-et-al}, Goncharov, Lempp and Solomon consider computable ordered abelian groups in general. In particular, they show that a computable ordered abelian group is computably categorical if and only if it has a finite \emph{basis} (defined below). 

\begin{definition}
    Let $G$ be an ordered abelian group. Given $g_1,...,g_n\in G$, we say that $\{g_1,...,g_n\}$ is linearly independent if  given $k_1,...,k_n\in \Z$, then $$k_1g_1+\cdots + k_ng_n=0$$ if and only if each $k_i=0$.  A maximal linearly independent subset of $G$ is called a basis of $G$. The cardinality of a basis of $G$ is called the dimension of $G$. 
\end{definition}

Note, in \cite{Reed-et-al} they use the term \emph{rank} instead of dimension. However to avoid confusion with Scott rank, we will call it dimension. 
In \cite{Reed-et-al}, the authors show how given a computable $G$ of infinite dimension, one can build a computable $H\cong G$ such that there is no computable isomorphism from $G$ to $H$. The same constructions can be used to show how given any Turing degree $\ddd$ and $\ddd$-computable $G$, one can build a $\ddd$-computable $H$ such that there is no $\ddd$-computable isomorphism from $G$ to $H$. This yields the following: 

\begin{theorem}[Essentially Theorem 1.8 in \cite{Reed-et-al}]
If $G$ is a countable ordered abelian group, then $G$ is computably categorical on a cone if and only if $G$ has a finite basis. \qed
\end{theorem}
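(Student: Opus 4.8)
The plan is to obtain this theorem by relativizing Theorem 1.8 of \cite{Reed-et-al}, which characterizes the (unrelativized) computable categoricity of ordered abelian groups by finiteness of the basis; as the paragraph preceding the statement already indicates, the whole content lies in checking that both directions of that result relativize uniformly in an oracle. I would prove the two implications separately, handling the forward direction by a direct matching algorithm and the reverse direction by relativizing the bad-copy construction.

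For the forward implication---if $G$ has a finite basis then $G$ is computably categorical on a cone---I would show the stronger fact that $G$ is \emph{relatively} computably categorical, which immediately gives categoricity on the cone above the degree of any fixed copy. Fix a basis $g_1,\dots,g_n$. By maximality of the independent set, every $x\in G$ satisfies a nontrivial relation $kx = k_1 g_1 + \cdots + k_n g_n$ with $k\neq 0$, so $G$ embeds as an ordered subgroup of the $n$-dimensional ordered $\Q$-vector space spanned by the $g_i$. The order type of a finite-dimensional ordered abelian group is carried by finitely much data (its finite chain of convex subgroups, i.e.\ the linear order on the Archimedean classes of the $g_i$ together with signs), so the isomorphism type of $\bar g$ as an ordered tuple is finite information. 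Given two copies $A,B$ computable from an oracle $\ddd$, one uses $\ddd$ to locate bases $\bar g$ of $A$ and $\bar g\,'$ of $B$, to compute this finite order data in each, and to select the (one of finitely many) order-preserving bijection $g_i\mapsto g_i'$. Torsion-freeness then forces a unique extension: for $x\in A$ with $kx=\sum k_i g_i$, its image $f(x)$ is the unique $y\in B$ with $ky=\sum k_i g_i'$, found by a $\ddd$-search. Hence any two $\ddd$-computable copies are $\ddd$-computably isomorphic, certifying both relative computable categoricity and categoricity on a cone. (The same embedding also furnishes a c.e.\ Scott family of existential formulas over $\bar g$, which is an alternative route to the same conclusion.)

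For the reverse implication I would argue the contrapositive: if $G$ has an infinite basis, then $G$ is not computably categorical on any cone. Here I would directly relativize the bad-copy construction of \cite{Reed-et-al}. As noted before the theorem, their construction shows that from any Turing degree $\ddd$ and any $\ddd$-computable infinite-dimensional $G$ one builds a $\ddd$-computable $H\cong G$ whose requirements diagonalize against every $\ddd$-partial computable function, so that no $\ddd$-computable isomorphism $G\to H$ exists. Given any purported base degree $\ddd_0$ of a cone, take any $\ddd\geq \ddd_0$ computing a copy $G_0\cong G$; the relativized construction yields a second $\ddd$-computable copy $H$ with no $\ddd$-computable isomorphism to $G_0$. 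Thus categoricity fails at the arbitrary point $\ddd$ of the cone with base $\ddd_0$, and since $\ddd_0$ was arbitrary, $G$ is not computably categorical on any cone.

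The main obstacle is the bad-copy construction in the reverse direction, and specifically verifying that it relativizes uniformly: the GLS diagonalization must run against the list of $\ddd$-partial computable functions rather than the ordinary partial computable functions. Because the enumeration of partial computable functionals relativizes in the standard way, the same priority/permitting argument goes through verbatim with $\ddd$ as an oracle, which is exactly what the authors of \cite{Reed-et-al} assert. The positive direction is, by contrast, essentially a finiteness-of-data argument once one observes that a finite-rank ordered group embeds in $\Q^n$, and poses no real difficulty; the only point worth confirming there is the robustness of ``computably categorical on a cone,'' namely that producing, uniformly in $\ddd$, a non-isomorphic pair at every $\ddd$ indeed defeats categoricity on every cone.
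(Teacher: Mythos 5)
Your proposal takes essentially the same approach as the paper, which offers no proof beyond the observation in the preceding paragraph that the Goncharov--Lempp--Solomon construction relativizes to an arbitrary degree $\ddd$ (your reverse direction), the forward direction being the standard dependence-equation argument for a finite basis. One small caveat: matching the bases $\bar g \mapsto \bar g'$ by ``finite order data'' alone is not sufficient, since two bases realizing the same Archimedean/sign data need not be automorphic (the full quantifier-free type, including cut and divisibility data, is infinite); but because computable categoricity on a cone only requires the \emph{existence} of a $\ddd$-computable isomorphism, one may non-uniformly hardcode a matching induced by an actual isomorphism, and your argument goes through.
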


By Theorem \ref{robust pSR}, we get the following corollary:

\begin{corollary}
\label{cor: sigma_3 iff finite basis}
    An ordered abelian group has a $\Sigma^{in}_3$ Scott sentence if and only if it has a finite basis. \qed
\end{corollary}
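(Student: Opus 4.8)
The plan is to obtain the corollary by chaining the preceding theorem with Theorem \ref{robust pSR}, specialized to the index $\alpha=1$. First I would invoke the equivalence of (1) and (2) in Theorem \ref{robust pSR} with $\alpha=1$: a countable structure $\A$ has a $\Sigma^{in}_3$ Scott sentence if and only if $\pSR(\A)\leq 1$ (this is exactly the case $\alpha+2=3$). By the equivalence of (1) and (3) of the same theorem, this is in turn equivalent to $\A$ being $\Delta^0_1$-categorical on a cone. So for an arbitrary countable structure, possessing a $\Sigma^{in}_3$ Scott sentence is the same as being $\Delta^0_1$-categorical on a cone.

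The second step is to identify $\Delta^0_1$-categoricity on a cone with computable categoricity on a cone. Since the $\Delta^0_1$ subsets of $\N$ are exactly the computable ones, being $\Delta^0_1$-categorical on a cone says precisely that, above a fixed base of the cone, any two copies of the structure admit an isomorphism computable from their atomic diagrams together with the cone's base, with no jump taken. This is exactly the notion of computable categoricity on a cone appearing in the preceding theorem. Specializing that theorem (essentially Theorem 1.8 of \cite{Reed-et-al}) to an ordered abelian group $G$ then gives that $G$ is computably categorical on a cone if and only if $G$ has a finite basis.

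Concatenating these equivalences yields that an ordered abelian group $G$ has a $\Sigma^{in}_3$ Scott sentence if and only if $G$ has a finite basis, which is the claim. The only points requiring care are bookkeeping: confirming that $\alpha=1$ is the correct specialization of Theorem \ref{robust pSR}, and verifying that the $\Delta^0_1$-categoricity on a cone furnished by that theorem is literally the computable categoricity on a cone hypothesized in the preceding theorem (in particular that no spurious jump is introduced in the $\alpha=1$ case). Neither is a genuine mathematical obstacle; the substantive content of the corollary is carried entirely by the theorem of Goncharov, Lempp, and Solomon, with Theorem \ref{robust pSR} serving only to translate its conclusion into the language of Scott sentence complexity.
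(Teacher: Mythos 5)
Your proof is correct and is exactly the route the paper intends: the corollary is stated with no separate argument precisely because it follows by specializing Theorem \ref{robust pSR} to $\alpha=1$ (so that a $\Sigma^{in}_3$ Scott sentence is equivalent to $\Delta^0_1$-categoricity on a cone, i.e.\ computable categoricity on a cone) and then applying the Goncharov--Lempp--Solomon theorem. Your bookkeeping remarks about the $\alpha=1$ case are accurate and no gap remains.
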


Note that the standard Presburger group $\Z$ has dimension $1$, and all other Presburger groups have dimension at least $2$. In fact, the Presburger groups with dimension $2$ are exactly those of the form $\Z[\hat r]$ (as in Definition \ref{A[r]}). To see this let $P$ be a Presburger group with dimension $2$. There exists an infinite element $X\in P$ such that $\{1,X\}$ is a basis for $P$. It will follow that $P\cong \Z[\hat r]$ where $\rho(X)=\hat{r}$.

It should be noted that unlike with vector spaces, it is not necessary for every element of an ordered abelian group $G$ to be a linear combination of elements of a basis of $G$. For example, $\{2\}$ is a basis of $\Z$. However, we do get a similar property: 

\begin{lemma}[\cite{Reed-et-al}]
\label{dependence equation}
    Let $G$ be an ordered abelian group with basis $\{b_1,...,b_n\}$. Given any $g\in G$ with $g\neq 0_G$, there exists a unique integer sequence $(m,k_1,...,k_n)$ such that:
    \begin{itemize}
        \item $m>0$; and
        \item $\text{GCD}(m,k_1,...,k_n)=1$; and
        \item $mg=k_1 b_1+ \cdots +k_nb_n$.
    \end{itemize}
\end{lemma}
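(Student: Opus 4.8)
The plan is to establish existence and uniqueness separately, with the single crucial input being that an ordered abelian group is torsion-free (since $ng = 0$ with $n > 0$ would force $g = 0$ by the compatibility of the order with addition). First I would note that maximality of the basis is what produces the relation in the first place.

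For existence, since $\{b_1,\dots,b_n\}$ is a \emph{maximal} linearly independent set, adjoining $g$ must destroy independence, so there are integers $a,c_1,\dots,c_n$, not all zero, with $ag + c_1 b_1 + \cdots + c_n b_n = 0$. I would first argue $a \neq 0$: otherwise $c_1 b_1 + \cdots + c_n b_n = 0$ with some $c_i \neq 0$, contradicting independence of the basis. Replacing the relation by its negative if necessary, I may assume $a > 0$, and setting $k_i = -c_i$ gives $ag = k_1 b_1 + \cdots + k_n b_n$. To arrange the gcd condition, let $d = \gcd(a,k_1,\dots,k_n) > 0$ and write $a = dm$ and $k_i = dk_i'$. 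Then $d\bigl(mg - (k_1' b_1 + \cdots + k_n' b_n)\bigr) = 0$, and here torsion-freeness is essential: since $d > 0$, we conclude $mg = k_1' b_1 + \cdots + k_n' b_n$, with $m > 0$ and $\gcd(m,k_1',\dots,k_n') = 1$ by construction.

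For uniqueness, suppose $(m,k_1,\dots,k_n)$ and $(m',k_1',\dots,k_n')$ both satisfy the conclusion, so $mg = \sum_i k_i b_i$ and $m'g = \sum_i k_i' b_i$. Cross-multiplying yields $m'm\,g = \sum_i m' k_i b_i$ and $mm'\,g = \sum_i m k_i' b_i$; subtracting and invoking independence of the $b_i$ gives $m' k_i = m k_i'$ for every $i$. Writing $v = (m,k_1,\dots,k_n)$ and $v' = (m',k_1',\dots,k_n')$, this says $m' v = m v'$ as integer vectors. Taking the gcd of the coordinates on each side and using $m,m' > 0$ together with $\gcd(v) = \gcd(v') = 1$ forces $m' = m$; cancelling this common positive integer then gives $v = v'$.

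The main obstacle, and the only genuinely nonroutine point, is that $G$ need not be divisible, so the step of dividing the relation through by $d$ is not literal division. The entire argument hinges on replacing that division by a cancellation justified through torsion-freeness of the ordered group. The remaining care is purely bookkeeping: tracking signs so that $m > 0$, and checking that the gcd normalization in both the existence construction and the uniqueness gcd-comparison is well defined.
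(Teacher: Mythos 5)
The paper does not prove this lemma at all; it is imported verbatim from \cite{Reed-et-al}, so there is no in-paper argument to compare against. Your proof is correct and is the standard one: maximality of the basis forces a nontrivial relation whose $g$-coefficient is nonzero by independence of the $b_i$, torsion-freeness of an ordered abelian group (which you correctly derive from translation-invariance of the order) justifies cancelling the gcd, and cross-multiplication together with the normalization $\gcd(m,k_1,\dots,k_n)=1$ and $m>0$ gives uniqueness. The only cosmetic gap is the degenerate case $g\in\{b_1,\dots,b_n\}$, where ``adjoining $g$'' does not enlarge the set; there the relation $1\cdot g=b_i$ is immediate, so nothing is lost.
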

We call such an equation a \emph{(reduced) dependence equation} for $g$ over $\{b_1,...,b_n\}$.

\begin{definition}
    Given $(g_1,...,g_n)\in G^n$, we define $\cut(g_1,...,g_n)$ to be the set of $k_1,...,k_n\in \Z^n$ such that $$k_1g_1+\cdots + k_ng_n >0. $$
\end{definition}

\begin{theorem}
\label{thm: no sigma3}
    There is no Presburger group with Scott sentence complexity $\Sigma_3$.
\end{theorem}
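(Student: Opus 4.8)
The plan is to argue by contradiction and reduce everything to a single structural lemma. Suppose some Presburger group $P$ has $\SC(P)=\Sigma_3$. Writing $\Sigma_3=\Sigma_{\alpha+2}$ forces $\alpha=1$, so by the first bullet of Lemma \ref{alpha+2} we must have $\pSR(P)=1$ and, crucially, $\SR(P)=3$. In particular $P$ has a $\Sigma^{in}_3$ Scott sentence, so by Corollary \ref{cor: sigma_3 iff finite basis} the group $P$ has a finite basis. It therefore suffices to prove the following \emph{Main Lemma}: every Presburger group with a finite basis has Scott rank at most $2$. This directly contradicts $\SR(P)=3$ and finishes the theorem.

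To prove the Main Lemma, let $P$ be a Presburger group with basis $\{b_1,\dots,b_n\}$. By Theorem \ref{robuster scott rank} it is enough to show every automorphism orbit of $P$ is $\Sigma^{in}_2$-definable, and by the standard consequence of Karp's theorem (\ref{karp}) this reduces to the purely combinatorial statement: for all tuples $\vec a,\vec b\in P\lom$, if $(P,\vec b)\leq_2(P,\vec a)$ then $\vec a$ and $\vec b$ lie in the same orbit. (The nontrivial direction of this reduction is that a $\Sigma^{in}_2$ formula true of $\vec a$ transfers across $(P,\vec b)\leq_2(P,\vec a)$ to $\vec b$.) So the whole problem collapses to verifying this one implication.

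To establish it, I would unfold the definition of $\leq_2$: for every $\vec d\in P\lom$ there is some $\vec c\in P\lom$ with $(P,\vec a,\vec d)\leq_1(P,\vec b,\vec c)$. I would \emph{choose $\vec d$ so that it contains the full basis $b_1,\dots,b_n$} — this is the step where finite dimension is used decisively. Since quantifier-free formulas (including the congruence predicates, by quantifier elimination, so in particular residue information) are closed under negation and lie in $\Sigma_1\cap\Pi_1$, the relation $(P,\vec a,\vec d)\leq_1(P,\vec b,\vec c)$ forces $\vec a\vec d$ and $\vec b\vec c$ to realize the \emph{same} quantifier-free type: the same residue sequences, the same $\mathbb Z$-linear relations (linear independence being an infinite conjunction of quantifier-free inequalities $k_1x_1+\cdots\neq 0$), and the same $\cut$. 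Because $\vec d$ contains a basis and $\dim P=n$, the corresponding coordinates of $\vec c$ are $n$ linearly independent elements, hence again a basis. I would then use Lemma \ref{dependence equation} to define $g$ on the divisible hull $\Q^n$ of $P$ by sending the $\vec d$-basis to the $\vec c$-basis, and check that matching residues and matching $\cut$ make $g$ restrict to an order- and residue-preserving automorphism of $P$ with $g(\vec a)=\vec b$.

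The main obstacle is precisely this last verification: showing that matching quantifier-free data on a basis-containing tuple yields a well-defined automorphism of \emph{all} of $P$, not merely of the finitely generated part. Concretely, for an arbitrary $p\in P$ one takes its reduced dependence equation $mp=k_1b_1+\cdots+k_nb_n$ from Lemma \ref{dependence equation}; then $g(p)$ should be $\tfrac1m(k_1c_1+\cdots+k_nc_n)$, and one must confirm this element genuinely lies in $P$ (this is where the transferred congruence conditions guarantee the required divisibility) and that $g$ respects the ordering (this is where the transferred $\cut$ data is used, recalling that finite dimension gives only finitely many Archimedean classes). Verifying that $g$ is surjective follows symmetrically. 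I also expect to spend care confirming that the infinitary conditions pinning down residue sequences and linear independence really do transfer through $\leq_1$, and in emphasizing why a single level-$2$ back-and-forth step is enough here: a basis is finite, so one extension already captures the entire group up to the bookkeeping above.
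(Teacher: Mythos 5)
Your proposal is correct and follows essentially the same route as the paper: Corollary \ref{cor: sigma_3 iff finite basis} yields a finite basis, and the heart of the argument --- that matching residue sequences, cut, and linear independence on a basis tuple (all transferable through $\leq_1$ because they are conjunctions of finitary $\Sigma_1$ and $\Pi_1$ conditions) determine an isomorphism via the dependence equations of Lemma \ref{dependence equation} --- is exactly the paper's formula $\Phi$ and map $f$. The only differences are cosmetic: the paper verifies condition (3) of Theorem \ref{robuster scott rank} (uniform $\boldsymbol{\Delta}^0_2$-categoricity) where you verify condition (1) via the back-and-forth characterization of orbit definability (note that the direction of that equivalence you actually need is part of the machinery behind Theorem \ref{robuster scott rank}, not a consequence of Karp's theorem alone), and you close the argument with Lemma \ref{alpha+2} where the paper instead observes that $P$ would have both a $\Sigma^{in}_3$ and a $\Pi^{in}_3$ Scott sentence.
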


\begin{proof}
    We need only show that any Presburger group with a $\Sigma^{in}_3$ Scott sentence also has a $\Pi^{in}_3$ Scott sentence. Fix such a $P$. We have by Corollary \ref{cor: sigma_3 iff finite basis} that $P$ has a finite basis. Fix elements $p_1,...,p_n$ which, along with $1$, form a basis of $P$. Note, this gives that $P$ has dimension $n+1$. Let the formula $\Phi(x_1,...,x_n)$ be the conjunction of the following for each $i\leq n$:

    \begin{itemize}
        \item $\rho(x_i)=\rho(p_i)$; and
        \item $\cut(1,x_1,...,x_n)=\cut(1,p_1,...,p_n)$; and
        \item The set $\{1,x_1,...,x_n\}$ is linearly independent.
    \end{itemize}
    Recall that $\rho(x)$ denotes the residue sequence of an element $x$ of a Presburger group. Note that $\Phi$ is $\Pi^{in}_1$. Let $T\subseteq\N$ be an oracle that codes each $\rho(p_i)$ as well as $\cut(1,p_1,...,p_n)$. Given a copy $\B\cong P$ and a tuple $\vec b \in \B^n$, it is decidable from $(\D(\B)\oplus T)'$ whether or not $\B\models \Phi(\vec b)$.
    
    Take $\A\cong P$ with $a_1,...,a_n\in \A$ such that $\A\models \Phi(a_1,...,a_n)$. Define $f:P\to \A$ such that $f(0_P)=0_\A$, $f(1_P)=1_\A$ and $f(p_i)=a_i$ for all $i$. Given any other $p\in P$, take the (unique) integers $m,k_0,k_1,...,k_n$ such that $mp=k_0+k_1p_1+\cdots +k_np_n$ with $m>0$ and GCD$(m,k_0,...,k_n)=1$ as in Lemma \ref{dependence equation}.  Define $f(p)$ to be the element $a\in \A$ such that $ma=k_0 + k_1a_1+ \cdots+ k_na_n$. The existence of $a$ follows from the fact that $\rho(a_i)=\rho(p_i)$ for each $i\leq n$ (which holds since $\A\models \Phi(a_1,...,a_n)$), and so $k_0 +k_1a_1 +\cdots + k_na_n\equiv k_0 + k_1p_1 +\cdots +k_np_n \equiv 0 \mod m$. The uniqueness of $a$ follows from the fact that these groups are torsion free.

    Thus we have that $f$ is well defined and injective. To see that $f$ is surjective, suppose there was some $b\in \A$ that was not in the image of $f$. We would have that $b$ does not satisfy any dependence equation over $1, a_1,...,a_n$. This would imply that the set $\{1,a_1,...,a_n,b\}$ is linearly independent which would contradict that $P$ (and thus $\A$) has dimension $n+1$. Hence, there is no such $b$ and we have that $f$ is a bijection. It is clear that addition is preserved under $f$. Furthermore if $q\in P$ with $q>0$, then $f(q)>0$ as well since $\cut(1,p_1,...,_n)=\cut(1,a_1,...,a_n)$. Hence, order is also preserved and so $f$ is an isomorphism. 

    Given two copies $\A,\B\cong P$, we can compute an isomorphism from $\A$ to $\B$ by simply finding tuples $\vec a\in \A^n$ and $\vec b\in \B^n$ with $\A\models \Phi(\vec a)$ and $\B\models \Phi(\vec b)$ and then following the same steps we used to build $f$ above. Note, such an isomorphism is (uniformly) computable from $(\D(\A)\oplus \D(\B) \oplus T)'$ which gives that $P$ is uniformly $\boldsymbol{\Delta}^0_2$-categorical. Hence, $P$ has a $\Pi^{in}_3$ Scott sentence by Theorem \ref{robuster scott rank}.   \end{proof}

\section{Some Degree Spectra of Presburger Groups}
\label{dgsp chapter}

\subsection{Computing Divisible Ordered Abelian Groups}

We can make use of the fact that $V\times \Z$ is a Presburger group when $V$ is a divisible ordered abelian group to build Presburger groups with some notable degree spectra. In particular, given any $S\subseteq \N$ we will be able to construct groups with degree spectra of the form $$\{\ddd: S \text{ is computable in } \ddd^{(\alpha)}\}$$ for all computable non-limit ordinals $\alpha\neq 2$ and $$\{\ddd: S \text{ is c.e.\ in } \ddd^{(\alpha)}\}$$ for all computable non-limit ordinals $\alpha\neq 0,2$ (Theorem \ref{dgsp theorem}). For the limit case, we will get a similar result (Corollary \ref{limit cor}).  Furthermore, given any countable linear order $\LL$, we will see by Theorem \ref{L'} that the Presburger group $P_\LL$ has degree spectrum $$\{\ddd:\ddd'\in \dgsp(\LL)\}.$$

As shown by the following proposition, being able to compute $V$ allows you to compute $V\times \Z$:

\begin{proposition}
\label{dgsp}
Consider the Presburger group $P=V \times \Z$ with $V$ a divisible ordered abelian group. We have $\dgsp(V)\subseteq \dgsp(P) $.
\end{proposition}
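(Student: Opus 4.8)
The plan is to show that any Turing degree which computes a copy of $V$ also computes a copy of $P = V \times \Z$, with no information lost in the reverse direction, so that the degrees of the two copies coincide. Since $\dgsp(\A)$ is by definition the set of degrees of copies of $\A$, it suffices to take an arbitrary $\ddd \in \dgsp(V)$, fix a copy $W \cong V$ with $\deg(W) = \ddd$, and manufacture from it a copy $Q \cong P$ with $\deg(Q) = \ddd$.

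First I would build $Q$ explicitly. Fix a computable bijection between $\N$ and (the domain of $W$) $\times\, \Z$, so that $Q$ has domain $\N$ and each element of $Q$ is coded by a pair $(w,z)$ with $w$ in the domain of $W$ and $z \in \Z$. Define addition on $Q$ coordinate-wise, the ordering on $Q$ to be the lexicographic ordering with the $W$-coordinate dominant (matching the Observation and Proposition \ref{plain}), and the constants as $0_Q = (0_W, 0)$ and $1_Q = (0_W, 1)$. By construction $Q \cong W \times \Z \cong V \times \Z = P$. Because every atomic fact about $Q$ --- equality, the graph of $+$, and the order --- is determined coordinate-wise from the corresponding atomic facts about $W$ together with the computable arithmetic and order of $\Z$, the atomic diagram $\D(Q)$ is computable from $\D(W)$; hence $\deg(Q) \leq \ddd$.

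Next I would check the reverse reduction, which pins the degree of $Q$ exactly at $\ddd$. The map $w \mapsto (w, 0)$ is a computable injection of the domain of $W$ into the domain of $Q$ (since the coding of the pair $(\cdot, 0)$ is computable), and it is an isomorphism of $W$ onto the substructure $W \times \{0\}$ of $Q$. Pulling the operations of $Q$ back along this computable embedding recovers every atomic fact of $W$, so $\D(W) \leq_T \D(Q)$. Combined with the previous paragraph this gives $\D(Q) \equiv_T \D(W)$, hence $\deg(Q) = \ddd$ and therefore $\ddd \in \dgsp(P)$. As $\ddd$ was an arbitrary element of $\dgsp(V)$, this yields $\dgsp(V) \subseteq \dgsp(P)$.

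There is no serious obstacle here; the content is entirely bookkeeping with the coding. The one step deserving care is the reverse reduction $\D(W) \leq_T \D(Q)$, i.e.\ verifying that a copy of $V$ can be read back off a copy of $V \times \Z$ uniformly and computably; the explicit product coding makes this transparent. Alternatively, one could avoid the reverse reduction entirely: the construction already exhibits some degree $\leq \ddd$ in $\dgsp(P)$, and since $P$ is not automorphically trivial its spectrum is upward closed, so $\ddd \in \dgsp(P)$ follows by Knight's theorem.
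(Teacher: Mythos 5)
Your proof is correct and follows essentially the same route as the paper: build the product copy $W\times\Z$ computably from a copy $W$ of $V$ of degree $\ddd$, and conclude $\ddd\in\dgsp(P)$. The paper leaves the degree bookkeeping as ``clear,'' whereas you supply it explicitly (both the reverse reduction $\D(W)\leq_T\D(Q)$ and the fallback via upward closure of the spectrum), which is a sound filling-in of the same argument rather than a different approach.
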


\begin{proof}
Suppose $\A\cong V$ and $\deg(\A)=\ddd$. It is clear that there is a $\ddd$-computable copy of the Presburger group $\A \times \Z\cong P$. Hence, $\dgsp(V)\subseteq \dgsp(P)$.
\end{proof}

\begin{proposition}
\label{prop d' in dgV}

    If $P$ and $V$ are as in the above proposition and $V$ is nontrivial, then $\{\ddd':\ddd\in \dgsp(P)\}\subseteq \dgsp(V)$.  
\end{proposition}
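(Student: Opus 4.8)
The plan is to recover a copy of $V$ from inside a given copy of $P$ by isolating the elements whose residue sequence is trivial, and to show that this extraction can be carried out computably from the jump of the given copy. First I would fix a copy $\A\cong P$ with $\deg(\A)=\ddd$ and consider the set $V_\A:=\{x\in\A:\rho(x)=\rho(0)\}$. Since $\A\cong V\times\Z$ and an element $(v,z)$ has the same residue sequence as $z$, while in $\Z$ only $0$ has the all-zero residue sequence, $V_\A$ corresponds exactly to $\{(v,0):v\in V\}$. This is a substructure of $P$ whose induced addition is that of $V$ and whose induced (lexicographic) order restricts to the order of $V$; hence $\{(v,0):v\in V\}\cong V$. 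Because residue sequences are preserved by isomorphisms of Presburger groups, $V_\A\cong V$, so $V_\A$ is the copy of $V$ we want to compute.

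Next I would pin down the complexity of $V_\A$. The key observation is that the residue $x\bmod n$ is the unique $r\in\{0,\dots,n-1\}$ for which some $y$ satisfies $ny+r=x$, and such a witness $(y,r)$ can be located by a terminating $\ddd$-search, its existence and uniqueness being guaranteed by the Presburger axioms together with torsion-freeness. Thus $(x,n)\mapsto (x\bmod n)$ is $\ddd$-computable, and consequently $x\in V_\A$ if and only if $x\bmod n=0$ for every $n$, a $\Pi^0_1(\ddd)$ condition. Hence $V_\A$ is $\ddd'$-decidable. Since $V$ is nontrivial and divisible it is infinite, so $V_\A$ is infinite; enumerating $V_\A$ in increasing order of its $\N$-codes gives a $\ddd'$-computable bijection $\N\to V_\A$, and pulling back the ($\ddd$-computable, hence $\ddd'$-computable) addition and order through this bijection yields a $\ddd'$-computable copy of $V$.

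Finally I would conclude. The copy just built has degree $\leq\ddd'$, so some degree below $\ddd'$ lies in $\dgsp(V)$; since $V$ is infinite with a nontrivial order it is not automorphically trivial, so $\dgsp(V)$ is upward closed by Knight's theorem \cite{Knight_1986}, giving $\ddd'\in\dgsp(V)$. As $\ddd\in\dgsp(P)$ was arbitrary, this establishes $\{\ddd':\ddd\in\dgsp(P)\}\subseteq\dgsp(V)$. I expect the main obstacle to be a bookkeeping point rather than a structural one: one must notice that residues are \emph{outright} $\ddd$-computable, not merely enumerable, so that $V_\A$ is $\Pi^0_1(\ddd)$ and therefore decidable from $\ddd'$. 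Had we instead only used that ``$n$ divides $x$'' is $\Sigma^0_1(\ddd)$, the membership condition ``$\forall n\,\exists y\,(ny=x)$'' would be $\Pi^0_2(\ddd)$ and we would be forced up to $\ddd''$, losing the sharp bound claimed.
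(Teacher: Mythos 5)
Your proof is correct, but it takes a different route from the paper's. The paper realizes $V$ as a \emph{quotient}: it takes the substructure $\CC$ of $\B$ generated by $1_\B$, observes that $\CC\cong\Z$ is c.e.\ in $\ddd$ and hence $\ddd'$-computable, and then passes to $\B/\CC\cong V$. You instead realize $V$ as a \emph{subgroup}, namely the set of elements with all-zero residue sequence, and observe that since each residue $x\bmod n$ is outright $\ddd$-computable (the witness search terminates by the Presburger axioms and gives a unique answer by torsion-freeness), membership in this set is $\Pi^0_1(\ddd)$ and hence $\ddd'$-decidable. Both arguments land at the same one-jump bound, and both need the upward-closure step at the end to place $\ddd'$ itself in $\dgsp(V)$. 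Your version has the mild advantage of avoiding the coset bookkeeping of a quotient construction (the operations on $V_\A$ are literally the restrictions of those on $\A$), and your closing remark correctly isolates why the bound is $\ddd'$ rather than $\ddd''$. It is worth noting that the paper itself makes your key observation in the discussion of Question \ref{Q dgsp V vs dgsp P}: $V$ is co-c.e.\ but apparently not c.e.\ in $P$, which is exactly the $\Pi^0_1(\ddd)$ phenomenon your argument exploits and the reason neither approach obviously improves the bound below $\ddd'$.
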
 

\begin{proof}
    Let $\B\cong P$ with $\deg(\B)=\ddd$. Consider the substructure $\CC$ of $\B$ generated by the element $1_\B$. We have that $\CC\cong (\Z,+,<,0,1)$ and that $\CC$ is c.e.\ in $\B$. Hence, $\CC$ is $\ddd'$-computable and so the structure $\B/\CC$, which is isomorphic to $V$, is $\ddd'$-computable.
\end{proof}

\noindent This may lead one to wonder if $\dgsp(V)$ is the same as $\dgsp(P)$ with $P$ and $V$ as above. This question remains open for now (Question \ref{Q dgsp V vs dgsp P}).  

By a theorem of Holder \cite{holder}, each Archimedean ordered group is isomorphic to a subgroup of the additive ordered group on $\R$. Given such an ordered group, we  define its \emph{divisible closure} as follows:

\begin{definition}
\label{closure}
Let $R\subseteq \R$. We define the divisible closure $C(R)$ to be the smallest divisible ordered abelian group such that $R\subseteq C(R) \subseteq \R$. 
\end{definition}

\noindent For example, $C(\{1,\sqrt2 \})=\{a+b\sqrt{2}:a,b\in \Q\}$.
We previously saw by Proposition \ref{up cone} that every upper cone of Turing degrees is the degree spectrum of some Presburger group, though the groups constructed were not plain Presburger groups.  The following proposition uses an alternative construction that will yield plain Presburger groups with these spectra:

\begin{proposition}
\label{upper cone}
Given any Turing degree $\ddd$, there exists a divisible ordered abelian group whose degree spectrum is the cone above $\ddd$. 
\end{proposition}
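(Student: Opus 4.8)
The goal is to realize an arbitrary upper cone $\{\ddd : \ddd \geq \deg(\ddd)\}$ as the degree spectrum of a \emph{divisible} ordered abelian group. The plan is to encode a representative set $S$ of degree $\ddd$ into a real number (or a finite tuple of reals) and take the divisible closure of the group they generate inside $\R$. Concretely, I would first pick a set $S \subseteq \N$ with $\deg(S) = \ddd$, and encode $S$ into a single real $x_S \in \R$ in a way that makes $S$ and $x_S$ Turing equivalent --- for instance, letting $x_S = \sum_{n \in S} 3^{-n}$ (a base-$3$ expansion using only digits $0$ and $1$, so there is no ambiguity in recovering $S$ from $x_S$ and vice versa). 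Then I would set $V = C(\{x_S\})$, the divisible closure from Definition \ref{closure}, which is a divisible ordered abelian group sitting inside $\R$.

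The heart of the argument is to show $\dgsp(V) = \{\ddd : \ddd \geq \deg(S)\}$, which splits into two inclusions. For the \emph{upper} inclusion (that every degree $\ddd \geq \deg(S)$ computes a copy of $V$), I would note that from $S$ we can compute $x_S$ to arbitrary precision, hence compare $\Q$-linear combinations $q \cdot x_S$ with rationals and thereby compute the ordering on the $\Q$-vector space $V = \Q \cdot x_S \cong \Q$ (or, if $1 \notin \Q\cdot x_S$, the appropriate finite-dimensional divisible group). Since $V$ as an abstract ordered group is just a rank-one divisible group, the only content in a ``copy'' is its order type relative to the distinguished generator, and all the ordering comparisons reduce to comparing rationals against the real $x_S$, which is $\deg(S)$-computable. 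For the \emph{lower} inclusion (that any copy of $V$ computes $S$), I would take an arbitrary $\ddd$-computable copy $\A \cong V$, fix a nonzero element $g \in \A$, and observe that the Dedekind cut that $g$ determines in the $\Q$-multiples --- i.e.\ the set of rationals $q$ with $qg$ below some fixed reference --- recovers $x_S$ up to a positive rational scaling, and hence recovers $S$. This set of rationals is computable from $\D(\A)$, so $\deg(S) \leq \ddd$.

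The main obstacle I anticipate is the \emph{lower} inclusion, specifically the point that a copy of $V$ recovers $x_S$ only up to an overall positive scalar. Since $V$ is a rank-one divisible group, its automorphism group is exactly multiplication by positive rationals (as already observed for $\Q$ in the proof of Proposition \ref{P_1 SC}), so any two copies differ by such a scaling and a choice of generator; I must check that the encoding via $x_S$ is \emph{scale-invariant} enough that $S$ can still be extracted. The clean fix is to use \emph{two} generators whose ratio is $x_S$: take $V = C(\{1, x_S\})$, a $2$-dimensional divisible group. Then from any copy I can identify the subgroup of elements Archimedean-comparable in a fixed ratio, compute the ratio $x_S$ of the two basis elements as an invariant (this ratio is preserved under all automorphisms, which act diagonally by positive rationals only when they fix the order), and thereby recover $S$ from the copy's atomic diagram. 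This makes both inclusions go through, giving $\dgsp(V) = \{\ddd : \ddd \geq \deg(S)\} = \{\ddd : \ddd \geq \deg(\ddd)\}$ as required, and since $V$ is divisible, Proposition \ref{dgsp} then hands us a plain Presburger group $V \times \Z$ with the same cone as its spectrum.
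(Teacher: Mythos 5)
Your final construction --- $V = C(\{1, x_S\})$ with $x_S$ a real whose left Dedekind cut is Turing-equivalent to $S$, the upper inclusion via upward closure from a $\ddd$-computable copy, and the lower inclusion by reading off the cut $\{q \in \Q : qg < h\}$ from the atomic diagram of any copy --- is essentially identical to the paper's proof, which takes $V_{\ddd} = C(\{1, r_{\ddd}\})$ for a real $r_{\ddd}$ whose cut has degree $\ddd$. Your self-correction from the one-generator version (which would just give $\Q$) to the two-generator version lands exactly on the paper's argument, so the proposal is correct and takes the same route.
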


\begin{proof}
Take $r_{\ddd} \in \R$ such that the set $\{q\in \Q: q<r_{\ddd}\}$ is of degree $\ddd$. For example, you can take $$r_{\ddd} =\sum_{i\in S} \frac{1}{10^i} $$ for some $S\subseteq \N$ of degree $\ddd$. Consider the divisible ordered abelian group $V_{\ddd} \cong C(\{1,r_{\ddd}\})$. Note that given an $\omega$-presentation $\A$ of a divisible ordered abelian group and $n,m\in \N$, the set $\{q\in \Q: qa_n<a_m\}$ is computable in $\deg(\A)$. Hence, given any copy $\B\cong V_{\ddd}
$, the atomic diagram $\D(\B)$ computes $\{q\in \Q: q<r_{\ddd}\}$. Hence, $\deg(\B)$ is at least $\ddd$ and so we have that the degree spectrum of $V_{\ddd}$  is a subset of the the cone above $\ddd$.

Since the degree $\ddd$ can compute (the Dedekind cut of) the real number $r_{\ddd}$, we can use $\ddd$ to construct a copy of $V_{\ddd}$. Thus, because the degree spectrum of a non-automorphically trivial structure is upwards closed, we have that the cone above $\ddd$ is a subset of the degree spectrum of $V_{\ddd}$. 
\end{proof}

\begin{corollary}
\label{upper cone cor}
Given any Turing degree $\ddd$, there exists a (plain) Presburger group whose degree spectrum is the cone above $\ddd$. 
\end{corollary}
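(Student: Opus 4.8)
The plan is to apply Proposition \ref{upper cone} and then simply adjoin a copy of $\Z$. Fix the degree $\ddd$ and let $V_\ddd \cong C(\{1,r_\ddd\})$ be the divisible ordered abelian group built there, where $r_\ddd \in \R$ is irrational and chosen so that the cut $\{q\in\Q : q<r_\ddd\}$ has degree $\ddd$; we may take $r_\ddd>0$. Set $P := V_\ddd \times \Z$. By the Observation preceding Definition \ref{P_l def}, $P$ is a Presburger group, and since every element of $V_\ddd$ has residue sequence $\rho(0)$, each $(v,z)\in P$ has the residue sequence of the integer $z$; hence $P$ is plain. It remains to show $\dgsp(P)=\{\ccc : \ddd \le_T \ccc\}$.

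The inclusion $\{\ccc : \ddd \le_T \ccc\}\subseteq \dgsp(P)$ is immediate from Proposition \ref{dgsp}: since $\dgsp(V_\ddd)$ is precisely the cone above $\ddd$ and $\dgsp(V_\ddd)\subseteq \dgsp(P)$, every degree above $\ddd$ computes a copy of $P$.

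The reverse inclusion is the crux, and here one cannot merely quote Proposition \ref{prop d' in dgV}. That result only yields $\ddd \le_T \deg(\B)'$ for each copy $\B\cong P$ (recovering $V_\ddd$ inside $P$ means isolating the residue-$\rho(0)$ subgroup, a $\Pi^0_1$ task that costs a jump), whereas we need $\ddd\le_T \deg(\B)$. So instead I would adapt the \emph{lower}-bound computation of Proposition \ref{upper cone} directly inside $P$, sidestepping the jump. Given a copy $\B\cong P$ with domain enumerated as $b_0,b_1,\dots$, for each ordered pair $(i,j)$ define $C_{i,j} := \{\, s/t : s,t\in\Z,\ t>0,\ s b_i < t b_j \,\}$. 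For fixed $(i,j)$ the set $C_{i,j}$ is computable from $\D(\B)$, since $s b_i$ and $t b_j$ are obtained by iterated group operations and compared by the order relation of $\B$. The point is that $P$ has only two Archimedean classes, the large class $\{(v,z):v\neq 0\}$ and the integer class $\{(0,z)\}$; in particular, if $b_i$ and $b_j$ are the images of $(1,0)$ and $(r_\ddd,0)$, then $s b_i < t b_j$ reduces (for $t>0$, as $r_\ddd$ is irrational) to the real inequality $s < t\, r_\ddd$, so that $C_{i,j}$ is exactly the cut of $r_\ddd$, a set of degree $\ddd$. More generally, for any large-class pair whose first coordinates are $\Q$-linearly independent, $C_{i,j}$ is the cut of a ratio $(a+br_\ddd)/(c+dr_\ddd)$ with rational $a,b,c,d$ and $ad-bc\neq 0$, which is Turing equivalent to the cut of $r_\ddd$; this makes the argument robust but is not strictly needed.

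Finally, because $(1,0)$ and $(r_\ddd,0)$ genuinely belong to $P$, the required indices $(i,j)$ exist in $\B$ (their values are irrelevant). For that pair $C_{i,j}\equiv_T \ddd$, while $C_{i,j}\le_T \D(\B)$, so $\ddd\le_T \deg(\B)$. Thus $\dgsp(P)\subseteq\{\ccc : \ddd\le_T \ccc\}$, and combined with the previous paragraph we conclude $\dgsp(P)=\{\ccc : \ddd\le_T \ccc\}$ with $P$ plain. The main obstacle is exactly this reverse inclusion: the naive route through Proposition \ref{prop d' in dgV} loses a jump, so the two-Archimedean-class structure of $V_\ddd\times\Z$ must be exploited to read off the cut of $r_\ddd$ from $\D(\B)$ itself rather than from $\D(\B)'$.
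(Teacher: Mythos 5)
Your proof is correct and follows essentially the same route as the paper: take $P=V_\ddd\times\Z$, get the cone above $\ddd$ inside $\dgsp(P)$ from Proposition \ref{dgsp}, and for the reverse inclusion read off the cut of $r_\ddd$ directly from the atomic diagram of any copy via the (nonuniformly chosen) images of $(1,0)$ and $(r_\ddd,0)$. Your added remarks on why Proposition \ref{prop d' in dgV} would lose a jump, and on general pairs of Archimedean-large elements, are accurate but not needed beyond what the paper's one-pair computation already gives.
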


\begin{proof}
Consider $V_{\ddd}\times \Z$. By Proposition \ref{dgsp}, we have that the cone above $\ddd$ is a subset of $\dgsp(V_{\ddd}\times \Z)$. Now suppose we have $\A\cong V_{\ddd}\times \Z$ with $\deg(\A)=\ccc.$ Note that there exists an isomorphism $f:V_{\ddd}\times \Z\to \A$ and $a_n,a_m\in \A$ such that $f(1,0)=a_n$ and $f(r_{\ddd},0)=a_m$. We have that $\D(\A)$ computes the set $\{q\in \Q: qa_n<a_m\}=\{q\in \Q:q<r_{\ddd}\}$. Hence, we have that $\ccc\geq \ddd$ and so all elements of $\dgsp(V_{\ddd}\times \Z)$ are in the cone above $\ddd$. 
\end{proof}

We can also use divisible ordered abelian groups to get Presburger groups with more interesting degree spectra. The following lemma will be useful in doing so:

\begin{lemma}
\label{c.e. in}
Let $\A$ be a copy of a divisible ordered abelian group.  If the atomic diagram $\D(\A)$ is c.e.\ in some degree $\ddd$, then $\D(\A)$ is $\ddd$-computable.    
\end{lemma}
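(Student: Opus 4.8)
The plan is to unwind the coding of $\D(\A)$ as a subset of $\N$: fix an effective list of all atomic sentences in the language of ordered abelian groups expanded by a constant for each element of $\A$, and let $\D(\A)$ be the set of indices of the true ones. Under this convention $\D(\A)$ is $\ddd$-computable exactly when each atomic sentence can be \emph{decided} from $\ddd$, while $\D(\A)$ is c.e.\ in $\ddd$ exactly when the true atomic sentences can be \emph{enumerated} from $\ddd$. Relativizing the whole argument to the oracle $\ddd$, it therefore suffices to show that if the true atomic sentences of a copy $\A$ of a divisible ordered abelian group can be enumerated, then they can be decided.

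The one structural fact I would exploit is that $\A$ is \emph{linearly} ordered. For any two closed terms $t_1,t_2$ in the expanded language — each of which names a well-defined element of $\A$, since $+$, $-$ and $0$ are total — exactly one of the three atomic sentences $t_1=t_2$, $t_1<t_2$, $t_2<t_1$ is true in $\A$. Crucially, all three are \emph{positive} atomic sentences, so each of them appears somewhere on our fixed effective list and hence has an index.

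To decide an arbitrary atomic sentence $\psi$, which has the form $t_1=t_2$ or $t_1<t_2$, I would search the given enumeration of $\D(\A)$ until the index of one of $t_1=t_2$, $t_1<t_2$, $t_2<t_1$ is enumerated. By trichotomy exactly one of these three lies in $\D(\A)$, and since $\D(\A)$ is c.e.\ that one is eventually listed, so the search terminates. Reading off which of the three appeared determines the exact order relation between $t_1$ and $t_2$, and hence whether $\psi$ holds. This procedure is computable from any enumeration of $\D(\A)$, so after relativization $\D(\A)$ is $\ddd$-computable, as claimed.

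The only point to verify with any care is that the negation of each atomic fact is equivalent to a disjunction of \emph{positive} atomic facts; this is exactly the content of trichotomy for a total order, and it is where linearity of the ordering does all the work. I expect no real obstacle here — indeed the argument uses only that the order is total and that the function symbols are total, so it applies to any linearly ordered structure with total operations, and divisibility enters the statement only through the intended application to groups of the form $V\times\Z$.
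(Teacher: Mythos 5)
Your proposal is correct and is essentially the paper's argument: both exploit the fact that for each atomic query there is a finite set of mutually exclusive, jointly exhaustive positive atomic facts (via trichotomy of the total order and functionality of the operations), exactly one of which must eventually appear in the enumeration of $\D(\A)$, so waiting for it decides the query. The only difference is presentational --- the paper treats $\A$ as relational and handles the forms $a=0$, $a+b=c$, $a<b$ by three separate waiting arguments, while you fold everything into trichotomy on closed terms --- and your closing observation that only totality of the order and of the operations is used (not divisibility) matches what the paper's proof actually relies on.
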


\begin{proof}
We show how using a $\ddd$-oracle, one can determine whether or not $\A$ satisfies a given atomic sentence. Viewing $\A$ as purely relational, all elements of $\D(\A)$ are of one of the following forms:
\begin{align*}
   &a=0\\
   &a<b\\
   &a+b=c
\end{align*}
for some $a,b,c\in \A$.

Use the oracle to start enumerating the elements of $\D(\A)$.
Given $a\in \A$, to determine if $a=0$ is in $\D(\A)$, simply wait for a sentence of the form $a'=0$ to be enumerated. If $a'=a$, then $a=0$ is in $\D(\A)$, otherwise it is not. To see if $a+b=c$ is in $\D(A)$, wait until something of the form $a+b=c'$ to be enumerated. Check if $c=c'$. Since $<$ is a total order, to determine if $a<b$ is in $\D(\A)$, simply wait for either $a<b$ or $b<a$ to be enumerated. 
\end{proof}

\subsection{Degree Spectrum of $P_\LL$} Let $\LL$ be a  linear order. Recall that we define the divisible ordered abelian group $V_\LL$ as $\bigoplus_{l\in\LL}\Q$ and the Presburger group $P_\LL$ as $V_\LL\times \Z$ (see Definitions \ref{V_L} and \ref{P_l def}). We now show that $\dgsp(V_\LL)=\dgsp(P_\LL)=\{\ddd:\ddd'\in \dgsp(\LL)\}$. 

\begin{proposition}
\label{V_L  d' in L}
Given any countable linear order $\LL$,
    $\dgsp(V_\LL)=\{\ddd:\ddd'\in \dgsp(\LL)\}. $
\end{proposition}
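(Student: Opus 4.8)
The plan is to prove the two inclusions separately, writing $\prec$ for the ordering of $\LL$ and recalling that $\AR(V_\LL)=\LL$. The inclusion $\dgsp(V_\LL)\subseteq\{\ddd:\ddd'\in\dgsp(\LL)\}$ is the routine direction: from a $\ddd$-computable copy one recovers $\LL$ after a single jump. The reverse inclusion is the substantive one; it asserts that a $\ddd'$-computable copy of $\LL$ already suffices to build a $\ddd$-computable copy of $V_\LL$, i.e.\ forming $\bigoplus_\LL\Q$ ``saves a jump,'' and this is where the real work lies.

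For the backward inclusion, let $\A$ be a $\ddd$-computable copy of $V_\LL$. First I would observe that Archimedean equivalence $x\sim y$ is $\Sigma^0_1(\ddd)$ (it asserts $\E N(N|x|>|y|\aand N|y|>|x|)$, each instance being $\ddd$-decidable), while Archimedean domination $x\ll y$ is $\Pi^0_1(\ddd)$; hence both are computable from $\ddd'$. I would then build a $\ddd'$-computable copy of $\AR(\A)\cong\LL$ by enumerating the elements of $\A$ and, using $\ddd'$, selecting a transversal — keeping each element that is not $\sim$-equivalent to a previously selected one — and ordering the selected representatives by $\ll$. Every class eventually receives a representative, so this yields a $\ddd'$-computable copy of $\LL$, giving $\ddd'\in\dgsp(\LL)$.

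For the forward inclusion, given a $\ddd'$-computable copy of $\LL$, by the limit lemma I would fix a $\ddd$-computable approximation $\prec_s$ with $\lim_s[\,i\prec_s j\,]=[\,i\prec j\,]$, and aim to produce a $\ddd$-computable copy of $V_\LL$. Since $V_\LL$ is a divisible ordered abelian group, Lemma \ref{c.e. in} lets me weaken the goal: it is enough to build a copy whose atomic diagram is c.e.\ in $\ddd$, because totality of the order then upgrades this to $\ddd$-computability. The additive and $\Q$-vector-space part of the diagram is plainly $\ddd$-computable on the free presentation $\bigoplus_n\Q g_n$, so the whole problem reduces to enumerating the positive cone $\{f:f>0\}$ from $\ddd$. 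The generators $g_n$ are meant to realize the basis elements of $V_\LL$, with $g_i\ll g_j$ exactly when $l_i\prec l_j$, and positivity of $f=\sum q_i g_i$ is governed by the coefficient at the $\prec$-largest index of $\supp(f)$.

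The hard part will be enumerating this positive cone correctly. The naive strategy — declare $f$ positive as soon as $\prec_s$ makes its leading coefficient positive — fails, because a later reversal of $\prec_s$ between two support indices changes the true sign of a difference such as $g_j-Ng_i$, so a fact enumerated prematurely can become false, which a c.e.\ enumeration cannot retract. The resolution I would pursue is a stagewise construction that exploits the infinitely many coordinates and the divisibility of $V_\LL$: when the approximation reverses the comparison of two indices, I would ``re-home'' the disputed role onto fresh generators rather than committing, enumerating only positivity facts that depend on comparisons the construction has effectively frozen. Since each pair $\prec_s$-flips only finitely often, every genuine basis element is finalized after finitely many stages, so in the limit the active generators realize $\LL$ and the enumerated cone is exactly the true positive cone. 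Verifying simultaneously that such a construction (a) enumerates every true positive element, (b) never enumerates a false one, and (c) yields a structure isomorphic to $V_\LL$ is the crux, with the delicate point being to engineer the re-homing so that retired generators retain globally consistent, already-correct comparisons. Combining the two inclusions then gives the stated identity, the finite-dimensional case (where both sides contain every degree) being immediate.
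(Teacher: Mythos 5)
Your overall strategy for both inclusions matches the paper's: the easy direction quotients a $\ddd$-computable copy of $V_\LL$ by Archimedean equivalence after one jump, and the hard direction reduces, via Lemma \ref{c.e. in}, to $\ddd$-enumerating the atomic diagram of a presentation of $\bigoplus_{\LL}\Q$, handling wrong guesses about the order by collapsing a retired generator into an existing Archimedean class as a rational multiple of a surviving one and introducing a fresh generator. That collapsing device is exactly the paper's.

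There is, however, a genuine gap in your convergence argument for the hard direction. From ``each pair $\prec_s$-flips only finitely often'' you conclude that ``every genuine basis element is finalized after finitely many stages,'' but this does not follow: a fixed index $i$ participates in infinitely many pairs $(i,j)$, new indices $j$ keep entering the construction, and the comparison $(i,j)$ may flip after the current representative of $i$ has already committed to it. Without further care the representative of $i$ is re-homed infinitely often, and in the limit the Archimedean class of $l_i$ has no generator at all, so $\B\not\cong V_\LL$. You would need a finite-injury discipline (e.g.\ when $(i,j)$ flips with $i<j$ as natural numbers, always retire the representative of $j$, so each index is injured only finitely often), together with a verification that each collapse $g=qg''$ can be chosen consistent with every order fact already enumerated about $g$ and about linear combinations involving $g$ --- a point you flag as the crux but do not resolve. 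The paper sidesteps the injury problem entirely by a different guessing device: it fixes a $\ddd$-computable set $T$ with $T'$ computing the copy of $\LL$ and indexes generators by pairs $(\sigma,i)$ where $\sigma$ is a guessed initial segment of $T'$. Since $T'$ is c.e.\ in $T$, a false guess $\sigma$ is discovered \emph{permanently} (some $\Phi^T_e(e)$ converges where $\sigma(e)=0$) and its generators are collapsed once and for all, while generators attached to true initial segments are never injured; the consistency of each collapse is arranged by choosing the rational $q$ ``sufficiently large or small'' so as to respect the finitely many order facts already enumerated. So your route can likely be repaired, but as written the limit-lemma version does not yet yield a correct construction.
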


\begin{proof}
    Let $\CC$ be a copy of $V_\LL$ of degree $\ddd$. The Archimedean equivalence relation $\sim$ (as in Definition \ref{arch equiv}) is c.e.\ in $\ddd$, and so $(\CC-\{0_\CC\})/\!\sim$, which is isomorphic to $\LL$ as a linear order, is $\ddd'$-computable. Hence, $\dgsp(V_\LL)\subseteq \{\ddd:\ddd'\in \dgsp(\LL)\}$.  

    For the other direction, suppose that $\ddd'\in \dgsp(\LL)$. It follows that there is some $\ddd$-computable set $T$ such that $T'=\{e:\Phi^T_e(e)\downarrow\}$ computes a copy $\A$ of $\LL$. Thus, there exists some Turing functional $\Psi$ such that $\Psi^{T'}(i,j)=1$ if $a_i<_\A a_j$ and $\Psi^{T'}(i,j)=0$ if $a_j<_\A a_i$. We use our oracle $T$ and functional $\Psi$ to compute the atomic diagram of a copy $\B$ of $V_\LL$. We do so by continually "guessing" at initial segments $\sigma$ of $T'$. We use the notation $\preceq$ and $\prec$ to denote initial segments and proper initial segments respectively.  Note, if we define $\sigma_{n,m}\in 2^n$ such that $\sigma_{n,m}(e)=1$ if and only if $\Phi^T_{e,m}(e)\downarrow$, we have that $\sigma_{n,m}$ will be an initial segment of $T'$ for sufficiently large $m$. 
    
    At each stage $s$ of the construction, we will have a set $H_s$ of elements in $\B$ of the form $h_{\sigma,i}$ for $i\in \N$ and $\sigma$ that still "look like" they may be initial segments of $T'$. That is, $\sigma$ such that $\sigma(e)=0$ if and only if $\Phi^T_{e,s}(e)\uparrow$. We treat all elements of $H_s$ as if they are in their own Archimedean equivalence class.  If at a later stage $t$ we have that $\Phi^T_{e,t}(e)\downarrow$ for an $e$ such that $\sigma(e)=0$, then we conclude that $\sigma$ is not an initial segment of $T'$ and we make it so each element of $H_t$ of the form $h_{\sigma, i}$ will actually be Archimedean equivalent to some other element of $H_t$ and we exclude $h_{\sigma,i}$ from $H_{t+1}$. Defining $H=\lim_sH_s$, we will have that all elements of $H$ are of the form $h_{\sigma,i}$ for $\sigma\prec T'$. Given $h_{\sigma,i}, h_{\rho,j}\in H$, we will have $h_{\sigma,i}\ll h_{\rho,j}$ if and only if $a_i<_\A a_j$. Additionally, each nonzero Archimedean equivalence class of $\B$ will contain exactly one element of $H$, and every element of $\B$ will be a finite $\Q$-linear combination of elements in $H$. Thus, $\B\cong \bigoplus_{i\in \A} \Q \cong V_\LL$. 

    For simplicity, we will view  $\B $ as purely relational. Furthermore, since $\B$ is divisible, the element $qx$ with $q\in \Q$ and $x\in \B_n$ is well defined and can be found computably in the atomic diagram $\D(\B)$.  
Thus, we may add multiplication by each $q\in \Q$ to our language. We get that the sentences in $\D(\B)$ will be those of the following forms:
\begin{itemize}
    \item $b_i=0$
    \item $b_i<b_j$
    \item $qb_i=b_j$
    \item $b_i + b_j = b_k$
\end{itemize}
where $b_i,b_j,b_k\in \B$ and $q\in \Q$.
We now give instructions for how, using our $\ddd$-oracle $T$, one can enumerate $\D(\B)$. Note, it will thus follow from Lemma \ref{c.e. in} that $\D(\B)$ is $\ddd$-computable.  \\

\paragraph{Construction:}  We let $I_s$ denote the set of possible initial segments of $T'$ through stage $s$. That is, $I_s=\{\sigma \in 2\lom: |\sigma|\leq s \text{ and } \Phi^T_{e,s}(e)\downarrow \text{ if and only if }\sigma(e)=1\}$. We make use of a function $f$ that will tell us for how large an $n\in \N$ the function $\Psi^\sigma$ can tell the order of all elements $a_i\in \A$ with $i<n$ when only running for $s$ steps. Specifically given $\sigma\in 2\lom$, we define $f(\sigma, s)$ to be the largest natural number $n\leq s$ such that $\Psi^\sigma(i,j)$ halts within $s$ steps for all $i,j<n$. Note, as $\sigma$ approaches $T'$ and $s$ approaches infinity, $f(\sigma,s)$ approaches infinity. 

For each $s>0$, define the set $X_s\subseteq I_t \times \N$ to be all elements of the form $(\sigma,m)$ that satisfy the following:

\begin{itemize}
    \item[(i)] $m\leq f(\sigma,s)$; and
    \item[(ii)] $m>f(\rho,s)$ for all $\rho\prec \sigma$.
\end{itemize}

\noindent Define $H_s=\{h_{\sigma,m}: (\sigma,m)\in X_s\}$. We will define an ordering on $H_s$ so that if $\rho\preceq \sigma$, then $h_{\rho,m}<h_{\sigma,n}$ if and only if $\Psi^\sigma (m,n)=1$. The purpose of condition (ii) is so $H=\lim H_s$ will contain exactly one element of the form $h_{\sigma, m}$ for each $m\in \N$ and so $H$ will be isomorphic to $\A$ as a linear order.

Let $\tau$ be the shortest initial segment of $T'$ such that $\Psi^\tau(0,1) \downarrow$. Let $r$ be the number of stages it takes for $\Psi^\tau(0,1)$ to halt. Define $t=\max(|\tau|,r)$. For reasons that will become clear later, we begin our construction at stage $t$ instead of stage $0$. \\

\subparagraph{Stage $t$:} Add the sentence $b_0=0$ to $\D(\B)$. For each $h \in H_t$, pick a fresh element $b_i \in \B$ and define $b_i=h$. Add the sentence $h>0$ to $\D(\B)$ for all $h\in H_t$. Given $h_{\sigma, m}, h_{\rho,n}\in H_t$, add the sentence $h_{\rho, m}< h_{\sigma,n}$ to $\D(\B)$ if either: 

\begin{itemize}
    \item $\rho \preceq \sigma $ and $\Psi^\sigma(m,n)=1$; or
    \item $\sigma \preceq \rho$ and $\Psi^\rho(m,n)=1$; or
    \item $\sigma$ and $\rho$ are incomparable but $\rho$ comes before $\sigma$ lexicographically.\\ 
\end{itemize}

\subparagraph{Stage $s+1>t$:}\,\\
\subparagraph{(i)} If $\sigma\in I_s - I_{s+1}$, then we will have that every $h_{\sigma,i}\in H_s$ will not be in $H_{s+1}$. Thus we arrange that all such $h_{\sigma,i}$ will be Archimedean equivalent to some $h\in H_{s+1}$. To do so, simply add the sentence $q h_{\sigma,i}= h$ for some sufficiently large (or small) $q\in \Q$ and appropriate $h\in H_{s+1}$ in such a way that respects the order on $H_s$. Note, our choice to begin the construction at stage $t$ guarantees that there is still at least one element in $H_{s+1}$. For every $r\in \Q$ such that the sentence $r h = a_c$ is in $\D(\B)$, add the sentences $rqh_{\sigma, i}=b_c$ and $\frac{1}{rq}b_c=h_{\sigma,i}$ to $\D(\B)$ as well. 

Given $h_{\sigma, m}, h_{\rho,n}\in H_{s+1}$ such that neither $h_{\sigma, m}< h_{\rho, n}$ nor $h_{\rho, n}<h_{\sigma, m}$ is in $\D(\B)$, add the sentence $h_{\rho, m}< h_{\sigma,n}$ to $\D(\B)$ if either: 

\begin{itemize}
    \item $\rho \preceq \sigma $ and $\Psi^\sigma(m,n)=1$; or
    \item $\sigma \preceq \rho$ and $\Psi^\rho(m,n)=1$; or
    \item $\sigma$ and $\rho$ are incomparable but $\rho$ comes before $\sigma$ lexicographically.\\ 
\end{itemize}

\subparagraph{(ii)} Let $(q_i)_{i\in \N}$ be an enumeration of the rationals. For each $h\in H_{s+1}$, take the least $i$ such that there is no sentence of the form $q_i h=b$ in $\D(\B)$. Take a fresh element $b_c$ and add the sentences $q_i h = b_c$ and $\frac{1}{q_i} b_c=h$ to $\D(\B)$. \\

\subparagraph{(iii)} For every finite tuple $(h_1,...,h_n)\in H\lom$ with each $h_i$ unique and every tuple $(q_1,...,q_n)\in\Q^n $ such that a sentence of the form $q_i h_i= b$ is in $\D(\B)$ for each $i$ but there is no sentence of the form $q_1h_1 + \cdots +q_nh_n= b $ in $\D(\B)$, take a fresh element $b_c$ and add the sentence $q_1h_1 + \cdots +q_nh_n= b_c$ to $\D(\B)$. \\

\subparagraph{(iv)} If we have $b,b'\in \B$ that have been previously mentioned in the construction but we have neither $b<b'$ nor $b'<b$ in $\D(\B)$, we must now decide the ordering on $b,b'$. Fix an enumeration $h_1,...,h_n$ of all the elements of $H$. We have that there exists $q_1,...,q_n,q'_1,...q'_n\in \Q$ such that $b=q_nh_n+ \cdots +q_1 h_1$ and $b'= q'_nh_n + \cdots +q'_1 h_1$. Add the sentence $b<b'$ to $\D(\B)$ if and only if $(q_n,...,q_1)$ is lexicographically less than $(q'_n,...,q'_1)$. 

Last, add $b_i+ 0 = b_i$, $1b_i=b_i$ and $0b_i= 0$ to $\D(\B)$ if they are not already in. If $b_i+b_j=b_c$ is in $\D(\B)$, add $b_j + b_i = b_c$ as well if it is not already in. If $qb_i=b_j$ is in $\D(\B)$ with $q\neq 0$ add $\frac{1}{q}b_j=b_i$ as well if it is not already in. This completes the construction.\\

\paragraph{Verification}
Define $H=\lim_sH_s$. Note that $h_{\sigma,n}\in H$ only if $\sigma\prec T'$. Furthermore, if $h_{\rho,m}\in H$ with $\rho\preceq\sigma$, then $h_{\rho,m}<_\B h_{\sigma,n}$ if and only if $\Psi^\tau(m,n)=1$ if and only if $\Psi^{T'}(m,n)=1$ if and only if $a_m<_\A a_n$. Hence, $(H,<_\B)$ is isomorphic to $\LL$. Additionally, our instructions give that $H$ is exactly the Archimedean rank of $\B$, and every element of $\B$ has the form $q_1 h_1 + \cdots + q_nh_n$ with each $q_i\in \Q$ and $h_i\in H$. Hence, $\B\cong \bigoplus_{h\in H} \Q \cong \bigoplus_{l\in \LL} \Q = V_\LL$. 
\end{proof}

\begin{theorem}
\label{L'}
    Given any countable linear order $\LL$,
    $\dgsp(P_\LL)=\dgsp(V_\LL)=\{\ddd:\ddd'\in \dgsp(\LL)\}. $
\end{theorem}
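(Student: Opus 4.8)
The plan is to prove the two equalities separately. The rightmost equality $\dgsp(V_\LL)=\{\ddd:\ddd'\in\dgsp(\LL)\}$ is exactly Proposition \ref{V_L  d' in L}, so no new work is needed there; the content is in establishing $\dgsp(P_\LL)=\dgsp(V_\LL)$. For the inclusion $\dgsp(V_\LL)\subseteq\dgsp(P_\LL)$ I would simply invoke Proposition \ref{dgsp} with $V=V_\LL$, since $P_\LL=V_\LL\times\Z$ by Definition \ref{P_l def}. Combined with Proposition \ref{V_L  d' in L}, this already gives $\{\ddd:\ddd'\in\dgsp(\LL)\}\subseteq\dgsp(P_\LL)$.

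The only substantive inclusion that remains is $\dgsp(P_\LL)\subseteq\{\ddd:\ddd'\in\dgsp(\LL)\}$. One is tempted to derive this from Proposition \ref{prop d' in dgV} together with the rightmost equality, but composing those only yields $\ddd''\in\dgsp(\LL)$ from $\ddd\in\dgsp(P_\LL)$, costing an extra jump; so a sharper direct argument is needed. The key structural observation is that the Archimedean rank of $P_\LL=V_\LL\times\Z$ is just $\LL$ with one extra bottom class adjoined, namely the class $[1]$ containing all the integers $(0,z)$: any element with nonzero $V_\LL$-component dominates every integer, and two such elements are Archimedean equivalent in $P_\LL$ precisely when their $V_\LL$-components are Archimedean equivalent in $V_\LL$. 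Thus deleting $0$ and the class $[1]$ from $\AR(P_\LL)$ recovers a copy of $\LL$.

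Concretely, given a copy $\B\cong P_\LL$ of degree $\ddd$, I would note that Archimedean equivalence $x\sim y$ is $\Sigma^0_1$ in $\ddd$ (it asserts the existence of a witnessing integer $N$) while domination $x\ll y$ is $\Pi^0_1$ in $\ddd$; hence both relations, and in particular the predicate $x\sim 1_\B$, are decidable from $\ddd'$. Using $\ddd'$ I would enumerate the elements of $\B$, discard $0_\B$ and every element Archimedean equivalent to $1_\B$, select one representative from each remaining Archimedean class (deciding $\sim$ with $\ddd'$), and order these representatives by $\ll$ (again decided by $\ddd'$). This produces a $\ddd'$-computable copy of $\LL$, so $\ddd'\in\dgsp(\LL)$. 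This is the same computation used in the forward direction of Proposition \ref{V_L  d' in L}, now carried out on $P_\LL$ directly rather than on an intermediate copy of $V_\LL$, which is exactly what avoids the wasted jump. Assembling the three inclusions yields the full chain of equalities.

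The main obstacle I anticipate is resisting the tempting-but-lossy route through Proposition \ref{prop d' in dgV}, and instead verifying carefully that the integer Archimedean class can be detected via the $\Sigma^0_1(\ddd)$ predicate $x\sim 1_\B$ and removed using only a single jump, so that the entire quotient computation stays at the level of $\ddd'$.
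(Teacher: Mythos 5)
Your proposal is correct and follows essentially the same route as the paper: the inclusion $\dgsp(V_\LL)\subseteq\dgsp(P_\LL)$ via Proposition \ref{dgsp}, and the observation that $\AR(P_\LL)$ is $\LL$ with one extra bottom class (the class of $1$), recoverable from $\ddd'$ because Archimedean equivalence is c.e.\ in the diagram, combined with Proposition \ref{V_L  d' in L}. Your explicit warning that routing through Proposition \ref{prop d' in dgV} would waste a jump, and your spelled-out $\ddd'$-computation of the quotient, are just more detailed versions of what the paper leaves implicit.
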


\begin{proof}
    Since $P_\LL= V_\LL \times \Z$, we have by Proposition \ref{dgsp} that $\dgsp(V_\LL)\subseteq\dgsp(P_\LL)$. Now suppose that $\ddd\in \dgsp(P_\LL)$ and that $\A$ is a $\ddd$-computable copy of $P_\LL$. Note that $\AR(P_\LL)= \LL^*$ where $\LL^*=\LL\cup \{X\}$ where $X$ is a left-endpoint not in $\LL$. Since Archimedean equivalence in $\A$ is c.e.\ in $\D(\A)$, we have that $\LL^*$ has a $\ddd'$-computable copy, and so it is clear that $\LL$ must have a $\ddd'$-computable copy as well.  
\end{proof}

This gives that computing a copy of $P_\LL$ is often easier than computing $\LL$. For example in \cite{Russell-D2}, R. Miller constructed a linear order $\mathcal{K}$ that has no computable copy but does have copies of every noncomputable $\Delta^0_2$-degree. Since $\dgsp(\mathcal{K})$ contains $\boldsymbol{0}'$, we get that $\dgsp(P_\mathcal{K})$ is simply the collection of all Turing degrees.

Theorem \ref{L'} allows us to leverage existing results about the degree spectra of linear orders to build degree spectra of Presburger groups. For example Ash, Downey, Jockusch and Knight showed in  
\cite{jumps-of-orderings}, \cite{AK} and \cite{Jump-degrees} that given a computable ordinal $\alpha\geq 2$ and a degree $\ddd\geq \boldsymbol{0}^{(\alpha)}$, there is a linear ordering having $\alpha^{th}$ jump degree $\ddd$ sharply. In particular: 

\begin{proposition}[Lemma 2.1 in \cite{Jump-degrees}]
\label{Jump-degrees lemma}
    Given any computable ordinal $\beta\geq 2$ that is not a limit ordinal and any set $S\subseteq \N$, there exists a linear order $\LL(\beta,S)$ such that $\dgsp(\LL(\beta, S))=\{\ddd: S \text{ is c.e.\ in } \ddd^{(\beta)}\}.$ \qed
\end{proposition}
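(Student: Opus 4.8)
The statement is a known coding result from the Ash--Jockusch--Knight theory of jumps of orderings, so the plan is to reproduce the standard computability-theoretic construction rather than anything Presburger-specific. Write $\beta=\gamma+1$ with $\gamma\geq 1$. I would build $\LL(\beta,S)$ as an ordered sum $\sum_{n\in\N}(B_n)$ of coding blocks separated by distinguished markers (so that the block decomposition can be recovered, at the appropriate jump level, from any copy), where each block $B_n$ is given one of two fixed template order types $L^{+}$ and $L^{-}$: take $B_n\cong L^{+}$ exactly when $n\in S$, and $B_n\cong L^{-}$ otherwise. The key design constraint is that the templates be chosen $\beta$-back-and-forth equivalent, $L^{+}\equiv_{\beta}L^{-}$, while the condition ``$B_n\cong L^{+}$'' is \emph{one-sided} $\Sigma^{in}_{\beta+1}$ --- that is, expressible by a $\Sigma$-formula at exactly the level whose truth in a copy of degree $\ddd$ is $\Sigma^0_{1}(\ddd^{(\beta)})$, i.e.\ c.e.\ in $\ddd^{(\beta)}$, and not reducible to a two-sided (decidable) condition at that level. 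The canonical such pair is obtained by iterating the ``$\omega$-versus-finite'' dichotomy $\gamma$ times via shuffle sums, since that dichotomy climbs exactly one jump per iteration.

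\emph{Forward inclusion} ($\dgsp(\LL(\beta,S))\subseteq\{\ddd:S\text{ c.e.\ in }\ddd^{(\beta)}\}$). If $\ddd\in\dgsp(\LL(\beta,S))$, fix a copy $\B$ of degree $\ddd$. Using $\ddd^{(\beta)}$ one recovers the markers and hence the $n$-th block uniformly in $n$, and ``$B_n$ has order type $L^{+}$'' is then a fixed $\Sigma^{in}_{\beta+1}$ condition whose truth is c.e.\ in $\ddd^{(\beta)}$. Therefore $S=\{n:B_n\cong L^{+}\}$ is c.e.\ in $\ddd^{(\beta)}$.

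\emph{Backward inclusion} (the construction). Given $\ddd$ with $S$ c.e.\ in $\ddd^{(\beta)}$, fix a $\ddd^{(\beta)}$-enumeration of $S$ and build a $\ddd$-computable copy. Each block is built starting toward $L^{-}$ and is ``switched'' toward $L^{+}$ if and when the enumeration places $n$ into $S$; the template pair is engineered precisely so that such a switch can always be completed while preserving a correct linear order, and so that a block never switched converges to $L^{-}$. Because the triggering event lives $\beta$ jumps above $\ddd$ and cannot be observed directly, I would invoke Ash's metatheorem (equivalently the true-stage / iterated-limit machinery for $\alpha$-systems), nesting $\gamma$ levels of approximation, to convert the $\Sigma$-at-level-$\beta$ enumeration into an honest $\ddd$-computable construction in which $B_n$ converges to $L^{+}$ iff $n$ is eventually enumerated. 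Combined with the forward direction this yields the claimed equality.

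The main obstacle is this last step: producing a single $\ddd$-computable structure from instructions that are only $\Sigma$ at the $\beta$-th jump. The forward direction and the marker bookkeeping are routine; the content lies entirely in the metatheorem argument, together with verifying that the chosen templates are genuinely $\beta$-back-and-forth equivalent and that the coding is one-sided. The equivalence guarantees the spectrum is not strictly smaller (no copy decodes $S$ below the $\beta$-th jump), and one-sidedness guarantees one obtains ``c.e.\ in'' rather than the stronger ``computable in'' $\ddd^{(\beta)}$. Since the paper cites this as an established lemma, in practice I would simply appeal to the Ash--Jockusch--Knight framework, which packages exactly this construction.
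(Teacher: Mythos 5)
The paper does not prove this proposition at all; it is imported verbatim as Lemma 2.1 of \cite{Jump-degrees} (hence the bare \qed), so there is no in-paper argument to compare against. Your sketch is a faithful outline of the standard Ash--Jockusch--Knight construction from that source --- a marked block sum over a pair of $\beta$-back-and-forth equivalent templates whose distinction is one-sided $\Sigma$ at level $\beta+1$, assembled via Ash's metatheorem --- and its only limitation is that the real content (the $\alpha$-system verification and the choice of iterated shuffle-sum templates) is deferred to the citation, exactly as the paper itself defers it.
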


\begin{corollary}
\label{jumps cor}
Given any computable ordinal $\alpha\geq 3$ that is not a limit ordinal and any set $S\subseteq \N$, there exists: \begin{itemize}
    \item[(1)] A divisible ordered abelian group $V$ and a Presburger group $P=V\times \Z$ such that $\dgsp(V)=\dgsp(P)=\{\ddd:S \text{ is c.e.\ in } \ddd^{(\alpha)}\} $; and
    \item[(2)] A divisible ordered abelian group $V$ and a Presburger group $P=V\times \Z$ such that $\dgsp(V)=\dgsp(P)=\{\ddd:S \text{ is computable in } \ddd^{(\alpha)}\} $.
\end{itemize}
\end{corollary}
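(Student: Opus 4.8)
The plan is to derive Corollary \ref{jumps cor} from Proposition \ref{Jump-degrees lemma} and Theorem \ref{L'} by a single jump-shifting observation. The starting point is that for the linear order $\LL(\beta,S)$ provided by Proposition \ref{Jump-degrees lemma} we have $\dgsp(P_{\LL(\beta,S)})=\{\ddd:\ddd'\in\dgsp(\LL(\beta,S))\}$ by Theorem \ref{L'}. Unwinding the inner membership, $\ddd'\in\dgsp(\LL(\beta,S))$ means $S$ is c.e.\ in $(\ddd')^{(\beta)}=\ddd^{(1+\beta)}$. Since we want the exponent to be $\alpha$, I would set $\beta$ so that $1+\beta=\alpha$; because $\alpha\geq 3$ is a non-limit ordinal, $\beta=\alpha$ works when $\alpha$ is finite (as $1+\alpha\ne\alpha$ only matters for the bookkeeping) — more carefully, I take $\beta\geq 2$ non-limit with $1+\beta=\alpha$, which exists precisely because $\alpha\geq 3$ is a successor. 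This immediately gives part (1): taking $V=V_{\LL(\beta,S)}$ and $P=P_{\LL(\beta,S)}=V\times\Z$, Theorem \ref{L'} yields $\dgsp(V)=\dgsp(P)=\{\ddd:S\text{ is c.e.\ in }\ddd^{(\alpha)}\}$.

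For part (2), the plan is to reduce the ``computable in'' case to the ``c.e.\ in'' case by applying part (1) twice. The key identity is that $S$ is computable in a degree $\eee$ if and only if both $S$ and its complement $\overline S$ are c.e.\ in $\eee$. So I would take the linear orders $\LL(\beta,S)$ and $\LL(\beta,\overline S)$ and form their \emph{sum} as linear orders, say $\K=\LL(\beta,S)+\LL(\beta,\overline S)$ (or another combination whose degree spectrum is the intersection of the two spectra). A copy of a sum of two linear orders computes copies of each summand's relevant data only if the cut between them is identifiable, so I would instead more safely argue at the level of the groups: the direct sum $V_{\LL(\beta,S)}\oplus V_{\LL(\beta,\overline S)}\cong V_{\K}$ where $\K$ is the linear-order sum, and $\dgsp(V_\K)=\{\ddd:\ddd'\in\dgsp(\K)\}$. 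The point is to engineer $\K$ so that $\dgsp(\K)=\dgsp(\LL(\beta,S))\cap\dgsp(\LL(\beta,\overline S))=\{\eee:S\text{ and }\overline S\text{ are both c.e.\ in }\eee^{(\beta)}\}=\{\eee:S\text{ is computable in }\eee^{(\beta)}\}$. Then Theorem \ref{L'} converts the spectrum of $V_\K$ (equivalently $P_\K$) into $\{\ddd:S\text{ is computable in }\ddd^{(\alpha)}\}$ exactly as in part (1).

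The main obstacle I anticipate is justifying that the combined linear order $\K$ really has degree spectrum equal to the intersection of the two component spectra — i.e.\ that from a copy of $\K$ one can uniformly recover the information coding \emph{both} $S$ and $\overline S$ at the appropriate jump level, and conversely that the intersection degree can compute $\K$. For a simple concatenation $\LL_1+\LL_2$ this requires the boundary between the two blocks to be $\eee^{(\beta)}$-detectable; if the orders from Proposition \ref{Jump-degrees lemma} have distinguishable endpoints or if one instead uses a marked sum (adjoining a distinguished element between the blocks), this detection becomes routine, but it must be checked that adding such a marker does not disturb the jump-degree computation. The cleanest route is probably to observe that the lemma's construction is robust enough to produce, in one order, a spectrum of the form $\{\eee:S\text{ computable in }\eee^{(\beta)}\}$ directly — and indeed Ash--Downey--Jockusch--Knight's machinery gives both the c.e.\ and the computable versions sharply — so I would invoke the computable-in analogue of Proposition \ref{Jump-degrees lemma} rather than hand-build $\K$, making part (2) a verbatim repeat of part (1) with ``c.e.\ in'' replaced by ``computable in.'' The genuine content is therefore entirely in citing the right linear-order result and in the arithmetic $1+\beta=\alpha$; the group-theoretic transfer is immediate from Theorem \ref{L'}.
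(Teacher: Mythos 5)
Part (1) of your proposal is essentially the paper's argument: choose a non-limit $\beta\geq 2$ with $1+\beta=\alpha$ (which exists since $\alpha\geq 3$ is a successor; note your initial aside that ``$\beta=\alpha$ works when $\alpha$ is finite'' is a slip, since $1+\alpha=\alpha+1$ for finite $\alpha$, but your corrected choice $1+\beta=\alpha$ is right), take $V=V_{\LL(\beta,S)}$, and apply Theorem \ref{L'}.

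For part (2), however, you have identified the right fact --- $S$ is computable in $\eee$ iff both $S$ and $\overline S$ are c.e.\ in $\eee$ --- but neither of your proposed implementations closes the argument. Forming a linear-order sum $\K=\LL(\beta,S)+\LL(\beta,\overline S)$ and claiming $\dgsp(\K)$ is the intersection of the two spectra is exactly the step you flag as problematic, and it is: a copy of $\K$ need not uniformly yield copies of the summands, and adding a marked cut changes the structure in ways Proposition \ref{Jump-degrees lemma} says nothing about. Your fallback of invoking a ``computable-in analogue'' of Proposition \ref{Jump-degrees lemma} appeals to a result not established in the paper. The clean fix, which is what the paper does, is to apply the c.e.\ version of Proposition \ref{Jump-degrees lemma} to the \emph{single} set $S\oplus\overline S$: one takes $V=V_{\LL(\beta,\,S\oplus\overline S)}$ and observes that $S\oplus\overline S$ is c.e.\ in $\ddd^{(\alpha)}$ if and only if $S$ is computable in $\ddd^{(\alpha)}$. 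This packages your ``both c.e.'' identity into one set rather than one linear order per half, so no combination of structures is needed and part (2) becomes a verbatim instance of part (1).
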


\begin{proof}
    Take $\beta$ such that $1+\beta= \alpha$. For (1), take $V=V_{\LL(\beta,S)}$. We have by Proposition \ref{L'} that $\dgsp(V)=\dgsp(P)=\{\ddd: \ddd'\in \dgsp(\LL(\beta,S))\}=\{\ddd: S \text{ is c.e.\ in }(\ddd')^{(\beta)}\} = \{\ddd:S \text{ is c.e.\ in } \ddd^{(\alpha)}\} $.  For (2), take $V=V_{\LL(\beta, S \oplus \overline{S})}$ and note that $S\oplus \overline{S}$ is c.e.\ in $\ddd^{(\alpha)}$ if and only if $S$ is computable in $\ddd^{(\alpha)}$. 
\end{proof}

We can get a similar result for the case where $\alpha$ is a limit ordinal. Given a limit ordinal $\lambda$, we say that an increasing sequence $(\alpha_n)_{n\in \N}$ is a \emph{fundamental sequence} for $\lambda$ if each $\alpha_n<\lambda$ and $\bigcup_n \alpha_n=\lambda$. 

\begin{proposition}[Lemma 2.4 in \cite{Jump-degrees}] Given any limit ordinal $\lambda$ and any set $S\subseteq \N\times \N$, there exists a fundamental sequence $(\alpha_n)_{n\in \N}$ converging to $\lambda$ and a linear order $\LL(\lambda,S)$ such that $$\dgsp(\LL(\lambda,S))=\{\ddd: S_n \text{ is computable in $\ddd^{(\alpha_n)}$ uniformly in $n$ for every $n\in \N$}\},$$ where $S_n=\{m:(n,m)\in S\}$. \qed

\end{proposition}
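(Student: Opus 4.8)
The plan is to build $\LL(\lambda,S)$ as an $\omega$-indexed sum of coding gadgets, one for each slice $S_n$, using the iterated-jump coding apparatus that underlies the single-level Proposition~\ref{Jump-degrees lemma}. First I would fix a fundamental sequence $(\alpha_n)_{n\in\N}$ for $\lambda$ whose terms are successor ordinals (for a limit $\lambda$ we may always arrange this, which lets us invoke the non-limit case), and for each $n$ produce a linear order $C_n = C_n(S_n)$ with two defining properties: (a) from any copy of $C_n$ one can decode $S_n$ using the $\alpha_n$-th jump of that copy, uniformly in $n$; and (b) whenever $S_n$ is computable in $\ddd^{(\alpha_n)}$, a copy of $C_n$ can be built $\ddd$-computably, again uniformly in $n$. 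These gadgets are the building blocks behind Proposition~\ref{Jump-degrees lemma}, adapted so that the coded object is \emph{computable} (rather than merely c.e.) at level $\alpha_n$ --- for instance by coding $S_n\oplus\overline{S_n}$ in the c.e.\ sense, as in the proof of Corollary~\ref{jumps cor}.

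Next I would assemble the gadgets into a single order $\LL(\lambda,S):=\sum_{n\in\N}(M_n + C_n)$, where each $M_n$ is a marker block (for instance a strictly increasing sequence of finite blocks, or a distinguished order type chosen so as not to occur at the boundary of any $C_m$). The role of the markers is to make the decomposition of any copy into its constituent gadgets recoverable at a fixed, low level, so that one can locate the $n$-th gadget and its endpoints without inflating the complexity of the decoding.

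For the upper bound I would take any copy $\A$ of degree $\ddd$, use the markers to partition $\A$ into the blocks $M_n + C_n$, isolate the copy of $C_n$ inside block $n$, and apply property (a) to recover $S_n$ from $(\D(\A))^{(\alpha_n)}\leq_T \ddd^{(\alpha_n)}$. Since both the block-location and the decoding are uniform in $n$, this shows every $\ddd\in\dgsp(\LL(\lambda,S))$ satisfies the stated condition. For the lower bound I would take $\ddd$ with $S_n$ computable in $\ddd^{(\alpha_n)}$ uniformly in $n$ and build a $\ddd$-computable copy: using property (b) I can lay out each $C_n$ $\ddd$-computably, interleaving the infinitely many gadget constructions and inserting the markers $M_n$ as I go.

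The hard part will be the lower bound, and specifically the uniform, simultaneous application of the Ash--Knight metatheorem (the true-stage / $\alpha$-system machinery of \cite{AK-book}) across infinitely many distinct jump levels $\alpha_n$ at once. The point is that, although decoding $S_n$ genuinely requires the $\alpha_n$-th jump, the \emph{encoding} of $S_n$ into $C_n$ must be carried out using only the oracle $\ddd$, with the jumps absorbed into the back-and-forth levels rather than consulted as an actual oracle; the uniformity hypothesis is exactly what permits a single $\ddd$-computation to run all of these constructions together. Making the marker blocks $M_n$ simultaneously recognizable (so the upper-bound decoding succeeds) and harmless to the encoding (so the lower-bound construction is not forced above $\ddd$) is the main piece of bookkeeping to get right.
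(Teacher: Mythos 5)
This proposition is not proved in the paper at all: it is quoted verbatim as Lemma~2.4 of \cite{Jump-degrees} and closed with \qed, so there is no in-paper argument to compare yours against. Judged on its own terms, your outline is the right \emph{shape} --- it is essentially the Ash--Downey--Jockusch--Knight strategy of summing level-$\alpha_n$ coding blocks with separators, and your freedom to choose the fundamental sequence (e.g.\ all successors) is correctly exploited, as is the $S_n\oplus\overline{S_n}$ trick to convert c.e.-coding into computable-coding.

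However, as a proof there is a genuine gap, and you have in effect flagged it yourself. Properties (a) and (b) of the gadgets $C_n$ are exactly the content of the theorem, and they cannot be obtained by citing the non-limit case as a black box: Proposition~\ref{Jump-degrees lemma} gives, for each fixed $n$, \emph{some} order coding $S_n$ at level $\alpha_n$, but it does not give a single Turing functional that, from a $\ddd$-oracle, simultaneously builds copies of all the $C_n$ when each $S_n$ is merely computable in $\ddd^{(\alpha_n)}$ (uniformly). That lower bound is a single transfinite priority construction --- a $\lambda$-system in the sense of Ash's metatheorem whose components sit at the unboundedly many levels $\alpha_n$ --- and setting it up and verifying its hypotheses is the entire work of the lemma; "interleaving the infinitely many gadget constructions" does not reduce to running the $n$-th construction separately, because each one individually is only guaranteed relative to an oracle you do not have. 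A secondary, fixable issue is your demand that the markers $M_n$ be recognizable at a \emph{fixed low} level: since the blocks $C_n$ have back-and-forth complexity growing with $\alpha_n$, what you actually need (and what suffices for the upper bound) is that the boundaries of block $n$ be locatable from $\ddd^{(\alpha_n)}$ uniformly in $n$; insisting on a fixed level is stronger than necessary and may not be achievable with natural separators. So the proposal is a correct plan whose decisive step --- the uniform metatheorem argument --- remains to be supplied.
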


\begin{corollary}
\label{limit cor}
    Given any limit ordinal $\lambda$ and any set $S\subseteq \N\times \N$, there exists a fundamental sequence $(\alpha_n)_{n\in \N}$ converging to $\lambda$, a divisible ordered abelian group $V$, and a Presburger group $P=V\times \Z$ such that $\dgsp(V)=\dgsp(P)=\{\ddd: S_n \text{ is computable in $\ddd^{(\alpha_n)}$ uniformly in $n$ for every $n\in \N$}\}$, where $S_n=\{m:(n,m)\in S\}$. \qed
\end{corollary}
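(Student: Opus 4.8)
The plan is to mirror the proof of Corollary \ref{jumps cor}, replacing the single-jump bookkeeping there with the fundamental-sequence version supplied by the preceding proposition. First I would apply that proposition to the given $\lambda$ and $S$, obtaining (after relabeling its fundamental sequence as $(\beta_n)_{n\in\N}$) a sequence converging to $\lambda$ together with a linear order $\LL(\lambda,S)$ satisfying
$$\dgsp(\LL(\lambda,S)) = \{\ddd : S_n \text{ is computable in } \ddd^{(\beta_n)} \text{ uniformly in } n \text{ for every } n\}.$$
Then I would set $V = V_{\LL(\lambda,S)}$ and $P = V \times \Z = P_{\LL(\lambda,S)}$, so that Theorem \ref{L'} immediately gives
$$\dgsp(V) = \dgsp(P) = \{\ddd : \ddd' \in \dgsp(\LL(\lambda,S))\}.$$

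The remaining work is to rewrite the membership condition $\ddd' \in \dgsp(\LL(\lambda,S))$ in terms of $\ddd$ itself. By the defining property of $\LL(\lambda,S)$, this condition says that $S_n$ is computable in $(\ddd')^{(\beta_n)}$ uniformly in $n$. Using the jump identity $(\ddd')^{(\beta_n)} = \ddd^{(1+\beta_n)}$, I would set $\alpha_n := 1 + \beta_n$ and conclude
$$\dgsp(V) = \dgsp(P) = \{\ddd : S_n \text{ is computable in } \ddd^{(\alpha_n)} \text{ uniformly in } n \text{ for every } n\}.$$
This is exactly the $1+\beta=\alpha$ shift already used in Corollary \ref{jumps cor}, now applied coordinatewise along the sequence; crucially, the reindexing leaves each $S_n$ paired with $\alpha_n$, so the uniformity in $n$ is preserved.

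The one point requiring genuine verification — and the place I expect the only real (if minor) obstacle — is checking that $(\alpha_n)_{n\in\N} = (1+\beta_n)_{n\in\N}$ is again a fundamental sequence for $\lambda$. Each $\alpha_n < \lambda$ because $\beta_n < \lambda$ and $\lambda \geq \omega$ is a limit, so $1+\beta_n$ equals $\beta_n$ when $\beta_n \geq \omega$ and is a finite ordinal otherwise, in both cases still below $\lambda$. Strict monotonicity follows from that of $(\beta_n)$: for the cofinally many $n$ with $\beta_n \geq \omega$ we have $1+\beta_n = \beta_n$, while the finite initial segment is handled directly since $1+k = k+1$ for finite $k$. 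Finally $\bigcup_n \alpha_n = \lambda$ because $1+\beta_n = \beta_n$ for all large $n$ and $\bigcup_n \beta_n = \lambda$. With this confirmed, the corollary follows.
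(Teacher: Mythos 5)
Your proposal is correct and is exactly the intended argument: the paper states this corollary with only a \qed, clearly meaning the reader should repeat the proof of Corollary \ref{jumps cor} using the limit-case proposition, which is precisely what you do (apply Theorem \ref{L'} to $V_{\LL(\lambda,S)}$ and reindex via $(\ddd')^{(\beta_n)}=\ddd^{(1+\beta_n)}$). Your check that $(1+\beta_n)_n$ is again a fundamental sequence is right, though it can be shortened: $1+\beta_n\leq\beta_n+1<\lambda$, left addition is strictly monotone in the right argument, and $1+\beta_n\geq\beta_n$ forces the union to be $\lambda$.
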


\subsection{One-Jump Spectra} 

Absent from Corollary \ref{jumps cor} are the cases where $\alpha$ is $0, 1$ or $2$. This is largely due to the absence of the $\alpha=0,1$ cases from Proposition \ref{Jump-degrees lemma}. It was shown by Richter in \cite{Richter_1981} and by Knight in \cite{Knight_1986} that the sets $\{\ddd: S \text{ is computable in } \ddd\}$ and $\{\ddd: S \text{ is computable in } \ddd'\}$ are the degree spectra of linear orders if and only if $S$ is computable. Thus, we must look beyond structures of the form $V_\LL$ and $P_\LL$ to fill in these gaps in Corollary \ref{jumps cor}. 

We already saw by Propositions \ref{up cone} and \ref{upper cone} and Corollary \ref{upper cone cor} that for any $S\subseteq \N$, the set $\{\ddd: S \text{ is computable in } \ddd\}$ is the degree spectrum of a divisible ordered abelian group as well as a Presburger group. We now turn our attention to the case involving a single jump of $\ddd$. 

\begin{proposition}
\label{degrees}
Let $S\subseteq \N$. The set of Turing degrees $\{\ddd: S$ is c.e.\ in $\ddd'\}$ is the degree spectrum of a divisible ordered abelian group.

\end{proposition}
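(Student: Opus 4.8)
The plan is to encode $S$ into the two-dimensional Archimedean classes of a divisible ordered abelian group, using the \emph{ratio} inside such a class as an intrinsic name for an index $n$ — this is what keeps the reading at the level of a single jump and evades the $V_\LL$ obstruction forced by Richter and Knight. I would fix once and for all a computable sequence $(\rho_n)_{n\in\N}$ of positive reals that is algebraically independent over $\Q$ and each of whose left cuts is computable, and then set
$$V := \bigoplus_{n\in\N} V_n, \qquad V_n := \begin{cases} C(\{1,\rho_n\}) & n\in S,\\ \Q & n\notin S,\end{cases}$$
so that the Archimedean class at position $n$ is two-dimensional of ``type $\rho_n$'' exactly when $n\in S$, and one-dimensional otherwise. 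Since $S$ and $(\rho_n)$ are fixed, $V$ is a fixed isomorphism type, and each $V_n$ is a single Archimedean class.

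For the upper bound $\dgsp(V)\subseteq\{\ddd:S\text{ c.e.\ in }\ddd'\}$, I would observe that, in any copy of degree $\ddd$, membership $n\in S$ is equivalent to the existence of Archimedean-equivalent $x\sim y$ of ratio exactly $\rho_n$ (i.e.\ $qx<y\iff q<\rho_n$ for all $q\in\Q$). Here $x\sim y$ is $\Sigma^0_1(\ddd)$ and ``ratio $=\rho_n$'' is $\Pi^0_1(\ddd)$, so the whole condition is $\Sigma^0_2(\ddd)=\Sigma^0_1(\ddd')$, i.e.\ c.e.\ in $\ddd'$. Algebraic independence of the $\rho_n$ is precisely what rules out false positives: for $m\neq n$ the class $C(\{1,\rho_m\})$ realizes only ratios in $\Q(\rho_m)$, which omits $\rho_n$, while one-dimensional classes realize only rational ratios; hence a pair of ratio $\rho_n$ occurs iff $n\in S$. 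Note that this detection is index-by-index and never enumerates the classes in order, so the successor function of the (type $\omega$) class ordering — which for an arbitrary copy may fail to be $\ddd'$-computable — is never invoked.

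For the lower bound, given $\ddd$ with $S$ c.e.\ in $\ddd'$ I would use $S\in\Sigma^0_2(\ddd)$ to fix a $\ddd$-computable $R$ with $n\in S\iff\exists m\,\forall k\,R(n,m,k)$, and build a $\ddd$-computable copy by a finite-injury ``merging'' construction in the spirit of the proof of Theorem~\ref{L'}. In region $n$ I keep a permanent generator $u_n$ (the line $\Q u_n$) together with a single candidate $v$ attached to the least currently unrefuted witness $m$, intended to satisfy $v=\rho_n u_n$. Crucially I commit comparisons of $v$ against rational multiples $qu_n$ only \emph{lazily}, keeping $v/u_n$ confined to a shrinking interval around $\rho_n$ while $m$ survives; if $m$ is refuted I set $v$ equal to a rational multiple of $u_n$ chosen inside the still-open interval — thereby collapsing the spurious second dimension back into $\Q u_n$ — and open a fresh candidate for the next witness. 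If $n\in S$ the least true witness is never refuted, the interval shrinks to $\{\rho_n\}$, and $V_n$ stabilizes as $C(\{1,\rho_n\})$; if $n\notin S$ every candidate is eventually collapsed and $V_n=\Q u_n$. Since at most one candidate ever survives, the dimension is exactly two when $n\in S$, pinning the isomorphism type, and Lemma~\ref{c.e. in} upgrades the resulting $\ddd$-c.e.\ enumeration of the atomic diagram to a genuine $\ddd$-computable copy.

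The main obstacle is exactly this lazy bookkeeping in the lower bound: one must interleave the deferral of comparisons near $\rho_n$ (so that a refuted candidate can always retreat to a nearby rational) with the requirement that a surviving candidate be pinned to the exact irrational ratio $\rho_n$, all while keeping the order total, divisible, and globally consistent — a refinement of, but structurally parallel to, the staged merging argument of Theorem~\ref{L'}. A secondary point requiring care is the algebraic-independence accounting in the upper bound, ensuring that the intrinsic ratio-names $\rho_n$ are never accidentally realized outside region $n$.
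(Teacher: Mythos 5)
Your proposal is correct and follows essentially the same route as the paper's proof: a direct sum of Archimedean classes that are two-dimensional with an irrational intrinsic ratio exactly when $n\in S$, a $\Sigma^0_2(\ddd)$ detection of that ratio for the upper bound, and a candidate-collapse construction with shrinking rational intervals (upgraded via Lemma~\ref{c.e. in}) for the lower bound. The only differences are cosmetic: the paper uses $\sqrt{p_n}$ (which satisfies the weaker property $\sqrt{p_n}\notin\Q+\Q\sqrt{p_m}$ rather than full algebraic independence) and represents $S$ via $\Fin^T$ instead of the $\exists m\,\forall k$ normal form, but the finite-injury bookkeeping is structurally identical.
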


\begin{proof}
    Let $\{p_n\}_{n\in \N}$ be an enumeration of the primes. Recall that given $R\subseteq\R$, we define $C(R)$ to be the divisible closure of $R$ (Definition \ref{closure}). Define $$V_S=\bigoplus_{n\in \N} U_n$$ where $$U_n = \begin{cases} C(\{1,\sqrt{p_n}\}) & \text{if } n\in S\\
\Q & \text{if } n\notin S.
\end{cases}$$ Suppose $\A$ is a copy of $V_S$ of degree $\ddd$. For each $n\in \N$, define the formula $$\phi_n := (\E x,y)\Big[\WW_{\sqrt{p_n}>q\in \Q} q x <y \,\,\&\,\, \WW_{\sqrt{p_n}<q\in \Q} q x> y   \Big].  $$ 
We have that $\A\models \phi_n$ if and only if $n\in S$. Note that each $\phi_n$ is $\Sigma^c_2$. That is, each $\phi_n$ is a \emph{computable infinitary} $\Sigma^{in}_2$ sentence (for background on computable infinitary logic, see e.g.\ \cite{AK-book}).  Hence, $\ddd'$ can enumerate the set of $n$ such that $\A\models \phi_n$ and can therefore enumerate $S$. 

For the other direction,  suppose that $S$ is c.e.\ in $\ddd'$. It follows that there exists a $\ddd$-computable set $T$ such that $\Fin^T = \{e: \dom(\Phi^T_e) \text{ is finite}\}= S$, as $\Fin$ is a known $\Sigma^0_2$-complete set. We will now build a $\ddd$-computable copy $\B\cong V_S$. We do so by building $\ddd$-computable copies $\B_n\cong U_n$ for all $n$ and define $\B=\bigoplus_{n\in \N} \B_n$. Since each $\B_n$ will be $\ddd$-computable, $\B$ will be $\ddd$-computable as well.

 We begin the construction of $\B_n$ by adding in an element $X$ that we treat like $1\in U_n$. At each stage $s+1$ of the construction, we have an element $h_s$  that we have been treating like $\sqrt{p_n}$. If the domain of $\Phi_n^T$ gains no new elements at stage $s+1$, then we let $h_{s+1}=h_s$ and keep treating this element like $\sqrt{p_n}$. If the domain of $\Phi_n^T$ does gain a new element at stage $s+1$, then we stop treating $h_s$ like it is $\sqrt{p_n}$ by finding a ``suitable'' $q\in \Q$  and adding the sentence $qX=h_s$. By a suitable $q$, we just mean one such that adding the sentence $qX=h_s$ will not contradict any of the sentences we have already added. Such a $q$ can be found by taking some rational that is sufficiently close to $1/\sqrt{p_n}$. We then find a fresh element to call $h_{s+1}$ which we begin to treat like $\sqrt{p_n}$. 

Note, if the domain of $\Phi_n^T$ is finite, then there will be a sufficiently large $k$ such that $h_{k}=h_{m}$ for all $m\geq k$. This element will turn out to satisfy ``$\sqrt{p_n}X=h_{k}$.'' In the case where $\Phi_n^T$ is infinite, every $h_s$ will turn out to be a rational multiple of $X$, and thus no element will be ``$\sqrt{p_n}X$.''

For simplicity, we will view each $\B_n $ as purely relational. Furthermore, since each $\B_n$ is divisible, the element $qx$ with $q\in \Q$ and $x\in \B_n$ is well defined and can be found computably in the atomic diagram $\D(\B_n)$.  
Thus, we may add multiplication by each $q\in \Q$ to our language. We get that the  sentences in $\D(\B_n)$ will be those of the following forms:
\begin{itemize}
    \item $b_i=0$
    \item $b_i<b_j$
    \item $qb_i=b_j$
    \item $b_i + b_j = b_k$
\end{itemize}
where $b_i,b_j,b_k\in \B_n$ and $q\in \Q$.
We now give instructions for how, using a $\ddd$ oracle, one can enumerate $\D(\B_n)$. Note, it will thus follow from Lemma \ref{c.e. in} that $\D(\B_n)$ is $\ddd$-computable.  \\

\paragraph{Construction:}  Fix an oracle $T\in \ddd$. Let $W_{n,m}^T$ denote the domain of $\Phi_n^T$ after running for $m$ steps. Let $\{u_{i}\}_{i\in \N}$ be a computable decreasing, and  $\{l_{i}\}_{i\in \N}$  a computable increasing, sequence of rationals such that  $\lim_{i}u_{i}=\lim_{i} l_{i}=\sqrt{p_n}$. Throughout the construction, we will have an increasing sequence $\{k_i\}_{i\in \N}$ that will keep the interval $[l_{k_i},u_{k_i}]$ sufficiently tight. \\

\subparagraph{Stage $0$:} Add the sentences $b_0= 0_{\B_n}$ and $b_0<b_1$ to $\D(\B_n)$. We will henceforth refer to $b_1$ as $X$. Let $k_0=0$ and let $b_2=h_{0}$.\\

\subparagraph{Stage $s+1$:}~ \\

\subparagraph{(i)} Check if $|W^T_{n,s+1}|>|W^T_{n,s}|$. 
\begin{itemize}
    \item If no, let $k_{s+1}=k_s + 1$ and let $h_{s+1}=h_s$.
    \item If yes, find the smallest natural number $k_{s+1}> k_s$ such that given any $q,q'\in \Q$ with the sentences $qX=b_i$ and $q'h_s=b_j$ already in $\D(\B_n)$ for some $b_i,b_j$, we have $q\neq q'u_{k_{s+1}}$. Add $u_{k_{s+1}} X = h_s$ to $\D(\B_n)$. Additionally, for all $q\in \Q$ such that there exists $b_j$ with $qh_s=b_j$ in $\D(\B_n)$, add the sentence $u_{k_{s+1}} q X = b_j$ to $\D(\B_n)$.  Find a fresh element of $\B_n$ to call $h_{s+1}$. \\
\end{itemize}

\subparagraph{(ii)} Let $\{q_i\}_{i\in \N}$ be an enumeration of $\Q$. Let $C=\{q\in \Q: qX=b_i \text{ is in $\D(\B_n)$ for some $b_i$} \}  $ and $D=\{q\in \Q: qh_{s+1}=b_i \text{ is in $\D(\B_n)$ for some $b_i$} \}  $. Find the least $m,p\in \N$ such that:
\begin{itemize}
    \item $-\frac{r}{t}$ is not in the interval $[l_{{k_{s+1}}}, u_{k_{s+1}}]$ for any $r\in D\cup \{q_m\}, t\in C\cup \{q_p\}$ (this will help us decide which elements are positive later on); and
    \item The sentences $q_mX=b_i$ and $q_p h_{s+1}= b_j$ are not yet in $\D(\B_n)$ for any $b_i,b_j$.
\end{itemize}
 Using fresh elements $b_c$ and $b_d$, add the sentences $q_mX=b_c$ and $q_p h_{s+1}=b_d$ to $\D(\B_n)$.\\

\subparagraph{(iii)} Suppose we have the sentences $b_i=qX$ and $b_j=q'h_{s+1}$ in $\D(\B_n)$ but no element equal to $qX + q'h_{s+1}$. Take a fresh element $b_c$ and add the sentence $b_c=b_i + b_j
$ to $\D(\B_n)$.\\

\subparagraph{(iv)} Suppose $b\in \B_n$ has appeared in a sentence already in $\D(\B_n)$. We have that $b$ can be expressed as $b=qX + q'h_{s+1}$ with $q,q'\in \Q$. Suppose we have such elements $b_i$,$b_j$, and $b_c$ with $$b_i=q_iX + q'_ih_{s+1}, $$
 $$b_j=q_jX + q'_jh_{s+1}, $$
and
$$b_c=q_kX + q'_jh_{s+1}. $$
If $q_i + q_j = q_c$ and $q'_i+q'_j=q'_c$, add the sentence $b_i+b_j=b_c$ to $\D(\B_n)$. If there is some $r\in \Q$ with $rq_i =q_j$ and $rq'_i=q'_j$, add the sentence $rb_i=b_j$ to $\D(\B_n)$. 

Note, by our careful selection of $q_m$ and $q_p$ during part (ii) of each stage, we are guaranteed that $-\frac{q_i}{q'_i}$ is not in the interval $[l_{{k_{s+1}}},u_{k_{s+1}}]$.  We say that $b_i>0$ if 
\begin{itemize}
    \item $q'_i=0$ and $q_i>0$; or
    \item $-\frac{q_i}{q'_i}<l_{k_{s+1}}$. 
\end{itemize}
If $b_i + b_j= b_c$ and $b_j>0$, then add the sentence $b_i<b_c$ to $\D(\B_n)$. 

Last, add $b_i+ 0 = b_i$, $1b_i=b_i$ and $0b_i= 0$ to $\D(\B_n)$ if it is not already in. If $b_i+b_j=b_c$ is in $\D(\B_n)$, add $b_j + b_i = b_c$ as well if it is not already in. If $qb_i=b_j$ is in $\D(\B_n)$ with $q\neq 0$, add $\frac{1}{q}b_j=b_i$ as well if it is not already in. This completes the construction.\\

\paragraph{Verification:} If $W_n^T$ is finite, then define $h=\lim_s h_s$. If $W_n^T$ is infinite, then we say that $h$ does not exist. All elements $b\in \B_n$ can be expressed as $b=qX + q'h$ (or just as $b=qX$ if $W_n^T$ is infinite) for unique $q,q'\in \Q$. Suppose $i\neq j$ but $b_i$ and $b_j$ both equal $qX + q'h$ (or just $qX$ if $h$ does not exist). Now suppose that $h=h_s$ exists and that $q,q'\neq 0$. It is clear from our construction that $b_i,b_j$ must have been first mentioned as fresh elements to satisfy $b_{i'}+b_{i''}=b_i$ and $b_{j'} + b_{j''}=b_j$ with $b_{i'},b_{j'}=qX$ and $b_{i''},b_{j''}= q'h_s$ at part (iii) of some stages. It is clear from our instructions that only one element of $\B_n$ will equal $q'h_s$. Hence we get that $i''=j''$. Note, if $i'=j'$, then $b_i=b_j$ because only one element will ever be added satisfying $b_{i'}+ b_{i''}$. Hence, we must have that $i'\neq j'$ yet the sentences $qX=b_{i'}$ and $qX=b_{j'}$ both are in $\D(\B_n)$. Without loss of generality, suppose that $qX=b_{j'}$ enters $\D(\B_n)$ at a later stage than $qX=b_{i'}$. Call this stage $p$. Note, it is not possible for $qX=b_{j'}$ to have entered at part (ii) as there was already an element equal to $qX$. This means that $p<s$ and it must have entered at part (i) due to $b_{j'}=q''h_{k_p}$ and $q''u_{k_{p+1}}=q$. However, we choose $k_{p+1}$ at part (i) of stage $p+1$ to avoid this happening. Hence, this is not possible and we get that if $b_i,b_j=qX + q'h$, then $b_i=b_j$.

Furthermore, it is clear from our instructions that there will be elements equal to $qX$ and $qh$ (or just $qX$ if $h$ does not exist) for all $q\in \Q$. Because of part (iii) of the construction, there will also be elements equal to $qX + q'h$ if $h$ exists for all $q,q'\in \Q$. Thus, we get that the maps $qX + q'h \mapsto q + q'\sqrt{p_n} $ and $qX\mapsto q$ are bijections from $\B_n$ to $C(1,\sqrt{p_n})$ and $\Q$  in the cases where $h$ does and does not exist respectively. Furthermore, it is clear from our construction that these maps are isomorphisms of divisible ordered abelian groups. Hence, we get that $\B_n\cong C(1,\sqrt{p_n})$ if $W^T_n$ is finite and $\B_n\cong \Q$ otherwise. 
\end{proof} 

\begin{corollary}
\label{degree cor 1}
    Let $S\subseteq \N$. The set of Turing degrees $\{\ddd: S$ is c.e.\ in $\ddd'\}$ is the degree spectrum of a Presburger group.
\end{corollary}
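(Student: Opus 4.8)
The plan is to reuse verbatim the divisible ordered abelian group $V_S$ constructed in the proof of Proposition \ref{degrees} and to take $P = V_S \times \Z$. Since the product of a divisible ordered abelian group with $\Z$ is a Presburger group, it suffices to prove that $\dgsp(P) = \{\ddd: S \text{ is c.e.\ in } \ddd'\}$.

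One inclusion is immediate from the machinery already in place. By Proposition \ref{degrees} we have $\dgsp(V_S) = \{\ddd: S \text{ c.e.\ in } \ddd'\}$, and since $P = V_S \times \Z$, Proposition \ref{dgsp} gives $\dgsp(V_S) \subseteq \dgsp(P)$. Hence every degree $\ddd$ for which $S$ is c.e.\ in $\ddd'$ already lies in $\dgsp(P)$, establishing $\{\ddd: S \text{ c.e.\ in } \ddd'\} \subseteq \dgsp(P)$.

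For the reverse inclusion the key observation is that the $\Sigma^c_2$ formulas $\phi_n$ from the proof of Proposition \ref{degrees} continue to detect membership in $S$ when evaluated in $P$ rather than in $V_S$. I would verify that any witnessing pair $x,y$ to $\phi_n$ must be Archimedean equivalent, since the two-sided ratio condition rules out $x \ll y$ and $y \ll x$, and that within a single Archimedean class of $P$ the achievable ratios are exactly those of the corresponding summand $U_j$ of $V_S$: the bottom $\Z$-class contributes only rational ratios, and the $\Z$-coordinates of mixed elements $(v,z)$ are Archimedean-negligible at the relevant scale, so neither can manufacture the irrational ratio $\sqrt{p_n}$. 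As $\sqrt{p_n}$ occurs as a ratio only in $U_n$, and only when $n \in S$, we obtain $P \models \phi_n \iff n \in S$. Consequently, given any copy $\A \cong P$ of degree $\ddd$, the set $\{n: \A \models \phi_n\} = S$ is $\Sigma^0_2(\ddd)$ and therefore c.e.\ in $\ddd'$, yielding $\dgsp(P) \subseteq \{\ddd: S \text{ c.e.\ in } \ddd'\}$.

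The subtle point, and the step I expect to require the most care, is precisely this forward direction. It is tempting to argue it through the convex quotient $P/\langle 1\rangle \cong V_S$ together with Proposition \ref{prop d' in dgV}, but that route recovers a copy of $V_S$ only from $\ddd'$ and so would give $S$ c.e.\ in $\ddd''$, one jump too many. Obtaining the correct jump level forces us to evaluate the $\phi_n$ directly inside $P$ at the level of $\ddd$, which is why confirming that the extra $\Z$-class introduces no spurious witnesses for $\phi_n$ is the crux of the argument.
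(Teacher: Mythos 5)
Your proposal is correct and follows essentially the same route as the paper: take $P = V_S \times \Z$, get one inclusion from Proposition \ref{dgsp}, and get the other by evaluating the $\Sigma^c_2$ sentences $\phi_n$ directly in $P$ so that $S$ is c.e.\ in $\deg(\A)'$ for any copy $\A$. Your explicit check that the extra $\Z$-class and mixed elements cannot produce spurious witnesses is a detail the paper leaves implicit (it only notes that witnesses in $V_S$ transfer to $P$), and your remark that routing through Proposition \ref{prop d' in dgV} would cost an extra jump correctly identifies why the direct evaluation is needed.
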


\begin{proof}
    Consider the structure $P= V_S\times \Z$ where $V_S$ is as in the previous proof. By Proposition \ref{dgsp}, we have that $\dgsp(V_S)=\{\ddd: S$ is c.e.\ in $\ddd'\}\subseteq \dgsp(P)$. 

    For the other direction, suppose that $\A\cong P$. We will have that $n\in S$ if and only if $$\A\models (\E x,y)\left[\WW_{\sqrt{p_n}>\frac{a}{b}\in \Q} ax<by \,\,\& \WW_{\sqrt{p_n}<\frac{a}{b}\in \Q} ax>by \right].$$ In particular, if $x$ and $y$ are witnesses to this sentence in $V_S$ then $(x,0)$ and $(y,0)$ are witnesses in $P$. This sentence is $\Sigma^c_2$. Hence, we get that $S$ is c.e.\ in $\deg(\A)'$. 
\end{proof}

\begin{corollary}
\label{degree cor 2}
Let $S\subseteq \N$. The set of Turing degrees $\{\ddd: S$ is computable in $\ddd'\}$ is the degree spectrum of a divisible ordered abelian group. 
\end{corollary}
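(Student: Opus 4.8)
The plan is to reduce directly to Proposition \ref{degrees} by applying it to the set $S\oplus\overline{S}$ in place of $S$, exactly as one passes from part (1) to part (2) of Corollary \ref{jumps cor}. The only new ingredient is an elementary recursion-theoretic observation, after which the desired group and the computation of its spectrum are already supplied by the proof of Proposition \ref{degrees}.

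First I would recall the standard (relativized Post) fact that for any Turing degree $\ccc$, a set $A$ is computable in $\ccc$ if and only if both $A$ and $\overline{A}$ are c.e.\ in $\ccc$, which in turn holds if and only if $A\oplus\overline{A}$ is c.e.\ in $\ccc$. Taking $A=S$ and $\ccc=\ddd'$, this gives that $S\oplus\overline{S}$ is c.e.\ in $\ddd'$ if and only if $S$ is computable in $\ddd'$.

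Next I would invoke Proposition \ref{degrees} with $S\oplus\overline{S}$ in place of $S$, producing a divisible ordered abelian group $V_{S\oplus\overline{S}}$ whose degree spectrum is $\{\ddd: S\oplus\overline{S}\text{ is c.e.\ in }\ddd'\}$. Combining this with the equivalence above yields
$$\dgsp(V_{S\oplus\overline{S}})=\{\ddd: S\oplus\overline{S}\text{ is c.e.\ in }\ddd'\}=\{\ddd: S\text{ is computable in }\ddd'\},$$
which is precisely the claimed spectrum.

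There is essentially no obstacle here beyond the observation in the first step: all of the real work, namely the stagewise enumeration that builds $V_{S\oplus\overline{S}}$ as a direct sum of copies of $C(\{1,\sqrt{p_n}\})$ and $\Q$ together with the verification that its degree spectrum is the c.e.-in-$\ddd'$ set, has already been carried out in the proof of Proposition \ref{degrees}. The single point requiring any care is the passage from ``$S$ computable in $\ddd'$'' to ``$S\oplus\overline{S}$ c.e.\ in $\ddd'$,'' which is routine.
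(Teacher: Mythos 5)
Your proposal is correct and matches the paper's own proof exactly: the paper likewise applies Proposition \ref{degrees} to $S\oplus\overline{S}$ and uses the observation that $S\oplus\overline{S}$ is c.e.\ in $\ddd'$ if and only if $S$ is computable in $\ddd'$. No issues.
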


\begin{proof}
Consider  $V_{S\oplus \overline{S}}$ (as defined in the previous proof). We have that the degree spectrum of $V_{S\oplus \overline{S}}$ is $\{\ddd:S\oplus \overline{S}$ is c.e.\ in $\ddd'\}=\{\ddd: S$ is computable in $\ddd'$\}. 
\end{proof}

\begin{corollary}
\label{degrees cor 3}

Let $S\subseteq \N$.  The set of Turing degrees $\{\ddd: S$ is computable in $\ddd'\}$ is the degree spectrum of a Presburger group. \qed

\end{corollary}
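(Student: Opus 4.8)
The plan is to reduce this to the c.e.-in-$\ddd'$ case that was already established for Presburger groups in Corollary \ref{degree cor 1}, mirroring exactly how Corollary \ref{degree cor 2} was deduced from Proposition \ref{degrees}. The only computability-theoretic ingredient needed is the standard fact that, relative to any fixed oracle, a set is computable if and only if both it and its complement are computably enumerable. Applied to the oracle $\ddd'$, this says that $S$ is computable in $\ddd'$ if and only if $S \oplus \overline{S}$ is c.e.\ in $\ddd'$.

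First I would apply Corollary \ref{degree cor 1}, which is stated and proved for an \emph{arbitrary} subset of $\N$, to the set $S \oplus \overline{S}$ in place of $S$. This directly yields a Presburger group --- namely $P = V_{S \oplus \overline{S}} \times \Z$, with $V_{S \oplus \overline{S}}$ built as in the proof of Proposition \ref{degrees} --- whose degree spectrum is $\{\ddd : S \oplus \overline{S} \text{ is c.e.\ in } \ddd'\}$. I would then rewrite this spectrum using the equivalence above, concluding that it equals $\{\ddd : S \text{ is computable in } \ddd'\}$, which is exactly the set required.

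Since the argument is a routine combination of an earlier result with a textbook fact, I do not anticipate any genuine obstacle. The only point worth verifying is that Corollary \ref{degree cor 1} really is formulated for a general $S \subseteq \N$, so that the substitution $S \mapsto S \oplus \overline{S}$ is legitimate; this is the case, and it is precisely the move already used to pass from Proposition \ref{degrees} to Corollary \ref{degree cor 2} in the divisible-group setting.
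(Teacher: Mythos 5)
Your proposal is correct and matches the paper's intended argument: the paper leaves this corollary with only a \qed precisely because it follows by applying Corollary \ref{degree cor 1} to $S \oplus \overline{S}$ and using the equivalence between ``$S$ computable in $\ddd'$'' and ``$S \oplus \overline{S}$ c.e.\ in $\ddd'$'', exactly as Corollary \ref{degree cor 2} was obtained from Proposition \ref{degrees}. Your check that Corollary \ref{degree cor 1} is stated for arbitrary $S \subseteq \N$ is the only point that needed verifying, and it holds.
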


Combining these with our previous results, we get the following:

\begin{theorem}
\label{dgsp theorem}
    Let $\alpha$ be a computable ordinal that is not a limit ordinal, and let $S$ be a subset of $\N$. \begin{itemize}
        \item[(1)] If $\alpha \neq 0,2$, then there exists a  divisible ordered abelian group $V$ and a Presburger group $P=V\times \Z$ such that $\dgsp(V)=\dgsp(P)=\{\ddd:S \text{ is c.e.\ in } \ddd^{(\alpha)}\}$.
        \item[(2)] If $\alpha \neq 2$, then there exists a divisible ordered abelian group $V$ and a Presburger group $P=V\times \Z$ such that $\dgsp(V)=\dgsp(P)=\{\ddd:S \text{ is computable in } \ddd^{(\alpha)}\} $.
    \end{itemize}
\end{theorem}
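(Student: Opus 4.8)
The plan is to prove the theorem purely by assembling the constructions already established in this section, organised as a case analysis on the non-limit ordinal $\alpha$. Since every genuine construction is already done, the only work is to verify that the ranges of $\alpha$ named in (1) and (2) are exactly covered by earlier results and that the descriptions of the spectra match after translating the jump arithmetic. First I would dispose of part (2) with $\alpha=0$: here $\ddd^{(0)}=\ddd$, so the target $\{\ddd:S\text{ computable in }\ddd\}$ is just the cone above $\deg(S)$, which is realised by the divisible ordered abelian group $V_{\deg(S)}$ from Proposition \ref{upper cone} and by $V_{\deg(S)}\times\Z$ from Corollary \ref{upper cone cor}.

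Next I would handle the single-jump cases $\alpha=1$. For part (1), the target $\{\ddd:S\text{ c.e.\ in }\ddd'\}$ is realised as a divisible ordered abelian group by Proposition \ref{degrees} and as a Presburger group by Corollary \ref{degree cor 1}. For part (2), the target $\{\ddd:S\text{ computable in }\ddd'\}$ is realised by Corollary \ref{degree cor 2} (group) and Corollary \ref{degrees cor 3} (Presburger group). Finally, for both parts with $\alpha\geq 3$ non-limit, these are precisely the cases of Corollary \ref{jumps cor}: its part (1) supplies a $V=V_{\LL(\beta,S)}$ together with $P=V\times\Z$ of spectrum $\{\ddd:S\text{ c.e.\ in }\ddd^{(\alpha)}\}$, and its part (2) supplies the analogous computable version, where $\beta$ is chosen with $1+\beta=\alpha$.

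The remaining point is bookkeeping about the excluded values: $\alpha=2$ is omitted from both parts and $\alpha=0$ from part (1). These sit in the gap between the single-jump constructions ($\alpha=1$) and Corollary \ref{jumps cor}, which rests on Proposition \ref{Jump-degrees lemma} and therefore requires $\beta\geq 2$; since $1+\beta=\alpha$ forces $\alpha\geq 3$, the value $\alpha=2$ is unreachable by that route, and no available construction yields $\{\ddd:S\text{ c.e.\ in }\ddd\}$ for the missing part-(1) value $\alpha=0$. I expect the main (though modest) obstacle to be nothing conceptual but rather the care needed in the ordinal arithmetic: verifying that for every non-limit $\alpha\geq 3$ there is a non-limit $\beta\geq 2$ with $1+\beta=\alpha$ (taking $\beta=\alpha-1$ when $\alpha$ is finite and $\beta=\alpha$ when $\alpha$ is infinite, using $1+\alpha=\alpha$), and confirming that ``c.e.\ in $(\ddd')^{(\beta)}$'' collapses to ``c.e.\ in $\ddd^{(\alpha)}$'' exactly as in the proof of Corollary \ref{jumps cor}. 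Checking that no case is then left uncovered or double-counted completes the proof.
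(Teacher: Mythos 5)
Your proposal is correct and follows essentially the same route as the paper's own proof: assemble part (2) for $\alpha=0$ from Proposition \ref{upper cone} and Corollary \ref{upper cone cor}, the $\alpha=1$ cases from Proposition \ref{degrees} and Corollaries \ref{degree cor 1}, \ref{degree cor 2} and \ref{degrees cor 3}, and all remaining non-limit $\alpha\geq 3$ from Corollary \ref{jumps cor}. The paper's proof is just this case assembly stated more tersely; your additional remarks on the ordinal arithmetic $1+\beta=\alpha$ are consistent with what Corollary \ref{jumps cor} already handles.
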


\begin{proof}
    For (1), the case of $\alpha=1$ follows from Proposition \ref{degrees} and Corollary \ref{degree cor 1}. The remaining cases are given in Corollary \ref{jumps cor}. For (2), the case of $\alpha=0$ follows from Proposition \ref{upper cone} and Corollary \ref{upper cone cor}, and the case of $\alpha=1$ follows from Corollaries \ref{degree cor 2} and \ref{degrees cor 3}. The remaining cases are given in Corollary \ref{jumps cor}. 
\end{proof}

Recall that we saw a similar result above in Corollary \ref{limit cor} for the case where $\alpha$ is a limit ordinal. 

\section{Further Questions}
\label{questions}

\subsection{Scott Analysis}

\begin{question}
\label{Q Sigma2/3}
    Are there any Presburger groups with Scott sentence complexity $\Sigma_4$?
\end{question}

Since there are no linear orders with Scott sentence complexity  $\Sigma_3$, our methods used in Section \ref{Scott Complexities Section} cannot yield us such a Presburger group of the form $P_\LL$. However, there are many other forms that Presburger groups can take. 

\begin{question} Let $\LL$ be a countable linear order.
\label{Q SRPL vs SRL}
\begin{itemize}
    \item[(i)] Must we have $\SR(P_\LL)=1+\SR(\LL)$?
    \item[(ii)] If $|\LL|>1$ and $\SC(\LL)=\Gamma_{\alpha}$ (with $\Gamma= \Pi, \Sigma$ or $d$-$\Sigma$), must we have $\SC(P_\LL)=\Gamma_{1+\alpha}$?
\end{itemize}
\end{question}

Note that if $\LL$ is the linear order containing only a single point, then $\SC(\LL)=\Pi_1$ but $\SC(P_\LL)=\SC(P_1)=d$-$\Sigma_2$ by Proposition \ref{P_1 SC}. All linear orders considered in this paper satisfy both (i) and (ii), and we have that (i) will hold so long as $\SR(\LL)$ is infinite (see Corollary \ref{psr=psr}). However, must this hold for all linear orders? 

Question \ref{Q SRPL vs SRL} is closely related to the following open question posed by Gonzalez and Harrison-Trainor:

\begin{question}[Question 1.10 in \cite{spectral-gaps}]
 \label{Q david/matthew}
 Let $\A$ be a countable structure with the property that whenever $\B$ is a countable structure and
$\A\leq_\alpha \B$ then $\B\cong \A$. Does $\A$ have a $\Pi_\alpha$ Scott sentence?
     
 \end{question}

If the answer to Question \ref{Q david/matthew} is yes, then given $\LL$ with $\SR(\LL)=n$ there would have to be some linear order $\K\ncong \LL$ such that $\LL\leq_{n} \K$. Thus, $P_\LL\leq_{1+n} P_\K$ by Corollary \ref{Ls iff 1+ Ps} and so $\SR(P_\LL)\geq 1+n$. This, along with the fact that $\SR(P_\LL)\leq 1+   \SR(\LL)$ (Lemma \ref{psr PL<1+ psr L}), would yield a positive answer to Question \ref{Q SRPL vs SRL} part (i).    

\subsection{Degree Spectra}

\begin{question}
\label{Q dgsp V vs dgsp P}
    Let $V$ be a nontrivial divisible ordered abelian group, and let $P$ be the Presburger group $V\times \Z$. Must we have $\dgsp(V)=\dgsp(P)$? 
\end{question}

All of the Presburger groups of the form $V\times \Z$ that we examined in this paper have this property. Additionally, we saw by Propositions \ref{dgsp} and \ref{prop d' in dgV} that $\dgsp(V)\subseteq\dgsp(P)$ and that $\{\ddd':\ddd\in \dgsp(P)\}\subseteq \dgsp(V)$. However, it
is not obvious how to strengthen Proposition \ref{prop d' in dgV}. Although $V$ is always co-c.e.\ in $P$, it is not clear how to computably enumerate $V$ given the atomic diagram of $P$.

\begin{question}
\label{Q universal dgsp}
    Are Presburger groups universal for degree spectra?
\end{question}

We call a class of structures $\frak{C}$ \emph{universal for degree spectra} if for every countable non-automorphically trivial structure $\B$, there exists some $\A\in \frak{C}$ such that $\dgsp(\A)\cong \dgsp(\B)$. It was shown by Hirschfeldt, Khoussainov, Shore and Slinko in \cite{hirschfeldt-et-al} that the following classes of structures are universal for degree spectra: symmetric irreflexive graphs, partial orderings, rings, integral domains, commutative semi-groups, and 2-step nilpotent groups. It was further shown by R. Miller, Poonen, Schoutens and Shlapentokh in \cite{Miller-fields} that the class of fields is universal for degree spectra as well. It should be noted that linear orders are not universal for degree spectra, so we must look beyond Presburger groups of the form $P_\LL$ to answer Question \ref{Q universal dgsp}.

\bibliographystyle{plain}
\bibliography{mybib}

\pdfoutput=1
\end{document}